\theoremstyle{definition}
\newtheorem{definition}{Definition}[subsection]
\newtheorem{definitionApp}{Definition}[section]
\newtheorem*{definition*}{Definition}
\newtheorem{notation}[definition]{Notation}
\newtheorem{notationApp}[definitionApp]{Notation}
\newtheorem*{notation*}{Notation}
\newtheorem*{assumption*}{Assumption}
\theoremstyle{plain}
\newtheorem{theorem}[definition]{Theorem}
\newtheorem{theoremAlph}{Theorem}
\newtheorem*{theorem*}{Theorem}
\newtheorem*{conjecture*}{Conjecture}
\newtheorem{proposition}[definition]{Proposition}
\newtheorem{propositionApp}[definitionApp]{Proposition}
\newtheorem*{proposition*}{Proposition}
\newtheorem{lemma}[definition]{Lemma}
\newtheorem*{lemma*}{Lemma}
\newtheorem{corollary}[definition]{Corollary}
\newtheorem*{corollary*}{Corollary}
\theoremstyle{remark}
\newtheorem{example}[definition]{Example}
\newtheorem{exampleApp}[definitionApp]{Example}
\newtheorem{remark}[definition]{Remark}
\DeclareMathOperator{\Aut}{Aut}
\DeclareMathOperator{\End}{End}
\DeclareMathOperator{\Ker}{Ker}
\DeclareMathOperator{\GL}{GL}
\DeclareMathOperator{\Hom}{Hom}
\DeclareMathOperator{\Id}{Id}
\DeclareMathOperator{\Mat}{Mat}
\DeclareMathOperator{\Rep}{Rep}
\DeclareMathOperator{\rk}{rk}
\DeclareMathOperator{\Spec}{Spec}
\DeclareMathOperator{\Stab}{Stab}
\DeclareMathOperator{\Tr}{Tr}
\newcommand{\sst}{\mathrm{ss}}
\newcommand{\st}{\mathrm{s}}
\newcommand\GG{\mathbb{G}}
\newcommand\ZZ{\mathbb{Z}}
\newcommand\gl{\mathfrak{gl}}
\newcommand\bbA{\mathbb{A}}
\newcommand\bbC{\mathbb{C}}
\newcommand\bbQ{\mathbb{Q}}
\newcommand\bbZ{\mathbb{Z}}
\newcommand\bff{\mathbf{f}}
\newcommand\bfm{\mathbf{m}}
\newcommand\bfq{\mathbf{q}}
\newcommand\bfr{\mathbf{r}}
\newcommand\bfs{\mathbf{s}}
\newcommand\rmT{\mathrm{T}}
\def\cA{\mathcal A}
\def\cO{\mathcal O}
\DeclareMathOperator{\rSpec}{\underline{Spec}}
\DeclareMathOperator{\Proj}{Proj}
\DeclareMathOperator{\rProj}{\underline{Proj}}
\newcommand\lpow{[\![}
\newcommand\rpow{]\!]}
\newcommand\llau{(\!(}
\newcommand\rlau{)\!)}
\newcommand\git{/\!/}
\newcommand\limit{\underset{t\rightarrow0}{\lim}\ }
\newcommand{\geqp}{%
  \mathrel{\raisebox{-0.5ex}{$\scriptscriptstyle($}}%
  \geq
  \mathrel{\raisebox{-0.5ex}{$\scriptscriptstyle)$}}%
}
\title{Moduli spaces of representations of quivers with multiplicities via non-reductive GIT}
\author{Victoria Hoskins\thanks{Universit\"{a}t Duisburg-Essen, \texttt{victoria.hoskins@uni-due.de}} \and Joshua Jackson\thanks{St John's College, University of Cambridge, \texttt{jjj26@cam.ac.uk}} \and Tanguy Vernet\thanks{Institute of Science and Technology Austria (ISTA), \texttt{tanguy.vernet@ist.ac.at}}}
\date{}
\begin{document}

\maketitle

\setcounter{tocdepth}{1}

\begin{abstract}
We construct new moduli spaces of quiver representations with multiplicities, i.e. over rings of truncated power series. This includes moduli of framed representations and analogues of Nakajima's quiver varieties. Our construction relies on tools from relative affine Geometric Invariant Theory for non-reductive groups and new stability conditions for quiver representations with multiplicities. We also study the cohomology of smooth moduli spaces of quiver representations with multiplicities, and show that several of these moduli spaces are cohomologically pure, using torus actions, as is the case for Nakajima's quiver varieties.
\end{abstract}

\tableofcontents

\section{Introduction}

\subsection{Background and motivation}

Moduli spaces of quiver representations are pervasive in moduli theory and geometric representation theory. They play a pivotal role, as they serve as local models for many moduli spaces parametrising linear objects, such as coherent sheaves, local systems or connections on algebraic varieties \cite{Mei15,Toda18,Dav24}. Moreover, there is a profound relation between quiver moduli and (generalised) Kac-Moody algebras (see for instance \cite{S18}). This allows one to use Lie-theoretic methods to analyse the topology of quiver moduli, and thus the topology of several moduli spaces. In return, the use of geometric techniques often reveals new structures in the study of Kac-Moody algebras and related quantum groups. Among many applications, one could mention geometric realisations of well-behaved bases of Kac-Moody algebras and their representations \cite{Lus91,KS97,Nak98,Sai02} and the explicit decomposition of the cohomology of several moduli stacks (of sheaves on curves, del Pezzo surfaces, symplectic surfaces) in terms of the intersection cohomology or BPS cohomology of their good moduli spaces \cite{Mei15,DHSM23}.

The geometric study of quiver moduli strongly relies on Geometric Invariant Theory (GIT) or, more generally, the use of reductive groups. Firstly, moduli spaces of quiver representations are constructed as GIT quotients \cite{Kin94}. Furthermore, the local modeling of moduli spaces by quiver moduli and the stratifications used in the study of their cohomology both rest on the construction of \'{e}tale slices \cite{Lun73,AHR20}. In light of \cite{BDINKP25}, one may also view the Hall induction techniques used in \cite{Mei15,DHSM23} as, at heart, a feature of moduli stacks which are locally modelled on quotient stacks by reductive groups.

In this paper, we construct new moduli spaces parametrising representations of a quiver with multiplicities (see \cref{first main thm}). A quiver with multiplicities $(Q,\bfm)$ is a quiver $Q= (Q_0,Q_1,s,t)$ and a collection $\bfm$ of positive integers called \emph{multiplicities} indexed by the vertices of $Q$. A representation of $(Q,\bfm)$ over $k$ consists of a free module over the truncated polynomial ring $k_{m_i} := k[\epsilon]/(\epsilon^{m_i})$ for each vertex $i \in Q_0$, and an appropriate linear map for each arrow (see \cref{def/quiver rep with mult}). Unlike the aforementioned spaces, moduli of quiver representations with multiplicities are naturally presented as quotients by non-reductive groups. Our construction therefore relies on Non-Reductive Geometric Invariant Theory (NRGIT), as developed in \cite{BDHK18,BDHK20,BK24,HHJ24,HHJ}. This leads us to define a notion of stability for representations of quivers with multiplicities, in the spirit of \cite{Kin94}.

Representations of quivers with multiplicities have found applications in both moduli theory and geometric representation theory over the past decade. On the one hand, certain moduli spaces of \emph{irregular} connections on the projective line can be realised as moduli of representations of star-shaped quivers with multiplicities \cite{Yam10,HWW23}. On the other hand, in \cite{GLS16,GLS18a}, Geiss, Leclerc and Schröer constructed geometric realisations of symmetrisable (as opposed to symmetric) Kac-Moody algebras, using moduli of representations of quivers with multiplicities.

Furthermore, certain counts of representations of quivers with multiplicities over finite fields enjoy good properties (polynomiality, positivity) \cite{HLRV24,HWW23,Ver24}. For quivers without multiplicities, such properties result from interpreting these counts as weight polynomials of moduli spaces, which are cohomologically pure \cite{CBVB04,HLRV13b,Dav18,Dav23c}. This is expected, for instance, for certain moduli spaces of connections known as open de Rham spaces \cite[Conj.\ 5.2.7]{HWW23}. One of the main motivations for this work was to establish cohomological purity for moduli spaces of quiver representations with multiplicities. Our second main result (\cref{main thm purity}) gives cohomological purity of various moduli spaces of quiver representations with multiplicities. This relies on a more general purity result for NRGIT quotients, which might be of independent interest. However, in the case of open de Rham spaces we could not find a suitable choice of stability to apply our results, so \cite[Conj.\ 5.2.7]{HWW23} remains open.

\subsection{NRGIT construction}

Let us now explain more concretely how we construct moduli spaces of quiver representations with multiplicities as NRGIT quotients.

Moduli spaces of representations of a quiver $Q$ (over a field $k$) of dimension vector $\bfr$ were constructed by King \cite{Kin94} as a reductive GIT quotient of the action
\begin{equation}\label{red GIT action for quivers}
    \GL_{\bfr}:= \prod_{i \in Q_0} \GL_{r_i}(k) \curvearrowright R(Q,\bfr):= \prod_{a \colon i \rightarrow j} \Hom(k^{\oplus r_i},k^{\oplus r_j})
\end{equation}
using a linearisation associated to a stability parameter $\theta \in \ZZ^{Q_0}$. This yields an open semistable set $ R(Q,\bfr)^{\theta-\sst}$ admitting a good quotient $M_{Q,\bfr}^{\theta-\sst}:= R(Q,\bfr) \git_\theta \GL_{\bfr}$, which is a coarse moduli space for (S-equivalence classes) of $\theta$-semistable representations. 

If we now consider a quiver with multiplicities $(Q,\bfm)$, analogously to above, the isomorphism classes of representations of $(Q,\bfm)$ with rank vector $\bfr$ are in bijection with the orbits of the following group action
\begin{equation}\label{NRGIT action for quivers}
    \GL_{\bfm,\bfr} := \prod_{i \in Q_0} \GL_{r_i}(k_{m_i}) \curvearrowright R(Q,\bfm;\bfr):= \prod_{a \colon i \rightarrow j} \Hom(k_{m_i}^{\oplus r_i},k_{m_j}^{\oplus r_j}).
\end{equation}
The stack of rank $\bfr$ representations of $(Q,\bfm)$ is $\Rep_{Q,\bfm;\bfr} = [R(Q,\bfm;\bfr)/\GL_{\bfm,\bfr}]$, whose dimension $\dim \Rep_{Q,\bfm;\bfr} =-\langle \bfr,\bfr \rangle_{Q,\bfm}$ is computed using an Euler pairing analogously to the classical case. However, the group $\GL_{\bfm,\bfr} $ is non-reductive. Indeed one has $\GL_{\bfm,\bfr} \cong \GL_\bfr \ltimes U_{\bfm,\bfr}$, where $U_{\bfm,\bfr}$ is the unipotent radical. Therefore, non-reductive GIT must be employed to construct moduli spaces. Certain Nakajima quiver varieties associated to star shaped quivers with constant multiplicities on the legs were constructed in \cite{HWW23} by proving an associated ring of invariants was finitely generated. In \cite{HHJ24}, a non-reductive GIT set-up was considered for quivers with constant multiplicities and moduli spaces of certain toric representations (i.e.\ $\bfr = \bm{1}$) were constructed. 

In this paper, we construct a quotient of the action \eqref{NRGIT action for quivers} using the relative approach to non-reductive GIT taken in \cite{HHJ}, which builds quotients of (graded) equivariant actions on affine morphisms. For us, this affine morphism is given by a truncation map $\tau$ which associates to a representation of $(Q,\bfm)$ a representation of $Q$; in the case of constant multiplicities, this truncation map is induced by the ring homomorphism $k_{m} \rightarrow k$ whose kernel is $(\epsilon)$ and simply corresponds to forgetting the positive powers of $\epsilon$, see \cite[Def.\ 7.2]{HHJ24}. In fact, via this truncation, the above actions \eqref{red GIT action for quivers} and \eqref{NRGIT action for quivers} fit into the following equivariant action:
\begin{equation}\label{equiv action quivers}
    \begin{tikzcd}[column sep=tiny]
    \GL_{\bfm,\bfr}\ar[d,"\tau_{\GL}"] & \curvearrowright & R(Q,\bfm;\bfr)\ar[d,"\tau"] \\
    \GL_{\bfr} & \curvearrowright & R(Q;\bfr),
    \end{tikzcd}
\end{equation}
where $\tau$ is equivariant with respect to the group homomorphism $\tau_{\GL}$. The kernel of $\tau_{\GL}$ is the unipotent radical of $\GL_{\bfm,\bfr}$. The above truncation maps induce a truncation map on stacks. Moreover, the truncation morphism $\tau$ also admits a natural section which enables us to view $\GL_\bfr$ as a Levi subgroup of $\GL_{\bfm,\bfr}$ and to view a representation $V$ of $Q$ as a representation of $(Q,\bfm)$, and there is naturally an induced homomorphism of automorphism groups  
\[ \tau_V \colon \Aut_{Q,\bfm}(V) \rightarrow \Aut_{Q}(V).\]
In fact, if $V$ is the representation associated to a point $x \in R(Q;\bfr)$, then $\Aut_{Q,\bfm}(V) \simeq \Stab_{\GL_{\bfm,\bfr}}(x)$ and $\Aut_Q(x) \simeq \Stab_{\GL_{\bfr}}(x)$ are stabiliser groups for the above action \eqref{equiv action quivers}.

For both actions there are subgroups $\Delta_m \subset \GL_{\bfm,\bfr}$ and $\Delta \subset \GL_{\bfr}$ acting trivially, which correspond to scalar automorphisms, and $\tau_{\GL}$ induces a morphism $\overline{\tau} \colon G_{\bfm,\bfr}:= \GL_{\bfm,\bfr} /\Delta_m \rightarrow \GL_{\bfr} /\Delta$. From the perspective of GIT, we could equivalently consider the equivariant action of $\overline{\tau}$, but it is more convenient to remember these constant stabilisers and work with the above action \eqref{equiv action quivers}.

\subsection{Main results and plan of the paper}

After collecting the required background on quivers with multiplicities and relative NRGIT in $\S$\ref{sec/background quivers mult} and $\S$\ref{sec/overview rel NRGIT} respectively, we proceed with proving \cref{first main thm} in $\S$\ref{sec/constr moduli spaces}. In order to apply \cite{HHJ} to the equivariant action \eqref{equiv action quivers}, one needs:
\begin{enumerate}[label=\roman*)]
    \item a good quotient of the action of $\GL_{\bfr}$ on an open subset of $R(Q,\bfr)$,
     \item a choice of character of $\GL_{\bfm,\bfr}$ to take the relative quotient,
     \item an assumption on the unipotent stabilisers on the open subset in i),
    \item  a \lq grading' of the action \eqref{equiv action quivers} by a multiplicative group $\GG_m$.
\end{enumerate}
Point i) is achieved by taking King's reductive GIT quotient associated to a choice of stability parameter $\theta$, and ii) corresponds to an additional choice of stability parameter $\rho$. Points iii) and iv) require a careful analysis (see $\S$ \ref{Sect/extGrad}-$\S$\ref{Sect/unipotentStab}). They are always present in NRGIT in some form to construct the unipotent part of the quotient using (local) slices.

Concerning iii), we only present \cref{first main thm} under the strongest possible stabiliser assumptions (see assumption \ref{U} below), which gives the nicest possible results; however, one can always construct a quotient by restricting to a locus of points with fixed dimensional unipotent stabilisers (see \cref{rmk NRGIT with non-trivial stabilisers}). \cref{prop suff cond for trivial unip stabilisers} collects some sufficient conditions to guarantee \ref{U} holds (e.g.\ if $\theta$ is generic with respect to $\bfr$). If \ref{U} fails, then one can still construct moduli spaces (see \cref{rmk moduli with non-trivial unip stabilisers}).

Point iv) is achieved by constructing a new external grading, which in the case of constant multiplicities, coincides with the one used in \cite{HHJ24}.

Given points i)-iv) above, we are able to construct a quotient of an open semistable set, which we moduli-theoretically interpret as a notion called $(\theta,\rho)$-semistability (see \cref{def/stabilityQuivMult}), and the quotient we obtain is relative to King's moduli space. Before we give the first main theorem, we (moduli-theoretically) introduce the strongest possible stabiliser assumption required to construct a moduli space.

\begin{assumption*}
We say assumption \ref{U} holds for rank $\bfr$ representations of $(Q,\bfm)$ if 
\begin{equation}\label{U}
			\overline{\tau}_V \colon \Aut_{Q,\bfm}(V)/\Delta_m \rightarrow \Aut_Q(V)/\Delta \text{ is injective } \forall \:  V \in \Rep_{Q,\bfr}^{\theta-\sst}  \tag*{$[U;\theta]$}
\end{equation}
\end{assumption*}

Our first main result under these strong stabiliser assumptions is as follows.

\begin{theoremAlph}\label{first main thm}
Let $(Q,\bfm)$ be a quiver with multiplicities,  $\bfr\in\bbZ_{\geq0}^{Q_0}$ be a rank vector and $\theta,\rho\in\bbZ^{Q_0}$ be stability parameters. Suppose the assumption \ref{U} holds for rank $\bfr$ representations of $(Q,\bfm)$. Then the following statements hold:
\begin{enumerate}[label=\roman*)]
    \item There are open sets $R(Q,\bfm;\bfr)^{\theta,\rho-\st} \subset R(Q,\bfm;\bfr)^{\theta,\rho-\sst} \subset R(Q,\bfm;\bfr)$ and a good $\GL_{\bfm,\bfr}$-quotient $q:R(Q,\bfm;\bfr)^{\theta,\rho-\sst} \rightarrow M_{Q,\bfm;\bfr}^{\theta,\rho-\sst}$, which restrict to a geometric quotient on $R(Q,\bfm;\bfr)^{\theta,\rho-\st}$.    
    \item There is a morphism $M_{Q,\bfm;\bfr}^{\theta,\rho-\sst} \rightarrow M_{Q,\bfr}^{\theta-\sst}$ which is projective over affine. Moreover, $M_{Q,\bfm;\bfr}^{\theta,\rho-\sst}$ is a normal irreducible quasi-projective variety. If the stable locus is non-empty, then $M_{Q,\bfm;\bfr}^{\theta,\rho-\sst}$ has dimension $\delta-\langle\bfr,\bfr\rangle_{Q,\bfm}$.
    \item The geometric points of $R(Q,\bfm;\bfr)^{\theta,\rho-\sst}$ correspond to $(\theta,\rho)$-semistable representations of $(Q,\bfm)$. As a consequence, $M_{Q,\bfm;\bfr}^{\theta,\rho-\sst}$ is a coarse moduli space for S-equivalence classes of $(\theta,\rho)$-semistable rank $\bfr$ representations of $(Q,\bfm)$. 
    \item If $\bfr$ is indivisible and $(\theta,\rho)$ is generic with respect to $\bfr$, then $q$ is a principal $G_{\bfm,\bfr}$-bundle, hence $M_{Q,\bfm;\bfr}^{\theta,\rho-\sst}$ is smooth and moreover a fine moduli space.
    \end{enumerate}
\end{theoremAlph}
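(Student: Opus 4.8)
My plan is to organize the proof as an application of the relative NRGIT machinery of [HHJ] to the equivariant action \eqref{equiv action quivers}, after arranging the four inputs i)--iv) listed in the introduction. I would structure the argument in four stages, corresponding to the four statements of the theorem, and I would isolate the purely NRGIT-flavoured parts (handled in \S\ref{Sect/extGrad}--\S\ref{Sect/unipotentStab}) from the quiver-specific bookkeeping.

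\textbf{Step 1: verify the hypotheses of the relative NRGIT theorem.} First I would recall from \S\ref{sec/overview rel NRGIT} the precise list of inputs that [HHJ] requires to produce a relative quotient of a graded equivariant action on an affine morphism. For input i), I take King's semistable locus $R(Q,\bfr)^{\theta-\sst}$ with its good $\GL_\bfr$-quotient $M_{Q,\bfr}^{\theta-\sst}$; I then need to show that the truncation morphism $\tau\colon R(Q,\bfm;\bfr)\to R(Q,\bfr)$, restricted over this semistable base, is an affine $\GL_{\bfm,\bfr}$-equivariant morphism. Affineness of $\tau$ is essentially formal: fibrewise it is a product of vector space projections $\Hom(k_{m_i}^{\oplus r_i},k_{m_j}^{\oplus r_j})\twoheadrightarrow\Hom(k^{\oplus r_i},k^{\oplus r_j})$, so $\tau$ is (Zariski-locally on the base) a trivial affine-space bundle. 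For input ii) I fix the character of $\GL_{\bfm,\bfr}$ determined by the stability parameter $\rho$ (pulled back along the projection to $\GL_\bfr$ and combined with the natural characters of the $\GL_{r_i}(k_{m_i})$), defining the relative linearisation. Input iii) is precisely assumption \ref{U}, once I translate it from the automorphism-group formulation into the statement that the unipotent radical $U_{\bfm,\bfr}$ acts with trivial stabilisers on the relevant open set (using the identifications $\Aut_{Q,\bfm}(V)\simeq\Stab_{\GL_{\bfm,\bfr}}(x)$, $\Aut_Q(V)\simeq\Stab_{\GL_\bfr}(x)$ and $\Ker\tau_{\GL}=U_{\bfm,\bfr}$ recorded in the introduction): injectivity of $\overline\tau_V$ says exactly that $U_{\bfm,\bfr}$ meets each stabiliser only in its (trivial) intersection with $\Delta_m$. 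Input iv) is the external $\GG_m$-grading; I would construct it explicitly by grading the $\epsilon$-filtration on each $k_{m_i}$, i.e.\ letting $\GG_m$ act with weight $\ell$ on the $\epsilon^\ell$-component, check that this normalises $\GL_{\bfm,\bfr}$ with Levi $\GL_\bfr$ and that it is an ``adapted'' grading in the sense of [HHJ] (the weights on $U_{\bfm,\bfr}$ are strictly positive), and note that for constant multiplicities this recovers the grading of [HHJ24].

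\textbf{Step 2: extract statements i)--iii).} Once the inputs are in place, the relative NRGIT theorem of [HHJ] directly yields: an open semistable locus and an open stable sublocus inside $R(Q,\bfm;\bfr)$, a good quotient $q$ restricting to a geometric quotient on the stable locus (statement i)); a projective-over-affine morphism $M_{Q,\bfm;\bfr}^{\theta,\rho-\sst}\to M_{Q,\bfr}^{\theta-\sst}$, with the target itself projective over the affine quotient $R(Q,\bfr)\git\GL_\bfr$, together with normality (inherited from the ambient smooth space $R(Q,\bfm;\bfr)$ via the quotient construction, since good quotients of normal varieties are normal) and irreducibility (since $R(Q,\bfm;\bfr)$ is an irreducible affine space and the semistable locus is open and nonempty when the stable locus is) (statement ii)). The dimension count $\delta-\langle\bfr,\bfr\rangle_{Q,\bfm}$ follows on the stable locus from $\dim\Rep_{Q,\bfm;\bfr}=-\langle\bfr,\bfr\rangle_{Q,\bfm}$ together with the fact that a geometric quotient by $G_{\bfm,\bfr}$ drops the dimension by $\dim G_{\bfm,\bfr}$; here $\delta:=\dim G_{\bfm,\bfr}=\dim\GL_{\bfm,\bfr}-\dim\Delta_m$, and I would double-check $\delta$ against the definition used in the statement. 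For statement iii) I would prove the Hilbert--Mumford-type numerical criterion identifying the GIT-semistable points of \eqref{NRGIT action for quivers} with the $(\theta,\rho)$-semistable representations of \cref{def/stabilityQuivMult}; this is the one genuinely quiver-theoretic computation, analogous to King's theorem, combining King's criterion for the Levi part with the extra inequalities coming from the unipotent directions and the grading, and it is what justifies the coarse moduli interpretation (S-equivalence = identification of orbits with the same associated graded).

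\textbf{Step 3: the fine moduli / smoothness statement iv).} Assume now $\bfr$ indivisible and $(\theta,\rho)$ generic with respect to $\bfr$. The strategy is: (a) genericity forces semistable $=$ stable, so $q$ is already a geometric quotient; (b) show all stabilisers are exactly $\Delta_m$, i.e.\ $G_{\bfm,\bfr}$ acts freely on $R(Q,\bfm;\bfr)^{\theta,\rho-\st}$. For (b), the Levi direction is King's argument: a $\theta$-stable representation of a quiver with indivisible dimension vector and generic $\theta$ is a brick, so $\Aut_Q(V)/\Delta$ is trivial; then assumption \ref{U} (injectivity of $\overline\tau_V$) upgrades this to $\Aut_{Q,\bfm}(V)/\Delta_m$ trivial, i.e.\ $\Stab_{G_{\bfm,\bfr}}(x)$ trivial. (c) A free action with a geometric quotient, where moreover the quotient map is the one produced by NRGIT (which in the stable/free situation [HHJ] shows to be a principal bundle), gives that $q$ is a principal $G_{\bfm,\bfr}$-bundle; I'd cite the relevant principal-bundle statement from [HHJ] rather than reprove descent. (d) Since $R(Q,\bfm;\bfr)^{\theta,\rho-\st}$ is smooth (open in affine space) and $q$ is a (fppf, even Zariski-locally trivial) principal bundle, the base $M_{Q,\bfm;\bfr}^{\theta,\rho-\st}=M_{Q,\bfm;\bfr}^{\theta,\rho-\sst}$ is smooth. (e) Fineness: a principal $G_{\bfm,\bfr}$-bundle over the quotient, pulled back from $R(Q,\bfm;\bfr)$ which carries the tautological family of representations, descends to a universal family on $M_{Q,\bfm;\bfr}^{\theta,\rho-\sst}$ (the $\Delta_m$-ambiguity is absorbed because we quotiented by $G_{\bfm,\bfr}=\GL_{\bfm,\bfr}/\Delta_m$, so there is no Brauer obstruction); hence it represents the moduli functor of stable representations.

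\textbf{Main obstacle.} I expect the real work to be Step 1, input iv) together with the stabiliser analysis feeding input iii): constructing an external $\GG_m$-grading that is simultaneously compatible with the product structure over all vertices, restricts to the [HHJ24] grading for constant multiplicities, and satisfies the ``semistable coincides with stable for the grading'' / adaptedness conditions demanded by [HHJ], is delicate, and it is precisely the place where the varying multiplicities $m_i$ (rather than a single constant $m$) complicate the picture. The second most delicate point is the Hilbert--Mumford computation in Step 2 for statement iii), since one must track both the reductive (King) inequalities and the unipotent/graded contributions and show they assemble into the clean combinatorial condition of \cref{def/stabilityQuivMult}. Statements i), ii) and the smoothness/fineness of iv) should then follow fairly mechanically from [HHJ] and standard GIT.
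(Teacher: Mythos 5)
Your overall architecture coincides with the paper's: feed King's quotient, the character $\rho$, assumption \ref{U} and an external grading into the relative NRGIT theorem, extract i)--iii) plus a Hilbert--Mumford identification of GIT-semistability with \cref{def/stabilityQuivMult}, and handle iv) via freeness and descent of the tautological family à la King. However, there is a genuine gap in your construction of the external grading in Step 1, input iv). You propose to let $\GG_m$ act ``with weight $\ell$ on the $\epsilon^\ell$-component'' of each $k_{m_i}$, i.e.\ to take the same weight profile at every vertex. For varying multiplicities this action does \emph{not} preserve the representation space: by \cref{Lem/extGrad1}, independent scalings $\Phi_{m_i,r_i}(t^{\alpha_i})$ preserve $\Hom_{k_{m_{ij}}}(k_{m_i}^{\oplus r_i},k_{m_j}^{\oplus r_j})$ inside $\Hom_k(k^{\oplus m_ir_i},k^{\oplus m_jr_j})$ only when $m_{ij}=1$ or $\alpha_i m_i=\alpha_j m_j$, so one is forced to take $\alpha_i$ proportional to $\mathrm{lcm}(\bfm)/m_i$ rather than $\alpha_i=1$. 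Moreover, even with the correct $\bm\alpha$, the conjugation action alone is not non-positively graded on the arrows and its fixed locus is not $R(Q,\bfr)$: one must additionally twist each arrow $a\colon i\to j$ by the weight $\beta_a=\alpha_i(f_{ji}-1)$ (\cref{Lem/extGrad2}), and it is exactly this twist that makes the limit map equal to the truncation $\tau$ of \cref{Def/matrixTruncation} (which extracts the corner block $[x_0]_{f_{ij},1}$, not the naive constant term). Your plan treats $\tau$ as a given projection and the grading as an independent check, whereas in the paper the two are constructed together; without the corrected $\bm\alpha$ and the arrow twists, the NRGIT hypotheses simply fail for non-constant $\bfm$.

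A secondary, fixable issue is your freeness argument in Step 3(b). The brick argument plus assumption \ref{U} controls stabilisers only for points in the image of the section $\iota$ (this is what the hypothesis of the NRGIT theorem requires, namely trivial unipotent stabilisers on the base $R(Q,\bfr)^{\theta-\sst}$); it does not directly show that an arbitrary stable point $x\in R(Q,\bfm;\bfr)^{\theta,\rho-\st}$ has trivial stabiliser in $G_{\bfm,\bfr}$. The paper instead observes (\cref{Lem/stabilisersStableLocus}) that $\Stab_{G_{\bfm,\bfr}}(x)$ is finite by stability and is the component group of $\Stab_{\GL_{\bfm,\bfr}}(x)$, which is connected because it is open in the affine space $\Stab_{\gl_{\bfm,\bfr}}(x)$; freeness then follows, and the principal-bundle property is deduced from the étale slice theorem rather than quoted from [HHJ]. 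You should either adopt this connectedness argument or supply a separate reason why stabilisers of stable points away from $\iota(R(Q,\bfr))$ are trivial.
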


This theorem is proved in $\S$\ref{sec proof of first main theorem}, and a natural generalisation leads to the construction of framed moduli spaces with multiplicities (see \cref{thm framed moduli with multiplicities}). We can also build analogues of Nakajima quiver varieties \cite{N94} for quivers with multiplicities (see \cref{Thm/quiverVarieties}). Indeed, as in the case of classical quivers, the cotangent bundle $\mathrm{T}^*R(Q,\bfm;\bfr)$ can be identified with the space of representations of the doubled quiver $(\overline{Q},\bfm)$. There is an explicit moment map:
\[
\mu_{(Q,\bfm),\bfr} \colon R(\overline{Q},\bfm;\bfr)\simeq\mathrm{T}^*R(Q,\bfm;\bfr)\rightarrow\gl_{\bfm,\bfr}^\vee\simeq\gl_{\bfm,\bfr}.
\]
Given $\gamma=(\gamma_i\cdot\Id)\in\gl_{\bfm,\bfr}$, our quiver varieties with multiplicities are defined as:
\[
N_{Q,\bfm;\bfr}^{\theta,\rho-\sst}(\gamma):=\mu_{(Q,\bfm),\bfr}^{-1}(\gamma)^{\theta,\rho-\sst}\git\GL_{\bfm,\bfr}.
\]

Finally in $\S$\ref{sec/purity}, as for classical quiver moduli \cite{ER09,Hau10}, we prove that several moduli spaces of quiver representations with multiplicities are cohomologically pure, i.e.\ the Hodge structure carried by their singular cohomology groups is pure \cite{Del74a}. This is the case for the smooth moduli spaces built in \cref{first main thm}.

We also prove cohomologically purity for certain modified Nakajima quiver varieties with multiplicities, which we denote by $\widetilde{N}_{Q,\bfm;\bfr}^{\theta,\rho-\sst}(\gamma)$. The modification concerns the stability condition, as our proof requires an externally grading $\GG_m$-action which makes the moment map $\mu_{(Q,\bfm),\bfr}$ equivariant. Since this is not the case for the external grading used above, we modify the grading for this proof, which also modifies the truncation morphism $\tau$ and the associated stability condition (see $\S$\ref{sec/purity for quiver varieties with multiplicities} for details). Our second main result reads as follows.

\begin{theoremAlph}\label{main thm purity}
Let $(Q,\bfm)$ be a quiver with multiplicities and $\bfr\in\bbZ_{\geq0}^{Q_0}$ be an indivisible rank vector. Let $\theta,\rho\in\bbZ^{Q_0}$ be stability parameters and assume that $(\theta,\rho)$ is generic with respect to $\bfr$. Suppose further that assumption \ref{U} holds for rank $\bfr$ representations of $(Q,\bfm)$. Then the following statements hold.
\begin{enumerate}[label=\roman*)]
\item\label{main thm purity first part}  The moduli space $M_{Q,\bfm;\bfr}^{\theta,\rho-\sst}$ is cohomologically pure.

\item\label{main thm purity second part} Let $\gamma=(\gamma_i\cdot\Id)\in\gl_{\bfm,\bfr}$. Then $\widetilde{N}_{Q,\bfm;\bfr}^{\theta,\rho-\sst}(\gamma)$ is cohomologically pure.
\end{enumerate}
\end{theoremAlph}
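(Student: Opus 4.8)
The plan is to deduce purity from a general torus-action criterion, following the strategy used for Nakajima quiver varieties in \cite{Nak98,Hau10}: a smooth quasi-projective variety $X$ over $\bbC$ equipped with a $\GG_m$-action such that the limit $\lim_{t\to 0} t\cdot x$ exists for all $x\in X$ (so that $X$ contracts onto the fixed locus $X^{\GG_m}$) has pure cohomology, because the Bialynicki-Birula decomposition exhibits $H^*(X)$ as built from the cohomology of the (smooth, projective or pure) fixed components with Tate twists, and the contraction forces the Hodge structure to be pure. I would first isolate this as a lemma about NRGIT quotients: if the NRGIT construction of \cite{HHJ} is carried out relative to King's quotient $M_{Q,\bfr}^{\theta-\sst}$, and if the latter is equipped with a contracting $\GG_m$-action compatible with everything in sight, then the resulting NRGIT quotient inherits a contracting $\GG_m$-action and hence is pure; this is the ``more general purity result for NRGIT quotients'' advertised in the introduction, so I would assume it is established in \S\ref{sec/purity}.

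For part \ref{main thm purity first part}, the genericity of $(\theta,\rho)$ and indivisibility of $\bfr$ give, by \cref{first main thm}(iv), that $M_{Q,\bfm;\bfr}^{\theta,\rho-\sst}$ is smooth. It remains to produce the contracting $\GG_m$-action. Here I would use the ``external grading'' $\GG_m$-action already built into the construction (point iv) of the NRGIT setup): scaling the positive powers of $\epsilon$ defines a $\GG_m$-action on $R(Q,\bfm;\bfr)$ which commutes with (a cocharacter-twisted version of) $\GL_{\bfm,\bfr}$, fixes the image of the section $R(Q;\bfr)\hookrightarrow R(Q,\bfm;\bfr)$, and for which $\lim_{t\to 0}$ contracts every representation with multiplicities onto its truncation. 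This descends to a $\GG_m$-action on the quotient $M_{Q,\bfm;\bfr}^{\theta,\rho-\sst}$ whose fixed locus maps into $M_{Q,\bfr}^{\theta-\sst}$, and whose flow contracts $M_{Q,\bfm;\bfr}^{\theta,\rho-\sst}$ onto a closed subvariety over $M_{Q,\bfr}^{\theta-\sst}$; combined with the fact that the structure morphism of \cref{first main thm}(ii) is projective over affine and that King's moduli space is itself pure (it is a smooth, or at least pure, GIT quotient in the generic indivisible case \cite{Kin94}), the torus-action criterion applies and gives purity.

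For part \ref{main thm purity second part}, the subtlety flagged in the introduction is that the external grading above does not make the moment map $\mu_{(Q,\bfm),\bfr}$ equivariant, so one cannot simply restrict purity from the ambient space to $\mu^{-1}(\gamma)$. The fix is to replace the grading by a modified one (the weights are shifted so that $\mu_{(Q,\bfm),\bfr}$ becomes homogeneous of some positive weight, exactly as the cotangent-scaling $\GG_m$ on $\mathrm{T}^*R(Q,\bfr)$ is chosen to scale the classical moment map), which in turn modifies the truncation morphism $\tau$ and the stability condition, yielding the tilde'd objects $\widetilde{N}_{Q,\bfm;\bfr}^{\theta,\rho-\sst}(\gamma)$. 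With this modified grading in hand, $\mu_{(Q,\bfm),\bfr}^{-1}(\gamma)$ is $\GG_m$-stable, the quotient $\widetilde{N}_{Q,\bfm;\bfr}^{\theta,\rho-\sst}(\gamma)$ is smooth (again by the indivisible generic case of the quiver-variety analogue of \cref{first main thm}, cf.\ \cref{Thm/quiverVarieties}), and the modified grading descends to a $\GG_m$-action on it. I would then check that this action is still contracting: the flow sends a representation of the doubled quiver with multiplicities to its truncation, which lies in $\mu^{-1}$ of the corresponding classical moment-map value, and the BB limits exist because $\widetilde{N}$ is projective over the affine GIT quotient $\mu_{(Q,\bfr)}^{-1}(\overline\gamma)\git\GL_{\bfr}$ of the classical Nakajima variety, which is itself pure \cite{Nak98}. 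Applying the torus-action criterion once more gives purity.

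The main obstacle, in my view, is not the torus-action criterion itself but verifying \emph{existence of the limits}, i.e.\ that the descended $\GG_m$-action on the NRGIT (or quiver-variety) quotient is genuinely contracting rather than merely defined. Upstairs on $R(Q,\bfm;\bfr)$ this is transparent, but after passing to the quotient one must control how the grading interacts with the semistable locus and the relative-projectivity morphism: one needs that $x$ semistable implies $\lim_{t\to 0} t\cdot x$ semistable (so the limit stays in the quotient), and that the image of the limit in King's space is the honest limit there — both of which should follow from the compatibility of the external grading with $\tau$ and with the linearisation, but require care, especially in the modified-grading case where the weight shift could in principle destroy semistability of limits. A secondary technical point is confirming that King's moduli space $M_{Q,\bfr}^{\theta-\sst}$ and the affine Nakajima GIT quotient are pure in the generality needed; for indivisible $\bfr$ and generic $\theta$ these are smooth, and purity of smooth affine-over-affine or projective-over-affine quotients with contracting torus actions is classical, so I expect this to be routine.
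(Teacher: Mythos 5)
Your overall strategy (reduce purity to a contracting/semi-projective torus action) is the right one and matches the paper's, and you correctly identify that the grading must be modified to make the moment map equivariant in part ii). However, there are two genuine gaps.

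First, in part i) the external grading $\GG_m$ alone does not give a semi-projective action on $M_{Q,\bfm;\bfr}^{\theta,\rho-\sst}$: its fixed locus lies over King's moduli space $M_{Q,\bfr}^{\theta-\sst}$, which is only projective over the \emph{affine} quotient $M_{Q,\bfr}$ and hence not proper in general, so the properness-of-fixed-locus condition fails. Your fallback — that King's space is smooth/pure and the structure morphism is "projective over affine" — does not repair this: genericity of $(\theta,\rho)$ does \emph{not} imply genericity of $\theta$, so $M_{Q,\bfr}^{\theta-\sst}$ may be singular with strictly semistables, and in any case \cref{Prop purity}\ref{Prop purity item 2} needs a \emph{proper} equivariant map to a semi-projective target, which projective-over-affine is not. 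The paper's fix (\cref{Thm/purityNRGITquotients}) is to introduce a second, \emph{conical} torus $T_c$ scaling all arrows, and to act by the subtorus $T_N=\{(t^N,t)\}\subset T_g\times T_c$ for $N\gg0$; the base $W$ then becomes semi-projective (it is projective over a cone), and the technical heart — \cref{Prop/weightAssumption} — is a weight analysis of generators of the relative algebra of twisted semi-invariants showing the affine morphism $Q\to W$ preserves semi-projectivity. This step is entirely absent from your proposal and cannot be bypassed.

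Second, in part ii) your claim that "$\mu_{(Q,\bfm),\bfr}^{-1}(\gamma)$ is $\GG_m$-stable" under the modified grading is false for $\gamma\neq 0$: the point of \cref{Cor/revisedExtGrad} is that $\mu_{(Q,\bfm),\bfr}$ is equivariant for a \emph{positive-weight} scaling action $\bullet_N$ on $\gl_{\bfm,\bfr}$, so the flow carries $\mu^{-1}(\gamma)$ to $\mu^{-1}(t\bullet_N\gamma)$ and only the fibre over $\gamma=0$ is preserved. Consequently there is no contracting action on $\widetilde{N}_{Q,\bfm;\bfr}^{\theta,\rho-\sst}(\gamma)$ itself, and the paper instead builds the smooth $\GG_m$-equivariant family $\widetilde{\mathcal{N}}\to\bbA^1_{\bbC}$ with fibres $\widetilde{N}_{Q,\bfm;\bfr}^{\theta,\rho-\sst}(t\bullet_N\gamma)$ and invokes the relative criterion \cref{Prop/purityCriterion} (comparing all fibres to the central one). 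Your proposal is missing this deformation argument, which is the essential mechanism for $\gamma\neq0$.
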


This theorem is proved in $\S$\ref{sec/purity for moduli of quiver reps with multiplicities}-$\S$\ref{sec/purity for quiver varieties with multiplicities}. It follows from a general result about purity of NRGIT quotients (see \cref{Thm/purityNRGITquotients}) when there is a conical $\GG_m$ acting on the affine morphism and commuting with the action of the (external) grading $\GG_m$. From this 2-dimensional torus action, one can produce a semi-projective $\GG_m$-action by taking an appropriate subtorus, and this is then used to deduce cohomological purity.

Finally, in Appendix \ref{appendix}, we explain how the grading of the action (\ref{equiv action quivers}) by a multiplicative group $\GG_m$ can be obtained from the grading introduced in \cite[\S 7]{HHJ24}, by unfolding the quiver with multiplicities $(Q,\bfm)$ into a quiver with constant multiplicities.

\subsubsection*{Notation and conventions} We let $k$ be a field, which from \cref{sec/overview rel NRGIT} onwards will be assumed to be algebraically closed of characteristic zero.

Given a block-matrix with $m\times n$ blocks ($m$ block-rows and $n$ block-columns), we say that the block on the $i$th block-row ($1\leq i\leq m$) and on the $j$th block-column ($1\leq j\leq n$) has coordinates $(i,j)$. Block-rows (resp. block-columns) are labeled by increasing integers from top to bottom (resp. from left to right).

Given a truncated power series $x\in k[\epsilon]/(\epsilon^m)$ ($m\geq1$), we write coefficients of $x$ as follows:
\[
x=x_0+x_1\epsilon+\ldots+x_{m-1}\epsilon^{m-1}.
\]

\subsubsection*{Acknowledgements} 
The authors would like to thank the Isaac Newton Institute for Mathematical Sciences, Cambridge, for support and hospitality during the programme New equivariant methods in algebraic and differential geometry, where this project was started. This programme was supported by EPSRC grant EP/R014604/1. T.V. was supported
by the European Union’s Horizon 2020 research and innovation programme under the Marie
Skłodowska-Curie Grant Agreement No. 101034413. He thanks Tam\'{a}s Hausel for his support and helpful discussions. We also thank Eloise Hamilton for useful conversations about quivers with multiplicities.

\section{Background on quivers with multiplicities}\label{sec/background quivers mult}

In this section, we recall basic facts on representations of quivers with multiplicities and classical moduli of quiver representations, following \cite{Kin94}. Throughout this section, we let $k$ be an arbitrary field.

\subsection{Representations of quivers with multiplicities}

We start with the definition of quivers with multiplicities and their representations.

\begin{definition}
A quiver is the datum $Q=(Q_0,Q_1,s,t)$ of a set of vertices $Q_0$, a set of arrows $Q_1$, along with source and target maps $s,t \colon Q_1\rightarrow Q_0$ indicating the orientation.

A quiver with multiplicities is the datum $(Q,\bfm)$ of a quiver $Q$ along with a collection of positive integers $\bfm=(m_i)_{i\in Q_0}$, called multiplicities.
\end{definition}

Classical representations of $Q$ (without multiplicities) are given by vector spaces over each vertex with corresponding linear maps over each arrow. Representations of $(Q,\bfm)$ replace the $k$-vector spaces with (free) modules over truncated polynomial rings, determined by the multiplicities. To define the appropriate linearity condition over the arrows, we introduce the following notation.

\begin{notation}\label{notation gcd of multiplicities}
For $m \in \mathbb{Z}_{>0}$, let $k_m:=k[\epsilon]/(\epsilon^m)$. For multiplicities $\bfm=(m_i)_{i\in Q_0}$, we introduce the following notation (consistent with \cite{GLS17a}) 
\begin{itemize}
    \item $m_{ij}:=\gcd(m_i,m_j)$,
    \item $\mu_{ij}:=\mathrm{lcm}(m_i,m_j)$,
    \item $f_{ji}:=\frac{m_i}{m_{ij}}$,
    \item $\delta:=\gcd(m_i,\ i\in Q_0)$.
\end{itemize}
Note that via the $k$-algebra homomorphism $k_{m_{ij}} \rightarrow k_{m_i}$ given by $\epsilon\mapsto\epsilon^{f_{ji}}$, we can view  $k_{m_i}$ as a $k_{m_{ij}}$-algebra. Moreover $k_\delta$ is a subring of each $k_{m_i}$.
\end{notation}

We will also use the following bilinear form in certain dimension counts.

\begin{notation}\label{Notation Euler pairing}
We denote by $\langle\bullet,\bullet\rangle_{Q,\bfm}$ the following quadratic form:
\[
\begin{array}{ccc}
\bbZ^{Q_0}\times\bbZ^{Q_0} & \rightarrow & \bbZ \\
(\bfr,\bfs) & \mapsto & \sum_{i\in Q_0}m_ir_is_i-\sum_{\substack{a\in Q_1 \\ a \colon i\rightarrow j}}\mu_{ij}r_is_j.
\end{array}
\]
\end{notation}

Although we are primarily concerned with free $k_m$-modules, we will later need a notion of rank for potentially non-free $k_m$-modules (see \cref{def/stabilityQuivMult}).

\begin{definition}
Let $M$ be a finitely generated $k_m$-module. Since $M$ is $\epsilon^m$-torsion and finitely generated over $k[\epsilon]$, it admits a decomposition:
\[
M\simeq\bigoplus_{1\leq j\leq m}k_j^{\oplus s_j},
\]
for unique non-negative integers $s_i$. We define the rank of $M$ to be the rank of its free part (over $k_m)$; that is, $\rk M:=s_m$. Note that, for any $k_m$-submodule $N$ of a free module $M$ of finite rank, we have:
\[
\rk(N)=\dim_kN/(N\cap\epsilon M).
\]
\end{definition}

We can now define our main object of study.

\begin{definition}\label{def/quiver rep with mult}
A representation of $(Q,\bfm)$ is a collection $M$ of $k_{m_i}$-modules $M_i$ for each vertex $ i\in Q_0$, along with $k_{m_{ij}}$-linear maps $M_a \colon M_{i}\rightarrow M_{j}$ for each arrow $a \colon i \rightarrow j$ in $Q_1$. We say $M$ is locally free (resp.\ finitely generated) if the $k_{m_i}$-module $M_i$ is free (resp.\ finitely generated) for all $i \in Q_0$. When $M$ is finitely generated and locally free, we call $\rk(M):=(\rk M_i)_{i\in Q_0}$ the rank vector of $\bfr$.

Given two representations $M,M'$ of $(Q,\bfm)$, a morphism $f \colon M\rightarrow M'$ is a collection of $k_{m_i}$-linear maps $f_i \colon M_i\rightarrow M'_i$ such that, for all $a\in Q_1$, the following diagram commutes:
\[
\begin{tikzcd}
M_{s(a)} \ar[r,"M_a"]\ar[d,"f_{s(a)}"] & M_{t(a)} \ar[d,"f_{t(a)}"] \\
M'_{s(a)} \ar[r,"M'_a"] & M'_{t(a)}.
\end{tikzcd}
\]
\end{definition}

In this paper, we always work with representations of finite rank and consider moduli of locally free representations. Non-locally free representations only appear in the statement of stability conditions.

\begin{remark}[Euler form]
The category of finitely generated representations of $(Q,\bfm)$ is abelian. Within this category, locally free representations are characterised as representations of projective dimension $1$ \cite[Prop.\ 3.5]{GLS17a}. The Euler pairing for locally free representations only depends on the rank vectors and is given by $\langle\bullet,\bullet\rangle_{Q,\bfm}$ \cite[Prop.\ 4.1]{GLS17a}.
\end{remark}

\begin{remark}[Scalar automorphisms]\label{rmk/scalarAut}
Let $M$ be a locally free representation of $(Q,\bfm)$. Then by definition, all maps $\varphi_a \colon M_{s(a)}\rightarrow M_{t(a)}$ are $k_\delta$-linear. Therefore, the automorphism group of $M$ contains a copy of $k_\delta^{\times}$, which we call scalar automorphisms.
\end{remark}

In order to geometrise the classification of quiver representations with multiplicities, we introduce a notion of families of such representations over an affine base scheme.

\begin{definition}\label{def/stack rep with mult}
Let $\bfr\in\bbZ_{\geq0}^{Q_0}$ be a rank vector. For a $k$-algebra $R$, a family of locally free rank $\bfr$ representations of $(Q,\bfm)$ over $\Spec(R)$ consists of the following data:
\begin{itemize}
\item a projective $R\otimes_kk_{m_i}$-module $M_i$ of rank $r_i$ for each $i\in Q_0$;
\item $R\otimes_kk_{m_{ij}}$-linear maps $M_a \colon M_{s(a)}\rightarrow M_{t(a)}$ for each arrow $a \colon i \rightarrow j $ in  $Q_1$.
\end{itemize}
Given a morphism of $k$-algebras $R\rightarrow R'$, the pullback of a family $\{M_i,M_a\}$ from $\Spec(R)$ to $\Spec(R')$ is given by the data $\{M_i\otimes_RR',M_a\otimes\mathrm{id}\}$.

The moduli stack of locally free rank $\bfr$ representations of $(Q,\bfm)$ over $k$ is defined as follows:
\[
\begin{array}{crcl}
\Rep_{Q,\bfm;\bfr} \colon & k\mathrm{-Alg} & \rightarrow & \mathrm{Gpd} \\
& R & \mapsto & \Rep_{Q,\bfm;\bfr}(R),
\end{array}
\]
where $\Rep_{Q,\bfm;\bfr}(R)$ is the groupoid of families of locally free, rank $\bfr$ representations of $(Q,\bfm)$ over $\Spec(R)$.
\end{definition}

Standard descent arguments show that $\Rep_{Q,\bfm;\bfr}$ is a stack for the fppf topology. We will show it is a quotient stack in \cref{Prop/ModuliStackVSQuotientStack} below.

\subsection{Moduli of quiver representations as a quotient}

In this section, we introduce a group action whose orbits correspond to isomorphism classes of representations of a quiver with multiplicities of fixed rank, and thus describe the moduli stack as a quotient stack. In the case of a quiver without multiplicities, we recall how King uses (reductive) geometric invariant theory to construct moduli spaces of semistable quiver representations \cite{Kin94}.  

\subsubsection{Presentation as a quotient stack}

We start with the group action used in the presentation of the moduli stack of quiver representations with multiplicities.

\begin{definition}\label{Def action on rep space}
Let $(Q,\bfm)$ be a quiver with multiplicities and $\bfr\in\bbZ_{\geq0}^{Q_0}$ be a rank vector. The representation space of $(Q,\bfm)$ with rank $\bfr$ is the following affine space over $k$:
\[
R(Q,\bfm;\bfr):=\prod_{a\in Q_1}\Hom_{k_{m_{ij}}}\left(k_{m_i}^{\oplus r_i},k_{m_j}^{\oplus r_j}\right).
\]
Let $\GL_{m,r}:= \GL_r(k_{m})$ be the automorphism group of $k_m^{\oplus r}$. The representation space is endowed with an action of the following algebraic group:
\[
\GL_{\bfm,\bfr}:=\prod_{i\in Q_0}\GL_{m_i,r_i}.
\]
In coordinates, the action is given by the following formula:
\[
(g_i)_{i\in Q_0}\cdot (x_a)_{a\in Q_1}=\left(g_{t(a)}x_ag_{s(a)}^{-1}\right)_{a\in Q_1}.
\]
Let $\Delta_\bfm\subset\GL_{\bfm,\bfr}$ denote the image of the diagonal embedding:
\[
k_\delta^{\times}\hookrightarrow\prod_{i\in Q_0}k_{m_i}^{\times}\hookrightarrow \prod_{i\in Q_0}\GL_{m_i,r_i}= \GL_{\bfm,\bfr}
\]
and let $G_{\bfm,\bfr}:=\GL_{\bfm,\bfr}/\Delta_\bfm$ denote the quotient group.
\end{definition}

The subgroup $\Delta_\bfm$ acts trivially on $R(Q,\bfm;\bfr)$ and accounts for the scalar automorphisms of the moduli stack $\Rep_{Q,\bfm;\bfr}$ (see \cref{rmk/scalarAut}). 

\begin{notation}[Notation for quivers with trivial multiplicities]
When $\bfm=\mathbf{1}$, we simply omit $\bfm$ from the notation and write $R(Q,\bfr):=R(Q,\mathbf{1};\bfr)$ and we write the above algebraic groups as follows
\[ \Delta:=\Delta_{\mathbf{1}} \hookrightarrow \GL_\bfr:=\GL_{\mathbf{1},\bfr} \twoheadrightarrow G_{\bfr}:=G_{\mathbf{1},\bfr}.\]
\end{notation}

Following \cite[\S 4]{Ser58}, we say that an algebraic group $G$ is special if all principal $G$-bundles are Zariski-locally trivial.

\begin{lemma}\label{Lem/unipotentRad}
The algebra homomorphisms $k_{m_i}\twoheadrightarrow k$ for each $ i\in Q_0$ induce a surjective morphism of algebraic groups $\GL_{\bfm,\bfr}\twoheadrightarrow\GL_\bfr$, whose kernel is the unipotent radical of $\GL_{\bfm,\bfr}$. As a consequence, $\GL_{\bfm,\bfr}$ is special.
\end{lemma}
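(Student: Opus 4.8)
The plan is to analyze the homomorphism $\GL_{\bfm,\bfr} \twoheadrightarrow \GL_\bfr$ explicitly and identify its kernel as a unipotent normal subgroup that is as large as possible. First I would observe that the map is induced by the ring surjections $k_{m_i} = k[\epsilon]/(\epsilon^{m_i}) \twoheadrightarrow k$, $\epsilon \mapsto 0$, which on matrix groups gives $\GL_{r_i}(k_{m_i}) \twoheadrightarrow \GL_{r_i}(k)$; surjectivity follows since any element of $\GL_{r_i}(k)$ lifts to $\GL_{r_i}(k_{m_i})$ (lift entrywise and note the determinant is a unit in $k_{m_i}$ because its reduction mod $\epsilon$ is nonzero). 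Taking the product over $i \in Q_0$ gives the claimed surjection $\tau_{\GL}$. The kernel $U_{\bfm,\bfr}$ is the product over $i$ of the congruence subgroups $\{g \in \GL_{r_i}(k_{m_i}) : g \equiv \Id \bmod \epsilon\}$.

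Next I would show $U_{\bfm,\bfr}$ is unipotent. For a single vertex, an element $g = \Id + \epsilon N_1 + \epsilon^2 N_2 + \cdots$ with $N_j \in \Mat_{r_i}(k)$ satisfies $(g - \Id)^{m_i} = 0$ because $g - \Id \in \epsilon \Mat_{r_i}(k_{m_i})$ and $\epsilon^{m_i} = 0$; hence $g$ is a unipotent matrix after embedding $\GL_{r_i}(k_{m_i}) \hookrightarrow \GL_{m_i r_i}(k)$ via the regular representation of $k_{m_i}$ on itself. So each factor, and therefore $U_{\bfm,\bfr}$, consists of unipotent elements; being also a closed connected subgroup, it is a unipotent algebraic group. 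Then I would argue $U_{\bfm,\bfr}$ is normal (it is a kernel) and that the quotient $\GL_{\bfm,\bfr}/U_{\bfm,\bfr} \cong \GL_\bfr$ is reductive. Since the unipotent radical is by definition the largest connected normal unipotent subgroup, and any such subgroup must map trivially to the reductive quotient $\GL_\bfr$ (a reductive group has no nontrivial normal unipotent subgroup) hence lie in $U_{\bfm,\bfr}$, we conclude $U_{\bfm,\bfr}$ is exactly the unipotent radical.

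For the final assertion that $\GL_{\bfm,\bfr}$ is special, I would invoke the standard fact (from \cite{Ser58}) that $\GL_n$ over any ring, and in particular $\GL_{r_i}(k_{m_i})$, is special, together with the closure of specialness under extensions: if $1 \to U \to G \to H \to 1$ is exact with $U$ and $H$ special, then $G$ is special. Here $H = \GL_\bfr$ is a product of $\GL_{r_i}$'s, hence special, and $U = U_{\bfm,\bfr}$ is a split unipotent group (it has a filtration with successive quotients isomorphic to $\GG_a$'s, coming from the $\epsilon$-adic filtration on $k_{m_i}$), and split unipotent groups are special since $H^1_{\mathrm{Zar}}(-, \GG_a) = 0$; applying the extension property finishes the argument. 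Alternatively one can directly use that $\GL_{\bfm,\bfr}$ is a product of groups $\GL_{r_i}(k_{m_i})$ each of which is special by \cite[\S4]{Ser58}.

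I do not expect a serious obstacle here; the main point requiring a little care is the identification of the kernel with the \emph{unipotent radical} (as opposed to merely a normal unipotent subgroup), which needs the remark that $\GL_\bfr$ has trivial unipotent radical, and the verification of specialness, which is cleanest via the extension-stability property rather than an ad hoc cocycle computation. Everything else is a direct unwinding of definitions at the level of the truncated polynomial rings.
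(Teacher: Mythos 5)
Your proposal is correct and follows essentially the same route as the paper: surjectivity via lifting and the fact that invertibility in $\Mat_{r_i\times r_i}(k_{m_i})$ is detected modulo $\epsilon$, unipotence of the kernel via its action on the faithful representation $\bigoplus_i k_{m_i}^{\oplus r_i}$, identification with the unipotent radical because the quotient $\GL_\bfr$ is reductive, and specialness by stability of special groups under extensions with the kernel filtered by additive groups. The extra details you supply (the explicit congruence-subgroup description and the computation $(g-\Id)^{m_i}=0$) are just a more explicit version of the paper's one-line argument.
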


\begin{proof}
Surjectivity is routine, as an element of $\Mat_{r_i\times r_i}(k_{m_i})$ is invertible if and only if its image in $\Mat_{r_i\times r_i}(k)$ is invertible.

The kernel $K$ of the morphism $\GL_{\bfm,\bfr}\twoheadrightarrow\GL_\bfr$ is a normal subgroup and is unipotent, as its elements act unipotently on the faithful $\GL_{\bfm,\bfr}$-representation $\bigoplus_{i\in Q_0}k_{m_i}^{\oplus r_i}$. Since the quotient $\GL_\bfr$ is reductive, $K$ is the unipotent radical of $\GL_{\bfm,\bfr}$.

Finally, $\GL_{\bfm,\bfr}$ is special, since it is an extension of special groups \cite[Prop.\ 8]{Ser58}. Indeed, the unipotent radical $K$ is an extension of additive groups, which are special \cite[\S 4.4]{Ser58}\footnote{Note that Serre works over an algebraically closed field \cite{Ser58}, but the statements we use are valid over an arbitrary base field.} and the Levi quotient $\GL_\bfr$ is also special \cite[Lem.\ III.4.10]{Mil80}.
\end{proof}

We can now prove the desired presentation of $\Rep_{Q,\bfm;\bfr}$.

\begin{proposition}\label{Prop/ModuliStackVSQuotientStack}
The moduli stack $\Rep_{Q,\bfm;\bfr}$ is isomorphic to $\left[R(Q,\bfm;\bfr)/\GL_{\bfm,\bfr}\right]$. As a consequence, $\Rep_{Q,\bfm;\bfr}$ is a smooth Artin stack of dimension $-\langle\bfr,\bfr\rangle_{Q,\bfm}$.
\end{proposition}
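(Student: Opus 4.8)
The plan is to prove the two assertions separately: first that $\Rep_{Q,\bfm;\bfr}$ is isomorphic to the quotient stack $[R(Q,\bfm;\bfr)/\GL_{\bfm,\bfr}]$, and then to deduce smoothness, the Artin property, and the dimension formula.

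\textbf{Identifying the quotient stack.} By definition, the quotient stack $[R(Q,\bfm;\bfr)/\GL_{\bfm,\bfr}]$ sends a $k$-algebra $R$ to the groupoid of pairs $(P,\varphi)$, where $P$ is a $\GL_{\bfm,\bfr}$-torsor over $\Spec(R)$ and $\varphi\colon P\to R(Q,\bfm;\bfr)$ is an equivariant morphism. First I would construct a morphism of stacks $[R(Q,\bfm;\bfr)/\GL_{\bfm,\bfr}]\to\Rep_{Q,\bfm;\bfr}$: given such a pair, one twists the trivial family $\bigl(R\otimes_k k_{m_i}^{\oplus r_i}\bigr)_{i\in Q_0}$ by the torsor $P$ to obtain projective $R\otimes_k k_{m_i}$-modules $M_i$ of rank $r_i$, and the equivariant map $\varphi$ supplies the $R\otimes_k k_{m_{ij}}$-linear maps $M_a$. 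For the inverse, given a family $\{M_i,M_a\}$ over $\Spec(R)$, one defines $P$ to be the functor of local trivialisations, i.e. $P(R')$ is the set of tuples of $R'\otimes_k k_{m_i}$-module isomorphisms $M_i\otimes_R R'\xrightarrow{\sim}(R'\otimes_k k_{m_i})^{\oplus r_i}$; this is naturally a $\GL_{\bfm,\bfr}$-torsor, and it is \emph{representable by a scheme} and fppf-locally trivial because each $M_i$, being projective of finite constant rank over $R\otimes_k k_{m_i}$, becomes free after an fppf (indeed Zariski, using that $R\otimes_k k_{m_i}$ is finite free over $R$) base change. Transporting the $M_a$ along these trivialisations produces the equivariant map $\varphi$. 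One then checks these two constructions are mutually inverse equivalences of groupoids, compatible with pullback along $R\to R'$ — this is the standard ``objects with descent data = torsors'' argument, and the only point requiring the specific structure here is that $\GL_{m_i,r_i}=\GL_{r_i}(k_{m_i})$ is genuinely the automorphism group scheme of the free module $(R\otimes_k k_{m_i})^{\oplus r_i}$ functorially in $R$, which is immediate.

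\textbf{Deducing the consequences.} Since $\GL_{\bfm,\bfr}$ is a smooth affine algebraic group acting on the smooth affine variety $R(Q,\bfm;\bfr)$, the quotient stack $[R(Q,\bfm;\bfr)/\GL_{\bfm,\bfr}]$ is a smooth Artin stack; this is the standard fact that a quotient of a scheme by a smooth affine group is algebraic (the atlas $R(Q,\bfm;\bfr)\to[R(Q,\bfm;\bfr)/\GL_{\bfm,\bfr}]$ is smooth and surjective). Its dimension is $\dim R(Q,\bfm;\bfr)-\dim\GL_{\bfm,\bfr}$. It remains to compute both. For the representation space, $\Hom_{k_{m_{ij}}}(k_{m_i}^{\oplus r_i},k_{m_j}^{\oplus r_j})$ is a free $k_{m_{ij}}$-module; since $k_{m_i}$ is free of rank $f_{ji}=m_i/m_{ij}$ over $k_{m_{ij}}$ and likewise $k_{m_j}^{\oplus r_j}$ has $k_{m_{ij}}$-rank $m_j r_j/m_{ij}$, each arrow $a\colon i\to j$ contributes $\dim_k=m_{ij}\cdot\frac{m_i r_i}{m_{ij}}\cdot\frac{m_j r_j}{m_{ij}}=\frac{m_i m_j}{m_{ij}}r_i r_j=\mu_{ij}r_i r_j$. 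For the group, $\dim\GL_{m_i,r_i}=\dim_k\Mat_{r_i\times r_i}(k_{m_i})=m_i r_i^2$, so $\dim\GL_{\bfm,\bfr}=\sum_{i}m_i r_i^2$. Subtracting gives $\dim R(Q,\bfm;\bfr)-\dim\GL_{\bfm,\bfr}=\sum_{a\colon i\to j}\mu_{ij}r_i r_j-\sum_i m_i r_i^2=-\langle\bfr,\bfr\rangle_{Q,\bfm}$, by \cref{Notation Euler pairing}.

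\textbf{Expected obstacle.} None of the steps is deep; the only place warranting care is the verification that the torsor $P$ attached to a family $\{M_i,M_a\}$ is representable by an algebraic space (scheme) and is fppf-locally trivial, so that it really defines an object of the quotient stack — i.e. that the stacky equivalence is well-defined at the level of sheaves, not just groupoids of sets. This reduces to the local triviality of projective modules over $R\otimes_k k_{m_i}$, which holds Zariski-locally on $\Spec R$ since $R\otimes_k k_{m_i}$ is a finite free $R$-algebra and projective modules of constant rank over it are locally free; combined with \cref{Lem/unipotentRad} (which records that $\GL_{\bfm,\bfr}$ is even special, hence torsors are Zariski-locally trivial) this is entirely routine. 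The dimension bookkeeping is the only ``calculation'', and it is short.
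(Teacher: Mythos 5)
Your proposal is correct and follows essentially the same route as the paper's (sketched) proof: both identify the moduli stack with the quotient stack via trivialisation of torsors/modules plus descent, and both conclude with the identical dimension count $\dim\Hom_{k_{m_{ij}}}(k_{m_i}^{\oplus r_i},k_{m_j}^{\oplus r_j})=\mu_{ij}r_ir_j$. The only difference is cosmetic: the paper trivialises the torsor using that $\GL_{\bfm,\bfr}$ is special (\cref{Lem/unipotentRad}) and delegates the inverse direction to a citation, whereas you construct the inverse explicitly via the frame bundle of a family and justify its Zariski-local triviality directly from local freeness of projective modules over $R\otimes_k k_{m_i}$ — a welcome filling-in of the step the paper leaves implicit.
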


\begin{proof}[Sketch of proof]
We indicate how to construct an equivalence of groupoids
\[
\left[R(Q,\bfm;\bfr)/\GL_{\bfm,\bfr}\right](R)\overset{\sim}{\rightarrow}\Rep_{Q,\bfm;\bfr}(R)
\]
for any $k$-algebra $R$, which is functorial in $R$.

Consider a principal $\GL_{\bfm,\bfr}$-bundle $P\rightarrow S=\Spec(R)$, endowed with a $\GL_{\bfm,\bfr}$-equivariant morphism $P\rightarrow R(Q,\bfm;\bfr)$. Since $\GL_{\bfm,\bfr}$ is special, we know that $P$ is trivialised by an affine open cover $S'=\Spec(R')\rightarrow S$, so that $P':=P\times_SS'\simeq S'\times\GL_{\bfm,\bfr}$. Using the section of $P'$ at the identity element, we obtain a morphism $S'\rightarrow R(Q,\bfm;\bfr)$ that corresponds to $x'\in R(Q,\bfm;\bfr)(R')$.

Define $M'\in\Rep_{Q,\bfm;\bfr}(R')$ as follows: let $M'_i:=(R'\otimes_kk_{m_i})^{\oplus r_i}$ for each $i\in Q_0$, with maps $M'_a$ induced by the above point $x'$. The descent data (from $S'$ to $S$) for the principal bundle $P'$ gives descent data for $M'$, so we obtain an object $M\in\Rep_{Q,\bfm;\bfr}(R)$. We claim that this defines a functorial equivalence of groupoids. The argument is similar to \cite[Prop.\ 3.1.4]{BDFHMT22}.

The dimension count is a straightforward check, as we have:
\[
\dim\Hom_{k_{m_{ij}}}(k_{m_i}^{\oplus r_i},k_{m_j}^{\oplus r_j})
=
\dim\Hom_{k_{m_{ij}}}(k_{m_{ij}}^{\oplus r_if_{ji}},k_{m_{ij}}^{\oplus r_jf_{ji}})
=
m_{ij}f_{ij}f_{ji}r_ir_j=\mu_{ij}r_ir_j
\]
for each arrow $a \colon i\rightarrow j$.
\end{proof}

\begin{notation}
For a closed point $x\in R(Q,\bfm;\bfr)$ (resp.\ $x\in R(Q,\bfr)$), we call $M(x)$ (resp.\ $V(x)$) the corresponding representation of $(Q,\bfm)$ (resp.\ $Q$).
\end{notation}

\subsubsection{Moduli of semistable quiver representations (after King)}

In this subsection, we outline King's construction of moduli spaces of semistable representations of a quiver $Q$ without multiplicities ($\bfm=\mathbf{1}$) as a GIT quotient of the  reductive group $\GL_{\bfr}$ acting on $R(Q,\bfr)$ with respect a stability parameter $\theta\in\bbZ^{Q_0}$.

\begin{definition}
For $\theta\in\bbZ^{Q_0}$, we say a representation $M$ of $Q$ is $\theta$-semistable (resp.\ $\theta$-stable) if $\theta\cdot\dim M=0$ and, for all non-zero, proper subrepresentations $N\subsetneq M$, we have:
\[
\theta\cdot\dim N\geq0 \
\left(
\text{resp. } \theta\cdot\dim N>0
\right).
\]
\end{definition}

Fix a rank vector $\bfr\in\bbZ_{\geq0}^{Q_0}$ and choose $\theta\in\bbZ^{Q_0}$ such that $\theta\cdot\bfr=0$. Then the character $\chi_\theta \colon \GL_\bfr\rightarrow\GG_m$ defined by
\[
\chi_\theta(g_i,\ i\in Q_0)=\prod_{i\in Q_0}\det(g_i)^{\theta_i}
\]
descends to a character of $G_\bfr=\GL_\bfr/\Delta$. 
We say $f\in k[R(Q,\bfr)]$ is a $\chi_{\theta}^n$-semi-invariant (where $n\geq1$) if it satisfies the formula:
\[
f(g\cdot x)=\chi_{\theta}^n(g)\cdot f(x),
\]
for $g\in\GL_\bfr$ and $x\in R(Q,\bfr)$. We denote by $k[R(Q,\bfr)]_{\chi_{\theta}^n}^{\GL_\bfr}$ the space of $\chi_{\theta}^n$-semi-invariant functions.

We sum up below key facts about King's moduli spaces of semistable quiver representations.

\begin{theorem}\emph{\cite[Prop.\ 3.1]{Kin94}\cite[Lem.\ 6.5]{Rei03}}
The subset $R(Q,\bfr)^{\theta-\sst}\subset R(Q,\bfr)$ (resp.\ $R(Q,\bfr)^{\theta-\st}\subset R(Q,\bfr)$) formed by geometric points corresponding to $\theta$-semistable (resp.\ $\theta$-stable) representations is open and $\GL_\bfr$-invariant.

The domain of definition of the rational map
\[
R(Q,\bfr)\dashrightarrow M_{Q,\bfr}^{\theta-\sst}:=R(Q,\bfr)\git_{\theta}\GL_\bfr=\Proj\left(\bigoplus_{n\geq0}k[R(Q,\bfr)]_{\chi_{\theta}^n}^{\GL_\bfr}\right)
\]
is $R(Q,\bfr)^{\theta-\sst}$ and the induced map is a good quotient, and restricts to a geometric quotient $R(Q,\bfr)^{\theta-\st} \rightarrow M_{Q,\bfr}^{\theta-\st}$. Moreover, the action of $G_\bfr$ on $R(Q,\bfr)^{\theta-\st}$ is scheme-theoretically free and the quotient map is a principal $G_\bfr$-bundle, so $M_{Q,\bfr}^{\theta-\st}$ is smooth. Finally, $M_{Q,\bfr}^{\theta-\sst}$ is projective over the affine scheme
\[
M_{Q,\bfr}:=R(Q,\bfr)\git\GL_\bfr=\Spec\left( k[R(Q,\bfr)]^{\GL_\bfr}\right).
\]
\end{theorem}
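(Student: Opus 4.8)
The plan is to recognise this statement as the combination of Mumford's affine GIT with King's dictionary between the Hilbert--Mumford criterion and the numerical condition for $\theta$-(semi)stability, and to assemble the four assertions from those two inputs.

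First I would set up the GIT problem: the reductive group $\GL_\bfr$ acts on the affine space $X:=R(Q,\bfr)$, and we linearise the trivial line bundle by the character $\chi_\theta$ (legitimate because $\theta\cdot\bfr=0$ makes $\chi_\theta$ trivial on $\Delta$, so the whole picture descends to $G_\bfr$). By finite generation of rings of invariants for reductive groups (Hilbert, Nagata), the ring $A:=\bigoplus_{n\geq0}k[X]^{\GL_\bfr}_{\chi_\theta^n}$ is finitely generated over $k$, hence a finitely generated graded algebra over $A_0=k[X]^{\GL_\bfr}=k[M_{Q,\bfr}]$; so $\Proj A\rightarrow\Spec A_0=M_{Q,\bfr}$ is projective, which is the last assertion. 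Mumford's construction then gives an open set $X^{\sst}\subseteq X$ --- by definition the non-vanishing locus of $A_+:=\bigoplus_{n\geq1}A_n$, which is exactly where the rational map $X\dashrightarrow\Proj A$ is defined --- together with a good quotient $X^{\sst}\rightarrow\Proj A$ obtained by gluing the affine good quotients $X_f\rightarrow\Spec(A_f)_0=\Spec k[X_f]^{\GL_\bfr}$ over homogeneous $f\in A_+$, and an open set $X^{\st}\subseteq X^{\sst}$ of points with closed orbit in $X^{\sst}$ and stabiliser finite modulo $\Delta$, on which the good quotient is geometric.

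The heart of the matter is to identify $X^{\sst}$ with $R(Q,\bfr)^{\theta-\sst}$ (and likewise in the stable case); this is King's argument and the step I expect to be the main obstacle, as it is the only non-formal ingredient and getting the sign conventions exactly right is delicate. Working over $\bar k$, one runs the Hilbert--Mumford criterion: $x$ is $\chi_\theta$-semistable iff $\mu^{\chi_\theta}(x,\lambda)\geq0$ for every nontrivial one-parameter subgroup $\lambda$ of $\GL_\bfr$, with strict inequality (for $\lambda$ not conjugate into $\Delta$) characterising stability. Decomposing each $k^{r_i}$ into $\lambda$-weight spaces $V_i=\bigoplus_nV_i^{(n)}$, one checks that $\limit\lambda(t)\cdot x$ exists precisely when each $x_a$ ($a\colon i\rightarrow j$) maps $\bigoplus_{n\geq p}V_i^{(n)}$ into $\bigoplus_{n\geq p}V_j^{(n)}$ for all $p$, i.e.\ when the descending weight filtration is a filtration of $V(x)$ by subrepresentations; and for such $\lambda$ the weight $\mu^{\chi_\theta}(x,\lambda)$ equals $-\sum_i\theta_i\sum_nn\dim V_i^{(n)}$, which a telescoping identity turns into a combination of the quantities $\theta\cdot\dim(\text{filtration step})$. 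Since every subrepresentation occurs in such a filtration and $\theta\cdot\dim V(x)=0$, one concludes (with the sign convention of the definition above) that $x$ is $\chi_\theta$-semistable iff $\theta\cdot\dim N\geq0$ for all $0\neq N\subsetneq V(x)$, and similarly for stability. This identifies the loci; openness and $\GL_\bfr$-invariance are then part of Mumford's theorem (alternatively, openness is visible directly, the complement being a finite union of images of incidence varieties parametrising destabilising subrepresentations of bounded dimension vector).

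Finally, for the stable locus I would argue as follows. Over a geometric point $x\in R(Q,\bfr)^{\theta-\st}$, $\Stab_{\GL_\bfr}(x)=\Aut_Q(V(x))$ is the unit group of $\End_Q(V(x))$; but a $\theta$-stable representation is a simple object of the abelian category of $\theta$-semistable representations, so by Schur's lemma $\End_Q(V(x))=\bar k$ and hence, as a group scheme, $\Aut_Q(V(x))=\GG_m=\Delta$. Thus $G_\bfr$ acts with trivial stabiliser group scheme, i.e.\ scheme-theoretically freely, on $R(Q,\bfr)^{\theta-\st}$. A geometric quotient for a scheme-theoretically free action of a smooth affine group, combined with Luna's étale slice theorem (for trivial stabiliser the slice yields $G_\bfr\times S\rightarrow G_\bfr\cdot S$ étale onto a saturated open), shows $R(Q,\bfr)^{\theta-\st}\rightarrow M_{Q,\bfr}^{\theta-\st}$ is faithfully flat, hence a principal $G_\bfr$-bundle; as $R(Q,\bfr)^{\theta-\st}$ is smooth and the bundle projection is smooth and surjective, $M_{Q,\bfr}^{\theta-\st}$ is smooth by descent. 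Everything beyond the Hilbert--Mumford translation is a routine application of Mumford's affine GIT, finite generation of invariants, and Luna's slice theorem.
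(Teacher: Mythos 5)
The paper does not prove this statement itself but cites \cite[Prop.\ 3.1]{Kin94} and \cite[Lem.\ 6.5]{Rei03}, and your proposal is a faithful reconstruction of exactly that standard argument: affine GIT for a character-twisted linearisation, King's Hilbert--Mumford dictionary identifying the numerical $\theta$-(semi)stability with the GIT notion, Schur's lemma to get trivial $G_\bfr$-stabilisers on the stable locus, and Luna's slice theorem for the principal-bundle and smoothness claims. This is correct and is essentially the same approach as the cited sources (and the same weight identity $\langle\lambda,\theta\rangle=\sum_p\theta\cdot\dim V^p$ reappears in the paper's Lemma on semistable limits).
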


We will sometimes use the following condition on $\theta$, which guarantees that there are no strictly $\theta$-semistable points, and thus that $M_{Q,\bfr}^{\theta-\sst}$ is smooth.

\begin{definition}\label{Def/genericStabilityParameter}
Let $\bfr\in\bbZ_{\geq0}^{Q_0}$ be a rank vector and $\theta\in\bbZ^{Q_0}$ be a stability parameter such that $\theta\cdot\bfr=0$. We say that $\theta$ is generic with respect to $\bfr$ if, for all $0<\bfr'<\bfr$, we have $\theta\cdot\bfr'\ne0$.
\end{definition}

For a rank vector $\bfr\in\bbZ_{\geq0}^{Q_0}$, there exist stability parameters which are generic with respect to $\bfr$ if and only if the vector $\bfr$ is indivisible.

\subsection{Moduli of framed quiver representations as a quotient}

One can rigidify moduli of quiver representations using additional \lq framing' data. There are various motivations for using framed representations (without multiplicities): i) they provide smooth models of moduli spaces of semistable quiver representations \cite{ER09}, ii) framed quiver moduli spaces can be used to (cohomologically) approximate the morphism from the stack of semistable representations to its corresponding good moduli space by proper morphisms \cite[$\S$4.1]{DM20}, and iii) framed quiver moduli spaces, and in particular Nakajima quiver varieties \cite{N94}, play an important role in geometric representation theory. 

In this section, we will describe the stack of framed quiver representations with multiplicities as a quotient stack. For a quiver without multiplicities, we state the construction and properties of moduli spaces of semistable framed representations as in \cite{ER09}.

\subsubsection{Framed quiver representations with multiplicities}

Let $(Q,\bfm)$ be a quiver with multiplicities. Fix a vector $\bff\in\bbZ_{\geq0}^{Q_0}$, which we call the framing vector.

\begin{definition}
An $\bff$-framed representation of $(Q,\bfm)$ is a pair $(M,b)$ consisting of a representation $M$ of $(Q,\bfm)$ and framing data $b$ consisting of $k_{m_i}$-linear maps $b_{i}\colon k_{m_i}^{\oplus f_i}\rightarrow M_i$ for each $i \in Q_0$. We say $(M,b)$ is locally free (resp. finitely generated) if $M$ is, and refer to $\rk M$ as the rank vector of $(M,b)$.

A morphism $h \colon (M,b)\rightarrow (M',b')$ of $\bff$-framed representations of $(Q,\bfm)$ is a morphism $h \colon M \rightarrow M'$ of $(Q,\bfm)$-representations that is compatible with the framing data in the sense that for all $i\in Q_0$, the following diagram commutes:
\[
\begin{tikzcd}
k_{m_i}^{\oplus f_i} \ar[r,"b_i"]\ar[d,equal] & M_{i} \ar[d,"h_i"] \\
k_{m_i}^{\oplus f_i} \ar[r,"b_i'"] & M'_{i}.
\end{tikzcd}
\]
\end{definition}

\subsubsection{The stack of framed quiver representations with multiplicities}

There is a moduli stack $\Rep_{Q,\bfm;\bfr}^{\bff}$ of $\bff$-framed locally free rank $\bfr$ representations of $(Q,\bfm)$ defined analogously to \cref{def/stack rep with mult}.

We now recall Crawley-Boevey's trick \cite{CB01}, which allows one to present the moduli stack $\Rep_{Q,\bfm;\bfr}^{\bff}$ as a rigidified moduli stack of representations of an associated framed quiver with multiplicities $(Q_\bff,\widehat{\bfm})$ defined as follows.

\begin{definition}\label{Def quiver with mult associated to framing}
Given $(Q,\bfm)$ and framing vector $\bff$ and rank vector $\bfr \in \mathbb{N}^{Q_0}$, we define the associated framed quiver with multiplicities $(Q_\bff,\widehat{\bfm})$ and rank vector $\hat{\bfr}$ for $Q_\bff$ by:
\begin{itemize}
\item $(Q_\bff)_0:=Q_0\sqcup\{\infty\}$ and  $(Q_\bff)_1:=Q_1\sqcup\bigsqcup_{i\in Q_0}\{b_{i,f} \colon \infty\rightarrow i,\ 1\leq f\leq f_i\}$,
\item $\widehat{m}_i:=
\left\{
\begin{array}{ll}
m_i & i\in Q_0, \\
1 & i=\infty.
\end{array}
\right.
$
\item $
\widehat{r}_i=
\left\{
\begin{array}{ll}
r_i & i\in Q_0, \\
1 & i=\infty.
\end{array}
\right.
$
\end{itemize}
\end{definition}

Note that $\Delta_{\widehat{\bfm}}=\GL_{\widehat{r}_\infty}(k_{\widehat{m}_\infty})=k^{\times}$ and $G_{\widehat{\bfm},\widehat{\bfr}}=\GL_{\widehat{\bfm},\widehat{\bfr}}/\Delta_{\widehat{\bfm}}\simeq\GL_{\bfm,\bfr}$.

\begin{proposition}
The moduli stack $\Rep_{Q,\bfm;\bfr}^{\bff}$ is isomorphic to  $[R(Q_\bff,\hat{\bfm};\hat{\bfr})/G_{\widehat{\bfm},\widehat{\bfr}}]$. As a consequence, $\Rep_{Q,\bfm;\bfr}^{\bff}$ is a smooth Artin stack of dimension $\sum_{i\in Q_0}m_if_ir_i-\langle\bfr,\bfr\rangle_{Q,\bfm}$.
\end{proposition}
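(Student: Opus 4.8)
The plan is to transcribe the proof of \cref{Prop/ModuliStackVSQuotientStack}, applied to the framed quiver with multiplicities $(Q_\bff,\widehat{\bfm})$ of \cref{Def quiver with mult associated to framing}, with the single crucial modification that one quotients by $G_{\widehat{\bfm},\widehat{\bfr}}$ rather than by $\GL_{\widehat{\bfm},\widehat{\bfr}}$ — this replacement is precisely what encodes the rigidity of the framing datum in Crawley–Boevey's trick \cite{CB01}. First I would record the families version of that trick: a family of locally free rank $\widehat{\bfr}$ representations of $(Q_\bff,\widehat{\bfm})$ over $\Spec(R)$ amounts to a line bundle $M_\infty$ on $\Spec(R)$ (the projective rank $1$ module over $R\otimes_k k_{\widehat{m}_\infty}=R$ at the vertex $\infty$), a family $\{M_i,M_a\}$ of rank $\bfr$ representations of $(Q,\bfm)$, and $R$-linear maps $M_\infty^{\oplus f_i}\to M_i$; once $M_\infty$ is trivialised this is exactly an $\bff$-framed family in the sense of \cref{def/stack rep with mult}, and a morphism of $(Q_\bff,\widehat{\bfm})$-families restricting to the identity on the $\infty$-component is exactly a morphism of $\bff$-framed families. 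Since the scalar automorphism group $\Delta_{\widehat{\bfm}}=\GL_{\widehat{r}_\infty}(k_{\widehat{m}_\infty})\simeq k^\times$ is central in $\Aut$ of every such representation, passing to its rigidification simultaneously removes the ambiguity in trivialising $M_\infty$ and kills exactly these $\infty$-scalars; thus $\Rep^{\bff}_{Q,\bfm;\bfr}$ is identified with the $\GG_m$-rigidification of $\Rep_{Q_\bff,\widehat{\bfm};\widehat{\bfr}}$ along $\Delta_{\widehat{\bfm}}$.

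Combining this with \cref{Prop/ModuliStackVSQuotientStack} for $(Q_\bff,\widehat{\bfm})$, which gives $\Rep_{Q_\bff,\widehat{\bfm};\widehat{\bfr}}\simeq[R(Q_\bff,\widehat{\bfm};\widehat{\bfr})/\GL_{\widehat{\bfm},\widehat{\bfr}}]$, and using that $\Delta_{\widehat{\bfm}}$ acts trivially on the affine space $R(Q_\bff,\widehat{\bfm};\widehat{\bfr})$, the $\GG_m$-rigidification of the right-hand side is $[R(Q_\bff,\widehat{\bfm};\widehat{\bfr})/G_{\widehat{\bfm},\widehat{\bfr}}]$, which yields the claimed presentation. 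In practice I would write this out as a direct equivalence of groupoids mirroring the proof of \cref{Prop/ModuliStackVSQuotientStack}: since $G_{\widehat{\bfm},\widehat{\bfr}}\simeq\GL_{\bfm,\bfr}$ is special (\cref{Lem/unipotentRad}), a principal $G_{\widehat{\bfm},\widehat{\bfr}}$-bundle with an equivariant map to $R(Q_\bff,\widehat{\bfm};\widehat{\bfr})$ is trivialised over an affine cover $\Spec(R')\to\Spec(R)$; the section at the identity determines a point of $R(Q_\bff,\widehat{\bfm};\widehat{\bfr})(R')$, from which one reads off the standard framed family $M'_i=(R'\otimes_k k_{m_i})^{\oplus r_i}$ with its maps $M'_a$ and framing maps $b'_i$, while conversely the bundle of isomorphisms of a framed family to the standard free modules at the vertices of $Q_0$ (no isomorphisms are taken at $\infty$, which is exactly where the framing rigidity enters) is such a $G_{\widehat{\bfm},\widehat{\bfr}}$-torsor with its tautological equivariant map. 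Descent glues these functorially in $R$, as in \cref{Prop/ModuliStackVSQuotientStack} and \cite[Prop.\ 3.1.4]{BDFHMT22}.

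The smoothness and dimension are then formal: $\Rep^{\bff}_{Q,\bfm;\bfr}$ is the quotient of the affine space $R(Q_\bff,\widehat{\bfm};\widehat{\bfr})$ by the smooth group $G_{\widehat{\bfm},\widehat{\bfr}}$, hence a smooth Artin stack of dimension $\dim R(Q_\bff,\widehat{\bfm};\widehat{\bfr})-\dim G_{\widehat{\bfm},\widehat{\bfr}}$. The arrows of $Q$ contribute $\sum_{a\colon i\to j}\mu_{ij}r_ir_j$ as in \cref{Prop/ModuliStackVSQuotientStack}, each of the $f_i$ framing arrows $\infty\to i$ contributes $\mathrm{lcm}(1,m_i)\,\widehat{r}_\infty r_i=m_ir_i$, and $\dim G_{\widehat{\bfm},\widehat{\bfr}}=\dim\GL_{\bfm,\bfr}=\sum_{i\in Q_0}m_ir_i^2$; subtracting and using $\langle\bfr,\bfr\rangle_{Q,\bfm}=\sum_{i\in Q_0}m_ir_i^2-\sum_{a\colon i\to j}\mu_{ij}r_ir_j$ (\cref{Notation Euler pairing}) gives $\sum_{i\in Q_0}m_if_ir_i-\langle\bfr,\bfr\rangle_{Q,\bfm}$.

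The main obstacle is the identification in the first paragraph: verifying carefully, and compatibly with descent, that the framed moduli stack is literally the $\GG_m$-rigidification of $\Rep_{Q_\bff,\widehat{\bfm};\widehat{\bfr}}$ along $\Delta_{\widehat{\bfm}}$, i.e.\ that restricting morphisms to the identity on the $\infty$-component corresponds exactly to dividing the structure group by $\Delta_{\widehat{\bfm}}$. Once this is in place, the remainder is a routine transcription of \cref{Prop/ModuliStackVSQuotientStack} together with the dimension count above.
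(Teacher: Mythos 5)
Your proposal is correct, and the dimension count at the end is exactly the paper's. The only real difference is in how the first isomorphism is established: the paper's (sketched) proof goes directly, first showing $\Rep_{Q,\bfm;\bfr}^{\bff}\simeq\bigl[\bigl(R(Q,\bfm;\bfr)\times\prod_{i}\Hom_{k_{m_i}}(k_{m_i}^{\oplus f_i},k_{m_i}^{\oplus r_i})\bigr)/\GL_{\bfm,\bfr}\bigr]$ by the same descent argument as \cref{Prop/ModuliStackVSQuotientStack} (the group simply does not act on the sources of the framing maps), and then identifying this product space with $R(Q_\bff,\widehat{\bfm};\widehat{\bfr})$ equivariantly for $\GL_{\bfm,\bfr}\simeq G_{\widehat{\bfm},\widehat{\bfr}}$. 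You instead first identify $\Rep_{Q,\bfm;\bfr}^{\bff}$ with the rigidification of $\Rep_{Q_\bff,\widehat{\bfm};\widehat{\bfr}}$ along $\Delta_{\widehat{\bfm}}$ and then use that rigidifying a quotient stack by a trivially acting central subgroup replaces the structure group by its quotient. This detour is valid — the check you flag as the main obstacle does go through: two trivialisations of $M_\infty$ differing by a unit $u$ produce framed families $(M,b)$ and $(M,ub)$, which are isomorphic via the scalar automorphism $u\cdot\Id$ of $M$, and automorphism groups match modulo $\Delta_{\widehat{\bfm}}$ — but it is extra work the paper avoids; your second paragraph (the torsor/descent argument using that $G_{\widehat{\bfm},\widehat{\bfr}}\simeq\GL_{\bfm,\bfr}$ is special, \cref{Lem/unipotentRad}) is in substance the paper's route and would suffice on its own.
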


\begin{proof}[Sketch of proof]
An argument similar to the proof of Proposition \ref{Prop/ModuliStackVSQuotientStack} gives the following isomorphism of stacks:
\[
\Rep_{Q,\bfm;\bfr}^{\bff}\simeq \left[ \left(R(Q,\bfm;\bfr)\times\prod_{i\in Q_0}\Hom_{k_{m_i}}\left(k_{m_i}^{\oplus f_i},k_{m_i}^{\oplus r_i}\right)\right)/\GL_{\bfm,\bfr} \right].
\]
The first part of the result then follows as the isomorphism
\[
R(Q,\bfm;\bfr)\times\prod_{i\in Q_0}\Hom_{k_{m_i}}\left(k_{m_i}^{\oplus f_i},k_{m_i}^{\oplus r_i}\right)\simeq R(Q_\bff,\widehat{\bfm};\widehat{\bfr})
\]
is equivariant with respect to the action of $\GL_{\bfm,\bfr}\simeq G_{\widehat{\bfm},\widehat{\bfr}}$. From the computation:
\[
\dim R(Q_{\bff},\hat{\bfm};\hat{\bfr})=\dim R(Q,\bfm;\bfr)+\sum_{i\in Q_0}m_if_ir_i,
\]
the dimension count follows directly.
\end{proof}

\subsubsection{Moduli of semistable framed representations (after Engel, Reineke)}

In this subsection, we describe moduli of semistable framed representations of a quiver $Q$ without multiplicities ($\bfm=\mathbf{1}$) after Engel and Reineke \cite{ER09}. We first recall the notion of semistable framed representations.

\begin{definition}\label{Def framed semistability}
For $\theta\in\bbZ^{Q_0}$, we say a framed representation $(M,b)$ is $\theta$-semistable if the following two conditions hold:
\begin{itemize}
\item $M$ is $\theta$-semistable (in particular $\theta\cdot\dim(M)=0$);
\item for all proper subrepresentations $N\subsetneq M$ containing the image of $b$, we have $\theta\cdot\dim N>0$.
\end{itemize}
\end{definition}

For $\theta = 0$, we note that $0$-semistability for a framed representation $(M,b)$ imposes no condition on $M$ but requires that the framing data $b$ \lq generates' $M$ (so the image of $b$ is not contained in a proper subrepresentation of $M$). Note that if $M$ is $\theta$-stable, then the condition on the image of $b$ automatically holds.

The following theorem explains how framed quiver moduli spaces can be used to give a (higher dimensional) \lq resolution' of unframed quiver moduli spaces. 

\begin{theorem}\label{thm/classicalframedquiver}\emph{\cite[Def.\ 3.1 and Prop.\ 3.3]{ER09}}
Given $Q$ with rank vector $\bfr$ and framing vector $\bff$ and stability parameter $\theta \in Z^{Q_0}$, there exists $\widehat{\theta}\in\bbZ^{(Q_\bff)_0}$ with the following properties.
\begin{enumerate}[label=(\roman*)]
    \item The parameter $\widehat{\theta}$ is generic with respect to $\widehat{\bfr}$, so $R(Q_\bff,\widehat{\bfr})^{\widehat{\theta}-\sst}=R(Q_\bff,\widehat{\bfr})^{\widehat{\theta}-\st}$ and the geometric points of this scheme correspond to $\theta$-semistable $\bff$-framed representations of $Q$ of rank $\bfr$.
    \item The quotient $M_{Q,\bfr}^{\bff;\theta-\sst}:=M_{Q_\bff,\hat{\bfr}}^{\widehat{\theta}-\sst}$ is a geometric $\GL_{\widehat{\bfr}}$-quotient and the quotient map is a principal bundle for $G_{\widehat{\bfr}}\simeq \GL_\bfr$. In particular, $M_{Q,\bfr}^{\bff;\theta-\sst}$ is a smooth variety.
    \item There is a projective morphism $M_{Q,\bfr}^{\bff;\theta-\sst} \rightarrow M_{Q,\bfr}^{\theta-\sst}$, which is a projective fibration over $M_{Q,\bfr}^{\theta-\st}$.
\end{enumerate}
\end{theorem}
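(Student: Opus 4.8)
The plan is to exhibit $M_{Q,\bfr}^{\bff;\theta-\sst}$ as one of King's GIT quotients for a suitably chosen \emph{generic} stability parameter $\widehat\theta$ on the framed quiver $Q_\bff$, and then to extract all three statements from King's theorem together with a bookkeeping of subrepresentations of $Q_\bff$. Throughout I work with the isomorphism
\[
R(Q_\bff,\widehat\bfr)\;\simeq\;R(Q,\bfr)\times\prod_{i\in Q_0}\Hom_k(k^{\oplus f_i},k^{\oplus r_i})
\]
from the proof of the preceding proposition (specialised to $\bfm=\mathbf 1$), under which a rank $\widehat\bfr$ representation $\widehat M$ of $Q_\bff$ corresponds to a pair $(M,b)$ of a rank $\bfr$ representation $M$ of $Q$ and framing data $b$; it is equivariant for $\GL_{\widehat\bfr}=\GL_\bfr\times\GL_1$ acting on $R(Q,\bfr)$ through the first factor and on the framing maps in the evident way, with the $\GL_1$-factor acting only on $b$.

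Writing $|\bfr|:=\sum_{i\in Q_0}r_i$, I would set $\widehat\theta_i:=N\theta_i+1$ for $i\in Q_0$ and $\widehat\theta_\infty:=-\sum_{i\in Q_0}\widehat\theta_i r_i$ (so $\widehat\theta\cdot\widehat\bfr=0$, and $\widehat\theta_\infty=-|\bfr|<0$ when $\theta\cdot\bfr=0$), for an integer $N>|\bfr|$. First I would check that $\widehat\theta$ is generic with respect to $\widehat\bfr$ in the sense of \cref{Def/genericStabilityParameter}: a rank vector $0\lneq\widehat\bfr'\lneq\widehat\bfr$ has $\widehat r'_\infty\in\{0,1\}$, and a direct computation gives $\widehat\theta\cdot\widehat\bfr'=N\,\theta\cdot\bfr'+|\bfr'|$ in the first case and $\widehat\theta\cdot\widehat\bfr'=-\bigl(N\,\theta\cdot(\bfr-\bfr')+|\bfr-\bfr'|\bigr)$ in the second; in both cases the integer term of absolute value in $[1,|\bfr|]$ together with $N>|\bfr|$ forces non-vanishing, so $R(Q_\bff,\widehat\bfr)^{\widehat\theta-\sst}=R(Q_\bff,\widehat\bfr)^{\widehat\theta-\st}$. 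Next I would match the stability notions: a subrepresentation of $\widehat M=(M,b)$ is either $(N,0)$ for an arbitrary subrepresentation $N\subseteq M$, or $(N,k)$ for a subrepresentation $N\subseteq M$ with $\mathrm{im}(b)\subseteq N$; one computes $\widehat\theta\cdot\dim(N,0)=N\,\theta\cdot\dim N+|\dim N|$, which (using $N>|\bfr|$) is $\geq0$ precisely when $\theta\cdot\dim N\geq0$, and $\widehat\theta\cdot\dim(N,k)=N\,\theta\cdot\dim N+|\dim N|-|\bfr|$, which for a proper $N$ (so $|\dim N|\leq|\bfr|-1$) is $\geq0$ precisely when $\theta\cdot\dim N>0$; finally the subrepresentation $(0,k)$ shows $b\neq0$ is forced. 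Comparing with \cref{Def framed semistability} this gives that $\widehat M$ is $\widehat\theta$-(semi)stable if and only if $(M,b)$ is a $\theta$-semistable framed representation, which is part (i).

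For part (ii), I would apply King's theorem to $(Q_\bff,\widehat\bfr,\widehat\theta)$: by genericity there are no strictly semistable points, so $M_{Q,\bfr}^{\bff;\theta-\sst}=M_{Q_\bff,\widehat\bfr}^{\widehat\theta-\sst}=M_{Q_\bff,\widehat\bfr}^{\widehat\theta-\st}$ is a geometric $\GL_{\widehat\bfr}$-quotient, the quotient map is a principal $G_{\widehat\bfr}$-bundle, and the quotient is smooth; and $G_{\widehat\bfr}=\GL_{\widehat\bfr}/\Delta\cong\GL_\bfr$ via $(g,t)\mapsto t^{-1}g$, as noted after \cref{Def quiver with mult associated to framing}. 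For part (iii), the projection $(M,b)\mapsto M$ is $\GL_{\widehat\bfr}$-equivariant (for the surjection $\GL_{\widehat\bfr}\twoheadrightarrow\GL_\bfr$) and, by the first computation above, sends $\widehat\theta$-semistable points to $\theta$-semistable points, hence descends to a forgetful morphism $M_{Q,\bfr}^{\bff;\theta-\sst}\to M_{Q,\bfr}^{\theta-\sst}$. Since the $\GL_1$-factor acts on the framing coordinates with non-trivial weights and trivially on $R(Q,\bfr)$, its invariants in $k[R(Q_\bff,\widehat\bfr)]$ are $k[R(Q,\bfr)]$, so $M_{Q_\bff,\widehat\bfr}=M_{Q,\bfr}$ and by King's theorem $M_{Q,\bfr}^{\bff;\theta-\sst}$ is projective over the affine variety $M_{Q,\bfr}$; as $M_{Q,\bfr}^{\theta-\sst}\to M_{Q,\bfr}$ is projective, the forgetful morphism is itself projective. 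Finally, over $M_{Q,\bfr}^{\theta-\st}$: if $M$ is $\theta$-stable then every proper subrepresentation of $M$ already satisfies $\theta\cdot\dim>0$, so the second clause of \cref{Def framed semistability} is vacuous and the only constraint on $b$ is $b\neq0$; using the principal $G_\bfr$-bundle $R(Q,\bfr)^{\theta-\st}\to M_{Q,\bfr}^{\theta-\st}$ and the associated vector bundle $\mathcal H$ with fibre $\bigoplus_{i\in Q_0}\Hom_k(k^{\oplus f_i},k^{\oplus r_i})$, the restriction of $M_{Q,\bfr}^{\bff;\theta-\sst}$ over $M_{Q,\bfr}^{\theta-\st}$ is identified with $\mathbb P(\mathcal H)$, a projective bundle.

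The main obstacle, and the step I would treat most carefully, is the simultaneous calibration of $\widehat\theta$: the perturbation of $\theta$ on $Q_0$ and the lower bound on $N$ must be arranged so that $\widehat\theta$ is generic with respect to $\widehat\bfr$ \emph{and} so that the numerical criterion on subrepresentations of $Q_\bff$ reproduces exactly \cref{Def framed semistability} for $(M,b)$; once this is pinned down, parts (ii)--(iii) are formal consequences of King's construction.
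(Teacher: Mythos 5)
This statement is quoted from Engel--Reineke and the paper offers no proof beyond the citation of \cite[Def.\ 3.1, Prop.\ 3.3]{ER09}; your proposal is a correct, self-contained reconstruction of exactly that argument (perturb $\theta$ on $Q_0$, balance at $\infty$, check genericity and match the numerical criterion against \cref{Def framed semistability}, then invoke King). The only things worth tightening are that the whole dictionary in part (i) presupposes the standing normalisation $\theta\cdot\bfr=0$ (otherwise $\widehat\theta$-stable points of $R(Q_\bff,\widehat{\bfr})$ need not have $\theta$-semistable underlying representation), and that in part (iii) the projectivity of the forgetful map follows from projectivity of the composite $M_{Q,\bfr}^{\bff;\theta-\sst}\rightarrow M_{Q,\bfr}$ together with \emph{separatedness} of $M_{Q,\bfr}^{\theta-\sst}\rightarrow M_{Q,\bfr}$, which is what your appeal to its projectivity is really using.
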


The fibres of this projective morphism over the strictly semistable set are also explicitly described in \cite{ER09}, and this description is later used in \cite{MR19} to describe the intersection cohomology of $M_{Q,\bfr}^{\theta-\sst}$.

\subsection{Truncating representations}\label{Sect/Truncation}

Let $(Q,\bfm)$ be a quiver with multiplicities and $\bfr\in\bbZ_{\geq0}^{Q_0}$ be a rank vector. In this section, we define the truncation of a representation of $(Q,\bfm)$ as a representation of $Q$ (without multiplicities) and build a truncation morphism $R(Q,\bfm;\bfr)\rightarrow R(Q,\bfr)$, which is equivariant with respect to the morphism of algebraic groups $\GL_{\bfm,\bfr}\twoheadrightarrow\GL_\bfr$.

Given a representation $M$ of $(Q,\bfm)$, we wish to build an induced representation of $Q$ such that the $k$-vector space at vertex $i$ is given by $M_i/\epsilon M_i$. However, defining the induced $k$-linear maps for each arrow $a \colon i \rightarrow j$ is slightly subtle, as $M_a \colon M_i \rightarrow M_j$ does not satisfy $M_a(\epsilon M_i)\subset\epsilon M_j$, unless $f_{ji}=1$ (i.e. $m_i$ divides $m_j$). This motivates the following definition.

\begin{definition}\label{Def/intrinsicTruncation}
Let $M$ be a locally free representation of $(Q,\bfm)$. We let $\sigma(M)$ denote the representation of $(Q,\bfm)$ given by $\sigma(M)_i:=M_i$ for each $ i\in Q_0$ and $\sigma(M)_a:= M_a\epsilon^{f_{ji}-1}$ for each arrow $a \colon i\rightarrow j$. We define a representation $\tau(M)$ of $Q$, called the (classical) truncation of $M$, by setting $\tau(M)_i := M_i/\epsilon M_i$ for each vertex $i$ and, for each arrow $a \colon i \rightarrow j$, we define the corresponding $k$-linear map as the composition
\[
\begin{tikzcd}[ampersand replacement=\&]
\tau(M)_a \colon M_i/\epsilon M_i
\ar[r,"\epsilon^{f_{ji}-1}\cdot"]\ar[rr,"\sigma(M)_a",bend right] \&
\epsilon^{f_{ji}-1}M_i/\epsilon^{f_{ji}}M_i
\ar[r,"M_a"] \&
M_j/\epsilon^{f_{ij}}M_j 
\ar[r,two heads] \&
M_j/\epsilon M_j,
\end{tikzcd}
\]
with $m_i = m_{ij}f_{ji}$ and $m_j = m_{ij}f_{ij}$, and $M_a$ and $\sigma(M)_a$ are $k_{m_{ij}}$-linear. 
\end{definition}

We now give a more concrete description of this truncation, which allows us to construct an equivariant truncation morphism $\tau \colon R(Q,\bfm;\bfr)\rightarrow R(Q,\bfr)$. We will define this morphism on each arrow as follows. 

For $m_i,m_j\geq1$ and $r_i,r_j\geq1$, let $m_{ij}:=\gcd(m_i,m_j)$ and consider the $k$-vector space 
\begin{equation}\label{Eq decomposition Hom space of fixed arrow}
\Hom_{k_{m_{ij}}}\left( k_{m_i}^{\oplus r_i},k_{m_j}^{\oplus r_j}\right) \simeq \bigoplus_{\substack{1\leq r\leq r_j \\ 1\leq s\leq r_i}}\Hom_{k_{m_{ij}}}(k_{m_i},k_{m_j}),
\end{equation}
where we view elements of the left side as $r_j\times r_i$ with entries in $\Hom_{k_{m_{ij}}}\!( k_{m_i},k_{m_j})$. Below we define a truncation morphism:
\[
\tau \colon \Hom_{k_{m_{ij}}}(k_{m_i},k_{m_j})\rightarrow\Hom_k(k,k),
\]
which induces a truncation morphism (which we will also denote by $\tau$)
\[
\tau \colon \Hom_{k_{m_{ij}}}\left( k_{m_i}^{\oplus r_i},k_{m_j}^{\oplus r_j}\right) \rightarrow \Hom_k(k^{\oplus r_i},k^{\oplus r_j})
\]
by applying the above morphism $\tau$ entrywise. 

\begin{notation}\label{Not/BlockMatrices}
To describe $k_{m_{ij}}$-linear maps from $k_{m_i}$ to $k_{m_j}$, we recall that $m_i=m_{ij}f_{ji}$ and $m_j=m_{ij}f_{ij}$, and we take $k$-bases:
\begin{align}\label{Eq useful basis}
\begin{split}
    k_{m_i}= & (k\cdot\epsilon^{m_i-1}\oplus\ldots\oplus k\cdot\epsilon^{f_{ji}(m_{ij}-1)})\oplus\ldots\oplus(k\cdot\epsilon^{f_{ji}-1} \oplus\ldots\oplus k\cdot 1),\\
    k_{m_j}= &(k\cdot\epsilon^{m_j-1}\oplus\ldots\oplus k\cdot\epsilon^{f_{ij}(m_{ij}-1)})\oplus\ldots\oplus(k\cdot\epsilon^{f_{ij}-1} \oplus\ldots\oplus k\cdot 1). 
    \end{split}
\end{align}
Then multiplication by $\epsilon^{f_{ji}}$ on $k_{m_i}$ (resp.\ $\epsilon^{f_{ij}}$ on $k_{m_j}$) is given by the following $m_{ij}\times m_{ij}$ block matrix with square blocks of size $f_{ji}$ (resp.\ $f_{ij}$):
\[
\left(
\begin{array}{cccc}
0 & \Id_{f} & \ldots &  0 \\
0 & \ddots & \ddots & \vdots \\
\vdots & \ddots & \ddots  & \Id_{f} \\
0 & \ldots & 0 & 0
\end{array}
\right)
,
\]
where $f=f_{ji}$ (resp.\ $f_{ij}$). 

Consider an element $x\in\Hom_{k_{m_{ij}}}(k_{m_i},k_{m_j})$. Since $x$ is $k_{m_{ij}}$-linear, it is determined by its image on $k\cdot\epsilon^{f_{ji}-1} \oplus\ldots\oplus k\cdot 1$. Moreover, we can write $x$ with respect to the above bases \eqref{Eq useful basis} as
\begin{equation}\label{Eq linear map in terms of useful basis}
\left(
\begin{array}{cccc}
x_0 & x_1 & \ldots  & x_{m_{ij}-1} \\
0 & x_0  & \ddots & \vdots \\
\vdots & \ddots & \ddots  & x_1 \\
0 &  \ldots & 0  & x_0
\end{array}
\right)
,
\end{equation}
where the blocks have size $f_{ij}\times f_{ji}$. For $0\leq m\leq m_{ij}-1$, the matrix $x_m$ corresponds to the $k$-linear map induced by $x$ between the vector spaces:
\[
\bigoplus_{f=0}^{f_{ji}-1}k\cdot\epsilon^f
\longrightarrow
\bigoplus_{f=0}^{f_{ij}-1}k\cdot\epsilon^{mf_{ij}+f}.
\]
These linear maps completely determine $x$, as it represents a $k_{m_{ij}}$-linear map.
\end{notation}

\begin{definition}\label{Def/matrixTruncation}
With the above notation, we define a truncation morphism
\[
\begin{array}{rrcl}
\tau \colon & \Hom_{k_{m_{ij}}}(k_{m_i},k_{m_j}) & \rightarrow & \Hom_k(k,k) \\
& x & \mapsto & \left[ x_0\right]_{f_{ij},1}.
\end{array}
\]
This then defines the truncation morphism
\[
\begin{array}{rrcl}
\tau \colon & R(Q,\bfm;\bfr) & \rightarrow & R(Q,\bfr) \\
& (x_a)_{a\in Q_1} & \mapsto & (\tau(x_a))_{a\in Q_1}.
\end{array}
\]
\end{definition}

In other words, the truncation of $x\in\Hom_{k_{m_{ij}}}(k_{m_i},k_{m_j})$ is the matrix coefficient corresponding to the induced map $k\cdot\epsilon^{f_{ji}-1}\rightarrow k\cdot 1$.

\begin{example}
Suppose that $\bfm=m\mathbf{1}$ for some $m\geq1$. Then $\Hom_{k_m}(k_m,k_m)\simeq k_m$ and the truncation map recovers the usual quotient $k_m\twoheadrightarrow k$. In that case, the truncation morphism $\tau$ coincides with the one defined in \cite[Def.\ 7.2]{HHJ24}.
\end{example}

\begin{remark}\label{Rmk/motivationTruncation}
The above construction might look somewhat ad hoc. However, it is also justified by the construction of a $\bbZ$-grading on $R(Q,\bfm;\bfr)$ induced by an action of $\GG_m$ which will provide an external grading (in the sense of NRGIT). By unfolding the quiver with multiplicities $(Q,\bfm)$ as in Appendix \ref{sec/unfolding}, this external grading may be constructed from the one defined in \cite[Prop.\ 7.7]{HHJ24}. We refer to \cref{Sect/extGrad} for further details.
\end{remark}

\begin{proposition}
The truncation morphism $\tau \colon R(Q,\bfm;\bfr)\rightarrow R(Q,\bfr)$ is equivariant with respect to the morphism of algebraic groups $\GL_{\bfm,\bfr}\twoheadrightarrow\GL_\bfr$. In particular, it induces a truncation morphism between moduli stacks
\[ \tau \colon \Rep_{Q,\bfm;\bfr} \simeq [R(Q,\bfm;\bfr) / \GL_{\bfm,\bfr}] \longrightarrow \Rep_{Q,\bfr} \simeq [R(Q,\bfr)/ \GL_\bfr].\]
\end{proposition}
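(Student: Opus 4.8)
The plan is to check $\GL_{\bfm,\bfr}$-equivariance arrow by arrow, reducing to the case of a single arrow $a\colon i\to j$, and then—since $\tau$ is defined entrywise on the block decomposition \eqref{Eq decomposition Hom space of fixed arrow}—further reducing to the scalar case, i.e.\ to the map $\tau\colon\Hom_{k_{m_{ij}}}(k_{m_i},k_{m_j})\to\Hom_k(k,k)$. Recall that the action is $(g_i)_{i}\cdot(x_a)_a=(g_{t(a)}x_ag_{s(a)}^{-1})_a$ on both sides, and the group homomorphism $\GL_{\bfm,\bfr}\twoheadrightarrow\GL_{\bfr}$ is induced componentwise by $k_{m_i}\twoheadrightarrow k$, i.e.\ by $g_i\mapsto (g_i)_0$ (the constant term, the image of $g_i$ under reduction mod $\epsilon$). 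So it suffices to show that for $g_i\in\GL_{r_i}(k_{m_i})$, $g_j\in\GL_{r_j}(k_{m_j})$ and $x\in\Hom_{k_{m_{ij}}}(k_{m_i}^{\oplus r_i},k_{m_j}^{\oplus r_j})$, we have $\tau(g_j x g_i^{-1})=(g_j)_0\,\tau(x)\,(g_i)_0^{-1}$.

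The key computation is that $\tau$, viewed through \cref{Def/matrixTruncation} and \cref{Not/BlockMatrices}, picks out the coefficient of the induced $k$-linear map $k\cdot\epsilon^{f_{ji}-1}\to k\cdot 1$, equivalently the block $[\,(\cdot)_0\,]_{f_{ij},1}$ in the block-triangular presentation \eqref{Eq linear map in terms of useful basis}. The cleanest way to carry this out is to use the intrinsic description from \cref{Def/intrinsicTruncation}: $\tau(M)$ has $\tau(M)_i=M_i/\epsilon M_i$ and $\tau(M)_a$ is the composite $M_i/\epsilon M_i\xrightarrow{\epsilon^{f_{ji}-1}\cdot}\epsilon^{f_{ji}-1}M_i/\epsilon^{f_{ji}}M_i\xrightarrow{M_a}M_j/\epsilon^{f_{ij}}M_j\twoheadrightarrow M_j/\epsilon M_j$. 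Each of the three maps in this composite is manifestly functorial in $M$: an automorphism $g_i$ of $M_i$ induces $(g_i)_0$ on $M_i/\epsilon M_i$ and commutes with multiplication by $\epsilon^{f_{ji}-1}$ (as $g_i$ is $k_{m_i}$-linear, hence $k_{m_{ij}}$-linear), $M_a\mapsto g_jM_ag_i^{-1}$ transports the middle map correctly since $g_i,g_j$ are $k_{m_{ij}}$-linear, and the final quotient $M_j/\epsilon^{f_{ij}}M_j\twoheadrightarrow M_j/\epsilon M_j$ intertwines the induced actions of $g_j$ and $(g_j)_0$. Composing, $\tau(g_j x g_i^{-1})=(g_j)_0\tau(x)(g_i)_0^{-1}$, which is precisely equivariance along $\GL_{\bfm,\bfr}\twoheadrightarrow\GL_{\bfr}$. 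One should also note that since $\Delta_\bfm$ maps onto $\Delta$ under $\tau_{\GL}$ and both act trivially, the morphism descends to $\overline\tau\colon G_{\bfm,\bfr}\to G_{\bfr}$, though this is not strictly needed here.

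Once equivariance is established, the induced morphism of quotient stacks $[R(Q,\bfm;\bfr)/\GL_{\bfm,\bfr}]\to[R(Q,\bfr)/\GL_{\bfr}]$ is formal: an equivariant pair (morphism of schemes + group homomorphism) always induces a morphism of quotient stacks, and one then invokes \cref{Prop/ModuliStackVSQuotientStack} to identify the source with $\Rep_{Q,\bfm;\bfr}$ and the analogous presentation for the target with $\Rep_{Q,\bfr}$. I do not expect a serious obstacle: the only mildly delicate point is bookkeeping in \cref{Not/BlockMatrices}—making sure that ``the coefficient $x_0$, block $(f_{ij},1)$'' really is natural under pre- and post-composition by $k_{m_{ij}}$-linear (as opposed to merely $k_{m_i}$- or $k_{m_j}$-linear) automorphisms. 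Working with the intrinsic formulation of \cref{Def/intrinsicTruncation} sidesteps this almost entirely, so the proof should be short.
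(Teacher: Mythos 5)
Your proof is correct, and it follows the same overall reduction as the paper (arrow by arrow, then entrywise to the single map $\Hom_{k_{m_{ij}}}(k_{m_i},k_{m_j})\rightarrow\Hom_k(k,k)$), but the verification step is genuinely different. The paper reduces to a bimodule statement and checks $\tau(\mu x\lambda)=\mu_0\tau(x)\lambda_0$ by writing out the multiplication-by-$\lambda$ and multiplication-by-$\mu$ matrices in the bases \eqref{Eq useful basis} and computing; you instead invoke the intrinsic description of \cref{Def/intrinsicTruncation} and observe that each of the three maps in the composite defining $\tau(M)_a$ is natural under $k_{m_i}$- resp.\ $k_{m_j}$-linear automorphisms, so that $\tau(g_jxg_i^{-1})=(g_j)_0\tau(x)(g_i)_0^{-1}$ falls out of functoriality. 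Your route is cleaner and avoids block-matrix bookkeeping, but it quietly uses the compatibility of the intrinsic truncation of \cref{Def/intrinsicTruncation} with the coordinate truncation of \cref{Def/matrixTruncation} (the statement that the matrix entry $[x_0]_{f_{ij},1}$ is exactly the induced map $k\cdot\epsilon^{f_{ji}-1}\rightarrow k\cdot 1$); the paper asserts this identification immediately after \cref{Def/matrixTruncation}, and you acknowledge it, so there is no gap --- just make that identification explicit if you write the argument out. The passage to the morphism of quotient stacks is formal in both treatments.
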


\begin{proof}
For each arrow $a \colon i \rightarrow j$ in $Q_1$ using the decomposition \eqref{Eq decomposition Hom space of fixed arrow}, it suffices to check that the truncation map:
\[
\tau \colon \Hom_{k_{m_{ij}}}(k_{m_i},k_{m_j})\rightarrow\Hom_k(k,k)
\]
is an equivariant homomorphism of bimodules with respect to the homomorphism of $k$-algebras $\tau \colon k_{m_i}\otimes_{k_{m_{ij}}} k_{m_j}\twoheadrightarrow k\otimes_k k$.

For $(\lambda,\mu)\in k_{m_i}\times k_{m_j}$, multiplication by $\lambda$ (resp.\ $\mu$) in the bases \eqref{Eq useful basis} corresponds to the following matrices
\[
\left(
\begin{array}{cccc}
\lambda_0 & \lambda_1 & \ldots & \lambda_{m_i-1} \\
0 & \lambda_0  & \ddots & \vdots \\
\vdots & \ddots & \ddots & \lambda_1 \\
0 & \ldots & 0 & \lambda_0
\end{array}
\right)
\text{, resp. }
\left(
\begin{array}{cccc}
\mu_0 & \mu_1 & \ldots & \mu_{m_j-1} \\
0 & \mu_0 & \ddots & \vdots \\
\vdots & \ddots & \ddots & \mu_1 \\
0 & \ldots & 0 & \mu_0
\end{array}
\right)
,
\]
where $\lambda=\lambda_0+\ldots+\lambda_{m_i-1}\epsilon^{m_i-1}$ (resp.\ $\mu=\mu_0+\ldots+\mu_{m_j-1}\epsilon^{m_j-1}$) and thus $\tau(\lambda,\mu) =(\lambda_0, \mu_0)$. For $x\in\Hom_{k_{m_{ij}}}(k_{m_i},k_{m_j})$, we can write $x$ with respect to the above bases as described in \eqref{Eq linear map in terms of useful basis}. Then checking that $\tau(\mu x\lambda)=\mu_0\tau(x)\lambda_0$ is routine.
\end{proof}

We will also make use of a section of $\tau$, which we denote by $\iota$.

\begin{definition}\label{definition iota}
With the above notation, we define:
\[
\begin{array}{rrcl}
\iota \colon & \Hom_k(k,k) & \rightarrow & \Hom_{k_{m_{ij}}}(k_{m_i},k_{m_j}) \\
& x & \mapsto & 
\left(
\begin{array}{cccc}
A_0(x) & 0 & \ldots & 0 \\
0 & A_0(x) & \ddots & \vdots \\
\vdots & \ddots & \ddots & 0 \\
0 & \ldots & 0 & A_0(x)
\end{array}
\right)
,
\end{array}
\]
where blocks have size $f_{ij}\times f_{ji}$ and $A_0(x)$ satisfies:
\[
[A_0(x)]_{f_1,f_2}:=
\left\{
\begin{array}{ll}
x & \text{if } (f_1,f_2)=(f_{ij},1), \\
0 & \text{else.}
\end{array}
\right.
\]
Working entrywise, this induces a morphism
\[
\begin{array}{rccl}
\iota \colon & R(Q,\bfr) & \rightarrow & R(Q,\bfm;\bfr) \\
& (x_a)_{a\in Q_1} & \mapsto & (\iota(x_a))_{a\in Q_1}.
\end{array}
\]
\end{definition}

\subsection{Stability conditions for quivers with multiplicities}

Let $(Q,\bfm)$ be a quiver with multiplicities and $\bfr\in\bbZ_{\geq0}^{Q_0}$ be a rank vector. We will introduce a notion of stability for locally free rank $\bfr$ representations of $(Q,\bfm)$ depending on two stability parameters $\theta,\rho\in\bbZ^{Q_0}$ satisfying $\theta\cdot\bfr=\rho\cdot\bfr=0$. The first stability parameter $\theta$ is used to determine stability of the truncated quiver representation, and the second parameter $\rho$ is additionally used to formulate a notion of stability in terms of subrepresentations with multiplicities. We will later prove that this notion of semistability corresponds to the one arising from the non-reductive GIT construction of moduli spaces of quiver representations with multiplicities (see \cref{Thm/HMcrit} below).

\begin{definition}\label{def/stabilityQuivMult}
Let $M$ be a locally free rank $\bfr$ representation of $(Q,\bfm)$. We say that $M$ is $(\theta,\rho)$-semistable (resp. stable) if the following conditions hold:
\begin{itemize}
\item for any (\emph{not necessarily locally free}) subrepresentation\footnote{The representation $\sigma(M)$ is used in \cref{Def/intrinsicTruncation} to define the truncation $\tau(M)$ and the condition $N \subset \sigma(M)$ ensures that $\tau(N) \subset \tau(M)$.} $N\subset \sigma(M)$ such that $0<\rk N<\rk M$, we have:
\[
\theta\cdot\rk N\geq0
;
\]
\item for any \emph{locally free} subrepresentation $N\subset M$ such that $0<\rk N<\rk M$ and $\theta\cdot\rk N=0$, we have:
\[
\rho\cdot\rk N\geq0
\text{ (resp. }
\rho\cdot\rk N>0
\text{).}
\]
\end{itemize}
When $\rho=0$, we simply say that $M$ is $\theta$-(semi)stable.
We call a representation \emph{naively} $(\theta,\rho)$-polystable if it is isomorphic to a direct sum of $(\theta,\rho)$-stable representations.
\end{definition}

Similarly to \cref{Def/genericStabilityParameter}, we define an open condition on $(\theta,\rho)$ which ensures that there are no strictly $(\theta,\rho)$-semistable representations.

\begin{definition}
We say that $(\theta,\rho)$ is generic with respect to $\bfr$ if, for any $0<\bfr'<\bfr$, we have $\theta\cdot\bfr'\ne0$ or we have $\theta\cdot\bfr'=0$ and $\rho\cdot\bfr'\ne0$.
\end{definition}

\begin{remark}\label{Rmk/genericTheta}
\begin{enumerate}[label=\roman*)]
\item When $\rho=0$, a locally free representation $M$ is $(\theta,0)$-semistable if and only if its truncation $\tau(M)$ is $\theta$-semistable. However, a $(\theta,0)$-semistable representation $M$ is $(\theta,0)$-stable if and only if it does not have any locally free subrepresentation $N$ such that $0<\rk N<\rk M$ and $\theta\cdot\rk N=0$. This condition is weaker than requiring that $\tau(M)$ be $\theta$-stable, as not all subrepresentations of $\tau(M)$ come from a locally free subrepresentation of $M$. Indeed this is a general phenomena for relative NRGIT: in general there is not an induced map between the geometric quotients of the stable loci (see \cite[Remark 3.20]{HHJ}).

\item When $\theta$ is generic with respect to $\bfr$, then the second condition is empty and $\rho$ does not play any role, as all $(\theta,\rho)$-semistable representations have $\theta$-stable truncation. In particular, $(\theta,\rho)$ is generic with respect to $\bfr$.

\item More generally, for $\eta$ small enough (depending only on $\theta$, $\rho$ and $\bfr$) and for all $0<\bfr'<\bfr$, we have that, $(\theta+\eta\rho)\cdot\bfr'\ne0$ if and only if either $\theta\cdot\bfr'\ne0$ or $\theta\cdot\bfr'=0$ and $\rho\cdot\bfr'\ne0$. Thus, $(\theta,\rho)$ is generic with respect to $\bfr$ if and only if $\theta+\eta\rho$ is generic with respect to $\bfr$. In that case, all $(\theta,\rho)$-semistable representations of $(Q,\bfm)$ are $(\theta,\rho)$-stable. Note that there exist generic parameters $(\theta,\rho)$ with respect to $\bfr$ if and only if $\bfr$ is indivisible.
\end{enumerate}
\end{remark}

We now define Jordan-H\"older filtrations and associated graded representations in the spirit of \cite{Kin94}. Since locally free quiver representations with multiplicities do not form an abelian category, the Jordan-H\"older factors of an object are not well-defined a priori (see for instance \cite{Eno22}). Nevertheless, we can still define a suitable notion of polystable representations.

\begin{definition}
Let $M$ be a $(\theta,\rho)$-semistable locally free rank $\bfr$ representation of $(Q,\bfm)$. A \emph{naive} Jordan-H\"{o}lder filtration of $M$ is a finite filtration $F_\bullet$ of $M$
\[
0\subsetneq F_nM\subsetneq\ldots\subsetneq F_1M\subsetneq F_0M=M
\]
by locally free subrepresentations, such that $\theta\cdot\rk F_iM=\rho\cdot\rk F_iM=0$ and $F_{i-1}M/F_iM$ is $(\theta,\rho)$-stable for all $1\leq i\leq n$.
The associated graded representation
\[
\mathrm{gr}_{F_\bullet}M:=\bigoplus_{i=1}^nF_{i-1}M/F_iM
\]
is naively $(\theta,\rho)$-polystable, by construction.
\end{definition}

\begin{remark}
Any $(\theta,\rho)$-semistable representation admits a \emph{naive} Jordan-H\"{o}lder filtration, by induction on the rank vector.
\end{remark}

\begin{definition}\label{Def/JHfiltrations}
Let $M$ be a $(\theta,\rho)$-semistable, locally free, rank $\bfr$ representation of $(Q,\bfm)$. We say that $M$ is $(\theta,\rho)$-polystable if any \emph{naive} Jordan-H\"older filtration $F_\bullet$ of $M$ splits, i.e. there is an isomorphism $M\simeq\mathrm{gr}_{F_\bullet}M$. In particular, $M$ is naively polystable.

A Jordan-H\"older filtration is defined as a \emph{naive} Jordan-H\"older filtration whose associated graded representation is $(\theta,\rho)$-polystable.

Two $(\theta,\rho)$-semistable rank $\bfr$ representations $M_1,M_2$ of $(Q,\bfm)$ are S-equivalent if they admit Jordan-H\"older filtrations $F_{1\bullet}$ and $F_{2\bullet}$ such that $\mathrm{gr}_{F_{1\bullet}}M_1\simeq\mathrm{gr}_{F_{2\bullet}}M_2$.
\end{definition}

\begin{remark}
In \cref{Prop/JHassociatedGraded}, we will prove that any $(\theta,\rho)$-semistable representation admits a Jordan-H\"older filtration.

When the stability parameter $\theta$ is generic with respect to $\bfr$, any naively $\theta$-polystable representation of rank $\bfr$ is $\theta$-stable. Thus, all \emph{naive} Jordan-H\"older filtrations are trivial and any such representation is also $\theta$-polystable. For general $\theta$ and $\rho$, it would be interesting to know whether any naively $(\theta,\rho)-$polystable representation is $(\theta,\rho)-$polystable. 
\end{remark}

One can also introduce a notion of (semi)stability for $\bff$-framed representations of $(Q,\bfm)$, but this only depends on the parameter $\theta\in\bbZ^{Q_0}$. The stability parameter $\rho$ has no effect on the notion of (semi)stability. Indeed the stability parameter $\widehat{\theta}$ for the framed quiver $Q_{\bff}$ that appears in \cref{thm/classicalframedquiver} is generic (in the sense of \cref{Def/genericStabilityParameter}), and so we cannot have $\widehat{\theta} \cdot \rk N = 0$ for some $0 < \rk N < \rk M$ (see \cref{Rmk/genericTheta}).

\begin{definition}\label{Def/stabilityFramedQuivMult}
Let $(M,b)$ be a $\bff$-framed representation of $(Q,\bfm)$, with rank vector $\bfr$. We say that $(M,b)$ is $\theta$-semistable if:
\begin{itemize}
\item $M$ is $\theta$-semistable;
\item for any (\emph{not necessarily locally free}) subrepresentation $N\subset\sigma(M)$ such that $0<\rk N< \rk M$ and $\mathrm{Im}(b)\subset N$, we have $\theta\cdot\rk N>0$.
\end{itemize}
\end{definition}

\subsection{Moment maps of quivers with multiplicities}

We now recall basic facts on moment maps associated to quivers with multiplicities. Consider the cotangent bundle $\rmT^*R(Q,\bfm;\bfr)\simeq R(Q,\bfm;\bfr)\times R(Q,\bfm;\bfr)^{\vee}$. The above $\GL_{\bfm,\bfr}$-action on $R(Q,\bfm;\bfr)$ induces an action on $\rmT^*R(Q,\bfm;\bfr)$, which is hamiltonian. Therefore, there exists a moment map $\mu_{(Q,\bfm),\bfr} \colon \rmT^*R(Q,\bfm;\bfr)\rightarrow\gl_{\bfm,\bfr}^{\vee}$, which is characterised by the following property:
\[
\forall (x,y,\xi)\in \mathrm{T}^*R(Q,\bfm;\bfr)\times\gl_{\bfm,\bfr},\ \langle\xi\cdot x,y\rangle=\langle\xi,\mu_{(Q,\bfm),\bfr}(x,y)\rangle
.
\]
We give an explicit formula for $\mu_{(Q,\bfm),\bfr}$ below, which already appeared in \cite{Yam10,GLS17a,HWW23}.

\begin{notation}\label{Not/TracePairing}
For $m\geq1$, we equip the ring $k_{m}$ with the following $k$-linear form:
\[
\begin{array}{rccl}
r \colon  & k_m & \rightarrow & k \\
 & \lambda(\epsilon)=\sum_k\lambda_k\cdot \epsilon^k & \mapsto &\underset{\epsilon=0}{\mathrm{Res}}\left( \epsilon^{-m}\lambda(\epsilon)\right):=\lambda_{m-1}
\end{array}
.
\]
For $r_{1},r_{2}\geq1$, the trace pairing
\[
\begin{array}{ccl}
\Hom_{k_m}(k_m^{\oplus r_1},k_m^{\oplus r_2}) \times \Hom_{k_m}(k_m^{\oplus r_2},k_m^{\oplus r_1}) & \rightarrow & k \\
(A,B) & \mapsto & \langle A,B\rangle:=r\left(\mathrm{Tr}(AB)\right)
\end{array}
\]
induces an isomorphism $\Hom_{k_{m}}(k_{m}^{\oplus r_{1}},k_{m}^{\oplus r_{2}})^{\vee}\simeq\Hom_{k_{m}}(k_{m}^{\oplus r_{2}},k_{m}^{\oplus r_{1}})$, see for instance \cite[\S 8]{GLS17a}.
\end{notation}

We thus directly obtain an isomorphism $\gl_{\bfm,\bfr}^{\vee}\simeq\gl_{\bfm,\bfr}$. Given an arrow $a \colon i\rightarrow j$ in $Q_1$, we use the isomorphism
\[
\Hom_{k_{m_{ij}}}(k_{m_{i}}^{\oplus r_{i}},k_{m_{j}}^{\oplus r_{j}})\simeq\Hom_{k_{m_{ij}}}(k_{m_{ij}}^{\oplus f_{ji}r_{i}},k_{m_{ij}}^{\oplus f_{ij}r_{j}})
\]
and the trace pairing to obtain an isomorphism $R(Q,\bfm;\bfr)^{\vee}\simeq R(Q^{\mathrm{op}},\bfm;\bfr)$, where $Q^{\mathrm{op}}$ is the opposite quiver of $Q$. Thus if $\overline{Q}$ denotes the doubled quiver, then this provides an isomorphism $\rmT^*R(Q,\bfm;\bfr)\simeq R(\overline{Q},\bfm;\bfr)$.

\begin{proposition}\emph{\cite[\S 1.2]{Ver24}}
 Under the above identifications, the moment map is described as follows. For $(x,y)\in R(\overline{Q},\bfm;\bfr)$:
\[
\mu_{(Q,\bfm),\bfr}(x,y)=
\left(
\sum_{\substack{a\in Q_1 \\ a \colon j\rightarrow i}}
\sum_{f=0}^{f_{ji}-1}
\epsilon^{f}x_ay_a\epsilon^{f_{ji}-1-f}
-
\sum_{\substack{a\in Q_1 \\ a \colon i\rightarrow j}}
\sum_{f=0}^{f_{ji}-1}
\epsilon^{f}y_ax_a\epsilon^{f_{ji}-1-f}
\right)_{i\in Q_0}
.
\]
\end{proposition}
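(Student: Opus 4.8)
The plan is to verify the defining property of the moment map directly in coordinates, using the explicit trace pairing from \cref{Not/TracePairing}. By definition, $\mu_{(Q,\bfm),\bfr}(x,y)$ is the unique element of $\gl_{\bfm,\bfr}\simeq\gl_{\bfm,\bfr}^\vee$ satisfying $\langle\xi\cdot(x,y),(x,y)^\vee\rangle=\langle\xi,\mu_{(Q,\bfm),\bfr}(x,y)\rangle$ for all $\xi\in\gl_{\bfm,\bfr}$, where on the left we pair the infinitesimal action on $\rmT^*R(Q,\bfm;\bfr)$ with the cotangent direction. Since everything is bilinear and the constructions are arrow-by-arrow and vertex-by-vertex, it suffices to fix a single arrow $a\colon i\to j$ and compute the contribution to $\mu$ at the vertices $i$ and $j$, then sum.

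First I would write down the infinitesimal action explicitly. For $\xi=(\xi_k)_{k\in Q_0}\in\gl_{\bfm,\bfr}$, the action on $R(\overline Q,\bfm;\bfr)$ sends $x_a\mapsto\xi_j x_a-x_a\xi_i$ and $y_a\mapsto\xi_i y_a-y_a\xi_j$ (recall $y_a$ goes from $j$ to $i$ in $\overline Q$). Here, however, there is a subtlety which is the crux of the computation: $x_a$ and $y_a$ are $k_{m_{ij}}$-linear maps between $k_{m_i}$- and $k_{m_j}$-modules, while $\xi_i\in\gl_{m_i,r_i}$ and $\xi_j\in\gl_{m_j,r_j}$ act $k_{m_i}$- resp.\ $k_{m_j}$-linearly. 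To pair correctly one must use the identification $\Hom_{k_{m_{ij}}}(k_{m_i}^{\oplus r_i},k_{m_j}^{\oplus r_j})\simeq\Hom_{k_{m_{ij}}}(k_{m_{ij}}^{\oplus f_{ji}r_i},k_{m_{ij}}^{\oplus f_{ij}r_j})$ and the trace pairing built from the residue form $r$ on $k_{m_{ij}}$, and then re-express the resulting traces in terms of the residue forms on $k_{m_i}$ and $k_{m_j}$ respectively. The powers $\epsilon^f$ and $\epsilon^{f_{ji}-1-f}$ appearing in the formula come precisely from expanding $k_{m_i}$ (resp.\ $k_{m_j}$) as a free $k_{m_{ij}}$-module via multiplication by $\epsilon$, i.e.\ from the relation between $\Tr_{k_{m_i}}$ and $\Tr_{k_{m_{ij}}}$ under restriction of scalars: for a $k_{m_i}$-linear endomorphism, its $k_{m_{ij}}$-trace is obtained by summing conjugates by the powers $\epsilon^0,\dots,\epsilon^{f_{ji}-1}$, which also accounts for the compatibility $r_{k_{m_{ij}}}\circ(\text{restriction})=r_{k_{m_i}}$ once normalised by $m_i=m_{ij}f_{ji}$.

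Concretely, I would fix $\xi=\xi_i$ supported at vertex $i$ and compute $\langle \xi_i y_a - y_a\xi_j\cdot(\text{no }x\text{ term}),\dots\rangle$ — more precisely, the pairing of $(\,\xi_j x_a - x_a\xi_i,\ \xi_i y_a - y_a\xi_j\,)$ against the cotangent vector dual to $(x_a,y_a)$, which under the trace pairing is $r(\Tr(x_a\cdot(\xi_i y_a - y_a\xi_j)))+r(\Tr(y_a\cdot(\xi_j x_a - x_a\xi_i)))$ or the analogous expression; then cyclically permute traces and collect the coefficient of $\xi_i$. The terms with $\xi_j$ cancel or reorganise, and using the restriction-of-scalars identity one rewrites $r_{k_{m_{ij}}}(\Tr(\cdots))$ as $r_{k_{m_i}}\bigl(\Tr\bigl(\xi_i\sum_{f=0}^{f_{ji}-1}\epsilon^f x_a y_a\epsilon^{f_{ji}-1-f}\bigr)\bigr)$ for an arrow into $i$, and with the opposite sign $-\sum_f\epsilon^f y_a x_a\epsilon^{f_{ji}-1-f}$ for an arrow out of $i$; summing over all arrows gives the claimed $i$-component of $\mu_{(Q,\bfm),\bfr}(x,y)$. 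Finally I would remark that uniqueness of the moment map (the pairing $\gl_{\bfm,\bfr}\times\gl_{\bfm,\bfr}\to k$ is non-degenerate by \cref{Not/TracePairing}) forces this to be the moment map, and note that the formula already appears in \cite{Yam10,GLS17a,HWW23,Ver24} so one may alternatively just cite those sources and check consistency of conventions.

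The main obstacle I anticipate is purely bookkeeping: correctly tracking the restriction-of-scalars from $k_{m_{ij}}$ to $k_{m_i}$ and to $k_{m_j}$ on the two sides of each arrow, so that the residue forms $r$ (which depend on the modulus $m$) match up and the exponents $f$ running from $0$ to $f_{ji}-1$ appear with the right shifts $\epsilon^{f_{ji}-1-f}$. The asymmetry between $f_{ji}$ and $f_{ij}$ (only $f_{ji}$ appears, since the moment-map value at vertex $i$ lives in $\gl_{m_i,r_i}$, and $m_i=m_{ij}f_{ji}$) is the one point where it is easy to slip; once the identity $\Tr_{k_{m_{ij}}}(z) = \sum_{f=0}^{f_{ji}-1}\epsilon^f\,(\text{matrix of }z)\,\epsilon^{-f}$-type relation is set up cleanly for a $k_{m_i}$-linear $z$ restricted to $k_{m_{ij}}$, the rest is a routine cyclic-trace manipulation.
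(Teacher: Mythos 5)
The paper does not actually prove this proposition: it is stated with a citation to \cite[\S 1.2]{Ver24} and no argument is given, so there is nothing to compare line by line. Your proposed direct verification is the standard one and is essentially correct: the defining property $\langle\xi\cdot x,y\rangle=\langle\xi,\mu(x,y)\rangle$ reduces, arrow by arrow, to pairing $\xi_j x_a-x_a\xi_i$ with $y_a$ under the $k_{m_{ij}}$-trace pairing, cyclically permuting, and then converting $r_{m_{ij}}\bigl(\Tr_{k_{m_{ij}}}(\xi_i w)\bigr)$ into $r_{m_i}\bigl(\Tr_{k_{m_i}}(\xi_i\sum_{f=0}^{f_{ji}-1}\epsilon^f w\,\epsilon^{f_{ji}-1-f})\bigr)$ for the $k_{m_{ij}}$-linear (but not $k_{m_i}$-linear) operator $w=x_ay_a$ or $y_ax_a$; the symmetrisation $\sum_f\epsilon^f(\cdot)\epsilon^{f_{ji}-1-f}$ is exactly what makes the result $k_{m_i}$-linear and compensates for the mismatch between the residue forms at levels $m_{ij}$ and $m_i$. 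Two small points to tighten: (a) with the paper's normalisation of the moment map, only the infinitesimal action on the $x$-coordinate paired against the cotangent coordinate $y$ enters, so you should drop the hedged ``$\xi_i y_a-y_a\xi_j$'' terms rather than leave the convention ambiguous; (b) the identity you gesture at with ``$\sum_f\epsilon^f(\cdot)\epsilon^{-f}$'' cannot be written with negative powers of the nilpotent $\epsilon$ --- the correct statement is the residue--trace compatibility above, which is easily checked on the basis $1,\epsilon,\dots,\epsilon^{f_{ji}-1}$ of $k_{m_i}$ over $k_{m_{ij}}$. With those adjustments your argument supplies a complete proof of the cited formula.
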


\begin{remark}
Given a quiver with multiplicities $(Q,\bfm)$ and a rank vector $\bfr\in\bbZ_{\geq0}^{Q_0}$, it is natural to ask whether the moment map is compatible with truncation, i.e.\ whether we have:
\[
\mu_{Q,\bfr}\circ\tau=\tau\circ\mu_{(Q,\bfm),\bfr}.
\]
This is true when multiplicities are constant (that is, $\bfm=m\mathbf{1}$ for some $m\geq1$), but not in general. See \cref{Lem/extGrad3} for a more detailed analysis.
\end{remark}

\begin{notation}
Consider the Lie algebra $\mathfrak{d}_{\bfm}\subset\gl_{\bfm,\bfr}$ of the subgroup $\Delta_{\bfm}\subset\GL_{\bfm,\bfr}$. Then the Lie algebra of $G_{\bfm,\bfr}$ is the quotient $\gl_{\bfm,\bfr}/\mathfrak{d}_\bfm$. We denote by $\gl_{\bfm,\bfr}^0\subset\gl_{\bfm,\bfr}$ the orthogonal Lie algebra of $\mathfrak{d}_{\bfm}$ under the above pairing. Its dimension is $\dim\gl_{\bfm,\bfr}-\delta$. Note that, since $\Delta_{\bfm}$ acts trivially on $R(Q,\bfm;\bfr)$, the moment map restricts to:
\[
\mu_{(Q,\bfm),\bfr} \colon \mathrm{T}^*R(Q,\bfm;\bfr)\rightarrow (\gl_{\bfm,\bfr}/\mathfrak{d}_\bfm)^{\vee}\simeq\mathfrak{gl}_{\bfm,\bfr}^0.
\]
\end{notation}

\section{Overview of results from relative NRGIT}\label{sec/overview rel NRGIT}

In this section, we review the main results of \cite{HHJ} regarding non-reductive geometric invariant theory in the relative affine setting. We recall the explicit construction of NRGIT quotients in the setting we need and the Hilbert-Mumford description of the associated (semi)stable locus. Throughout this section, we assume that $k$ is an algebraically closed field of characteristic zero.

\subsection{Gradings of equivariant actions}

Let us introduce the notion of an equivariant action and a grading of such an action.

\begin{definition}
Given a homomorphism $\varphi \colon G \rightarrow H$ of affine algebraic groups over $k$, an \emph{equivariant action} of $\varphi \colon G \rightarrow H$ on a morphism $f \colon X \rightarrow Z$ of $k$-schemes is given by a pair of actions $(\sigma_X \colon G \times X \rightarrow X,\sigma_Z \colon H \times Z \rightarrow Z)$ such that $f \circ \sigma_X = \sigma_Z \circ (\varphi \times f)$.  If $H = \mathrm{Id}$, then we say $G$ acts fibrewise on $f$.
\end{definition}

Let $G$ be a linear algebraic group; then we can choose to express $G$ as a semi-direct product of its unipotent radical $U \subset G$ and a Levi subgroup $R$, as $k$ has characteristic zero. There is a surjection $G = U \rtimes R \twoheadrightarrow R$. In this article we will only be interested in constructing quotients for equivariant actions of $G = U \rtimes R \twoheadrightarrow R$ on affine morphisms $f \colon X \rightarrow Z$.

A key assumption that enables the construction of NRGIT quotients is the presence of a multiplicative group that grades this action in the following sense.

\begin{definition}
We say that an equivariant action of a homomorphism $\varphi \colon G \rightarrow H$ of affine algebraic groups on a morphism $f \colon X \rightarrow Z$ of $k$-schemes is \begin{enumerate}
    \item \emph{(internally) graded} by a subgroup $\GG_m \subset G$ if the following properties hold:
    \begin{enumerate}[label=\roman*)]
        \item $\varphi(\GG_m)$ acts trivially on $Z$ (thus $\GG_m$ acts fibrewise on $f$);
        \item For the fibrewise $\GG_m$-action on $f$, the morphism $f$ induces an isomorphism $X^{\GG_m} \cong Z$ and $\lim_{t \rightarrow 0} t \cdot x$ exists for all $x \in X$;
        \item For the action of $\GG_m$ on $G$ by conjugation, we have that $\varphi$ induces an isomorphism $G^{\GG_m} \cong H$ and $\lim_{t \rightarrow 0} tgt^{-1}$ exists for all $g \in G$.
    \end{enumerate}
    \item \emph{(externally) graded} by a subgroup $\GG_m \subset \Aut(G)$ if $\varphi$ admits an extension to $\widetilde{\varphi} \colon \widetilde{G}:= G \rtimes \GG_m \rightarrow H$ and the action of $\varphi$ on $f$ extends to a graded equivariant action of $\widetilde{\varphi} \colon \widetilde{G} \rightarrow H$ on $f$.
\end{enumerate}
\end{definition}

\begin{remark}\label{Rmk/extGradedAction}
Suppose that $X$ is an affine $G$-scheme and that there exists a subgroup $\GG_m\subset G$ such that $\lim_{t \rightarrow 0} t \cdot x$ (resp.\ $\lim_{t \rightarrow 0} tgt^{-1}$) exists for all $x\in X$ (resp.\ for all $g\in G$). Then the mapping $x \mapsto \lim_{t \rightarrow 0} t\cdot x$ induces a morphism $f \colon X\rightarrow Z:=X^{\GG_m}$ which is equivariant with respect to the morphism of algebraic groups $G\rightarrow H:=G^{\GG_m}$ defined in the same way. By construction, this action is (internally) graded.
\end{remark}

\subsection{Relative affine NRGIT using an external grading}

In this paper, we only use the construction of NRGIT quotients in the externally graded case; details on the internal case can be found in \cite[$\S$5.1]{HHJ}. There are two ways to describe the NRGIT quotient in the externally graded setting: the first we give in \cref{def NRGIT quotient} and the second is described in \cref{Thm/extGradQuot} \ref{Item 2 Thm/extGradQuot}, where we realise this NRGIT quotient as an open subset inside an associated internally graded GIT quotient of $\widetilde{X} := X \times \mathbb{A}^1$ where the extra $\mathbb{A}^1$ is added to compensate for additionally quotienting by the grading $\GG_m$. Properties of both constructions are needed in the rest of the paper: the first construction gives the relative structure of the quotient and is needed for purity results later on, and the second gives a Hilbert--Mumford criterion that enables us to interpret (semi)stability.

\paragraph*{Relative affine setup (externally graded case)}
In this subsection, we suppose that $G = U \rtimes R$ is a linear algebraic group with unipotent radical $U$ and we have an equivariant action of $G \twoheadrightarrow R$ on an affine morphism $f \colon X \rightarrow Z$ of $k$-schemes, which is externally graded by a subgroup $\GG_m \subset \Aut(G)$. \\

Since $f$ is affine, we can write $X = \rSpec_Z \cA$ where $\cA = f_* \cO_X$ is a sheaf of $\cO_Z$-algebras. Furthermore, the fibrewise $\GG_m$-action on $f$ induces a $\ZZ$-grading $\cA = \oplus_i \cA_i$ and as this action is graded, this is concentrated in non-positive degrees $\cA = \oplus_{i \leq 0} \cA_i$ and $\cA_0 = \cO_Z$.

\begin{definition}\label{def NRGIT quotient}
    In the above situation, suppose we have a good $R$-quotient $q \colon Z' \rightarrow W$ of an open set $j \colon Z' \hookrightarrow Z$. Let $f' \colon X' \rightarrow Z'$ denote the base change of $f$ along $j$. \\
    For any character $\rho \colon G \rightarrow \GG_m$, we let $(q_*j^*\cA)^G_{\rho}$ denote the subsheaf of $q_*j^*\cA$ whose sections are $\rho$-semi-invariants; that is, for any open $U \subset W$, we have:
    \[
    (q_*j^*\cA)^G_{\rho}(U)=
    \left\{
    h \in \cO_{X'}((q \circ f')^{-1}(U))\ \left\vert\
    \begin{array}{l}
    \forall g \in G,\\
    \forall x \in (q \circ f')^{-1}(U),
    \end{array}
    \ h(g \cdot x) = \rho(g) h(x)
    \right.
    \right\}
    \]
    This defines a sheaf \[q_*j^*\cA^{G,\rho} = \bigoplus_{n \geq 0} (q_*j^*\cA)^G_{\rho^n}\] of graded $\cO_W$-algebras, which we refer to as the sheaf of \emph{$\rho$-twisted semi-invariants}. \\
    The inclusion $q_*j^*\cA^{G,\rho} \hookrightarrow \oplus_{n \geq 0} q_*j^*\cA$ of sheaves of graded $\cO_W$-algebras induces a rational map over $W$
    \[  X' = \rSpec_W q_{*} j^*\cA \dashrightarrow X/\!/_{\hspace{-2pt}q,\rho} G:= \rProj_W q_*j^*\cA^{G,\rho} \]
    whose domain of definition is denoted $X^{q,\rho-\sst}$ and called the \emph{relative $\rho$-twisted semistable set (with respect to $q$)}. We call the induced $W$-morphism \[\pi \colon X^{q,\rho-\sst} \rightarrow X/\!/_{\hspace{-2pt} q,\rho} G\] the \emph{$\rho$-twisted relative affine GIT quotient (with respect to $q$)}.
\end{definition}

If we take $\rho$ to be the trivial character, we omit this from the notation and call $X/\!/_{\hspace{-2pt} q} G$ the \emph{(untwisted) relative affine GIT quotient (with respect to $q$)}. By construction $X/\!/_{\hspace{-2pt} q,\rho} G$ is projective over $X/\!/_{\hspace{-2pt} q} G$, which is itself affine over $W$.

We can now state the main properties of this NRGIT quotient. For this, we recall the following notion.

\begin{notation}
A one-parameter subgroup (or 1PS for short) of a group $G$ is any non-trivial homomorphism $\lambda \colon \GG_m\rightarrow G$.
\end{notation}

\begin{theorem}\label{Thm/extGradQuot}\emph{\cite[Thm 1.3]{HHJ}}
    For an equivariant action of $G = U \rtimes R \twoheadrightarrow R$ on an affine morphism $f \colon  X \rightarrow Z$ which is externally graded by $\GG_m \subset \Aut(G)$, suppose we have a good $R$-quotient $q \colon Z' \rightarrow W$ of an open subset $Z' \subset Z$ and that
    \[ \dim \Stab_{U}(z) = 0 \quad \text{for all } z \in Z'.\]
    For a character $\rho \colon G \rightarrow \GG_m$, the following statements hold.
    \begin{enumerate}[label=\roman*)]
        \item The $\rho$-twisted relative affine GIT quotient $\pi \colon X^{q,\rho-\sst} \rightarrow X/\!/_{\hspace{-2pt}q,\rho} G$ is a good $G$-quotient, and  $X/\!/_{\hspace{-2pt}q,\rho} G$ is projective-over-affine over $W$.
        \item\label{Item 2 Thm/extGradQuot} There is a relative projective completion $X/\!/_{\hspace{-2pt}q,\rho} G \hookrightarrow \widetilde{X}/\!/_{\hspace{-2pt} q,\widetilde{\rho} \hspace{1pt}} \widetilde{G}$ over $W$ given by a GIT quotient of an associated internally graded equivariant action of 
        \[ \widetilde{\varphi} \colon \widetilde{G} : = G \rtimes \GG_m \twoheadrightarrow \widetilde{R} : = R \times \GG_m \text{ on } \widetilde{f} \colon \widetilde{X} := X \times_Z \mathbb{A}^1_Z \rightarrow Z\]  
        with respect to a character $\widetilde{\rho} = (\rho,N)$ for $N>\!>0$. This inclusion is induced by the $G$-equivariant inclusion $X \hookrightarrow \widetilde{X}$ given by $x \mapsto (x,1)$.
        \item The restriction of $\pi$ to the open stable set $X^{q,\rho-\st}= X \cap \widetilde{X}^{q,\widetilde{\rho}-\st,\widetilde{G}}$ is a geometric $G$-quotient whose image is quasi-projective over $W$.
        \item The (semi)stable loci $X^{q,\rho-\mathrm{(s)s}}$ admit explicit Hilbert--Mumford descriptions as follows: we have $X^{q,\rho-\mathrm{(s)s}}= X \cap \widetilde{X}^{q,\widetilde{\rho}-\mathrm{(s)s},\widetilde{G}}$, and thus 
\[X^{q,\rho-\mathrm{(s)s}}  = \left\{ x \in X' \left| \begin{array}{c} \langle \widetilde{\rho}, \widetilde{\lambda} \rangle \: \geqp \: 0 \text{ for all 1PSs } \widetilde{\lambda}  \colon \GG_m \rightarrow \widetilde{G} \\ \text{such that } \lim_{t \rightarrow 0} \widetilde{\lambda}(t) \cdot (x,1)\text{ exists in } \widetilde{X}'  \end{array} \right. \right\}
\]
        where $\widetilde{X}':= \widetilde{f}^{-1}(Z') = X' \times_{Z'} \mathbb{A}^1_{Z'}$.
        \item The closed points of the good quotient $X/\!/_{\hspace{-2pt} q, \rho} G$ are in bijection with $G$-orbits in $X^{q,\rho-\sst}$ which are closed (so-called polystable orbits). Moreover, a $G$-orbit in $X^{q,\rho-\sst}$ is closed if and only if it is closed under all flows along 1-PSs $\lambda$ of $G$. In particular, the limit as $ t \rightarrow 0$ of the action of $\lambda(t) \cdot x$ exists in $X^{q,\rho-\sst}$ if and only if this limit exists in $X'$ and $\langle\lambda,\rho\rangle=0$.
        \item Every semistable $G$-orbit contains a unique closed (i.e.\ polystable) orbit in its closure in $X^{q,\rho-\sst}$. From any semistable point, one can flow to a polystable point in the orbit closure using a 1PS of $G$. We say two semistable points are S-equivalent if their orbits have the same polystable orbit in their closure. The closed points of $X/\!/_{\hspace{-2pt} q, \rho} G$ are in bijection with S-equivalence classes of semistable $G$-orbits.
    \end{enumerate}
\end{theorem}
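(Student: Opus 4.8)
The plan is to reduce the externally graded situation to an internally graded one and then run the relative NRGIT machinery: local slices to handle the unipotent radical, reductive GIT to handle the Levi. For the reduction, set $\widetilde{G} := G \rtimes \GG_m$, $\widetilde{R} := R \times \GG_m$ and $\widetilde{X} := X \times_Z \mathbb{A}^1_Z$, with $\GG_m$ acting on the new $\mathbb{A}^1$-coordinate with a suitably chosen weight. One checks that the resulting equivariant action of $\widetilde{\varphi} \colon \widetilde{G} \twoheadrightarrow \widetilde{R}$ on $\widetilde{f} \colon \widetilde{X} \rightarrow Z$ is internally graded by $\GG_m \subset \widetilde{G}$: the extra variable absorbs the additional quotient, so that $\widetilde{f}$ identifies $\widetilde{X}^{\GG_m}$ with $Z$ and $\lim_{t \rightarrow 0} t \cdot \widetilde{x}$ exists for every $\widetilde{x} \in \widetilde{X}$. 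This already yields the statement of ii). The $G$-equivariant inclusion $X \hookrightarrow \widetilde{X}$, $x \mapsto (x,1)$, identifies $X$ with the locus where the new coordinate is non-zero, and once the internally graded quotient of $\widetilde{X}$ is constructed it will exhibit $X \git_{q,\rho} G$ as an open subscheme of $\widetilde{X} \git_{q,\widetilde{\rho}} \widetilde{G}$ for $\widetilde{\rho} = (\rho, N)$ with $N \gg 0$. So it suffices to construct and analyse the internally graded quotient of $\widetilde{X}$ and translate back.

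For the internally graded quotient one peels off the unipotent radical $U \subset \widetilde{G}$ first and then the reductive quotient $\widetilde{R}$. The crucial hypothesis is $\dim \Stab_U(z) = 0$ for $z \in Z'$. Writing $\widehat{U} := U \rtimes \GG_m$, one uses the grading to contract $\widetilde{X}' := \widetilde{f}^{-1}(Z')$ onto $Z'$ and, working locally over $W$, builds slices transverse to the $U$-orbits through $Z'$; the stabiliser hypothesis guarantees these are genuine (étale) slices. From this local picture one shows that the sheaf of $\rho^n$-twisted $\widehat{U}$-invariants is a finitely generated sheaf of $\cO_W$-algebras and that its relative $\rProj$ is an honest good quotient of an open $\widehat{U}$-semistable locus (rather than merely an enveloped quotient) --- this is the relative, sheaf-theoretic upgrade of the affine NRGIT theorems of \cite{BDHK18,BDHK20,BK24}, and is where the real work sits. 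The residual action of $\widetilde{R} = R \times \GG_m$ on this $\widehat{U}$-quotient is reductive, so ordinary relative GIT over the affine base $W$ (a fibrewise version of \cite{Kin94}) produces the good quotient $\widetilde{X} \git_{q,\widetilde{\rho}} \widetilde{G}$, projective-over-affine over $W$, restricting to a geometric quotient on the stable locus. Intersecting with the locus where the new coordinate is non-zero, and noting that $\widehat{U}$-invariance together with the correct $\GG_m$-weight encodes exactly $\rho$-semi-invariance on $X$, recovers i) and iii).

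Parts iv)--vi) are then read off from the projective completion $\widetilde{X} \git_{q,\widetilde{\rho}} \widetilde{G}$. Since this is a GIT-type quotient of the (still non-reductive) group $\widetilde{G}$ acting on the affine-over-$W$ scheme $\widetilde{X}'$ with linearisation $\widetilde{\rho}$, a point is (semi)stable exactly when every $1$-PS $\widetilde{\lambda}$ of $\widetilde{G}$ along which $\lim_{t \rightarrow 0} \widetilde{\lambda}(t) \cdot (x,1)$ exists in $\widetilde{X}'$ pairs non-negatively (resp. positively) with $\widetilde{\rho}$; here one must allow all $1$-PSs of $\widetilde{G}$, not just those of a maximal torus, because destabilising unipotent directions are detected by existence of the limit rather than by a weight. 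The proof that this Hilbert--Mumford condition is correct uses the slice description together with a valuative/limit characterisation of non-semistability, after which pulling back along $x \mapsto (x,1)$ gives $X^{q,\rho-\mathrm{(s)s}} = X \cap \widetilde{X}^{q,\widetilde{\rho}-\mathrm{(s)s},\widetilde{G}}$ and the stated formula. For v) and vi) one invokes the good-quotient formalism: closed points correspond to closed orbits; a semistable $G$-orbit is closed iff it is closed under all $1$-PS flows of $G$ (a non-reductive Kempf--Hesselink-type analysis, the grading again supplying the flows, and $\langle \lambda, \rho \rangle = 0$ being forced for a flow to remain semistable); and every semistable orbit closure contains a unique such polystable orbit, reachable along a $1$-PS of $G$, giving the S-equivalence statement.

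The step I expect to be the main obstacle is the finite-generation / genuine-quotient claim for $\widehat{U}$: showing that $\rProj$ of the $\rho$-twisted $\widehat{U}$-invariant sheaf really is a good quotient of an open semistable set, and not a proper enveloped quotient, hinges on making the local slice construction work uniformly over $W$ using $\dim \Stab_U(z) = 0$ on $Z'$, together with the ``$\widehat{U}$-trick'' of \cite{BDHK18,BDHK20,BK24}. A close second is verifying the non-reductive Hilbert--Mumford criterion of iv), since one must rule out that some unipotent direction destabilises a point without being visible to the $1$-PS limit test.
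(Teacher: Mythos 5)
The paper does not prove this theorem: it is quoted verbatim as \cite[Thm 1.3]{HHJ}, so there is no internal proof to compare against. Your outline faithfully reconstructs the strategy of that reference and of the underlying NRGIT literature --- passing from the external to the internal grading via $\widetilde{X}=X\times_Z\mathbb{A}^1_Z$ and $\widetilde{G}=G\rtimes\GG_m$ with $\widetilde{\rho}=(\rho,N)$ for $N\gg0$, quotienting first by $\widehat{U}=U\rtimes\GG_m$ using slices and the trivial-unipotent-stabiliser hypothesis, then by the reductive Levi, and extracting the Hilbert--Mumford description by conjugating one-parameter subgroups of $\widetilde{G}$ into $\widetilde{R}$ --- and you correctly flag the genuinely hard steps (finite generation of the twisted $\widehat{U}$-invariants and the validity of the non-reductive Hilbert--Mumford criterion) as the ones requiring the detailed slice arguments of the cited work, so this is an accurate sketch of the same approach rather than a self-contained proof.
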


\begin{remark}\label{rmk/alt descr HM}
The Hilbert--Mumford description in the external case follows from the following Hilbert--Mumford description for the internally graded action of $\widetilde{\varphi}$ on $\widetilde{f}$:
\[ \widetilde{X}^{q,\widetilde{\rho}-\mathrm{(s)s}, \widetilde{G}} = \left\{ x \in \widetilde{X}' \left| \begin{array}{c} \langle \widetilde{\rho}, \widetilde{\lambda} \rangle \: \geqp \: 0 \text{ for all 1PSs } \widetilde{\lambda}  \colon \GG_m \rightarrow \widetilde{G} \\ \text{such that } \lim_{t \rightarrow 0} \lambda(t) \cdot x \text{ exists in } \widetilde{X}' \end{array} \right. \right\}. \]
Furthermore, we can conjugate any 1PS of $\widetilde{G}$ by an element in $U$, so its image is contained in $\widetilde{R}$, thus we have
\[ \widetilde{X}^{q,\rho-\mathrm{(s)s}, \widetilde{G}} = \bigcap_{u \in U} u X^{q,\rho-\mathrm{(s)s},\widetilde{R}}.\]
\end{remark}

The above results were stated in the optimal setting, where all points in the open subset $Z' \subset Z$ have trivial unipotent stabilisers. However, relative NRGIT can be applied more generally when this fails as explained in the following remark.

\begin{remark}[NRGIT with non-trivial unipotent stabilisers]\label{rmk NRGIT with non-trivial stabilisers}
 If the generic $U$-stabiliser in $Z'$ is trivial, but not all points in $Z'$ have trivial stabiliser, then one can base change $f$ along the open set in $Z'$ where the unipotent stabilisers are trivial and apply these results to this base-changed morphism, where now the quotient $q$ is replaced by its restriction. This avoids complicated blow-up procedures to get rid of the non-generic stabilisers, but gives something which lies only over an open subset in $W$.
    
    If the generic $U$-stabiliser in $Z'$ has positive dimension and we assume first for simplicity that $U$ is abelian, then one can also construct relative NRGIT quotients provided the unipotent stabilisers have constant dimension (which again can be achieved by base changing $f$ along the open set in $Z'$ where the unipotent stabilisers have minimal dimension). The idea is to locally find complementary subgroups $U' \subset U$ that act freely (here the assumption that $U$ is abelian is used so it suffices to find a complementary subspace to the stabiliser inside the Lie algebra) and take the quotients by these complementary subgroups instead. If $U$ is not abelian, then one fixes a sequence of normal subgroups whose quotients are abelian, and deals with this iteratively by fixing the dimensions of the stabilisers for the subgroups in this sequence. In particular, once one fixes the dimensions of all these stabilisers, one can construct a relative NRGIT quotient (see \cite[$\S$4.3]{HHJ} for details).
\end{remark}

\section{Moduli spaces of quiver representations with multiplicities}\label{sec/constr moduli spaces}

In this section, we construct moduli spaces of semistable quiver representations with multiplicities and prove \cref{first main thm}. To do this, we check step by step that the equivariant action \eqref{equiv action quivers} satisfies the requirements of \cref{Thm/extGradQuot}, and then moduli-theoretically interpret GIT (semi)stability. Throughout the section, we fix a quiver with multiplicities $(Q,\bfm)$ and a rank vector $\bfr\in\bbZ_{\geq0}^{Q_0}$.

\subsection{External grading and truncation}\label{Sect/extGrad}

We start by constructing an external grading for the action $\GL_{\bfm,\bfr}$-action on $R(Q,\bfm;\bfr)$ 
and show that it lifts to an external grading of the equivariant action of the homomorphism $\GL_{\bfm,\bfr}\twoheadrightarrow\GL_\bfr$ on the truncation morphism $\tau \colon R(Q,\bfm;\bfr)\rightarrow R(Q,\bfr).$

We consider a modified version of the external grading \cite[\S 7]{HHJ24}, which is tailored to quivers with varying multiplicities. We first set up some notation.

\begin{notation}
Let $m,r\geq1$. Using \cref{Not/BlockMatrices}, we view $\GL_{m,r}$ as the following subgroup of $\GL_{mr}$:
\begin{align}\label{Eqn/blockDecompositionGroup}
\GL_{m,r}=
\left\{
\left(
\begin{array}{cccc}
g_0 & g_1 & \ldots & g_{m-1} \\
0 & g_0 & \ddots & \vdots \\
\vdots & \ddots & \ddots & g_1 \\
0 & \ldots & 0 & g_0
\end{array}
\right)
\ \left\vert\ 
\begin{array}{l}
g_0\in\GL_r \\
\forall 1\leq l\leq m-1,\ g_l\in\gl_r
\end{array}
\right.
\right\}
\end{align}
Let $\Phi_{m,r} \colon \GG_m\rightarrow\GL_{mr}$ be the morphism given by:
\[
\Phi_{m,r}(t)=
\left(
\begin{array}{cccc}
t^{m-1}\Id & 0 & \ldots & 0 \\
0 & \ddots & \ddots & \vdots \\
\vdots & \ddots & t\Id & 0 \\
0 & \ldots & 0 & \Id
\end{array}
\right)
.
\]
Let $*$ denote the following $\GG_m$-action on $\GL_{m,r}$ by group automorphisms:
\[
\begin{array}{ccl}
\GG_m\times\GL_{m,r} & \rightarrow & \GL_{m,r} \\ 
(t,g) & \mapsto & t*g:=\Phi_{m,r}(t)g\Phi_{m,r}(t)^{-1}.
\end{array}
\]
For $m_1, m_2 \geq 1$, let $m_{12} = \gcd(m_1,m_2)$ and $f_{ji} = m_{i}/m_{ij}$ as in \cref{notation gcd of multiplicities}. For $r_1,r_2 \geq 1$, we view $\Hom_{k_{m_{12}}}(k_{m_1}^{\oplus r_1},k_{m_2}^{\oplus r_2})$ as the following subspace of $\Hom_{k}(k^{\oplus m_1r_1},k^{\oplus m_2r_2})$:

\begin{align}\label{Eqn/blockDecompositionRep}
\Hom_{k_{m_{12}}}(k_{m_1}^{\oplus r_1},k_{m_2}^{\oplus r_2})
=
\left\{
\left.
\left(
\begin{array}{cccc}
x_0 & x_1 & \ldots &  x_{m_{12}-1} \\
0 & x_0 &  \ddots & \vdots \\
\vdots &  \ddots & \ddots & x_1 \\
0 & 0 & \ldots  & x_0
\end{array}
\right)
\ \right\vert\ 
\begin{array}{l}
\forall \: 0\leq l\leq m_{ij}-1, \\
x_l\in\Hom_{k}(k^{\oplus r_1f_{21}},k^{\oplus r_2f_{12}})
\end{array}
\right\}
.
\end{align}
Below we will often further subdivide the blocks $x_i$ into blocks of size $r_2\times r_1$.
\end{notation}

\begin{lemma}\label{Lem/extGrad1}
Let $m_1,r_1,m_2,r_2\geq1$. Let also $\alpha_1,\alpha_2\in\bbZ$. Then the action
\[
\begin{array}{rccl}
*_{\alpha_1,\alpha_2} \colon & \GG_m\times\Hom_{k}(k^{\oplus m_1r_1},k^{\oplus m_2r_2}) & \rightarrow & \Hom_{k}(k^{\oplus m_1r_1},k^{\oplus m_2r_2}) \\ &
(t,x) & \mapsto & \Phi_{m_2,r_2}(t^{\alpha_2})x\Phi_{m_1,r_1}(t^{\alpha_1})^{-1}
\end{array}
\]
preserves the subspace $\Hom_{k_{m_{12}}}(k_{m_1}^{\oplus r_1},k_{m_2}^{\oplus r_2})\subset \Hom_{k}(k^{\oplus m_1r_1},k^{\oplus m_2r_2})$ if and only if we have $m_{12}=1$ or $\alpha_1m_1=\alpha_2m_2$.
\end{lemma}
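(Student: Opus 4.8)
The plan is to compute the action $*_{\alpha_1,\alpha_2}$ explicitly on the block matrix \eqref{Eqn/blockDecompositionRep} and determine exactly when the result stays in the prescribed subspace. Recall that an element of $\Hom_{k_{m_{12}}}(k_{m_1}^{\oplus r_1},k_{m_2}^{\oplus r_2})$, viewed inside $\Hom_k(k^{\oplus m_1 r_1}, k^{\oplus m_2 r_2})$, is a block matrix built from blocks $x_l \in \Hom_k(k^{\oplus r_1 f_{21}}, k^{\oplus r_2 f_{12}})$ arranged in the upper-triangular Toeplitz pattern of \eqref{Eqn/blockDecompositionRep} with $m_{12} \times m_{12}$ blocks. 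Since $\Phi_{m_i,r_i}(t^{\alpha_i})$ is diagonal with entry $t^{\alpha_i(m_i - 1 - p)}\Id$ on the $p$-th block-row/column (using the basis conventions of \cref{Not/BlockMatrices}, $0 \leq p \leq m_i - 1$), conjugation/multiplication simply multiplies each scalar entry of $x$ in position $(q, p)$ (with $0 \le p \le m_1-1$, $0 \le q \le m_2-1$) by $t^{\alpha_2(m_2-1-q) - \alpha_1(m_1-1-p)}$.

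First I would set up the bookkeeping: index the rows and columns of the large matrix by pairs $(\text{block index}, \text{inner index})$, translate the Toeplitz structure of \eqref{Eqn/blockDecompositionRep} into a condition on which scalar entries are forced to be equal or zero, and record the exponent $e(p,q) := \alpha_2(m_2 - 1 - q) - \alpha_1(m_1 - 1 - p)$ of $t$ picked up by the entry in position $(q,p)$. The subspace is preserved if and only if, whenever two scalar positions are forced to carry the same value by the Toeplitz condition, they get scaled by the same power of $t$ — i.e. $e$ is constant along the relevant diagonals — and whenever a position is forced to be zero it stays zero (automatic, since scaling $0$ gives $0$). Working out which positions within a single block $x_l$ and across the repeated blocks must agree, the constraint reduces to: $e(p,q)$ depends only on the ``diagonal'' $l$ in which the block sits. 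The key computation is to see that moving from block $x_l$ to block $x_{l+1}$ shifts $(p,q)$ by $(+f_{21}, -f_{12})$ at the level of inner indices combined with a block shift, and that consistency forces $\alpha_1 f_{21} = \alpha_2 f_{12}$, equivalently $\alpha_1 m_1 / m_{12} = \alpha_2 m_2 / m_{12}$, i.e. $\alpha_1 m_1 = \alpha_2 m_2$.

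For the ``only if'' direction I would exhibit an explicit $x$ in the subspace — say the element with a single nonzero $x_l$ equal to an elementary matrix — whose image under $*_{\alpha_1,\alpha_2}$ violates the Toeplitz pattern unless $\alpha_1 m_1 = \alpha_2 m_2$; this is straightforward once the exponent formula $e(p,q)$ is in hand. The degenerate case $m_{12} = 1$ must be singled out: then $f_{ji} = m_i$ for both $i$, the ``subspace'' is the whole $\Hom_k(k^{\oplus m_1 r_1}, k^{\oplus m_2 r_2})$ (a single block, no Toeplitz constraint), so every $*_{\alpha_1,\alpha_2}$ trivially preserves it regardless of the $\alpha_i$ — which is why the statement has the disjunction ``$m_{12} = 1$ or $\alpha_1 m_1 = \alpha_2 m_2$''.

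The main obstacle I anticipate is purely notational: keeping the two distinct block decompositions straight — the ``outer'' $m_{12} \times m_{12}$ Toeplitz structure from \eqref{Eqn/blockDecompositionRep} and the ``inner'' $f_{12} \times f_{21}$ subdivision of each $x_l$ — and correctly pinning down how an inner row/column index translates into an overall index $p \in \{0, \dots, m_1 - 1\}$ (respectively $q \in \{0, \dots, m_2 - 1\}$) with respect to the bases \eqref{Eq useful basis}, so that the exponents of $t$ are computed against the correct diagonal entry of $\Phi_{m_i,r_i}$. Once that dictionary is fixed, the equivalence falls out of the single identity $\alpha_1 m_1 = \alpha_2 m_2 \iff \alpha_1 f_{21} = \alpha_2 f_{12}$, and no deep argument is needed.
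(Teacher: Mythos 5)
Your proposal is correct and follows essentially the same route as the paper's proof: both isolate the degenerate case $m_{12}=1$, compute the $t$-weight picked up by each $r_2\times r_1$ block under $*_{\alpha_1,\alpha_2}$, and observe that the Toeplitz pattern of \eqref{Eqn/blockDecompositionRep} is preserved exactly when the weight is invariant under the simultaneous shift by $f_{21}$ columns and $f_{12}$ rows of blocks, i.e.\ $\alpha_1 f_{21}=\alpha_2 f_{12}$, which is equivalent to $\alpha_1 m_1=\alpha_2 m_2$. Your explicit exponent $e(p,q)$ and the elementary-matrix witness for the ``only if'' direction just make precise what the paper leaves implicit.
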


\begin{proof}
Note that, when $m_{12}=1$, we have that:
\[
\Hom_{k_{m_{12}}}(k_{m_1}^{\oplus r_1},k_{m_2}^{\oplus r_2})
=
\Hom_{k}(k^{\oplus m_1r_1},k^{\oplus m_2r_2})
\]
and the left-hand side is trivially preserved under the above $\GG_m$-action. Let us then assume that $m_{12}\ne1$.

Any matrix $x\in\Hom_{k_{m_{12}}}(k_{m_1}^{\oplus r_1},k_{m_2}^{\oplus r_2})$ as in \eqref{Eqn/blockDecompositionRep} can be further subdivided into blocks of size $r_2\times r_1$. The action $*_{\alpha_1,\alpha_2}$ scales each block of size $r_2\times r_1$ with a fixed weight, which decreases by $-\alpha_1$ when moving from right to left and increases by $\alpha_2$ when moving from bottom to top among blocks.

Therefore, the subspace $\Hom_{k_{m_{12}}}(k_{m_1}^{\oplus r_1},k_{m_2}^{\oplus r_2})$ is preserved under $*_{\alpha_1,\alpha_2}$ if and only if the aforementioned weights remained unchanged by moving simultaneously $f_{21}$ blocks to the left and $f_{12}$ blocks upwards, i.e.\ $-\alpha_1f_{21}+\alpha_2f_{12}=0$, or equivalently $\alpha_1m_1=\alpha_2m_2$.
\end{proof}

To define the external grading, consider $\bm{\alpha} = (\alpha_i)_{i \in Q_0}\in\bbZ_{>0}^{Q_0}$ such that
\begin{equation}\label{eqns for vertex weights in ext grading}
  \forall i,j\in Q_0,\ \alpha_im_i=\alpha_jm_j  
\end{equation}
and $\bm{\beta}=(\beta_a)_{a\in Q_1}\in\bbZ^{Q_1}$. We define the following actions:
\[
\begin{array}{rccl}
*_{\bm{\alpha}} \colon & \GG_m\times \GL_{\bfm,\bfr} & \rightarrow & \GL_{\bfm,\bfr} \\ &
\left(t,(g_i)_{i\in Q_0}\right) & \mapsto & \left(\Phi_{m_i,r_i}(t^{\alpha_i})g_i\Phi_{m_i,r_i}(t^{\alpha_i})^{-1}\right)_{i\in Q_0},
\end{array}
\]
\[
\begin{array}{rccl}
*_{\bm{\alpha},\bm{\beta}} \colon & \GG_m\times R(Q,\bfm;\bfr) & \rightarrow & R(Q,\bfm;\bfr) \\ &
\left(t,(x_a)_{a\in Q_1}\right) & \mapsto & \left(t^{\beta_a}\cdot\Phi_{m_j,r_j}(t^{\alpha_j})x_a\Phi_{m_i,r_i}(t^{\alpha_i})^{-1}\right)_{\substack{a\in Q_1 \\ a \colon i\rightarrow j}}.
\end{array}
\]

\begin{lemma}\label{Lem/extGrad2}
For the action $*_{\bm{\alpha},\bm{\beta}}$, the following statements hold.
\begin{enumerate}[label=(\roman*)]
\item The action $*_{\bm{\alpha},\bm{\beta}}$ extends to an action 
\[
\begin{array}{rccl}
*_{\bm{\alpha},\bm{\beta}} \colon & (\GL_{\bfm,\bfr}\rtimes_{\bm{\alpha}}\GG_m)\times R(Q,\bfm;\bfr) & \rightarrow & R(Q,\bfm;\bfr) \\ &
\left((g,t),(x_a)_{a\in Q_1}\right) & \mapsto & g\cdot(t*_{\bm{\alpha},\bm{\beta}}x),
\end{array}
\]
where the semi-direct product $\GL_{\bfm,\bfr}\rtimes_{\bm{\alpha}}\GG_m$ is taken with respect to $*_{\bm{\alpha}}$.

\item The $\GL_{\bfm,\bfr}\rtimes_{\bm{\alpha}}\GG_m$-action $*_{\bm{\alpha},\bm{\beta}}$ induces an externally graded action (see \cref{Rmk/extGradedAction}) if and only $\beta_a-\alpha_i(f_{ji}-1)\geq0$ for all $a \colon i \rightarrow j$ in $Q_1$.

\item If $\beta_a\geq \alpha_i(f_{ji}-1)$ for all $a \colon i \rightarrow j$ in $Q_1$, then we have a $\GL_\bfr$-equivariant isomorphism:
\[
R(Q,\bfm;\bfr)^{\GG_m}\simeq R(Q,\bfr)
\]
if and only if we have $\beta_a= \alpha_i(f_{ji}-1)$ for all $a \colon i \rightarrow j$ in $Q_1$. In that case, we have an induced externally graded equivariant action
\[
(\GL_{\bfm,\bfr}\twoheadrightarrow\GL_\bfr)\curvearrowright (\tau \colon R(Q,\bfm;\bfr)\rightarrow R(Q,\bfr)).
\]
\end{enumerate}
\end{lemma}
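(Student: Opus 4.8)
The plan is to verify each of the three parts by direct computation with the block-matrix descriptions \eqref{Eqn/blockDecompositionGroup} and \eqref{Eqn/blockDecompositionRep}, leaning on \cref{Lem/extGrad1} for the essential compatibility condition. For part (i), I would first check that $*_{\bm\alpha}$ is genuinely an action of $\GG_m$ on $\GL_{\bfm,\bfr}$ by group automorphisms: the constraint \eqref{eqns for vertex weights in ext grading} is exactly what \cref{Lem/extGrad1} (applied with $m_1=m_2=m_i$, $r_1=r_2=r_i$, $\alpha_1=\alpha_2=\alpha_i$, where trivially $\alpha_i m_i=\alpha_i m_i$) guarantees, so each $\Phi_{m_i,r_i}(t^{\alpha_i})$-conjugation preserves the subgroup $\GL_{m_i,r_i}\subset\GL_{m_ir_i}$. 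Then one forms the semidirect product $\GL_{\bfm,\bfr}\rtimes_{\bm\alpha}\GG_m$ and checks the cocycle identity $(g,t)\cdot((h,s)\cdot x)=((g,t)(h,s))\cdot x$; this reduces to the fact that $*_{\bm\alpha,\bm\beta}$ is an action of $\GG_m$ on $R(Q,\bfm;\bfr)$ (which needs \cref{Lem/extGrad1} applied across each arrow $a\colon i\to j$, using $\alpha_im_i=\alpha_jm_j$ so the subspace $\Hom_{k_{m_{ij}}}(k_{m_i}^{\oplus r_i},k_{m_j}^{\oplus r_j})$ is preserved) together with the compatibility $t*_{\bm\alpha,\bm\beta}(g\cdot x)=(t*_{\bm\alpha}g)\cdot(t*_{\bm\alpha,\bm\beta}x)$, which is immediate from the definition of the two actions in terms of conjugation by the same matrices $\Phi$.

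For part (ii), the point is to determine when the $\GG_m$-action $*_{\bm\alpha,\bm\beta}$ on $R(Q,\bfm;\bfr)$ is \emph{conical}, i.e.\ all limits $\lim_{t\to0}t*_{\bm\alpha,\bm\beta}x$ exist; by \cref{Rmk/extGradedAction} this is precisely the condition for the action to be externally graded (the limits on the group always exist since $\bm\alpha>0$, and the fixed-point locus on the group recovers $\GL_\bfr$). So I would compute the $\GG_m$-weights on the coordinates of $R(Q,\bfm;\bfr)$. Using \eqref{Eqn/blockDecompositionRep} subdivided into $r_j\times r_i$ blocks, the block in position $(p,q)$ inside the matrix-coefficient $x_l$ (for an arrow $a\colon i\to j$) has a $\GG_m$-weight that is an affine-linear function of $p,q,l$ with leading coefficients coming from $\alpha_i,\alpha_j,\beta_a$; the minimal such weight over all these blocks is $\beta_a-\alpha_i(f_{ji}-1)$ (attained at the bottom-left corner of the top-right block $x_{m_{ij}-1}$, say), and the limit exists iff every weight is $\geq0$, i.e.\ iff $\beta_a\geq\alpha_i(f_{ji}-1)$ for all arrows. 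This is a bookkeeping exercise with the explicit form of $\Phi_{m,r}$; I would be careful to track exactly which block attains the minimum.

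For part (iii), assuming the conicality bound from (ii), I would identify $R(Q,\bfm;\bfr)^{\GG_m}$ as the subspace of weight-zero coordinates and compare it with the image of the section $\iota$ of \cref{definition iota}. The fixed locus consists of those $x$ for which every $r_j\times r_i$ block has weight zero; from the weight formula this forces all blocks except possibly the bottom-left block of $x_0$ to vanish, and that surviving block carries weight $\beta_a-\alpha_i(f_{ji}-1)$ for each arrow. Hence the fixed locus is isomorphic to $R(Q,\bfr)$ — equivariantly for the induced $\GL_\bfr=\GL_{\bfm,\bfr}^{\GG_m}$-action — precisely when that surviving weight is $0$, i.e.\ $\beta_a=\alpha_i(f_{ji}-1)$ for all arrows, and in that case this isomorphism is exactly the one identifying $R(Q,\bfm;\bfr)^{\GG_m}$ with $R(Q,\bfr)$ via $\iota$ and $\tau$. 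The final assertion then follows: the limit map $x\mapsto\lim_{t\to0}t*_{\bm\alpha,\bm\beta}x$ is precisely the truncation $\tau$ (one checks on coordinates that it picks out the $(f_{ij},1)$-entry of $x_0$, matching \cref{Def/matrixTruncation}), so by \cref{Rmk/extGradedAction} the equivariant action $(\GL_{\bfm,\bfr}\twoheadrightarrow\GL_\bfr)\curvearrowright(\tau\colon R(Q,\bfm;\bfr)\to R(Q,\bfr))$ is externally graded.

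The main obstacle I anticipate is purely combinatorial rather than conceptual: keeping the indexing of the nested block decompositions straight — the outer $m_{ij}\times m_{ij}$ blocks $x_l$, the inner $f_{ij}\times f_{ji}$ pattern relating $k_{m_i}$ to $k_{m_j}$, and the further $r_j\times r_i$ subdivision — so as to write down the $\GG_m$-weight of each scalar coordinate correctly and thereby pin down exactly which coordinate realizes the extremal weight $\beta_a-\alpha_i(f_{ji}-1)$. Once the weight formula is written out carefully, all three equivalences fall out by comparing it to $0$; the identification of the limit map with $\tau$ and of the fixed locus with $\iota(R(Q,\bfr))$ is then a direct match against \cref{Def/matrixTruncation} and \cref{definition iota}.
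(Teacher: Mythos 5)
Your proposal follows essentially the same route as the paper: part (i) via the commutation of $*_{\mathbf{0},\bm\beta}$ with $*_{\bm\alpha,\mathbf{0}}$ and \cref{Lem/extGrad1}, part (ii) by computing the $\GG_m$-weights of the $r_j\times r_i$ sub-blocks and locating the minimum, and part (iii) by identifying the weight-zero locus with the image of $\iota$ and the limit map with $\tau$. The one concrete slip is in part (ii): the minimal weight $\beta_a-\alpha_i(f_{ji}-1)$ is attained at the bottom-left $r_j\times r_i$ sub-block of $(x_a)_0$ (equivalently, the $(f_{ij},1)$-entry-block of $x_0$, which is exactly the coordinate $\tau$ extracts), \emph{not} at the bottom-left corner of the top-right block $x_{m_{ij}-1}$; the latter sits at global column-block index $(m_{ij}-1)f_{ji}+1$ and row-block index $f_{ij}$, so its weight is $\beta_a-\alpha_i(f_{ji}-1)+\alpha_if_{ji}(m_{ij}-1)$, strictly larger whenever $m_{ij}>1$. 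Indeed, every sub-block of $x_l$ with $l\geq 1$ has weight exceeding that of the corresponding sub-block of $x_0$ by $\alpha_i f_{ji}l>0$. Since you state the correct minimal value, and your part (iii) correctly singles out the bottom-left block of $x_0$ as the sole potential weight-zero coordinate, this is an internal inconsistency rather than a fatal error, but it is precisely the bookkeeping point you flagged as delicate, so it needs to be fixed for the argument to be complete.
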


\begin{proof}
Due to the assumption \eqref{eqns for vertex weights in ext grading}, the $\GG_m$-action $*_{\bm{\alpha},\bm{\beta}}$ is well-defined by \cref{Lem/extGrad1} as
\[
t*_{\bm{\alpha},\bm{\beta}}(x_a)_{a\in Q_1}=(t^{\beta_a}\cdot (t*_{\alpha_i,\alpha_j}x))_{\begin{subarray}{l}a\in Q_1 \\ a \colon i \rightarrow j\end{subarray}}.
\]
Part (i) holds as the $\GG_m$-action $*_{\bm{\alpha},\mathbf{0}}$ extends to a $\GL_{\bfm,\bfr}\rtimes_{\bm{\alpha}}\GG_m$-action, by construction, and the $\GG_m$-action $*_{\mathbf{0},\bm{\beta}}$ commutes with the $\GL_{\bfm,\bfr}\rtimes_{\bm{\alpha}}\GG_m$-action $*_{\bm{\alpha},\mathbf{0}}$.

For (ii), we only need to characterise when the $\GG_m$-actions $*_{\bm{\alpha}}$ and $*_{\bm{\alpha},\bm{\beta}}$ induce non-positive gradings on $\cO(\GL_{\bfm,\bfr})$ and $\cO(R(Q,\bfm;\bfr))$ respectively by \cref{Rmk/extGradedAction}. Since $\bm{\alpha}\in\bbZ_{>0}^{Q_0}$, the action $*_{\bm{\alpha}}$ induces a non-positive grading on $\cO(\GL_{\bfm,\bfr})$. For computing the weights of the $\GG_m$-action $*_{\bm{\alpha},\bm{\beta}}$ on $R(Q,\bfm;\bfr)$, we view $x\in R(Q,\bfm;\bfr)$ as a collection of matrices (indexed by $a \colon i\rightarrow j$ in $Q_1$)
\[
x_a\in\Hom_{k_{m_{ij}}}(k_{m_{ij}}^{\oplus r_if_{ji}},k_{m_{ij}}^{\oplus r_jf_{ij}})
\]
as in \eqref{Eqn/blockDecompositionRep}. Then the $\GG_m$-action $*_{\bm{\alpha},\bm{\beta}}$ scales the blocks of size $r_j\times r_i$ with fixed weight, and the block of $x_a$ with minimal weight is the bottom-left block of $(x_a)_0$, whose weight is $\beta_a-\alpha_i(f_{ji}-1)$, which completes (ii).

For (iii), by the above discussion on $\GG_m$-weights of $R(Q,\bfm;\bfr)$, we have:
\[
R(Q,\bfm;\bfr)^{\GG_m}=\prod_{\substack{a \colon i\rightarrow j \\ \beta_a=\alpha_i(f_{ji}-1)}}\Hom_{k}(k^{\oplus r_i},k^{\oplus r_j}).
\]
For the final statement in (iii), we analyse the limit maps under these $\GG_m$-actions. For $g=(g_i)_{i\in Q_0}\in\GL_{\bfm,\bfr}$, viewing $g_i$ as in \eqref{Eqn/blockDecompositionGroup} for every $i\in Q_0$, we have:
\[
\limit t*_{\bm{\alpha}} g=((g_i)_0)_{i\in Q_0}\in\GL_\bfr.
\]
For $x\in R(Q,\bfm;\bfr)$, viewing $x_a$ as in \eqref{Eqn/blockDecompositionRep} for every $a \colon i \rightarrow j$ in $Q_1$, we obtain:
\[
\limit t*_{\bm{\alpha},\bm{\beta}} x=(\tau(x_a))_{a\in Q_1}\in R(Q,\bfr),
\]
since the bottom-left $r_j\times r_i$ block of $x_a$ is the block of $(x_a)_0$ with coordinates $(f_{ij},1)$, which is $\tau(x_a)$ as in \cref{Def/matrixTruncation}. This finishes the proof.
\end{proof}

\begin{remark}
\cref{Lem/extGrad2} shows there are many possible external gradings of the action $\GL_{\bfm,\bfr}\curvearrowright R(Q,\bfm;\bfr)$. One of these gradings yields a more natural limit map $\tau$ (see \cref{Rmk/motivationTruncation}). Nevertheless, it can be useful to adjust the weight vector $\bm{\beta}$ for other purposes - for instance to make quiver moment maps equivariant as below.
\end{remark}

For the remainder of this subsection, we consider the doubled quiver $\overline{Q}$. The following results are key to the definition of the modified Nakajima quiver varieties appearing in \cref{main thm purity}.

\begin{lemma}\label{Lem/extGrad3} The above $\GG_m$-actions interact with the trace pairing (see \cref{Not/TracePairing}) and moment map as follows.
\begin{enumerate}[label=(\roman*)]
\item Fix $m_1,r_1,m_2,r_2$ and  $\alpha_1,\alpha_2,\beta_a,\beta_{a^*}\in\bbZ_{>0}$ satisfying $\alpha_1 m_1 = \alpha_2 m_2$. For $(x,y)\in\Hom_{k_{m_{12}}}(k_{m_1}^{\oplus r_1},k_{m_2}^{\oplus r_2})\times\Hom_{k_{m_{12}}}(k_{m_2}^{\oplus r_2},k_{m_1}^{\oplus r_1})$, we have
\[
\langle t*_{\alpha_1,\alpha_2,\beta_a}x,t*_{\alpha_2,\alpha_1,\beta_{a^*}}y \rangle
=
t^{\alpha_1f_{21}(m_{12}-1)+\beta_{a}+\beta_{a^*}} \langle x,y \rangle.
\]

\item Fix $\bm{\alpha}\in\bbZ_{>0}^{Q_0}$ satisfying \eqref{eqns for vertex weights in ext grading} and consider $\bm{\beta} = (\beta_a,\beta_{a^*})_{a \in Q_1}\in\bbZ_{>0}^{\overline{Q}_1}$. For all $(x,y)\in R(\overline{Q},\bfm;\bfr)$, we have:
\end{enumerate}\[ 
\mu_{(Q,\bfm),\bfr}(t*_{\bm{\alpha},\bm{\beta}}(x,y))=
t*_{\bm\alpha}
\left(
\sum_{\substack{a\in Q_1 \\ a \colon j\rightarrow i}}
t^{w_a}
\sum_{f=0}^{f_{ji}-1}
\epsilon^{f}x_ay_a\epsilon^{f_{ji}-1-f}
-
\sum_{\substack{a\in Q_1 \\ a \colon i\rightarrow j}}
t^{w_{a^*}}
\sum_{f=0}^{f_{ji}-1}
\epsilon^{f}y_ax_a\epsilon^{f_{ji}-1-f}
\right)_{\hspace{-6pt}i\in Q_0}
\] 
\hspace{1cm} where, for $a \colon i\rightarrow j$ in $ Q_1$, we have:
\begin{align*}
w_a & = \beta_a+\beta_{a^*}-\alpha_j(f_{ij}-1), \\
w_{a^*} & = \beta_a+\beta_{a^*}-\alpha_i(f_{ji}-1).
\end{align*}
\end{lemma}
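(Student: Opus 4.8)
The plan is to prove part (i) by a direct computation with the explicit block-matrix description of the $*$-actions, and then to deduce part (ii) by expanding the moment map formula term by term and applying part (i) together with the definition of the $*_{\bm\alpha}$-action.

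For part (i), I would recall from \cref{Not/TracePairing} that $\langle x, y\rangle = r(\Tr(xy))$ where, after identifying $\Hom_{k_{m_{12}}}(k_{m_1}^{\oplus r_1},k_{m_2}^{\oplus r_2})$ with $\Hom_{k_{m_{12}}}(k_{m_{12}}^{\oplus f_{21}r_1},k_{m_{12}}^{\oplus f_{12}r_2})$, the product $xy$ lies in $\Hom_{k_{m_{12}}}(k_{m_{12}}^{\oplus f_{21}r_1},k_{m_{12}}^{\oplus f_{21}r_1})$ and $r$ extracts the coefficient of $\epsilon^{m_{12}-1}$ of the trace. The key observation is that the action $*_{\alpha_1,\alpha_2,\beta_a}$ scales each $r_2\times r_1$ sub-block of $x$ by a monomial in $t$: writing $x$ as in \eqref{Eqn/blockDecompositionRep} subdivided into $r_2\times r_1$ blocks, the block that represents the coefficient of $\epsilon^l$ and sits in block-position $(p,q)$ within its $x_l$-block gets the weight $\beta_a + \alpha_2(\text{row shift}) - \alpha_1(\text{column shift})$. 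Multiplying $x$ and $y$, the weights of corresponding blocks add, and the diagonal blocks contributing to the trace pick up exactly the weight $\alpha_1 f_{21}(m_{12}-1) + \beta_a + \beta_{a^*}$ — this is because the $\epsilon^{m_{12}-1}$-component of the product corresponds to shifting $f_{21}(m_{12}-1)$ block-columns (equivalently $f_{12}(m_{12}-1)$ block-rows, which gives the same weight by $\alpha_1 f_{21} = \alpha_2 f_{12}$, i.e.\ $\alpha_1 m_1 = \alpha_2 m_2$). So $\langle t*x, t*y\rangle$ is $t$ to that power times $\langle x,y\rangle$, as claimed. I would write this out by tracking the weight of a single representative block and invoking $\GG_m$-equivariance of the pairing under the two commuting actions (scaling and conjugation) separately.

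For part (ii), I would simply substitute $t*_{\bm\alpha,\bm\beta}(x,y)$ into the moment map formula of the proposition preceding \cref{Not/TracePairing}, namely
\[
\mu_{(Q,\bfm),\bfr}(x',y') = \Bigl( \sum_{a\colon j\to i} \sum_{f=0}^{f_{ji}-1} \epsilon^f x'_a y'_a \epsilon^{f_{ji}-1-f} - \sum_{a\colon i\to j} \sum_{f=0}^{f_{ji}-1} \epsilon^f y'_a x'_a \epsilon^{f_{ji}-1-f} \Bigr)_{i\in Q_0},
\]
with $x'_a = t^{\beta_a}\Phi_{m_j,r_j}(t^{\alpha_j}) x_a \Phi_{m_i,r_i}(t^{\alpha_i})^{-1}$ and similarly for $y'_a$. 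For an arrow $a\colon j\to i$ (so $x_a\colon M_j\to M_i$ and $y_a\colon M_i\to M_j$) the conjugating factors on the outer ends are $\Phi_{m_i,r_i}(t^{\alpha_i})$ on the left and $\Phi_{m_i,r_i}(t^{\alpha_i})^{-1}$ on the right, matching the $t*_{\bm\alpha}$-conjugation at vertex $i$ pulled outside; the inner $\Phi_{m_j,r_j}(t^{\alpha_j})^{\pm1}$ factors cancel against the $\epsilon$-powers only up to the scalar coming from how $\Phi$ interacts with multiplication by $\epsilon$. Concretely, $\Phi_{m_j,r_j}(t^{\alpha_j})\,\epsilon\,\Phi_{m_j,r_j}(t^{\alpha_j})^{-1} = t^{\alpha_j}\epsilon$ (from the block form of $\Phi$ and of multiplication by $\epsilon$), and the $f_{ji}-1$ total powers of $\epsilon$ appearing in each summand together with the $\epsilon^{f_{ji}}$-periodicity give an overall factor; combined with the two $t^{\beta_a}, t^{\beta_{a^*}}$ from the definition and part (i)'s bookkeeping, this produces the stated $t^{w_a}$, resp.\ $t^{w_{a^*}}$, with $w_a = \beta_a+\beta_{a^*}-\alpha_j(f_{ij}-1)$ and $w_{a^*} = \beta_a+\beta_{a^*}-\alpha_i(f_{ji}-1)$. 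After pulling these scalars out and recognising the remaining conjugation as exactly $t*_{\bm\alpha}$ applied to the untwisted moment map, the formula follows.

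The main obstacle I anticipate is purely bookkeeping: correctly tracking which vertex's $\Phi$-factor survives and which cancels in each of the two sums (the $j\to i$ sum versus the $i\to j$ sum), and getting the asymmetry between $w_a$ and $w_{a^*}$ right — it comes from the fact that in the $\epsilon^f(\cdot)\epsilon^{f_{ji}-1-f}$ sandwich the "missing" power of $\epsilon$ at the receiving vertex is what determines whether one subtracts $\alpha_j(f_{ij}-1)$ or $\alpha_i(f_{ji}-1)$. I would handle this by fixing one arrow $a\colon i\to j$, writing out $\epsilon^f y_a x_a \epsilon^{f_{ji}-1-f}$ (which lands in $\gl$ at vertex $i$) versus $\epsilon^f x_a y_a \epsilon^{f_{ji}-1-f}$ (landing at vertex $j$) with all $\Phi$-factors inserted, using the identity $\Phi_{m,r}(t^\alpha)\epsilon^k = t^{\alpha k}\epsilon^k \Phi_{m,r}(t^\alpha)$ modulo the relevant truncation, and reading off the exponent; the general formula then follows by summing over arrows. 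The commutation of $*_{\mathbf0,\bm\beta}$ with $*_{\bm\alpha,\mathbf0}$, already noted in the proof of \cref{Lem/extGrad2}(i), lets me treat the $t^{\beta_a}$ scalars and the conjugations independently, which keeps the computation manageable.
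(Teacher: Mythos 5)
Your proposal is correct. For part (i) you follow essentially the paper's computation: both arguments track the $t$-weight of each $r_2\times r_1$ block of $x$ and the corresponding $r_1\times r_2$ block of $y$, and check that the blocks paired in $\Tr(x_m y_{m_{12}-1-m})$ always have total weight $\alpha_1 f_{21}(m_{12}-1)+\beta_a+\beta_{a^*}$ (the paper writes out the two weights explicitly in terms of the block coordinates $(i,j)$; your ``row shift minus column shift'' bookkeeping is the same thing). For part (ii), however, you take a genuinely different route. The paper does \emph{not} substitute into the explicit formula for $\mu_{(Q,\bfm),\bfr}$: it reduces to a single-arrow quiver and works with the defining property $\langle\xi\cdot x,y\rangle=\langle\xi,\mu(x,y)\rangle$, using part (i) twice to move the $\GG_m$-action across the pairing and then invoking nondegeneracy to read off $\mu(t*(x,y))$. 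Your argument instead plugs $x'_a=t^{\beta_a}\Phi_{m_j,r_j}(t^{\alpha_j})x_a\Phi_{m_i,r_i}(t^{\alpha_i})^{-1}$ directly into the explicit expression for $\mu$, cancels the inner $\Phi$-factors in $y'_ax'_a$ and $x'_ay'_a$, and commutes the outer $\Phi$'s past the $\epsilon$-sandwich via $\Phi_{m,r}(t^{\alpha})\epsilon^k\Phi_{m,r}(t^{\alpha})^{-1}=t^{\alpha k}\epsilon^k$; this correctly produces $w_{a^*}=\beta_a+\beta_{a^*}-\alpha_i(f_{ji}-1)$ at the source vertex and $w_a=\beta_a+\beta_{a^*}-\alpha_j(f_{ij}-1)$ at the target. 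Your route is more elementary and makes part (ii) logically independent of part (i), at the cost of relying on the explicit coordinate formula for the moment map; the paper's route is coordinate-free once (i) is in place and also explains \emph{why} the weights $w_a,w_{a^*}$ arise from the weight of the trace pairing. Both are valid.
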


\begin{proof}
For (i) we view the morphisms $x$ and $y$ as in \eqref{Eqn/blockDecompositionRep}. Under the isomorphisms:
\begin{align*}
\Hom_{k_{m_{12}}}(k_{m_1}^{\oplus r_1},k_{m_2}^{\oplus r_2})
\simeq
\Hom_{k_{m_{12}}}(k_{m_{12}}^{\oplus r_1f_{21}},k_{m_{12}}^{\oplus r_2f_{12}}),
\\
\Hom_{k_{m_{12}}}(k_{m_2}^{\oplus r_2},k_{m_1}^{\oplus r_1})
\simeq
\Hom_{k_{m_{12}}}(k_{m_{12}}^{\oplus r_2f_{12}},k_{m_{12}}^{\oplus r_1f_{21}}),
\end{align*}
we find that $x$ (resp.\ $y$) corresponds to the following $r_2f_{12}\times r_1f_{21}$ (resp.\ $r_1f_{21}\times r_2f_{12}$) matrix with coefficients in $k_{m_{12}}$:
\[
x_0+\epsilon x_1+\ldots+\epsilon^{m_{12}-1}x_{m_{12}-1}
\text{ (resp. }
y_0+\epsilon y_1+\ldots+\epsilon^{m_{12}-1}y_{m_{12}-1}
\text{).}
\]
We thus obtain:
\[
\langle x,y\rangle=\Tr(x_0y_{m_{12}-1})+\ldots+\Tr(x_{m_{12}-1}y_0).
\]
Now fix $0\leq m\leq m_{12}-1$ and $1\leq i\leq f_{12}$, $1\leq j\leq f_{21}$. In computing $\Tr(x_my_{m_{12}-1-m})$, the $r_2\times r_1$ block of $x_m$ with coordinates $(i,j)$ is paired with the $r_1\times r_2$ block of $y_{m_{12}-1-m}$ with coordinates $(j,i)$. The former has weight $\alpha_2(mf_{12}+f_{12}-i)-\alpha_1(f_{21}-j)+\beta_a$ under the $\GG_m$-action, whereas the latter has weight $\alpha_1((m_{12}-1-m)f_{21}+f_{21}-j)-\alpha_2(f_{12}-i)+\beta_{a^*}$. Summing up, we obtain that $\Tr(x_my_{m_{12}-1-m})$ is scaled with weight $\alpha_1f_{21}(m_{12}-1)+\beta_a+\beta_{a^*}$ under the $\GG_m$-action, which proves the claim.

For (ii), it suffices to prove the claim for a quiver with multiplicities $(Q,\bfm)$ having only one arrow $a \colon i\rightarrow j$ and for this we use (i). For $\xi\in\gl_{\bfm,\bfr}$, by the defining property of the moment map $\mu:=\mu_{(Q,\bfm),\bfr}$, we have
\[
\begin{split}
\langle\xi,\mu(t*_{\bm{\alpha},\bm{\beta}}(x,y))\rangle &
=
\langle\xi\cdot(t*_{\bm{\alpha},\bm{\beta}}x),t*_{\bm{\alpha},\bm{\beta}}y\rangle \\
& =
\langle t*_{\bm{\alpha},\bm{\beta}}((t^{-1}*_{\bm{\alpha}}\xi)\cdot x),t*_{\bm{\alpha},\bm{\beta}}y\rangle \\
& =
t^{w_{ij}}\cdot\langle(t^{-1}*_{\bm{\alpha}}\xi)\cdot x,y\rangle \\
& =
t^{w_{ij}}\cdot\langle t^{-1}*_{\bm{\alpha}}\xi,\mu(x,y)\rangle \\
& =
t^{w_{ij}}\cdot\left(\langle t^{-1}*_{\bm{\alpha}}\xi_j,\mu(x,y)_j\rangle + \langle t^{-1}*_{\bm{\alpha}}\xi_i,\mu(x,y)_i\rangle\right) \\
& =
t^{w_{ij}-\alpha_j(m_j-1)}\cdot\langle\xi_j,t*_{\bm{\alpha}}\mu(x,y)_j\rangle+t^{w_{ij}-\alpha_i(m_i-1)}\cdot\langle\xi_i,t*_{\bm{\alpha}}\mu(x,y)_i\rangle \\
& =
\left\langle (\xi_i,\xi_j),t*_{\bm{\alpha}}(t^{w_{a^*}}\cdot \mu(x,y)_i,t^{w_a}\cdot \mu(x,y)_j)\right\rangle,
\end{split}
\]
where $w_{ij}:=\alpha_if_{ji}(m_{ij}-1)+\beta_a+\beta_{a^*}$ and we used the first part (i) in the third and sixth equalities. Since this is true for all $\xi\in\gl_{\bfm,\bfr}$, we obtain the desired formula.
\end{proof}

\begin{corollary}\label{Cor/revisedExtGrad}
Set $\beta_a=\alpha_i(f_{ji}-1)$ for every $a \colon i\rightarrow j$ in $Q_1$. Fix $C>0$ and consider the $\GG_m$-action on $\gl_{\bfm,\bfr}$ given by:
\[
t*_{\bm{\alpha},\bm{\alpha}+C}\xi:=(t^{\alpha_i+C}\cdot (t*_{\alpha_i}\xi_i))_{i\in Q_0}.
\]
Then there exist $(\beta_{a^*})_{a\in Q_1}\in\bbZ_{>0}^{Q_1}$ such that the moment map $\mu_{(Q,\bfm),\bfr}$ is equivariant with respect to the $\GG_m$-actions $*_{\bm{\alpha},\bm{\beta}}$ on $R(\overline{Q},\bfm;\bfr)$ and $*_{\bm{\alpha},\bm{\alpha}+C}$ on $\gl_{\bfm,\bfr}$.
\end{corollary}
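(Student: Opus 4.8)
The plan is to extract the required value of $\beta_{a^*}$ directly from the weight computation already carried out in \cref{Lem/extGrad3}(ii), and then to check that the two scaling conditions that formula imposes on each arrow are mutually compatible — the only nontrivial input being the hypothesis \eqref{eqns for vertex weights in ext grading}. Concretely, \cref{Lem/extGrad3}(ii) tells us that, with $\beta_a$ fixed to $\alpha_i(f_{ji}-1)$ for $a\colon i\to j$, the $i$th component of $\mu_{(Q,\bfm),\bfr}(t*_{\bm\alpha,\bm\beta}(x,y))$ equals $t*_{\alpha_i}$ applied to the $i$th component of the untwisted moment map, \emph{except} that the summand coming from an arrow $a\colon i\to j$ is scaled by $t^{w_a}$ in the component at $j$ and by $t^{w_{a^*}}$ in the component at $i$, where $w_a=\beta_a+\beta_{a^*}-\alpha_j(f_{ij}-1)$ and $w_{a^*}=\beta_a+\beta_{a^*}-\alpha_i(f_{ji}-1)$. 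Since by definition $(t*_{\bm\alpha,\bm\alpha+C}\xi)_i=t^{\alpha_i+C}\bigl(t*_{\alpha_i}\xi_i\bigr)$ and $t*_{\alpha_i}$ commutes with scalars, the moment map is equivariant for $*_{\bm\alpha,\bm\beta}$ and $*_{\bm\alpha,\bm\alpha+C}$ exactly when every such weight equals $\alpha_i+C$ at the relevant vertex; that is, when $w_a=\alpha_j+C$ and $w_{a^*}=\alpha_i+C$ for every arrow $a\colon i\to j$.

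So the first step is to read off $\beta_{a^*}$ from the condition $w_{a^*}=\alpha_i+C$: this says $\beta_a+\beta_{a^*}-\alpha_i(f_{ji}-1)=\alpha_i+C$, and substituting $\beta_a=\alpha_i(f_{ji}-1)$ forces $\beta_{a^*}=\alpha_i+C$. Since $\alpha_i>0$ and $C>0$, this is a positive integer, so I would simply \emph{define} $\beta_{a^*}:=\alpha_{s(a)}+C$ for each $a\in Q_1$, giving the desired $(\beta_{a^*})_{a\in Q_1}\in\bbZ_{>0}^{Q_1}$. The second step is to verify the remaining condition $w_a=\alpha_j+C$ with this choice. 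Substituting, $w_a=\alpha_i(f_{ji}-1)+(\alpha_i+C)-\alpha_j(f_{ij}-1)=\alpha_i f_{ji}-\alpha_j f_{ij}+\alpha_j+C$, and since $\alpha_i f_{ji}=\alpha_i m_i/m_{ij}$ and $\alpha_j f_{ij}=\alpha_j m_j/m_{ij}$, hypothesis \eqref{eqns for vertex weights in ext grading} ($\alpha_i m_i=\alpha_j m_j$) gives $\alpha_i f_{ji}=\alpha_j f_{ij}$, whence $w_a=\alpha_j+C$, as needed. Feeding these equalities back into \cref{Lem/extGrad3}(ii) then yields $\mu_{(Q,\bfm),\bfr}(t*_{\bm\alpha,\bm\beta}(x,y))_i=t*_{\alpha_i}\bigl(t^{\alpha_i+C}\mu_{(Q,\bfm),\bfr}(x,y)_i\bigr)=\bigl(t*_{\bm\alpha,\bm\alpha+C}\mu_{(Q,\bfm),\bfr}(x,y)\bigr)_i$, which is the claimed equivariance.

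The one point to watch is the mild discrepancy that $\beta_a=\alpha_i(f_{ji}-1)$ can vanish when $f_{ji}=1$, while \cref{Lem/extGrad3}(ii) is stated for $\bm\beta\in\bbZ_{>0}^{\overline{Q}_1}$; but the identity in \cref{Lem/extGrad3}(ii) is purely a bookkeeping of $\GG_m$-weights and holds verbatim for $\bm\beta\in\bbZ_{\geq0}^{\overline{Q}_1}$, so this causes no difficulty (alternatively, one notes that only the values $\beta_{a^*}$ need to be positive for the intended application). I do not expect any genuine obstacle: the entire content of the corollary is that the two weight equations $w_a=\alpha_j+C$ and $w_{a^*}=\alpha_i+C$ per arrow can be solved simultaneously, and their compatibility is precisely what \eqref{eqns for vertex weights in ext grading} guarantees.
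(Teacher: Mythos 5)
Your proposal is correct and follows the paper's own proof essentially verbatim: both read off $\beta_{a^*}=\alpha_{s(a)}+C$ from the condition $w_{a^*}=\alpha_i+C$ in \cref{Lem/extGrad3}(ii) and verify the companion condition $w_a=\alpha_j+C$ via $\alpha_i f_{ji}=\alpha_j f_{ij}$, which is \eqref{eqns for vertex weights in ext grading}. Your remark that $\beta_a=\alpha_i(f_{ji}-1)$ may vanish while \cref{Lem/extGrad3}(ii) is stated for strictly positive weights is a fair and correctly resolved observation that the paper glosses over.
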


\begin{proof}
The moment map $\mu_{(Q,\bfm),\bfr}$ is $\GG_m$-equivariant if and only if for all $i\in Q_0$, we have $w_a=\alpha_i+C$ for all arrows $a\in Q_1$ pointing towards $i$ and $w_{a^*}=\alpha_i+C$ for all arrows $a\in Q_1$ starting from $i$. In other words, we require that, for all $a\colon i \rightarrow j$ in $Q_1$, we have $w_a=\alpha_j+C$ and $w_{a^*}=\alpha_i+C$. By definition of $w_a,w_{a^*}$, this amounts to $\beta_{a^*}=\alpha_i+C$.
\end{proof}

\subsection{Unipotent stabilisers}\label{Sect/unipotentStab}

 The aim of this section is to determine sufficient conditions to guarantee the unipotent stabiliser assumption required in \cref{Thm/extGradQuot} for the truncation map $\Rep(Q,\bfm;\bfr) \rightarrow \Rep(Q,\bfr)$ and the good $\GL_{\bfr}$-quotient $q_\theta \colon \Rep(Q,\bfr)^{\theta-\sst} \rightarrow M_{Q,\bfr}^{\theta-ss}$ taken with respect to a fixed stability parameter $\theta\in\bbZ^{Q_0}$. Let $U=U_{\bfm,\bfr}$ be the unipotent radical of $\GL_{\bfm,\bfr}$. Since the $\GL_{\bfm,\bfr}$-action has generic stabiliser $\Delta_{\bfm}$, we want to determine conditions for when $U$-stabiliser subgroups are contained in $\Delta_{\bfm}$ (strictly speaking, we should work with $G_{\bfm,\bfr}:=\GL_{\bfm,\bfr}/\Delta_{\bfm}$). The aim of this section is to prove the following result.

\begin{proposition}\label{prop suff cond for trivial unip stabilisers}
Suppose that one of the following conditions holds:
\begin{enumerate}[label=\roman*)]
\item the rank vector $\bfr$ is indivisible and $\theta$ is generic with respect to $\bfr$;
\item $R(Q,\bfr)^{\theta-\sst} = R(Q,\bfr)^{\theta-\st}$;
\item any neighbouring vertices of $(Q,\bfm)$ have coprime multiplicities and for all $x\in R(Q,\bfr)^{\theta-\sst}$, we have that, at each vertex of $Q$, ingoing maps are jointly surjective or outgoing maps are jointly injective.
\end{enumerate}
Then Assumption \ref{U} holds (for the action of $G_{\bfm,\bfr}$); that is, for every $x\in R(Q,\bfr)^{\theta-\sst}$, we have:
\[\Stab_U(\iota(x))\subset \Delta_\bfm.\]
\end{proposition}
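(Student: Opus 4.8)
The strategy is to compute explicitly the action of a unipotent element $u \in U$ on a point of the form $\iota(x)$, and reduce each of the three conditions to an elementary linear-algebra statement about the maps $V(x)_a$. Write $u = (u_i)_{i \in Q_0}$ with $u_i \in \GL_{m_i, r_i}$ a block-upper-triangular matrix whose diagonal blocks $(u_i)_0$ equal $\Id_{r_i}$ (by definition of the unipotent radical $U$, via \cref{Lem/unipotentRad}). Since $\iota(x)_a$ has the very sparse block shape from \cref{definition iota} — the only nonzero block of $\iota(x)_a$ is the $(f_{ij},1)$-subblock $x_a$ of $(\iota(x)_a)_0$ — the equation $u_{t(a)} \iota(x)_a = \iota(x)_a u_{s(a)}$ becomes a system of linear equations in the off-diagonal blocks $(u_i)_l$ ($l \geq 1$), with coefficients built out of the $x_a$. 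The first step is to write out this system arrow-by-arrow and vertex-by-vertex.

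For condition i), I would invoke \cref{prop suff cond for trivial unip stabilisers} ii) as a black box and separately handle the generic case: if $\bfr$ is indivisible and $\theta$ is generic, then by \cref{Rmk/genericTheta} every $\theta$-semistable representation of rank $\bfr$ is in fact $\theta$-stable, so $R(Q,\bfr)^{\theta-\sst} = R(Q,\bfr)^{\theta-\st}$ and we are reduced to case ii). For condition ii), the key input is that a $\theta$-stable representation $V(x)$ has no proper nonzero subrepresentations with $\theta \cdot \rk = 0$, hence its automorphism group is $\Delta = k^\times$ (Schur's lemma for the abelian category of $\theta$-semistable representations of fixed slope). One should then show that $\Stab_U(\iota(x))$ maps, under the truncation $\tau_{\GL}$, into $\Stab_{\GL_\bfr}(x) = \Aut_Q(V(x)) = \Delta$; since the kernel of $\tau_{\GL}$ restricted to $U$ is all of $U$, this is not immediately enough — instead one must feed the structure of $\iota$ back in. Concretely: an element $u \in \Stab_U(\iota(x))$ gives, for each $l \geq 1$ and each $i$, a block $(u_i)_l \in \gl_{r_i}$, and the stabiliser equations force the tuple $((u_i)_l)_{i}$ to define an endomorphism (not just of $V(x)$ but compatible across the $\epsilon$-filtration) that must vanish or be scalar by stability. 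This is the step I expect to require the most care: teasing out, from the block equations, enough copies of ``$\phi$ is an endomorphism of a stable representation'' to conclude all the $(u_i)_l$ with $l \geq 1$ are forced into $\mathfrak{d}_\bfm$, i.e.\ $u \in \Delta_\bfm$.

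For condition iii), the coprimality hypothesis $m_{ij} = 1$ for all arrows $a \colon i \to j$ means the $\Hom$-spaces $\Hom_{k_{m_{ij}}}(k_{m_i}^{\oplus r_i}, k_{m_j}^{\oplus r_j})$ are simply $\Hom_k(k^{\oplus m_i r_i}, k^{\oplus m_j r_j})$ (the block-Toeplitz constraint of \eqref{Eqn/blockDecompositionRep} is vacuous), which drastically simplifies the stabiliser equations; here one exploits the joint surjectivity of incoming maps / joint injectivity of outgoing maps at each vertex to cancel the unwanted off-diagonal blocks $(u_i)_l$ directly, propagating the constraint through the quiver. I would structure this as: pick a vertex where incoming maps are jointly surjective, deduce $(u_i)_l$ is determined by the $(u_j)_l$ at sources $j$ of arrows into $i$, and similarly at injective-outgoing vertices; then a connectivity/induction argument along $Q$ pins everything down to scalars. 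The main obstacle throughout is bookkeeping the block structure — in particular correctly tracking which subblock of $(u_i)_l$ interacts with $x_a$ under the $\epsilon$-shift, since $f_{ji} \neq 1$ in general for case ii) makes the multiplication-by-$\epsilon^{f_{ji}-1}$ twist (as in \cref{Def/intrinsicTruncation}) reappear inside the stabiliser equations — but no single step is conceptually deep; it is a matter of organizing the linear algebra cleanly, ideally by phrasing the whole computation in terms of the category of (not necessarily locally free) representations of $(Q,\bfm)$ so that Schur-type arguments apply uniformly.
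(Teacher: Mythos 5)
Your overall architecture matches the paper for cases i) and ii): the paper also reduces i) to ii) (genericity plus indivisibility forces $\sst=\st$), and for ii) it also uses that stability gives $\End(x)=k\cdot\Id$ and then lands in $\Delta_\bfm$ rather than at the identity. However, the step you flag as "requiring the most care" is precisely where the paper's one substantive idea lives, and your plan does not supply it. The paper first proves a standalone structural lemma: writing $u_i=\Id+\sum_l u_{i,l}\epsilon^l$, the equations $u_{t(a)}\iota(x_a)=\iota(x_a)u_{s(a)}$ reduce to $u_{j,q_j}x_a=x_au_{i,q_i}$ \emph{only} for tuples $\bfq$ lying on the rational line $\{q_i/m_i=q_j/m_j\}$ in $\bbQ^{Q_0}$, together with the vanishing conditions $u_{j,q_j}x_a=0$ (resp.\ $x_au_{i,q_i}=0$) whenever the corresponding index at the other end of the arrow is non-integral. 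Thus $\Stab_U(\iota(x))$ is parametrised by points $\bfq$ of that line with $0<\bfq<\bfm$, each contributing an element of a constrained endomorphism space $\End(x)_\bfq$. Without this indexing, your case ii) argument is incomplete in exactly the delicate situation where $\bfm$ is divisible: you must show that the only $\bfq$ giving nonzero contributions are the integer multiples of $\bfm/\delta$, which is what identifies the stabiliser with $\Delta_\bfm$ rather than merely showing each block is "scalar-like". Saying the blocks "define an endomorphism compatible across the $\epsilon$-filtration" is the right instinct but does not by itself rule out contributions from non-integral $\bfq$.

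For case iii) your proposed mechanism is wrong in an instructive way. You suggest propagating constraints through the quiver by a connectivity/induction argument ("$(u_i)_l$ is determined by the $(u_j)_l$ at sources of arrows into $i$"). In fact the coprimality hypothesis makes the problem \emph{decouple} vertex-by-vertex: for any $\bfq$ on the line, no two adjacent vertices can both have integral entries (if $q_im_j=q_jm_i$ with $\gcd(m_i,m_j)=1$ and $0<q_i<m_i$, then $q_i,q_j$ cannot both be integers), so $\End(x)_\bfq=\bigoplus_{i:q_i\in\bbZ}\End(x)_i$ where $\End(x)_i$ consists of endomorphisms supported at the single vertex $i$; such an endomorphism must kill the images of all incoming maps and land in the kernels of all outgoing ones, so it vanishes precisely under the local joint surjectivity/injectivity hypothesis. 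There is no determination of one block by another and no induction along the quiver — the stabiliser equations at non-aligned indices are pure vanishing conditions, not transport equations. Also note your parenthetical that coprimality makes the Toeplitz constraint vacuous is true for the $\Hom$-spaces on arrows but irrelevant to the group side: the $u_i$ still live in $\GL_{r_i}(k_{m_i})$ with $m_i>1$ in general, so the block bookkeeping does not disappear.
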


Note that (i) implies (ii). We will prove the proposition under assumption (ii) in \cref{cor: ss=s implies trivial unip stabilisers}, and under assumption (iii) in \cref{lem ustab condition when adjacent vertices are coprime}.

\begin{notation}
We denote by $L\subset\bbQ^{Q_0}$ the line cut out by the equations:
\[
\forall i,j\in Q_0,\ m_jl_i=m_il_j,
\]
and by $I$ be the open interval of $L$ joining the origin and $\bfm$.

Given $x\in R(Q,\bfr)$ and $\bfq\in\bbQ_{\geq0}^{Q_0}$, we further define:
\[
\End(x)_\bfq:=
\left\{
\xi\in\End(x)\ \vert\ \xi_i\ne0\Rightarrow q_i\in\{1,\ldots, m_i-1\}
\right\}
,
\]
where $\End(x)\subset\gl_\bfr$ is the Lie algebra $\GL_{\bfr}$-stabiliser of $x\in R(Q,\bfr)$.
\end{notation}

\begin{remark}
First note that, for any $\bfq\in I$, we have $q_i\ne0$ for all $i\in Q_0$. Second, to see that the above equations indeed cut out a line, note that $(l_i)_{i\in Q_0} \in L$ is determined by any single entry and that the coefficients $m_i$ satisfy the following consistency condition:
\[
\forall i,j,k\in Q_0,\ \frac{m_i}{m_k}\frac{m_k}{m_j}\frac{m_j}{m_i}=1,
\]
which guarantees the existence of a solution in $\bbQ^{Q_0}$.
\end{remark}

By Lemma \ref{Lem/unipotentRad}, the unipotent radical $U$ can be described as:
\[
U=
\left\{
\left(
\left.
u_i=\Id_{r_i}+\sum_{m=1}^{m_i-1}u_{i,m}\cdot\epsilon^m
\right)_{i\in Q_0}
\ \right\vert\ 
\begin{array}{l} u_{i,m}\in\gl_{r_i} \:
\forall i\in Q_0\\
\forall m\in\{1,\ldots,m_i-1\}
\end{array}
\right\}
.
\]

The key to the proof of \cref{prop suff cond for trivial unip stabilisers} is the following explicit description of unipotent stabilisers for truncated representations of $(Q,\bfm)$.

\begin{proposition}
Let $x\in R(Q,\bfr)$. Then we have:
\[
\Stab_U(\iota(x))=
\left\{
\left.
\left(\Id_{r_i}+\sum_{\bfq\in I}(\xi_{\bfq})_i\cdot\epsilon^{q_i}\right)_{i\in Q_0}
\ \right\vert \ 
\forall\bfq\in I,\ \xi_{\bfq}\in\End(x)_{\bfq}
\right\}
,
\]
where, by construction, the coefficient of $\epsilon^{q_i}$ is zero whenever $q_i\notin\{0,\ldots,m_i-1\}$, so that the above sums are well-defined and finite.
\end{proposition}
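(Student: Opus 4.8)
The plan is to reduce the statement to an equality of Lie algebras and then decompose everything according to the external grading of \cref{Sect/extGrad}. Since $U$ is unipotent and $k$ has characteristic zero, $\Stab_U(\iota(x))$ is a connected unipotent subgroup equal to $\exp(\mathfrak{s})$, where $\mathfrak{s}:=\{\xi\in\mathfrak{u}:\xi\cdot\iota(x)=0\}$ is the infinitesimal stabiliser, $\mathfrak{u}:=\operatorname{Lie}U=\{(\xi_i)_{i\in Q_0}:\xi_i=\sum_{m=1}^{m_i-1}\xi_{i,m}\epsilon^m,\ \xi_{i,m}\in\gl_{r_i}\}$, and $\xi$ acts on the $a$-component by $\xi\cdot y_a=\xi_j y_a-y_a\xi_i$ for $a\colon i\to j$. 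Writing $\iota_{\bfq}$ for the map sending $\xi\in\End(x)_{\bfq}$ to $(\xi_i\epsilon^{q_i})_{i}\in\mathfrak{u}$ and setting $\mathfrak{s}_0:=\bigoplus_{\bfq\in I}\iota_{\bfq}(\End(x)_{\bfq})$, one checks directly that the proposed right-hand side is $\{\Id+\eta:\eta\in\mathfrak{s}_0\}$; moreover, using that $I$ lies on the line $L$, that if the sum of two points of $I$ leaves $I$ then the corresponding $\epsilon$-powers vanish, and that a product of endomorphisms is an endomorphism, one sees that $\mathfrak{s}_0$ is closed under the products occurring in the exponential and logarithm series, so that in fact $\exp(\mathfrak{s}_0)=\{\Id+\eta:\eta\in\mathfrak{s}_0\}$. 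Hence it suffices to prove $\mathfrak{s}=\mathfrak{s}_0$.

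Next I would fix $\bm{\alpha}\in\bbZ_{>0}^{Q_0}$ with $\alpha_i m_i=P$ constant (e.g.\ $P=\mathrm{lcm}(m_i)$, $\alpha_i=P/m_i$) and $\bm{\beta}$ with $\beta_a=\alpha_i(f_{ji}-1)$ for all $a\colon i\to j$, so that by \cref{Lem/extGrad2}(iii) the $\GG_m$-actions $*_{\bm{\alpha}}$ on $\GL_{\bfm,\bfr}$ and $*_{\bm{\alpha},\bm{\beta}}$ on $R(Q,\bfm;\bfr)$ make $\tau$ the limit map and $\iota(x)$ a $\GG_m$-fixed point. The induced grading places $\gl_{r_i}\epsilon^{l}\subset\mathfrak{u}$ in degree $\alpha_i l$; since $\iota(x)$ is fixed, both $\mathfrak{s}$ and $\mathfrak{s}_0$ are graded subspaces of $\mathfrak{u}$, so it is enough to compare them in each degree $W$. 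A short bookkeeping gives $\mathfrak{u}_W=\bigoplus_{i\in S_W}\gl_{r_i}\epsilon^{m_i(W)}$, where $m_i(W):=W/\alpha_i$ and $S_W:=\{i:\alpha_i\mid W\}$, which is non-zero only for $0<W<P$; and in that range $\bfq^{(W)}:=(W/\alpha_i)_{i}$ is the unique point of $I$ with these coordinates and $\End(x)_{\bfq^{(W)}}=\{\xi\in\End(x):\operatorname{supp}\xi\subseteq S_W\}$ (since $q^{(W)}_i\notin\bbZ$ when $i\notin S_W$), whence $\mathfrak{s}_0\cap\mathfrak{u}_W=\iota_{\bfq^{(W)}}(\End(x)_{\bfq^{(W)}})$.

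It then remains to compute, for $\xi=(\xi_i\epsilon^{m_i(W)})_i\in\mathfrak{u}_W$ and an arrow $a\colon i\to j$, the equation $\xi_j\epsilon^{m_j(W)}\iota(x)_a=\iota(x)_a\,\xi_i\epsilon^{m_i(W)}$. Here I would use the explicit shape of $\iota(x)_a$ coming from \cref{definition iota}: as a $k_{m_{ij}}$-linear map $k_{m_i}^{\oplus r_i}\to k_{m_j}^{\oplus r_j}$ it annihilates $\epsilon^p e_s$ unless $p\equiv f_{ji}-1\pmod{f_{ji}}$, in which case $\epsilon^p e_s\mapsto\epsilon^{r(p)}x_a e_s$ with $r(p)=(\tfrac{p+1}{f_{ji}}-1)f_{ij}$. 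Evaluating both sides on the $\epsilon^p e_s$ and tracking which $\epsilon$-powers survive modulo $\epsilon^{m_j}$, the crucial input is the primewise identity $\mathrm{lcm}(\alpha_i,\alpha_j)=P/m_{ij}$ (valid since $m_i,m_j\mid P$), which gives $i,j\in S_W\iff P/m_{ij}\mid W\iff f_{ji}\mid m_i(W)\iff f_{ij}\mid m_j(W)$. One then finds that the equation is equivalent to $\xi_j x_a=x_a\xi_i$ if $i,j\in S_W$; to $x_a\xi_i=0$ (resp.\ $\xi_j x_a=0$) if only $i$ (resp.\ only $j$) lies in $S_W$; and is vacuous if neither does — in every case exactly the relation defining $\End(x)$ for $(\xi_i)_i$ extended by zero off $S_W$. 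Hence $\mathfrak{s}\cap\mathfrak{u}_W=\{(\xi_i\epsilon^{m_i(W)})_i:(\xi_i)\in\End(x),\ \operatorname{supp}\subseteq S_W\}=\mathfrak{s}_0\cap\mathfrak{u}_W$, and summing over $W$ yields $\mathfrak{s}=\mathfrak{s}_0$, hence the proposition.

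The main obstacle is this last per-arrow computation: one must handle the combinatorics of the congruence $p\equiv f_{ji}-1\pmod{f_{ji}}$ together with the shift $r(p)$ carefully — in particular checking that a value $\equiv f_{ji}-1\pmod{f_{ji}}$ never falls in a "forbidden" window such as $[m_i,\,m_i+f_{ji}-1)$ — so that no spurious constraints on $\xi$ appear beyond the endomorphism relations, and that the relations that do appear are injective in the relevant coefficient of $\xi$. All of this rests on the divisibility relations among $m_i$, $m_{ij}$, $f_{ji}$, $\alpha_i$ and $P$.
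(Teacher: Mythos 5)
Your proposal is correct, and the checkable claims it rests on do hold: with $P=\mathrm{lcm}(m_i)$ and $\alpha_i=P/m_i$ one has $\mathrm{lcm}(\alpha_i,\alpha_j)=P/m_{ij}$, hence $i,j\in S_W\iff f_{ji}\mid m_i(W)\iff f_{ij}\mid m_j(W)$; no integer congruent to $f_{ji}-1$ modulo $f_{ji}$ lies in $[m_i,\,m_i+f_{ji}-1)$, so the truncation never kills all the test vectors; and evaluating $\xi_j\epsilon^{m_j(W)}\iota(x)_a=\iota(x)_a\xi_i\epsilon^{m_i(W)}$ on the $\epsilon^pe_s$ produces exactly $\xi_jx_a=x_a\xi_i$, $x_a\xi_i=0$ or $\xi_jx_a=0$ in the three cases, with the two truncation thresholds matching because $r(p)+m_j(W)\geq m_j\iff p+m_i(W)\geq m_i$.

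That said, the arithmetic core of your argument is the same as the paper's. The paper writes the single equation $u_{t(a)}\iota(x_a)=\iota(x_a)u_{s(a)}$ in the block bases \eqref{Eq useful basis} and reads off that the coefficient $u_{j,q_j}$ is coupled to $u_{i,q_i}$ precisely when $q_im_j=q_jm_i$ and is annihilated by $x_a$ otherwise; your weight spaces $\mathfrak{u}_W$ are just a reparametrisation of the points $\bfq\in I$ via $\bfq^{(W)}=W\bfm/P$, so the decomposition is identical. The two visible differences are organisational. First, you pass through $\exp$ and the infinitesimal stabiliser, which obliges you to verify that $\mathfrak{s}_0$ is closed under the products in the exponential series; this detour is avoidable, because the stabiliser equations are affine-linear in $u$ — substituting $u=\Id+\eta$ turns $u_{t(a)}\iota(x_a)=\iota(x_a)u_{s(a)}$ directly into the linear condition $\eta_{t(a)}\iota(x_a)=\iota(x_a)\eta_{s(a)}$, so $\Stab_U(\iota(x))=\Id+\mathfrak{s}$ with no exponential and no closure check. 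This linearity is what the paper exploits implicitly by simply solving for the coefficients $u_{i,m}$. Second, you invoke the external grading to justify working weight-by-weight; this gives a clean a priori reason why coefficients attached to different $\bfq$ do not interact, at the cost of setting up the torus action, whereas the paper gets the same decoupling for free from the block-matrix computation. Both routes are valid; the paper's is shorter, yours makes the role of the grading (and hence the link to \cref{Sect/extGrad}) more transparent.
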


\begin{proof}
Let $x\in R(Q,\bfr)$ and $u\in U$. Let us solve the equations:
\[
u_{t(a)}\iota(x_a)=\iota(x_a)u_{s(a)},\ a\in Q_1.
\]
For a fixed arrow $a \colon i\rightarrow j$, we work with block matrices for the following decompositions:
\[
k_{m_i}^{\oplus r_i}=(\epsilon^{m_i-1}\cdot k^{\oplus r_i}\oplus\ldots\oplus \epsilon^{(m_{ij}-1)f_{ji}}\cdot k^{\oplus r_i})\oplus\ldots\oplus (\epsilon^{f_{ji}-1}\cdot k^{\oplus r_i}\oplus\ldots\oplus k^{\oplus r_i}),
\]
\[
k_{m_j}^{\oplus r_j}=(\epsilon^{m_j-1}\cdot k^{\oplus r_j}\oplus\ldots\oplus \epsilon^{(m_{ij}-1)f_{ij}}\cdot k^{\oplus r_j})\oplus\ldots\oplus (\epsilon^{f_{ij}-1}\cdot k^{\oplus r_j}\oplus\ldots\oplus k^{\oplus r_j}).
\]
From the discussion in Section \ref{Sect/Truncation}, we have that the matrix of $u_i$ (resp. $u_j$) has the following decomposition as a $m_{ij}\times m_{ij}$ block matrix, with blocks of size $r_if_{ji}$ (resp. $r_jf_{ij}$):
\[
\left(
\begin{array}{cccc}
U_{i,0} & U_{i,1} & \ldots & U_{i,m_{ij}-1} \\
0 & U_{i,0} & \ddots & \vdots \\
\vdots & \ddots & \ddots & U_{i,1} \\
0 & \ldots & 0 & U_{i,0}
\end{array}
\right)
\text{ (resp. }
\left(
\begin{array}{cccc}
U_{j,0} & U_{j,1} & \ldots & U_{j,m_{ij}-1} \\
0 & U_{j,0} & \ddots & \vdots \\
\vdots & \ddots & \ddots & U_{j,1} \\
0 & \ldots & 0 & U_{j,0}
\end{array}
\right)
),
\]
where, for $0\leq k\leq m_{ij-1}$, the matrix $U_{i,k}$ (resp. $U_{j,k}$) is the following block-matrix of size $f_{ji}\times f_{ji}$ (resp. $f_{ij}\times f_{ij}$), with blocks of size $r_i\times r_i$ (resp. $r_j\times r_j$):
\[
U_{i,k}
=
\left(
\begin{array}{ccccc}
u_{i,kf_{ji}} & u_{i,kf_{ji}+1} & \ldots & u_{i,(k+1)f_{ji}-2} & u_{i,(k+1)f_{ji}-1} \\
u_{i,kf_{ji}-1} & u_{i,kf_{ji}} & \ldots & u_{i,(k+1)f_{ji}-3} & u_{i,(k+1)f_{ji}-2} \\
\vdots & \vdots & \ddots & \vdots & \vdots \\
u_{i,(k-1)f_{ji}+2} & u_{i,(k-1)f_{ji}+3} & \ldots & u_{i,kf_{ji}} & u_{i,kf_{ji}+1} \\
u_{i,(k-1)f_{ji}+1} & u_{i,(k-1)f_{ji}+2} & \ldots & u_{i,kf_{ji}-1} & u_{i,kf_{ji}}
\end{array}
\right)
\]
\[
\text{(resp. }
U_{j,k}
=
\left(
\begin{array}{ccccc}
u_{j,kf_{ij}} & u_{j,kf_{ij}+1} & \ldots & u_{j,(k+1)f_{ij}-2} & u_{j,(k+1)f_{ij}-1} \\
u_{j,kf_{ij}-1} & u_{j,kf_{ij}} & \ldots & u_{j,(k+1)f_{ij}-3} & u_{j,(k+1)f_{ij}-2} \\
\vdots & \vdots & \ddots & \vdots & \vdots \\
u_{j,(k-1)f_{ij}+2} & u_{j,(k-1)f_{ij}+3} & \ldots & u_{j,kf_{ij}} & u_{j,kf_{ij}+1} \\
u_{j,(k-1)f_{ij}+1} & u_{j,(k-1)f_{ij}+2} & \ldots & u_{j,kf_{ij}-1} & u_{j,kf_{ij}}
\end{array}
\right)
).
\]
By convention, we set $u_{i,k}=0$ when $k<0$. 

Likewise, in the bases above, the matrix of $x_a$ is block-diagonal of size $m_{ij}$, with blocks of size $r_jf_{ij}\times r_if_{ji}$:
\[
\left(
\begin{array}{cccc}
X_a & 0 & \ldots & 0 \\
0 & X_a & \ddots & \vdots \\
\vdots & \ddots & \ddots & 0 \\
0 & \ldots & 0 & X_a
\end{array}
\right)
,
\]
where $X_a$ is a block-rectangular matrix of size $f_{ij}\times f_{ji}$, with blocks of size $r_j\times r_i$:
\[
\left(
\begin{array}{cccc}
0 & 0 & \ldots & 0 \\
0 & 0 & \ldots & 0 \\
\vdots & \vdots & \ddots & \vdots \\
x_a & 0 & \ldots & 0
\end{array}
\right)
.
\]
In this setup, the equation:
\[
u_{t(a)}\iota(x_a)=\iota(x_a)u_{s(a)},\ a\in Q_1.
\]
is equivalent to $U_{j,k}X_a=X_aU_{i,k}$, for $0\leq k\leq m_{ij}-1$. In other words:
\[
\left(
\begin{array}{cccc}
u_{j,(k+1)f_{ij}-1}x_a & 0 & \ldots & 0 \\
u_{j,(k+1)f_{ij}-2}x_a & 0 & \ldots & 0 \\
\vdots & \vdots & \ddots & \vdots \\
u_{j,kf_{ij}}x_a & 0 & \ldots & 0
\end{array}
\right)
=
\left(
\begin{array}{cccc}
0 & 0 & \ldots & 0 \\
\vdots & \vdots & \ddots & \vdots \\
0 & 0 & \ldots & 0 \\
x_au_{i,kf_{ji}} & x_au_{i,kf_{ji}+1} & \ldots & x_au_{i,(k+1)f_{ji}-1}
\end{array}
\right)
,
\]
which amounts to: for $q_i\in\{1,\ldots,m_i-1\}$ and $q_j\in\{1,\ldots,m_j-1\}$,
\begin{enumerate}
\item $u_{j,q_j}x_a=x_au_{i,q_i}$ whenever $q_i$ and $q_j$ satisfy $q_if_{ij}=q_jf_{ji}$ (equivalently, $q_im_j=q_jm_i$);
\item $u_{j,q_j}x_a=0$ (resp. $x_au_{i,q_i}=0$) whenever $q_j$ (resp. $q_i$) is not divisible by $f_{ij}$ (resp. by $f_{ji}$); or equivalently when $\frac{f_{ji}}{f_{ij}}q_j$ (resp. $\frac{f_{ij}}{f_{ji}}q_i$) is not an integer.
\end{enumerate}
Note that the second characterisation in 2. holds because $f_{ji}$ and $f_{ij}$ are coprime. We reformulate the equations above as $u_{j,q_j}x_a=x_au_{i,q_i}$, where $\mathbf{q}\in\bbQ_{>0}^{Q_0}$ lies on the interval $I$ and, by convention, $u_{i,q_i}=0$ when $q_i\not\in\{1,\ldots, m_i-1\}$. This concludes the proof.
\end{proof}

Let us write $\bfm=\delta\bfm'$, where $\delta\in\bbZ_{>0}$ and $\bfm'$ is indivisible. We are particularly interested in situations where unipotent stabilisers are contained in the diagonal subgroup
\[
\Delta_\bfm=k_{\delta}^{\times}\cdot\Id\subset\GL_{\bfm,\bfr},
\]
which acts trivially on $R(Q,\bfm;\bfr)$.

\begin{corollary}\label{Cor/U-stabilisers when multiplicities are divisible}
Suppose that $\delta>1$, i.e. $\bfm$ is divisible. Then:
\[
\Stab_U(\iota(x))\subset\Delta_\bfm\Rightarrow\End(x)=k\cdot\Id.
\]
\end{corollary}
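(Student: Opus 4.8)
The plan is to read everything off the preceding proposition, which describes $\Stab_U(\iota(x))$ completely in terms of the subspaces $\End(x)_\bfq$ for $\bfq\in I$, and to combine this with the explicit shape of $\Delta_\bfm\cap U$.

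First I would locate a convenient integral point of the interval $I$. Since $\delta=\gcd(m_i,\,i\in Q_0)>1$, the vector $\bfm'=\bfm/\delta$ lies in $\bbZ_{>0}^{Q_0}$, and as $\bfm'=(1/\delta)\bfm$ with $0<1/\delta<1$ it lies on the open interval $I$ joining the origin to $\bfm$. Moreover $m_i'=m_i/\delta$ satisfies $1\le m_i'\le m_i-1$ for every $i$ — the upper bound using precisely that $\delta>1$ — so the constraint defining $\End(x)_{\bfm'}$ (namely $\xi_i\ne0\Rightarrow q_i\in\{1,\dots,m_i-1\}$) is vacuous, and hence $\End(x)_{\bfm'}=\End(x)$.

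Next, applying the proposition with $\xi_{\bfm'}=\xi$ for an arbitrary $\xi=(\xi_i)_{i\in Q_0}\in\End(x)$ and $\xi_\bfq=0$ for all other $\bfq\in I$, I obtain that
\[
g(\xi):=\left(\Id_{r_i}+\xi_i\,\epsilon^{m_i'}\right)_{i\in Q_0}\in\Stab_U(\iota(x)).
\]
Under the hypothesis $\Stab_U(\iota(x))\subset\Delta_\bfm$, and since $g(\xi)$ is unipotent, this forces $g(\xi)\in\Delta_\bfm\cap U$. Now $\Delta_\bfm$ is the diagonal image of $k_\delta^\times$, and the inclusion $k_\delta\hookrightarrow k_{m_i}$ sends $\epsilon\mapsto\epsilon^{m_i'}$, so an element of $\Delta_\bfm\cap U$ has the form $\big((1+\sum_{k=1}^{\delta-1}c_k\epsilon^{km_i'})\Id_{r_i}\big)_{i\in Q_0}$ with scalars $c_1,\dots,c_{\delta-1}\in k$ that do not depend on $i$. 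Comparing, at each vertex $i$, the coefficient of $\epsilon^{m_i'}$ in the equality $g(\xi)=\big((1+\sum_k c_k\epsilon^{km_i'})\Id_{r_i}\big)_i$ yields $\xi_i=c_1\Id_{r_i}$, with one and the same scalar $c_1$ for all $i$. Hence $\xi=c_1\Id\in k\cdot\Id$, proving $\End(x)\subset k\cdot\Id$; the reverse inclusion is immediate since scalar endomorphisms always commute with the structure maps of $x$, so $\End(x)=k\cdot\Id$.

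I do not expect a genuine obstacle here, as the argument is essentially a corollary-level deduction from the preceding proposition. The one point that needs care is the precise description of $\Delta_\bfm\cap U$: one must use that $k_\delta\hookrightarrow k_{m_i}$ identifies $\epsilon$ with $\epsilon^{m_i'}$, so that the lowest nonconstant term of a $\Delta_\bfm$-element at vertex $i$ sits exactly in degree $m_i'$, matching the degree of the term $\xi_i\epsilon^{m_i'}$ produced above. It is this alignment of degrees that makes the coefficient comparison force $\xi_i$ to be a scalar matrix with a \emph{vertex-independent} scalar, rather than merely imposing a weaker constraint.
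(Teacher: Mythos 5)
Your proof is correct and is exactly the deduction the paper intends (the corollary is stated without proof): you use the distinguished integral point $\bfm'=\bfm/\delta\in I$, for which $\End(x)_{\bfm'}=\End(x)$ precisely because $\delta>1$, and then the explicit description of $\Delta_\bfm\cap U$ via $k_\delta\hookrightarrow k_{m_i}$, $\epsilon\mapsto\epsilon^{m_i'}$, forces every $\xi\in\End(x)$ to be a vertex-independent scalar. This mirrors the argument the paper gives for the converse statement in \cref{cor: ss=s implies trivial unip stabilisers}, so no further comment is needed.
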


In particular, if $\bfm$ is divisible and $\theta\in\bbZ^{Q_0}$, then unipotent stabilisers are trivial on $\iota(R(Q,\bfr)^{\theta-ss})$ only if $R(Q,\bfr)^{\theta-\sst}=R(Q,\bfr)^{\theta-\st}$. We can also deduce a converse statement.

\begin{corollary}\label{cor: ss=s implies trivial unip stabilisers}
Consider $\theta\in\bbZ^{Q_0}$ such that $R(Q,\bfr)^{\theta-\sst}=R(Q,\bfr)^{\theta-\st}$. Then, for every $x\in R(Q,\bfr)^{\theta-\sst}$, we have:
\[
\Stab_U(\iota(x))\subset \Delta_\bfm.
\]
\end{corollary}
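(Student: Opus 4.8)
The plan is to deduce this directly from the explicit description of $\Stab_U(\iota(x))$ obtained above, once we know that $\theta$-stability of $x$ pins down its endomorphism algebra. First I would record that, since $R(Q,\bfr)^{\theta-\sst}=R(Q,\bfr)^{\theta-\st}$, every point $x$ of this locus corresponds to a $\theta$-\emph{stable} representation $V(x)$ of $Q$. Comparing the values of $\theta\cdot\dim$ on the kernel and on the image of an arbitrary endomorphism of $V(x)$, using $\theta\cdot\dim V(x)=0$ together with the strict positivity of $\theta\cdot\dim$ on proper nonzero subrepresentations, shows that every nonzero endomorphism of $V(x)$ is an isomorphism, so $\End(x)$ is a finite-dimensional division algebra over $k$; as $k$ is algebraically closed, this forces $\End(x)=k\cdot\Id$. (We may assume $r_i>0$ for all $i\in Q_0$, deleting rank-zero vertices if necessary.)

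Next I would substitute $\End(x)=k\cdot\Id$ into the formula for $\Stab_U(\iota(x))$. For $\bfq\in I$, an element $\lambda\cdot\Id$ of $\End(x)$ has every block $\lambda\cdot\Id_{r_i}$ nonzero once $\lambda\ne0$, so $\End(x)_\bfq=k\cdot\Id$ when $q_i\in\{1,\dots,m_i-1\}$ for all $i$, and $\End(x)_\bfq=0$ otherwise. Writing $\bfq=t\bfm$ with $t\in(0,1)\cap\bbQ$, so that $q_i=tm_i$, the condition ``$q_i\in\{1,\dots,m_i-1\}$ for all $i$'' amounts (the strict inequalities being automatic for $\bfq\in I$) to $tm_i\in\bbZ$ for all $i$, that is, to the denominator of $t$ dividing $\delta=\gcd(m_i,\,i\in Q_0)$. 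Hence $\End(x)_\bfq\ne0$ exactly when $\bfq=\tfrac{k}{\delta}\bfm$ for some $1\le k\le\delta-1$, and every element of $\Stab_U(\iota(x))$ then takes the form
\[
\left(\Id_{r_i}+\sum_{k=1}^{\delta-1}\lambda_k\,\epsilon^{km_i/\delta}\,\Id_{r_i}\right)_{i\in Q_0},\qquad \lambda_1,\dots,\lambda_{\delta-1}\in k.
\]

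Finally I would recognise such an element as the image of the unit $1+\lambda_1\epsilon+\dots+\lambda_{\delta-1}\epsilon^{\delta-1}\in k_\delta^{\times}$ under the diagonal embedding $k_\delta^{\times}\hookrightarrow\prod_{i}k_{m_i}^{\times}\hookrightarrow\GL_{\bfm,\bfr}$ defining $\Delta_\bfm$, recalling that the subring inclusion $k_\delta\hookrightarrow k_{m_i}$ is $\epsilon\mapsto\epsilon^{m_i/\delta}$; this gives $\Stab_U(\iota(x))\subseteq\Delta_\bfm$, as wanted. I do not expect a real obstacle here: all the structural content is already contained in the description of $\Stab_U(\iota(x))$ proved beforehand, and what is left is essentially a matching of exponents. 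The only points needing a little attention are the harmless reduction to $r_i>0$ and the degenerate case $\delta=1$, in which the displayed sum is empty and $\Stab_U(\iota(x))$ is trivial --- consistent with the claim.
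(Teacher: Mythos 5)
Your proposal is correct and follows essentially the same route as the paper: deduce $\End(x)=k\cdot\Id$ from stability (Schur's lemma), observe that $\End(x)_{\bfq}\ne0$ for $\bfq\in I$ forces all entries of $\bfq$ to be integral, hence $\bfq$ a multiple of $\bfm/\delta$, and conclude that the surviving entries of a unipotent stabiliser element are exactly those of the diagonal copy of $k_\delta^\times$. The paper's proof is just a terser version of the same argument, leaving the Schur-lemma step and the exponent matching implicit.
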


\begin{proof}
Set $x\in R(Q,\bfm;\bfr)^{\theta-\sst}$. By assumption on $\theta$, we have $\End(x)=k\cdot\Id$, where $k$ is embedded diagonally into $\gl_\bfr$ as scalar matrices. Thus, for $\bfq\in I$, we have $\End(x)_\bfq\ne\{0\}$ only if all entries of $\bfq$ are integral, i.e. $\bfq$ is a multiple of $\bfm'$. Therefore the only possible non-zero entries of $u\in\Stab_U(\iota(x))$ are $u_{i,dm'_i},\ i\in Q_0,\ 0< d<\delta$. In other words, $\Stab_U(\iota(x))\subset \Delta_\bfm$, which is what we wanted to show.
\end{proof}

When $\bfm$ is indivisible, unipotent stabilisers may be trivial on $R(Q,\bfr)^{\theta-\sst}$ even in the presence of strictly semistable representations (see \cref{Sect/examplesTwistedModuli}). We give some sufficient conditions below.

\begin{notation}
We will denote by $\End(x)_i$ those endomorphisms supported only at the vertex $i \in Q_0$.
\end{notation}

We can easily obtain the following characterisation.

\begin{lemma} \label{lemma characterising End_i}
    Fix $i \in Q_0$. Then we have \[ \End(x)_i = \left\{ \phi \in \End(V_i) \;\middle|\; \begin{aligned}  &\mathrm{Im}\: \phi \subseteq \Ker x_a  \text{ for all } a \in s^{-1}(i) \\ &\mathrm{Im}\: x_a \subseteq \Ker \phi \text{ for all }a \in t^{-1}(i) \end{aligned} \right\}.\] In particular, we have $\End(x)_i = 0$ if and only if one of the following holds \begin{enumerate}[label=\roman*)]
        \item we have $\underset{a \colon i \rightarrow j}{\bigcap}\Ker x_a = 0$, in which case we say that $x$ is \emph{jointly injective} at $i$;
        \item we have $\sum\limits_{a \colon j \rightarrow i}\mathrm{Im} x_a = V_i$, in which case we say that $x$ is \emph{jointly surjective} at $i$.
    \end{enumerate} 
\end{lemma}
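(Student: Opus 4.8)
The plan is to describe $\End(x)$ arrow-by-arrow and then specialise to endomorphisms supported at a single vertex. Recall that $\End(x)$ is the space of endomorphisms of the quiver representation $V = V(x)$, i.e. the set of tuples $\xi = (\xi_k)_{k \in Q_0}$ with $\xi_k \in \End(V_k)$ such that $\xi_{t(a)} x_a = x_a \xi_{s(a)}$ for every arrow $a \in Q_1$. First I would fix $i \in Q_0$ and take $\xi \in \End(x)$ with $\xi_k = 0$ for all $k \neq i$, writing $\phi := \xi_i$. For an arrow $a$ with neither endpoint at $i$ the relation $\xi_{t(a)} x_a = x_a \xi_{s(a)}$ is vacuous; for an arrow $a$ with source $i$ it reads $x_a \phi = 0$, i.e. $\mathrm{Im}\,\phi \subseteq \Ker x_a$; and for an arrow $a$ with target $i$ it reads $\phi x_a = 0$, i.e. $\mathrm{Im}\, x_a \subseteq \Ker\phi$. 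Conversely, any $\phi \in \End(V_i)$ satisfying these two families of conditions extends by zero to an element of $\End(x)$ supported at $i$. This yields the displayed formula for $\End(x)_i$.

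For the ``in particular'' statement I would argue both directions directly from this formula. If $x$ is jointly injective at $i$, then every $\phi \in \End(x)_i$ satisfies $\mathrm{Im}\,\phi \subseteq \bigcap_{a \colon i \to j} \Ker x_a = 0$, hence $\phi = 0$; if $x$ is jointly surjective at $i$, then $\Ker\phi \supseteq \sum_{a \colon j \to i} \mathrm{Im}\, x_a = V_i$, hence again $\phi = 0$. So either hypothesis forces $\End(x)_i = 0$. For the converse I would prove the contrapositive: if $x$ is neither jointly injective nor jointly surjective at $i$, set $K := \bigcap_{a \colon i \to j} \Ker x_a$ and $W := \sum_{a \colon j \to i} \mathrm{Im}\, x_a$; by assumption $K \neq 0$ and $W \subsetneq V_i$, so $V_i/W \neq 0$ and there is a nonzero linear map $\bar{\phi} \colon V_i/W \to K$. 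Then the composite $\phi \colon V_i \twoheadrightarrow V_i/W \xrightarrow{\bar{\phi}} K \hookrightarrow V_i$ is nonzero, has image contained in $K$ and kernel containing $W$, hence lies in $\End(x)_i$ by the formula, so $\End(x)_i \neq 0$.

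This argument is entirely elementary, so I do not anticipate a genuine obstacle; the only point requiring a little care is the converse construction, where one must confirm that the endomorphism factoring through $V_i/W$ and $K$ actually satisfies all the defining conditions of $\End(x)_i$, which is immediate from the description obtained in the first step. The remaining routine checks (that extension-by-zero of a valid $\phi$ does define an element of $\End(x)$, and the bookkeeping over $s^{-1}(i)$ and $t^{-1}(i)$) I would leave to the reader.
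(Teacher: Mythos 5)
Your proof is correct and takes essentially the same route as the paper: the displayed formula follows by unwinding the commutation relations $\xi_{t(a)}x_a=x_a\xi_{s(a)}$ for an endomorphism supported at $i$, and a nonzero element of $\End(x)_i$ is then exactly the same data as a nonzero map $V_i/\sum_{t(a)=i}\mathrm{Im}\,x_a\to\bigcap_{s(a)=i}\Ker x_a$, which exists iff neither joint injectivity nor joint surjectivity holds. (One minor caveat, shared with the statement itself: your case analysis treats $s^{-1}(i)$ and $t^{-1}(i)$ as giving separate conditions, which tacitly assumes there is no loop at $i$ — for a loop the relation is $\phi x_a=x_a\phi$ rather than $x_a\phi=\phi x_a=0$ — but this matches how the lemma is used in the paper.)
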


\begin{proof}
    The first statement is direct from the definition. For the second, note that to have a nonzero element of $\End(x)_i$ we must choose a nonzero map $V_i/\sum_{t(a)=i}
     \text{Im}x_a \rightarrow \bigcap_{s(a)=i}\Ker x_a$, and any such map will do.
\end{proof}

\begin{lemma}\label{Lem/isolatedVertices} 
    Fix $x \in R(Q,\bfr)$. Suppose that $\Stab_U(\iota(x)) = \{e\}$. Consider a vertex $i \in Q_0$ such that, in the undirected graph underlying $Q$, $i$ shares edges only with vertices of multiplicity not divisible by $m_i$. Then the representation $V(x)$ must be either jointly injective or jointly surjective at $i$. 
\end{lemma}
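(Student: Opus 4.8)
The plan is to use the explicit description of $\Stab_U(\iota(x))$ from the preceding proposition together with \cref{lemma characterising End_i}. Recall that the proposition identifies
\[
\Stab_U(\iota(x)) = \left\{ \left(\Id_{r_i} + \sum_{\bfq \in I}(\xi_\bfq)_i \cdot \epsilon^{q_i}\right)_{i \in Q_0} \ \middle| \ \xi_\bfq \in \End(x)_\bfq \ \forall \bfq \in I \right\},
\]
where $I$ is the open interval of the line $L = \{(l_i) : m_j l_i = m_i l_j \ \forall i,j\}$ joining the origin and $\bfm$. The key point is that for $\bfq \in I$, the $i$-th component of $\xi_\bfq$ is forced to be zero whenever $q_i \notin \{1,\ldots,m_i-1\}$. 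So to produce a \emph{nontrivial} element of the stabiliser supported (only) at the vertex $i$, I need a point $\bfq \in I$ such that $q_i \in \{1,\ldots,m_i-1\}$ — i.e. $q_i$ is a positive integer strictly less than $m_i$ — and an element $\xi \in \End(x)_i$ that is nonzero in its $i$-component (and which I may take to vanish at all other vertices).

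First I would translate the hypothesis on $i$: in the undirected graph underlying $Q$, every neighbour $j$ of $i$ has $m_i \nmid m_j$. I claim this forces that there exists $\bfq \in I$ with $q_i \in \{1,\ldots,m_i-1\}$ and yet $\xi = \delta_i \cdot \phi$ (the endomorphism supported at $i$ with value $\phi \in \End(x)_i$ at vertex $i$) lies in $\End(x)_\bfq$ for \emph{any} nonzero $\phi \in \End(x)_i$. Indeed, by definition $\xi \in \End(x)_\bfq$ requires only that $\xi_k \neq 0 \Rightarrow q_k \in \{1,\ldots,m_k-1\}$; since $\xi$ is supported only at $i$, the sole constraint is $q_i \in \{1,\ldots,m_i-1\}$. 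Now $\bfq \in I$ means $\bfq = \frac{q_i}{m_i}\bfm$, so its $j$-th coordinate is $q_j = \frac{q_i}{m_i} m_j$. For the element to genuinely enter the stabiliser I additionally need that the equations $u_{j,q_j} x_a = x_a u_{i,q_i}$ of the proposition impose no constraint relating vertex $i$ to a neighbour $j$ unless $q_j$ is also an integer in range — but since $\xi$ is supported only at $i$, the relevant equation is $u_{j,q_j} = 0$ on the $j$-side and $x_a u_{i,q_i} = 0 = u_{j,q_j} x_a$, so it reduces to whether $q_j$ is integral; if $q_j \notin \{1,\ldots,m_j-1\}$ then there is no coefficient $u_{j,q_j}$ at all, and the mixed equation in case 2 of the proposition (the vanishing conditions $x_a u_{i,q_i} = 0$ when $f_{ji} \nmid q_i$) is exactly what \cref{lemma characterising End_i} handles via $\phi \in \End(x)_i$. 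So the heart of the matter is: choosing $q_i = 1$ (hence $\bfq = \frac{1}{m_i}\bfm$), the neighbour coordinate is $q_j = m_j/m_i \notin \bbZ$ precisely because $m_i \nmid m_j$, so indeed $u_{j,q_j}$ does not appear and the only surviving equation is $x_a u_{i,1} = 0$ for arrows $a$ at $i$, i.e. $\phi \in \End(x)_i$.

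Putting this together: if $V(x)$ is neither jointly injective nor jointly surjective at $i$, then by \cref{lemma characterising End_i} we have $\End(x)_i \neq 0$, so pick $0 \neq \phi \in \End(x)_i$; set $\bfq = \frac{1}{m_i}\bfm \in I$ and let $\xi_\bfq$ be the endomorphism with $i$-component $\phi$ and all other components zero. The hypothesis $m_i \nmid m_j$ for every neighbour $j$ guarantees $q_j = m_j/m_i \notin \{1,\ldots,m_j-1\}$, so no constraint couples $i$ to its neighbours, and the element $u = \bigl(\Id_{r_i} + (\xi_\bfq)_i \epsilon^{q_i}\bigr)_{i}$ (identity in all other components) lies in $\Stab_U(\iota(x))$. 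Since $\phi \neq 0$ and $q_i = 1 < m_i$ (using $m_i \geq 2$, which holds because $i$ has a neighbour of multiplicity not divisible by $m_i$, forcing $m_i > 1$ unless $i$ is isolated — and if $i$ is isolated the claim is vacuous as there are no arrows, making $\End(x)_i = \End(V_i)$ act freely-ish; I should handle this edge case by noting an isolated vertex with $r_i > 0$ already gives $\Stab_U(\iota(x)) \neq \{e\}$ whenever $m_i > 1$, contradiction, so the statement is only nonvacuous when $i$ has a neighbour), this $u$ is not the identity, contradicting $\Stab_U(\iota(x)) = \{e\}$. Hence $V(x)$ is jointly injective or jointly surjective at $i$.

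The main obstacle I anticipate is the careful bookkeeping in the step identifying exactly which equations from the stabiliser proposition survive for an endomorphism supported at a single vertex — in particular making sure that when $q_j \notin \bbZ$ the "divisibility" equations of type 2 in that proposition reduce precisely to the conditions defining $\End(x)_i$ in \cref{lemma characterising End_i}, and correctly matching the roles of $f_{ji}$ versus $f_{ij}$ for arrows into $i$ versus out of $i$. A secondary subtlety is the edge case of an isolated vertex $i$ (no neighbours), where the hypothesis is vacuously satisfied; there one argues directly that $m_i > 1$ would already produce a nontrivial stabiliser element $\Id + \phi \epsilon$ for any $\phi \in \gl_{r_i}$, so either $m_i = 1$ (and joint injectivity/surjectivity at $i$ is automatic since $\End(x)_i = \gl_{r_i}$ would still need to vanish — actually for an isolated vertex one must have $r_i = 0$ or accept the statement is about the nonisolated case), which should be dispatched in a remark or a short opening sentence.
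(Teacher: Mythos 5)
Your proof is correct and takes essentially the same route as the paper: both arguments pick the unique $\bfq\in I$ with $q_i=1$, note that $m_i\nmid m_j$ forces $q_j=m_j/m_i\notin\bbZ$ at every neighbour so that $\End(x)_i\subseteq\End(x)_{\bfq}$, and then conclude $\End(x)_i=0$ from triviality of the stabiliser together with \cref{lemma characterising End_i}. The detailed equation bookkeeping and the isolated-vertex digression are not needed; the paper dispatches the lemma with exactly this three-line argument.
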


\begin{proof}
Let $i \in Q_0$ be a vertex sharing edges only with vertices of multiplicity not divisible by $m_i$. Consider the unique element $\bfq \in I$ whose $i$th entry is $1$. By assumption on $i$, none of its neighbouring vertices have integer entry in $\bfq$, so that $\End(x)_\bfq \supset \End(x)_i$. In particular, to have $\Stab_U(\iota(x)) = \{e\}$ we must have $\End(x)_i=0$, which gives us the statement by Lemma \ref{lemma characterising End_i}. 
\end{proof}

We obtain a partial converse to this statement, under an assumption on multiplicities.

\begin{lemma} \label{lem ustab condition when adjacent vertices are coprime}
    Suppose that any two vertices which are connected by an arrow have coprime multiplicities. Then $\Stab_U(\iota(x)) = \{e\}$ if and only if for every vertex $i$ the representation $x$ is either jointly surjective or jointly injective at $i$.
\end{lemma}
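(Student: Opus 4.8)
The plan is to exploit the explicit description of $\Stab_U(\iota(x))$ from the proposition above, specialised to the case where adjacent vertices have coprime multiplicities. Recall that
\[
\Stab_U(\iota(x)) = \left\{ \left(\Id_{r_i} + \sum_{\bfq \in I} (\xi_\bfq)_i \cdot \epsilon^{q_i}\right)_{i \in Q_0} \ \middle|\ \xi_\bfq \in \End(x)_\bfq \ \forall \bfq \in I \right\},
\]
so that $\Stab_U(\iota(x)) = \{e\}$ if and only if $\End(x)_\bfq = 0$ for every $\bfq \in I$. Thus the task is to show that, under the coprimality hypothesis, $\End(x)_\bfq = 0$ for all $\bfq \in I$ is equivalent to $\End(x)_i = 0$ for all $i \in Q_0$, which by Lemma \ref{lemma characterising End_i} is exactly the stated joint injectivity/surjectivity condition.

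The ``if'' direction (joint surjectivity or injectivity at each vertex implies trivial stabiliser) should follow from Lemma \ref{Lem/isolatedVertices}, or more directly: I would argue that the coprimality hypothesis forces $\End(x)_\bfq$ to split as a direct sum over vertices. Indeed, for $\bfq \in I$ and an arrow $a \colon i \to j$, the condition $m_{ij} = \gcd(m_i,m_j) = 1$ means $f_{ji} = m_i$ and $f_{ij} = m_j$, so the constraint coupling $\xi_i$ and $\xi_j$ in the proof of the proposition — namely $u_{j,q_j} x_a = x_a u_{i,q_i}$ with $q_i m_j = q_j m_i$ — can only be an \emph{equality of nonzero coefficients} when both $q_i$ and $q_j$ are integers in the relevant ranges; since $\bfq$ lies on the line $L$ with $q_i/q_j = m_i/m_j$ and $\gcd(m_i,m_j)=1$, having both $q_i \in \mathbb{Z}$ and $q_j \in \mathbb{Z}$ simultaneously is very restrictive. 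More precisely, for a fixed $\bfq \in I$, at each vertex $i$ one has $(\xi_\bfq)_i \ne 0$ only if $q_i \in \{1,\dots,m_i-1\}$, and the coupling equation across an arrow $a\colon i\to j$ degenerates (by characterisation 2 in the proof of the proposition, using that $f_{ji},f_{ij}$ are coprime, here automatic) to $x_a (\xi_\bfq)_i = 0$ and $(\xi_\bfq)_j x_a = 0$ whenever $q_i$ or $q_j$ fails to be an integer. So I would check: for $\bfq \in I$ with some $q_i$ integral, the coprimality of $m_i$ with each neighbouring multiplicity forces the neighbouring $q_j$ to be non-integral, hence $(\xi_\bfq)_i \in \End(x)_i$; conversely a vertex $i$ with $q_i \notin \mathbb{Z}$ contributes nothing. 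This yields $\End(x)_\bfq \subseteq \bigoplus_{i : q_i \in \mathbb{Z}} \End(x)_i$, and in fact equality. Consequently $\bigoplus_{\bfq \in I} \End(x)_\bfq = 0$ iff $\End(x)_i = 0$ for all $i$, giving both directions at once.

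For the ``only if'' direction I need that $\Stab_U(\iota(x)) = \{e\}$ forces $\End(x)_i = 0$ at \emph{every} vertex $i$ — not just the ``isolated'' ones of Lemma \ref{Lem/isolatedVertices}. This is where the coprimality hypothesis does real work: given any vertex $i$, I want to produce some $\bfq \in I$ with $q_i$ integral (say $q_i = 1$, using the unique point of $I$ with $i$th coordinate $1$, noting $1 < m_i$ when $m_i > 1$; the case $m_i = 1$ is trivial since then $\End(x)_i$ plays no role in $U$) and such that the only contribution to $\End(x)_\bfq$ at $i$ is precisely $\End(x)_i$. By coprimality, each neighbour $j$ of $i$ has $m_j$ coprime to $m_i$, so $q_j = m_j/m_i \notin \mathbb{Z}$ (as $m_i > 1$), exactly the hypothesis of Lemma \ref{Lem/isolatedVertices}. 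Hence every vertex of multiplicity $> 1$ satisfies that lemma's hypothesis, and we conclude $\End(x)_i = 0$ for all such $i$; by Lemma \ref{lemma characterising End_i} this says $x$ is jointly injective or jointly surjective at $i$. The main obstacle — and the point requiring care — is the bookkeeping in showing $\End(x)_\bfq$ decomposes as claimed: one must carefully track, for each $\bfq \in I$, which vertices have integral coordinate and verify via the coprimality that no arrow couples two such vertices, so that the defining equations of $\End(x)_\bfq$ genuinely decouple into the per-vertex conditions of Lemma \ref{lemma characterising End_i}. Once that decomposition is established, both implications are immediate.
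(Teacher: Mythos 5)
Your proposal is correct and takes essentially the same route as the paper: one direction via \cref{Lem/isolatedVertices} (whose hypothesis the coprimality assumption makes hold at every vertex), and the other via the decomposition $\End(x)_\bfq = \bigoplus_{i\,:\,q_i\in\ZZ}\End(x)_i$, which holds because for $\bfq\in I$ the relation $q_im_j=q_jm_i$ together with $\gcd(m_i,m_j)=1$ and $0<q_i<m_i$ prevents adjacent coordinates from both being integral, so the coupling equations across arrows degenerate to the per-vertex conditions of \cref{lemma characterising End_i}. Your explicit bookkeeping (including flagging the $m_i=1$ edge case) is if anything slightly more careful than the paper's two-line argument.
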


\begin{proof}
    Note that the assumption on the multiplicity ensures that all vertices satisfy the assumption of \cref{Lem/isolatedVertices}, which gives one direction. Moreover, any $\bfq \in I$ has the property that for any arrow $a \colon  i \rightarrow j$, the entries of $\bfq$ at $i$ and $j$ cannot both be integers. For such $\bfq$ we have $\End(x)_\bfq = \bigoplus_{i : q_i\in \ZZ} \End(x)_i$, and hence we will have $\Stab_U(\iota(x)) = \{e\}$ as long as each $\End(x)_i$ vanishes.
\end{proof}

\subsection{Hilbert-Mumford analysis}\label{Sect/HManalysis}

Let $\theta,\rho\in\bbZ^{Q_0}$ be two stability parameters. In this section, we describe the GIT semistable locus for the $\rho$-twisted quotient of quiver representatives with multiplicities relative to the classical moduli space of $\theta$-(semi)stable quiver representations.

\begin{assumption*}
Until the end of \cref{sec/constr moduli spaces}, we assume that the weights of the external grading action satisfy $\beta_a=\alpha_i(f_{ji}-1)$ for any arrow $a \colon i\rightarrow j$ (see \cref{Sect/extGrad}).
\end{assumption*}

\subsubsection{Semistable locus for the Levi action}

By \cref{rmk/alt descr HM}, the first step is to analyse the semistable locus for the Levi subgroup $\GL_\bfr\times\GG_m\subset\GL_{\bfm,\bfr}\rtimes\GG_m$. We build a thickened quiver $Q_\bfm$, such that there is an isomorphism:
\[
R(Q,\bfm;\bfr)\simeq R(Q_\bfm,\bfr),
\]
which is equivariant with respect to the action of the Levi subgroup:
\[
\tilde{\GL}_\bfr:=\GL_\bfr\times\GG_m
\]

\begin{definition}
Let $(Q,\bfm)$ be a quiver with multiplicities. The thickened quiver $Q_\bfm$ is defined as follows:
\begin{itemize}
\item the set of vertices is $(Q_\bfm)_0:=Q_0$;
\item the set of arrows is
\[
(Q_\bfm)_1=\left\{ (a \colon i\rightarrow j,m,f_1,f_2)\in Q_1\times\bbZ_{\geq0}^3\ \left\vert\ 
\begin{array}{l}
0\leq m\leq m_{ij}-1 \\
0\leq f_1\leq f_{ji}-1 \\
0\leq f_2\leq f_{ij}-1
\end{array}
\right.\right\}
,
\]
where $s(a,m,f_1,f_2)=s(a)$ and $t(a,m,f_1,f_2)=t(a)$.
\end{itemize}
\end{definition}

We define a $\tilde{\GL}_\bfr=\GL_\bfr\times\GG_m$-action on $R(Q_\bfm,\bfr)$ as follows. The $\GL_\bfr$-action is the usual action on the representation space of a quiver, while $\GG_m$ acts on $x\in R(Q_\bfm,\bfr)$ by scaling $x_{(a,m,f_1,f_2)}$ with weight
\begin{equation}\label{arrow weights in thickened quiver}
   \mathrm{wt}(a,m,f_1,f_2):=\alpha_j(mf_{ij}+f_2)+\alpha_i(f_{ji}-1-f_1)\geq0. 
\end{equation}
Then we have:

\begin{lemma}\label{Lem/thickQuiver}
There is a $\tilde{\GL}_\bfr$-equivariant isomorphism:
\begin{equation}\label{Eqn/thickQuiver}
R(Q,\bfm;\bfr)\simeq R(Q_\bfm,\bfr).
\end{equation}
Moreover, the composition $R(Q_\bfm,\bfr)\simeq R(Q,\bfm;\bfr)\overset{\tau}{\rightarrow} R(Q,\bfr)$ 
is $\GL_\bfr$-equivariant and $\GG_m$-invariant, and is given by the assignment:
\[
(x_{(a,m,f_1,f_2)})_{(a,m,f_1,f_2)\in (Q_\bfm)_1}
\mapsto
(x_{a,0,f_{t(a)s(a)}-1,0})_{a\in Q_1}.
\]
In particular, the image of the section $R(Q,\bfr) \stackrel{\iota}{\rightarrow}  R(Q,\bfm;\bfr) \simeq R(Q_\bfm,\bfr)$ is fixed by $\GG_m$.
\end{lemma}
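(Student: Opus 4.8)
The plan is to make the isomorphism explicit on coordinates and then transport the two group actions and the truncation map through it. First I would unwind the definitions: $R(Q,\bfm;\bfr)=\prod_{a\colon i\to j}\Hom_{k_{m_{ij}}}(k_{m_i}^{\oplus r_i},k_{m_j}^{\oplus r_j})$, and by the Toeplitz block decomposition \eqref{Eqn/blockDecompositionRep} together with the further subdivision of each $x_l$ into $r_j\times r_i$ blocks as in \cref{Not/BlockMatrices}, each $\Hom$-factor is canonically a product of copies of $\Hom_k(k^{\oplus r_i},k^{\oplus r_j})$ indexed by triples $(m,f_1,f_2)$ with $0\le m\le m_{ij}-1$, $0\le f_1\le f_{ji}-1$, $0\le f_2\le f_{ij}-1$. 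Matching the triple $(m,f_1,f_2)$ with the arrow $(a,m,f_1,f_2)$ of $Q_\bfm$ gives the isomorphism of affine spaces \eqref{Eqn/thickQuiver}; concretely, the coordinate $x_{(a,m,f_1,f_2)}$ is the $r_j\times r_i$ sub-block of $(x_a)_m$ that realises the induced $k$-linear map $\epsilon^{f_1}\otimes k^{r_i}\to\epsilon^{mf_{ij}+f_2}\otimes k^{r_j}$ in the bases \eqref{Eq useful basis}.

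Next I would check $\tilde{\GL}_\bfr$-equivariance. The Levi $\GL_\bfr\subset\GL_{\bfm,\bfr}$ consists of the matrices that are constant in $\epsilon$, which act on $k_{m_i}^{\oplus r_i}$ as $\Id_{m_i}\otimes g_i$; conjugating $x_a$ by such elements multiplies each $r_j\times r_i$ sub-block on the left by $g_j$ and on the right by $g_i^{-1}$, which is exactly the standard $\GL_\bfr$-action on $R(Q_\bfm,\bfr)$. For the $\GG_m$-factor I would compute the weight of $*_{\bm\alpha,\bm\beta}$ on the sub-block indexed by $(m,f_1,f_2)$: since $\Phi_{m_i,r_i}(t^{\alpha_i})$ scales the $\epsilon^p$-component by $t^{\alpha_i p}$, the formula $t\cdot x_a=t^{\beta_a}\Phi_{m_j,r_j}(t^{\alpha_j})x_a\Phi_{m_i,r_i}(t^{\alpha_i})^{-1}$ scales that sub-block by $t^{\,\beta_a+\alpha_j(mf_{ij}+f_2)-\alpha_i f_1}$, which under the running assumption $\beta_a=\alpha_i(f_{ji}-1)$ equals $t^{\mathrm{wt}(a,m,f_1,f_2)}$ with $\mathrm{wt}$ as in \eqref{arrow weights in thickened quiver}. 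This is the same arithmetic already carried out in the proofs of \cref{Lem/extGrad1} and \cref{Lem/extGrad2}, now done for every sub-block rather than only the one of minimal weight, so the reorganisation map intertwines the $\GG_m$-action with the weighted scaling action, giving $\tilde{\GL}_\bfr$-equivariance.

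For the composition with $\tau$: by \cref{Def/matrixTruncation}, $\tau(x_a)=[(x_a)_0]_{f_{ij},1}$, which in the bases \eqref{Eq useful basis} is precisely the sub-block of $(x_a)_0$ realising $\epsilon^{f_{ji}-1}\otimes k^{r_i}\to\epsilon^0\otimes k^{r_j}$, i.e. the coordinate $x_{(a,0,f_{ji}-1,0)}$ with $f_{ji}=f_{t(a)s(a)}$; this is the stated projection. Its $\GL_\bfr$-equivariance is already known (it is the equivariant truncation morphism of the preceding proposition restricted to the Levi), and its $\GG_m$-invariance is immediate since $\mathrm{wt}(a,0,f_{ji}-1,0)=0$, so the targeted coordinate is $\GG_m$-fixed. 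For the final assertion, \cref{definition iota} shows $\iota(x)$ has all coordinates zero except the $(a,0,f_{ji}-1,0)$ ones (where it records $x_a$), and these are exactly the weight-$0$ coordinates, because $\mathrm{wt}(a,m,f_1,f_2)=0$ forces $m=f_2=0$ and $f_1=f_{ji}-1$ (all three summands being nonnegative); hence $\iota\!\left(R(Q,\bfr)\right)\subseteq R(Q_\bfm,\bfr)^{\GG_m}$, so it is fixed by $\GG_m$. Equivalently, $\iota$ is a section of $\tau$, whose image lies in the fixed locus by \cref{Lem/extGrad2}.

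The only real difficulty is bookkeeping: keeping the two nested block structures (the $m_{ij}\times m_{ij}$ Toeplitz decomposition and the $f_{ij}\times f_{ji}$ sub-block decomposition) and the ordering of the $\epsilon$-power bases consistent, so that both the weight computation genuinely matches \eqref{arrow weights in thickened quiver} and the coordinate singled out by $\tau$ is indeed $(a,0,f_{t(a)s(a)}-1,0)$. There is no conceptual obstacle — the content is a careful dictionary between the "$k_{m_{ij}}$-linear map" description of $R(Q,\bfm;\bfr)$ and the "thickened quiver" description — and most of the weight arithmetic is already available from \cref{Lem/extGrad1}–\cref{Lem/extGrad2}.
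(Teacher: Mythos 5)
Your proposal is correct and follows essentially the same route as the paper's proof: the same block decomposition of $\Hom_{k_{m_{ij}}}(k_{m_i}^{\oplus r_i},k_{m_j}^{\oplus r_j})$ into $r_j\times r_i$ sub-blocks indexed by $(m,f_1,f_2)$, the same weight computation $\beta_a+\alpha_j(mf_{ij}+f_2)-\alpha_i f_1=\mathrm{wt}(a,m,f_1,f_2)$ under the standing assumption $\beta_a=\alpha_i(f_{ji}-1)$, and the same identification of $\tau$ with the projection onto the unique weight-zero coordinate $(a,0,f_{ji}-1,0)$. The only cosmetic difference is that the paper first presents the isomorphism via a chain of tensor-product identifications before passing to the explicit block coordinates, whereas you work with the block coordinates throughout; your block conventions (source $\epsilon^{f_1}$, target $\epsilon^{mf_{ij}+f_2}$) match the paper's.
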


\begin{remark}
We will often abuse notation and also call $\tau$ the composition:
\[
R(Q_\bfm,\bfr)\simeq R(Q,\bfm;\bfr)\overset{\tau}{\rightarrow} R(Q,\bfr).
\]
\end{remark}

\begin{proof}
Let us first specify the isomorphism $R(Q_\bfm,\bfr)\simeq R(Q,\bfm;\bfr)$. Given an arrow of $Q$, called $a \colon i\rightarrow j$, we have the following $\GL_{r_i}\times\GL_{r_j}$-equivariant decomposition:
\[
\begin{split}
\Hom_{k_{m_{ij}}}(k_{m_i}^{\oplus r_i},k_{m_j}^{\oplus r_j}) &
\simeq
\Hom_{k_{m_{ij}}}(k^{\oplus r_i}\otimes_kk_{m_i},k^{\oplus r_j}\otimes_kk_{m_j}) \\
& \simeq
\Hom_k(k^{\oplus r_i},k^{\oplus r_j})\otimes_k\Hom_{k_{m_{ij}}}(k_{m_i},k_{m_j}) \\
& \simeq
\Hom_k(k^{\oplus r_i},k^{\oplus r_j})\otimes_k\Hom_{k_{m_{ij}}}(k_{m_{ij}}^{\oplus f_{ji}},k_{m_{ij}}^{\oplus f_{ij}}) \\
& \simeq
\bigoplus_{f_1=0}^{f_{ji}-1}\bigoplus_{f_2=0}^{f_{ij}-1}\Hom_k(k^{\oplus r_i},k^{\oplus r_j})\otimes_kk_{m_{ij}} \\
& \simeq
\bigoplus_{f_1=0}^{f_{ji}-1}\bigoplus_{f_2=0}^{f_{ij}-1}\bigoplus_{m=0}^{m_{ij}-1}\Hom_k(k^{\oplus r_i},k^{\oplus r_j}),
\end{split}
\]
where the direct sum decomposition of $k_{m_{ij}}$ (resp. $k_{m_i}$, $k_{m_j}$) is indexed using the following basis:
\[
k_{m_{ij}}=\bigoplus_{m=0}^{m_{ij}-1}k\epsilon^m
\ \left(\text{resp. }
k_{m_i}=\bigoplus_{f_1=0}^{f_{ji}-1}k_{m_{ij}}\epsilon^{f_1}
\text{ ; }
k_{m_j}=\bigoplus_{f_2=0}^{f_{ij}-1}k_{m_{ij}}\epsilon^{f_2}
\right) .
\]
By matching the right-hand side summand labeled by $(m,f_1,f_2)$ with the arrow $(a,m,f_1,f_2)$ of $Q_{\bfm}$, we obtain a $\GL_\bfr$-equivariant isomorphism $R(Q_\bfm,\bfr)\simeq R(Q,\bfm;\bfr)$.

We now check that this isomorphism is also $\GG_m$-equivariant. Fix an arrow of $Q$, called $a \colon i\rightarrow j$, and $x\in R(Q,\bfm;\bfr)$. Using \cref{Not/BlockMatrices}, the above isomorphism has the following explicit expression:
\[
\begin{array}{rcl}
R(Q,\bfm;\bfr) & \rightarrow & R(Q_\bfm,\bfr) \\
(x_a)_{a\in Q_1} & \mapsto & ([(x_a)_m]_{f_{ij}-f_2,f_{ji}-f_1})_{(a,m,f_1,f_2)\in (Q_\bfm)_1},
\end{array}
\]
where $[(x_a)_m]_{f',f''}$ denotes the $r_j\times r_i$-block of $(x_a)_m$ with coordinates $(f',f'')$. Moreover, under the standing $\GG_m$-action, the block $[(x_a)_m]_{f_{ij}-f_2,f_{ji}-f_1}$ is scaled with weight:
\[
\begin{split}
\alpha_j(mf_{ij}+f_2)-\alpha_if_1+\beta_a & = \alpha_j(mf_{ij}+f_2)+\alpha_i(f_{ji}-1-f_1) = \mathrm{wt}(a,m,f_1,f_2).
\end{split}
\]
This proves $\GG_m$-equivariance.

Finally, recall from \cref{Not/BlockMatrices} that $\tau(x)$ coincides with $[x_0]_{f_{ij},1}$, which in the above decomposition corresponds to $(m,f_1,f_2)=(0,f_{ji}-1,0)$. This yields the wanted description of $\tau$ under the isomorphism.
\end{proof}

\begin{example}
Suppose $\bfm=m\mathbf{1}$ for some integer $m\geq1$. Then arrows of $Q_\bfm$ are simply given by $(Q_\bfm)_1=Q_1\times\{0,\ldots, m-1\}$. The isomorphism $R(Q,\bfm;\bfr)\simeq R(Q_\bfm,\bfr)$ reads, in coordinates:
\[
(x_a)_{a\in Q_1}\mapsto (x_{a,n})_{
\begin{subarray}{l}
a\in Q_1 \\
0\leq n\leq m-1
\end{subarray}
},
\]
where $x_{a,n}$ is the coefficient of $\epsilon^n$ in $x_a\in\Hom_{k_m}(k_m^{\oplus r_i},k_m^{\oplus r_j})\simeq k_m\otimes_k\Hom_k(k^{\oplus r_i},k^{\oplus r_j})$.
\end{example}

Let us now analyse the semistable locus for the action of the Levi subgroup
\[
\GL_\bfr\times\GG_m\subset\GL_{\bfm,\bfr}\rtimes\GG_m
\]
on $R(Q,\bfm;\bfr)\times\bbA^1$. The action on $R(Q,\bfm;\bfr)$ is as described in Section \ref{Sect/extGrad}, while the action on $\bbA^1$ is given by $(g,t)\cdot z=tz$. Before we analyse the Hilbert-Mumford criterion, we set up some notation.

\begin{notation}
We will write one-parameter subgroups of $\tilde{\GL}_\bfr=\GL_\bfr\times\GG_m$ as $\tilde{\lambda}=(\lambda,l)$, where $
\tilde{\lambda}(t)=(\lambda(t),t^l)$.
Likewise, we will write characters of $\tilde{\GL}_\bfr$ as $\tilde{\rho}=(\rho,n)$, given by $\tilde{\rho}(g,t)=\rho(g)t^n$. 
Moreover, the datum of a one-parameter subgroup $\lambda$ of $\GL_\bfr$ is equivalent to the datum of a $\bbZ$-grading of the $Q_0$-graded vector space $k^{\oplus\bfr}$. We denote by $W^p\subset k^{\oplus\bfr}$ the $Q_0$-graded weight-subspace of weight $p$ under the action of $\lambda$ and:
\[
V^p:=\bigoplus_{q\geq p}W^q,
\]
so that $V^{\bullet}$ is a descending filtration of $k^{\oplus\bfr}$ by $Q_0$-graded vector spaces.
\end{notation}

We next determine when the flow of a point under a one-parameter subgroup admits a limit.

\begin{lemma}\label{Lem/1PSlimit}
Let $(x,z)\in R(Q_\bfm,\bfr)\times(\bbA^1\setminus\{0\})$ and $\tilde{\lambda}=(\lambda,l):\GG_m\rightarrow\GL_\bfr\times\GG_m$ be a one parameter subgroup. Then $\lim_{t\rightarrow0}\tilde{\lambda}(t)\cdot (x,z)$ exists if and only if:
\[
\left\{
\begin{array}{l}
l\geq0\ , \\
\forall (a,m,f_1,f_2)\in (Q_\bfm)_1,\ x_{a,m,f_1,f_2}(V_{s(a)}^\bullet)\subset V_{t(a)}^{\bullet-l\mathrm{wt}(a,m,f_1,f_2)}.
\end{array}
\right.
\]
In particular, if $l=0$ or $x$ lies in the image of $\iota \colon R(Q,\bfr) \rightarrow R(Q_\bfm,\bfr)$, the second condition is equivalent to $V^p$ being a subrepresentation of $V(x)$ for all $p\in\bbZ$. 
\end{lemma}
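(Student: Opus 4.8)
\textbf{Proof plan for \cref{Lem/1PSlimit}.}

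The plan is to compute the $\GG_m$-action $\tilde\lambda(t)\cdot(x,z)$ arrow-by-arrow and record the weight with which each matrix coefficient of $x$ is scaled, then ask when all these weights (together with the weight on the $\bbA^1$-factor) are non-negative. First I would fix a one-parameter subgroup $\tilde\lambda=(\lambda,l)$ and, using the weight decomposition $k^{\oplus\bfr}=\bigoplus_p W^p$ attached to $\lambda$, decompose each linear map $x_{a,m,f_1,f_2}\in\Hom_k(V_{s(a)},V_{t(a)})$ into its graded pieces $x^{p\to q}_{a,m,f_1,f_2}\colon W^p_{s(a)}\to W^q_{t(a)}$. Under the $\GG_m$-action of \cref{Sect/extGrad} (transported along the isomorphism of \cref{Lem/thickQuiver}), the arrow $x_{a,m,f_1,f_2}$ carries the external weight $\mathrm{wt}(a,m,f_1,f_2)$ from the grading $\GG_m$, so the graded piece $x^{p\to q}_{a,m,f_1,f_2}$ is scaled by $t$ to the power $q-p+l\cdot\mathrm{wt}(a,m,f_1,f_2)$; meanwhile the coordinate $z$ on $\bbA^1$ is scaled by $t^l$. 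Since $(x,z)$ has $z\neq0$, the limit as $t\to0$ exists if and only if $l\geq0$ and $q-p+l\,\mathrm{wt}(a,m,f_1,f_2)\geq0$ for every $(a,m,f_1,f_2)$ and every pair $p>q$ with $x^{p\to q}_{a,m,f_1,f_2}\neq0$ — equivalently $x^{p\to q}=0$ whenever $q<p-l\,\mathrm{wt}(a,m,f_1,f_2)$. Re-expressing this in terms of the filtration $V^\bullet$ (where $V^p=\bigoplus_{q\geq p}W^q$): the condition ``$x_{a,m,f_1,f_2}$ kills all weight-$\geq p$ vectors into weight-$<p-l\,\mathrm{wt}$ components'' is exactly $x_{a,m,f_1,f_2}(V^p_{s(a)})\subseteq V^{p-l\,\mathrm{wt}(a,m,f_1,f_2)}_{t(a)}$ for all $p$, which is the asserted shifted-filtration condition.

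For the final ``in particular'' clause, note that all the weights $\mathrm{wt}(a,m,f_1,f_2)$ are $\geq 0$ by \eqref{arrow weights in thickened quiver}. If $l=0$, the shift $l\,\mathrm{wt}(a,m,f_1,f_2)$ vanishes for every arrow, so the second condition reads $x_{a,m,f_1,f_2}(V^p_{s(a)})\subseteq V^p_{t(a)}$ for all arrows of $Q_\bfm$ and all $p$, which says precisely that each $V^p$ is a subrepresentation of $V(x)\in R(Q_\bfm,\bfr)$. If instead $x$ lies in the image of $\iota$, then by \cref{Lem/thickQuiver} (and \cref{definition iota}) the only nonzero components of $x$ are the $x_{a,0,f_{t(a)s(a)}-1,0}$, and for these $\mathrm{wt}(a,0,f_{ji}-1,0)=\alpha_j\cdot0+\alpha_i(f_{ji}-1-(f_{ji}-1))=0$; so again every relevant weight-shift is zero and the condition collapses to $x_{a,0,f_{ji}-1,0}(V^p_{s(a)})\subseteq V^p_{t(a)}$ for all $p$, i.e.\ each $V^p$ is a subrepresentation of $V(x)$ viewed in $R(Q,\bfr)$ via $\tau$. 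In both cases one also checks the $l\geq0$ constraint is automatically the remaining half of the criterion, so the two conditions together are equivalent to the filtration-by-subrepresentations statement.

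The main obstacle I expect is purely bookkeeping: correctly transporting the $\GG_m$-action from the ``block-matrix'' description on $R(Q,\bfm;\bfr)$ (as in \cref{Lem/extGrad2}(iii) and its proof) across the isomorphism $R(Q,\bfm;\bfr)\simeq R(Q_\bfm,\bfr)$ of \cref{Lem/thickQuiver}, so that the external weight of the arrow labelled $(a,m,f_1,f_2)$ is exactly $\mathrm{wt}(a,m,f_1,f_2)$ as defined in \eqref{arrow weights in thickened quiver} and not off by a sign or by a $\beta_a$-shift. This is already essentially carried out in the proof of \cref{Lem/thickQuiver} (the block $[(x_a)_m]_{f_{ij}-f_2,f_{ji}-f_1}$ is scaled with weight $\mathrm{wt}(a,m,f_1,f_2)$), so here I would simply cite that computation. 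The remaining steps — decomposing by $\lambda$-weight, reading off that a product of scalars $t^{q-p}\cdot t^{l\,\mathrm{wt}}$ has a limit iff the exponent is non-negative, and rewriting weight-space conditions as filtration conditions — are routine and standard in Hilbert--Mumford arguments for quivers, so I would present them compactly.
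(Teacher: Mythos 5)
Your proposal is correct and follows essentially the same route as the paper's proof: decompose each $x_{a,m,f_1,f_2}$ into $\lambda$-weight components, observe that the component from the weight-$p$ to the weight-$q$ space is scaled by $t^{q-p+l\,\mathrm{wt}(a,m,f_1,f_2)}$ (citing the weight computation already done in \cref{Lem/thickQuiver}), translate non-negativity of these exponents into the shifted-filtration condition, and note that the shift vanishes when $l=0$ or when $x=\iota(y)$ since the only nonzero components then have $\mathrm{wt}=0$. No gaps.
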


\begin{proof}
By the preceding discussion and Lemma \ref{Lem/thickQuiver}, for any $(a,m,f_1,f_2)\in (Q_\bfm)_1$, the component:
\[
x_{a,m,f_1,f_2}^{(p,q)} \colon W_{s(a)}^q\rightarrow W_{t(a)}^p
\]
is scaled with weight $p-q+l\mathrm{wt}(a,m,f_1,f_2)$
under the action of $\tilde{\lambda}$. So $\lim_{t\rightarrow0}\tilde{\lambda}(t)\cdot x$ exists if and only if:
\[
\forall (a,m,f_1,f_2)\in (Q_\bfm)_1,\ x_{a,m,f_1,f_2}(V_{s(a)}^\bullet)\subset V_{t(a)}^{\bullet-l\mathrm{wt}(a,m,f_1,f_2)}.
\]
Besides, $\lim_{t\rightarrow0} t^l\cdot z$ exists if and only if $l\geq0$. The final claim follows, as if $l = 0$, then there is no index shift in the second condition. Similarly if $x$ is in the image of $\iota$, then there is no index shift, as $\mathrm{wt}(a,m,f_1,f_2) =0$ whenever $x_{a,m,f_1,f_2}$ is non-zero (see \cref{Lem/thickQuiver}).
\end{proof}

We now proceed to the core of the Hilbert-Mumford analysis.

\begin{lemma}\label{Lem/HM1}
There exists $n_0\geq0$, depending only on $(Q,\bfm)$, $\bfr$, $\rho$ and $\bm{\alpha},\bm{\beta}$, such that for all $n\geq n_0$ and $(x,z)\in \tau^{-1}(R(Q,\bfr)^{\theta-\sst})\times(\bbA^1\setminus\{0\})$, the following are equivalent:
\begin{enumerate}[label=\roman*)]
\item $(x,z)$ is $\tilde{\rho}$-semistable (resp. stable) for the action of $\tilde{\GL}_\bfr$; 
\item for any subrepresentation $V\subset k^{\oplus\bfr}$ of $V(x)$ such that $\theta\cdot\dim V=0$, we have $\rho\cdot\dim V\geq0$ (resp. $\rho\cdot\dim V>0$).
\end{enumerate}
\end{lemma}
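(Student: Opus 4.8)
The plan is to read the statement off the Hilbert--Mumford criterion for the reductive Levi $\tilde{\GL}_\bfr=\GL_\bfr\times\GG_m$, as recorded in \cref{Thm/extGradQuot}~\ref{Item 2 Thm/extGradQuot} and \cref{rmk/alt descr HM}: working relative to $W=M_{Q,\bfr}^{\theta-\sst}$, a point $(x,z)\in\widetilde X':=\tau^{-1}(R(Q,\bfr)^{\theta-\sst})\times\bbA^1$ with $z\ne0$ is $\tilde\rho$-(semi)stable for $\tilde{\GL}_\bfr$ precisely when $\langle\tilde\rho,\tilde\lambda\rangle\geqp0$ for every one-parameter subgroup $\tilde\lambda=(\lambda,l)$ of $\tilde{\GL}_\bfr$ whose flow $\lim_{t\to0}\tilde\lambda(t)\cdot(x,z)$ exists in $\widetilde X'$ (for stability, excluding those $\tilde\lambda$ fixing $(x,z)$, as usual). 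A one-parameter subgroup $\tilde\lambda=(\lambda,l)$ amounts to $l\in\bbZ$ together with a $\bbZ$-grading $k^{\oplus\bfr}=\bigoplus_pW^p$ with descending filtration $V^p:=\bigoplus_{q\ge p}W^q$; translating the grading --- which changes neither the action, since $\Delta$ acts trivially, nor the pairing, since $\rho\cdot\bfr=0$ --- I may assume the least weight is $0$, so $V^0=k^{\oplus\bfr}$ and, by Abel summation, $\langle\tilde\rho,\tilde\lambda\rangle=nl+\sum_{p\ge1}\rho\cdot\dim V^p$. Since $z\ne0$ the flow can exist only if $l\ge0$, and then by \cref{Lem/1PSlimit} the weight-zero arrows of $Q_\bfm$ (those inducing $\tau$) force $V^\bullet$ to be a filtration of $V(\tau(x))$ by subrepresentations, with $\tau$ of the limit point equal to $\mathrm{gr}_{V^\bullet}V(\tau(x))$; as $V(\tau(x))$ is already $\theta$-semistable, this associated graded is $\theta$-semistable --- equivalently, the limit lies in $\widetilde X'$ --- if and only if $\theta\cdot\dim V^p=0$ for all $p$.

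Granting these translations, (i)$\Rightarrow$(ii) is immediate: given a subrepresentation $V$ of the $Q_\bfm$-representation $V(x)$ with $\theta\cdot\dim V=0$, take $\lambda$ with weight $1$ on $V$ and $0$ on a graded complement and $l=0$; then $\lim_{t\to0}\tilde\lambda(t)\cdot(x,z)$ exists and lies in $\widetilde X'$ by the above, so $\rho\cdot\dim V=\langle\tilde\rho,\tilde\lambda\rangle\geqp0$. The converse when $l=0$ is equally direct: \cref{Lem/1PSlimit} then shows each $V^p$ is a genuine subrepresentation of $V(x)$, with $\theta\cdot\dim V^p=0$ by the above, so (ii) gives $\rho\cdot\dim V^p\geqp0$ for all $p$, whence $\langle\tilde\rho,\tilde\lambda\rangle=\sum_{p\ge1}\rho\cdot\dim V^p\geqp0$.

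The heart of the argument --- and the step I expect to be the main obstacle --- is to choose $n_0$ so that when $l\ge1$ the one-parameter subgroup $\tilde\lambda=(\lambda,l)$ is never (strictly) destabilising. The difficulty is that for $l\ge1$ the subspaces $V^p$ need only be subrepresentations of the truncation $V(\tau(x))$, not of $V(x)$, so (ii) does not apply directly, and one must recover $Q_\bfm$-subrepresentations by hand. The mechanism I would use: if $W_{\max}$ denotes the largest weight $\mathrm{wt}(a,m,f_1,f_2)$ of the external grading on $R(Q_\bfm,\bfr)$, then any subrepresentation $N$ of $V(\tau(x))$ equal to $V^p$ for more than $lW_{\max}$ consecutive indices $p$ is automatically a subrepresentation of $V(x)$: for each arrow of $Q_\bfm$ one picks an index $p$ in that range with $p-l\,\mathrm{wt}(a,m,f_1,f_2)$ still in it, and \cref{Lem/1PSlimit} gives $x_{a,m,f_1,f_2}(N)=x_{a,m,f_1,f_2}(V^p)\subset V^{p-l\,\mathrm{wt}(a,m,f_1,f_2)}=N$. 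Setting $n_0:=1+W_{\max}\bigl(\sum_i r_i\bigr)\bigl(\sum_i|\rho_i|r_i\bigr)$ and assuming $n\ge n_0$: if some $\tilde\lambda=(\lambda,l)$ with $l\ge1$ and flow in $\widetilde X'$ had $\sum_{p\ge1}\rho\cdot\dim V^p<-nl$ (resp.\ $\le-nl$), I would group the sum by the value of $V^p$, bound the total contribution of the values occurring with multiplicity at most $lW_{\max}$ by $l(n_0-1)$ in absolute value (using $|\rho\cdot\dim V^p|\le\sum_i|\rho_i|r_i$ and that the distinct $V^p$ form a chain of length at most $\sum_i r_i$), and conclude that the remaining high-multiplicity part is strictly negative; some high-multiplicity value $N$ then has $\rho\cdot\dim N<0$, and by the observation above $N$ is a subrepresentation of $V(x)$ with $\theta\cdot\dim N=0$, contradicting (ii). The stable case runs on the same estimates, using additionally that under the stable form of (ii) no non-central one-parameter subgroup of $\GL_\bfr$ can fix $x$ (its two complementary graded subrepresentations would violate (ii)), so the one-parameter subgroups in the stabiliser of $(x,z)$ cause no trouble.
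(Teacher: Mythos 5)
Your proposal is correct and follows essentially the same route as the paper's proof: the Hilbert--Mumford criterion for $\tilde{\GL}_\bfr$, King's weight formula, \cref{Lem/ssGr} to force $\theta\cdot\dim V^p=0$, and the same pigeonhole mechanism whereby any subspace occupying more than $l\cdot w$ consecutive steps of the filtration is a genuine subrepresentation of $V(x)$, with $n_0$ chosen to dominate the contribution of the remaining steps. The only differences are cosmetic: you phrase the key estimate contrapositively, use the sharper chain-length bound $\sum_i r_i$ in place of the paper's $\prod_i(r_i+1)$, and are slightly more explicit about excluding central one-parameter subgroups in the stable case.
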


\begin{proof}[Proof of Lemma \ref{Lem/HM1}]
Let us define:
\begin{align*}
& A(\bfr)=\prod_{i\in Q_0}(r_i+1)\ , \\
& B=\underset{\substack{0\leq\bfr'\leq\bfr \\ \rho\cdot\bfr'<0}}{\max}\left\{\vert\rho\cdot\bfr'\vert\right\}\ , \\
& w=\underset{(a,m,f_1,f_2)\in (Q_\bfm)_1}{\max}\left\{\mathrm{wt}(a,m,f_1,f_2)\right\}\ .
\end{align*}
Fix $n> A(\bfr)Bw$ and let $(x,z)\in\tau^{-1}(R(Q,\bfr)^{\theta-ss})\times\left(\bbA^1\setminus\{0\}\right)$.

Suppose that for any subrepresentation $V\subset k^{\oplus\bfr}$ of $V(x)$ such that $\theta\cdot\dim V=0$, we have $\rho\cdot\dim V\geq0$ (resp. $\rho\cdot\dim V>0$). We want to check that, for any one-parameter subgroup $\tilde{\lambda} \colon \GG_m\rightarrow \tilde{\GL}_\bfr$ such that $\lim_{t\rightarrow0}\tilde{\lambda}(t)\cdot (x,z)$ exists and lies in $\tau^{-1}(R(Q,\bfr)^{\theta-\sst})\times\bbA^1$, we have $\langle\tilde{\lambda},\tilde{\rho}\rangle\geq0$ (resp. $\langle\tilde{\lambda},\tilde{\rho}\rangle>0$).

Let $\tilde{\lambda}$ be such a one-parameter subgroup and $V^\bullet$ the associated filtration of $k^{\oplus\bfr}$. As initially observed by King \cite[\S 3]{Kin94}, we have:
\[
\langle\tilde{\lambda},\tilde{\rho}\rangle
=
\sum_{p\in\bbZ}\rho\cdot\dim V^p+ln.
\]
Since, by assumption,
\[
\tau\left(\limit\lambda(t)\cdot x\right)=\limit\left(\tilde{\lambda}(t)\cdot\tau(x)\right)
\]
is $\theta$-semistable and, by Lemma \ref{Lem/1PSlimit}, the subspace $V^p\subset k^{\oplus\bfr}$ is a subrepresentation of $V(\tau(x))$, we have that $\theta\cdot\dim V^p=0$ for all $p\in\bbZ$, by Lemma \ref{Lem/ssGr} below.

Let $p_{\min}<p_{\max}$ denote the two integers satisfying:
\[
k^{\oplus\bfr}=V^{p_{\min}}\supsetneq V^{p_{\min}+1} \supset V^{p_{\max}}\supsetneq V^{p_{\max}+1}=\{ 0\},
\]
so that there are $p_{\max}-p_{\min}$ steps in the filtration where $V^p\subset k^{\oplus\bfr}$ is a non-zero proper subspace. Note that at most
\[
A(\bfr)=\prod_{i\in Q_0}(r_i+1)
\]
distinct ($Q_0$-graded) subspaces $V\subset k^{\oplus\bfr}$ occur among the $V^p$ with $ p_{\min}+1\leq p\leq p_{\max}$. Consider
\[
w=\underset{(a,m,f_1,f_2)\in (Q_\bfm)_1}{\max}\left\{\mathrm{wt}(a,m,f_1,f_2)\right\}.
\]
By the pigeonhole principle, there can be at most $lwA(\bfr)$ steps in the filtration which are taken by subspaces $V\subset k^{\oplus\bfr}$ occurring $lw$ or fewer times in the filtration. The remaining steps are then taken by subspaces $V\subset k^{\oplus\bfr}$ which are subrepresentations of $V(x)$, by Lemma \ref{Lem/1PSlimit}. Note that if $l=0$, then all subspaces $V^p\subset k^{\oplus\bfr}$ are subrepresentations of $V(x)$.

If the graded subspace $V^p \subset k^{\oplus\bfr}$ is a subrepresentation of $V(x)$, then by assumption we have $\rho\cdot\dim V^p \geq 0$ (resp. $\rho\cdot\dim V^p>0$). If not, then by definition of the constant $B$, we have $\rho\cdot\dim V^p \geq -B$ and there can be at most $lwA(\bfr)$ values of $p$ in the range $p_{\min} < p \leq p_{\max}$ for which $V^p$ is not a subrepresentation of $V(x)$. Thus by the choice of $n> A(\bfr)Bw$, we obtain the estimate:
\[
\begin{split}
\langle\tilde{\lambda},\tilde{\rho}\rangle
& =\sum_{p\in\bbZ}\rho\cdot\dim V^p+ln \\
& \geq l(n-A(\bfr)Bw) \\
& \geq0.
\end{split}
\]
(resp.\ we obtain the estimate:
\[
\begin{split}
\langle\tilde{\lambda},\tilde{\rho}\rangle
& =\sum_{p\in\bbZ}\rho\cdot\dim V^p+ln \\
& \geq l(n-A(\bfr)Bw) \\
& \geq0,
\end{split}
\]
where the first inequality is strict if $l=0$, whereas the second inequality is strict if $l>0$). This completes the proof of the implication (ii) $\Rightarrow$ (i).

Conversely, suppose that for any one-parameter subgroup $\tilde{\lambda} \colon \GG_m\rightarrow \tilde{\GL}_\bfr$ such that $\lim_{t\rightarrow0}\tilde{\lambda}(t)\cdot (x,z)$ exists and lies in $\tau^{-1}(R(Q,\bfr)^{\theta-\sst})\times\bbA^1$, we have $\langle\tilde{\lambda},\tilde{\rho}\rangle\geq0$ (resp. $\langle\tilde{\lambda},\tilde{\rho}\rangle>0$).

Let $V\subset k^{\oplus\bfr}$ be a subrepresentation of $V(x)$ such that $\theta\cdot\dim V=0$. Consider the one-parameter subgroup $\tilde{\lambda}=(\lambda,0)$, where $\lambda$ corresponds to the filtration:
\[
V^p=
\left\{
\begin{array}{ll}
k^{\oplus\bfr} & ,\ p<0 \\
V & ,\ p=0 \\
\{0\} & ,\ p>0.
\end{array}
\right.
\]
By Lemma \ref{Lem/1PSlimit}, $\lim_{t\rightarrow0}\tilde{\lambda}(t)\cdot(x,z)$ exists, so we obtain:
\[
\langle\tilde{\lambda},\tilde{\rho}\rangle=\rho\cdot\dim V\geq0
\text{ (resp. }\rho\cdot\dim V>0).
\]
This proves the lemma.
\end{proof}

The following lemma is well-known and already implicit in King's work \cite{Kin94}.

\begin{lemma}\label{Lem/ssGr}
Let $x\in R(Q,\bfr)^{\theta-\sst}$ and $\lambda \colon \GG_m\rightarrow\GL_\bfr$ be a one-parameter subgroup such that $x_0=\lim_{t\rightarrow0}\lambda(t)\cdot x$ exists. Let us call $V^\bullet$ the filtration of $k^{\oplus\bfr}$ associated to $\lambda$. Then the following are equivalent:
\begin{enumerate}[label=\roman*)]
\item $x_0\in R(Q,\bfr)^{\theta-\sst}$;
\item $\langle\lambda,\theta\rangle=0$;
\item for all $p\in\bbZ$, we have $\theta\cdot\dim V^p=0$.
\end{enumerate}
\end{lemma}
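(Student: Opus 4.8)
The plan is to identify $V(x_0)$ with the associated graded representation $\bigoplus_{p\in\bbZ}V^p/V^{p+1}$ of $V(x)$ for the filtration $V^\bullet$, and then play the $\theta$-semistability of $V(x)$ off against that of $V(x_0)$. First I would record two standard facts about the limit $x_0$. Since $x_0=\lim_{t\rightarrow0}\lambda(t)\cdot x$ exists, every $V^p$ is a subrepresentation of $V(x)$ (the classical observation underlying \cref{Lem/1PSlimit}); and since $x_0$ is $\lambda$-fixed, its arrow maps preserve each weight space $W^p$, so that both $V^p$ and the complementary graded subspace $\bigoplus_{q<p}W^q$ are subrepresentations of $V(x_0)$, and $V(x_0)\simeq\bigoplus_p W^p$ as representations of $Q$. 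Second, I would recall King's identity $\langle\lambda,\theta\rangle=\sum_{p\in\bbZ}\theta\cdot\dim V^p$ \cite[\S 3]{Kin94} (already invoked in the proof of \cref{Lem/HM1}); the right-hand side is a finite sum because $\theta\cdot\bfr=0$ annihilates the terms with $V^p=k^{\oplus\bfr}$.

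Granting these, the equivalence (ii)$\Leftrightarrow$(iii) is immediate: each $V^p$ is a subrepresentation of the $\theta$-semistable $V(x)$, hence $\theta\cdot\dim V^p\geq0$, so $\langle\lambda,\theta\rangle=\sum_p\theta\cdot\dim V^p$ is a sum of non-negative integers, and it vanishes precisely when each summand does.

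For (i)$\Rightarrow$(iii) I would, for each $p$, apply the $\theta$-semistability of $V(x_0)$ to the two complementary subrepresentations $V^p$ and $\bigoplus_{q<p}W^q$: both have non-negative $\theta$-degree, and the two degrees sum to $\theta\cdot\bfr=0$, forcing $\theta\cdot\dim V^p=0$. For (iii)$\Rightarrow$(i) I would first verify that each graded piece $W^p$ is $\theta$-semistable: one has $\theta\cdot\dim W^p=\theta\cdot\dim V^p-\theta\cdot\dim V^{p+1}=0$, and given a subrepresentation $N'\subset W^p$ its preimage $\widetilde{N}$ satisfies $V^{p+1}\subseteq\widetilde{N}\subseteq V^p$, so $\theta\cdot\dim\widetilde{N}\geq0$ by $\theta$-semistability of $V(x)$, whence $\theta\cdot\dim N'=\theta\cdot\dim\widetilde{N}-\theta\cdot\dim V^{p+1}=\theta\cdot\dim\widetilde{N}\geq0$. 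Then $V(x_0)\simeq\bigoplus_p W^p$ is $\theta$-semistable: for a subrepresentation $N$, filter it by the subrepresentations $N\cap\bigoplus_{q\geq p}W^q$, whose successive quotients embed into the $W^p$ and hence have non-negative $\theta$-degree, so $\theta\cdot\dim N\geq0$, while $\theta\cdot\dim V(x_0)=\sum_p\theta\cdot\dim W^p=0$.

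I expect no real obstacle here — this is the classical slope bookkeeping behind King's construction — and the only point needing a little care is the clean justification that $V(x_0)\simeq\bigoplus_p W^p$, i.e.\ that the limit representation is the associated graded of $V(x)$ and in particular that the partial sums $\bigoplus_{q\in S}W^q$ of weight spaces are genuine subrepresentations of it; once that is in place, all four implications above are short.
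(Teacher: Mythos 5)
Your proof is correct, and for half of the statement it takes a genuinely more self-contained route than the paper. The equivalence (ii)$\Leftrightarrow$(iii) is argued exactly as in the paper: each $V^p$ is a subrepresentation of the $\theta$-semistable $V(x)$, so King's identity $\langle\lambda,\theta\rangle=\sum_p\theta\cdot\dim V^p$ expresses the pairing as a sum of non-negative terms. Where you diverge is on part (i): the paper simply declares (i)$\Leftrightarrow$(ii) to be ``well-known'' and points to it as a degenerate case of \cref{Thm/extGradQuot}, whereas you prove (i)$\Leftrightarrow$(iii) directly by identifying $V(x_0)$ with the associated graded $\bigoplus_p W^p$ and doing the slope bookkeeping by hand — applying semistability of $V(x_0)$ to the complementary subrepresentations $V^p$ and $\bigoplus_{q<p}W^q$ for one direction, and for the other showing each $W^p\simeq V^p/V^{p+1}$ is $\theta$-semistable by lifting subrepresentations to $V(x)$ and then that a direct sum of degree-zero semistable pieces is semistable via the induced filtration on a subrepresentation $N$. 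All of these steps check out (the key facts you flag — that the arrow maps of the $\lambda$-fixed point $x_0$ preserve the weight spaces, and that the preimage $\widetilde{N}$ of $N'\subset V^p/V^{p+1}$ is a subrepresentation of $V(x)$ containing $V^{p+1}$ — are both standard and correctly justified). What your approach buys is independence from the GIT machinery for the (i) part, at the cost of a longer argument; what the paper's citation buys is brevity. Either is acceptable.
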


\begin{proof}
By Lemma \ref{Lem/1PSlimit} (for a quiver without multiplicities), we have that $x(V^p)\subset V^p$. Since $x$ is $\theta$-semistable, we obtain that $\theta\cdot\dim V^p\geq0$ for all $p\in\bbZ$. Moreover, we have \cite[\S 3]{Kin94}:
\[
\langle\lambda,\theta\rangle=\sum_{p\in\bbZ}\theta\cdot\dim V^p.
\]
Thus we get (ii) $\Leftrightarrow$ (iii). The equivalence (i) $\Leftrightarrow$ (ii) is well-known. It can be seen as a very degenerate case of \cref{Thm/extGradQuot}.
\end{proof}

We now translate the stability criterion from Lemma \ref{Lem/HM1} in terms of representations of $(Q,\bfm)$.

\begin{lemma}\label{Lem/HM2}
Let $x\in R(Q_\bfm,\bfr)$ and $y\in R(Q,\bfm;\bfr)$ be the corresponding representation under the isomorphism (\ref{Eqn/thickQuiver}). The following are equivalent:
\begin{enumerate}[label=\roman*)]
\item for any subrepresentation $V\subset V(x)$ such that $\theta\cdot\dim V=0$, we have $\rho\cdot\dim V\geq0$ (resp. $\rho\cdot\dim V>0$); 
\item for any $Q_0$-graded $k$-subspaces $V\subset k^{\oplus\bfr}$ such that $\theta\cdot\dim V=0$ and $(V_i\otimes_kk_{m_i})_{i\in Q_0}$ form a subrepresentation of $M(y)$, we have $\rho\cdot\dim V\geq0$ (resp. $\rho\cdot\dim V>0$).
\end{enumerate}
\end{lemma}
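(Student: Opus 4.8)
The plan is to show that (i) and (ii) are inequalities quantified over the \emph{same} collection of $Q_0$-graded subspaces $V\subset k^{\oplus\bfr}$ --- namely those with $\theta\cdot\dim V=0$ which ``generate a subrepresentation'' --- so that the only thing requiring an argument is that the two notions of subrepresentation appearing in (i) and (ii) coincide.

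First I would unravel both conditions. A $Q_0$-graded subspace $V\subset k^{\oplus\bfr}$ underlies a subrepresentation of the $Q_\bfm$-representation $V(x)$ precisely when $x_{(a,m,f_1,f_2)}(V_{s(a)})\subseteq V_{t(a)}$ for every arrow $(a,m,f_1,f_2)\in(Q_\bfm)_1$. On the other side, for each $i\in Q_0$ the submodule $V_i\otimes_kk_{m_i}\subseteq k_{m_i}^{\oplus r_i}$ is automatically free over $k_{m_i}$ of rank $\dim_kV_i$ and a $k_{m_{ij}}$-direct summand, so the collection $(V_i\otimes_kk_{m_i})_{i\in Q_0}$ is a (locally free) subrepresentation of $M(y)$ precisely when $y_a(V_i\otimes_kk_{m_i})\subseteq V_j\otimes_kk_{m_j}$ for every arrow $a\colon i\rightarrow j$ of $Q$. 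Thus both (i) and (ii) have the shape ``$\rho\cdot\dim V\geq0$ (resp.\ $>0$) for all $V$ with $\theta\cdot\dim V=0$ in a certain collection'', and it suffices to identify the two collections.

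The key step is to match these, arrow by arrow. Fix $a\colon i\rightarrow j$ in $Q_1$. By the construction underlying \cref{Lem/thickQuiver}, the isomorphism $R(Q,\bfm;\bfr)\simeq R(Q_\bfm,\bfr)$ is induced from the canonical decomposition
\[
\Hom_{k_{m_{ij}}}(k_{m_i}^{\oplus r_i},k_{m_j}^{\oplus r_j})\;\simeq\;\Hom_k(k^{\oplus r_i},k^{\oplus r_j})\otimes_k\Hom_{k_{m_{ij}}}(k_{m_i},k_{m_j})
\]
together with a choice of $k$-basis $\{\psi_{m,f_1,f_2}\}$ of $\Hom_{k_{m_{ij}}}(k_{m_i},k_{m_j})$ indexed by the triples $(m,f_1,f_2)$ (there are $m_{ij}f_{ij}f_{ji}=\mu_{ij}$ of them, matching the dimension), under which $y_a=\sum_{(m,f_1,f_2)}x_{(a,m,f_1,f_2)}\otimes\psi_{m,f_1,f_2}$. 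Let $N:=k^{\oplus r_j}/V_j$ with quotient map $\pi_j$. Then $y_a(V_i\otimes_kk_{m_i})\subseteq V_j\otimes_kk_{m_j}$ if and only if the $k_{m_{ij}}$-linear map $V_i\otimes_kk_{m_i}\rightarrow N\otimes_kk_{m_j}$ obtained by restricting $y_a$ and applying $\pi_j\otimes\mathrm{id}$ vanishes; under the above decomposition this map equals $\sum_{(m,f_1,f_2)}\overline{x}_{(a,m,f_1,f_2)}\otimes\psi_{m,f_1,f_2}$, where $\overline{x}_{(a,m,f_1,f_2)}:=\pi_j\circ x_{(a,m,f_1,f_2)}|_{V_i}\in\Hom_k(V_i,N)$. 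Since the natural map $\Hom_k(V_i,N)\otimes_k\Hom_{k_{m_{ij}}}(k_{m_i},k_{m_j})\rightarrow\Hom_{k_{m_{ij}}}(V_i\otimes_kk_{m_i},N\otimes_kk_{m_j})$ is an isomorphism (tensor--hom adjunction along $k\hookrightarrow k_{m_{ij}}$, together with finite-dimensionality of $V_i$ and $N$ over $k$), and the $\psi_{m,f_1,f_2}$ are $k$-linearly independent, this map is zero if and only if every $\overline{x}_{(a,m,f_1,f_2)}$ is zero, i.e.\ if and only if $x_{(a,m,f_1,f_2)}(V_i)\subseteq V_j$ for all $(m,f_1,f_2)$. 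Ranging over all arrows $a$, the two collections of ``generating'' subspaces coincide, and (i) $\Leftrightarrow$ (ii) follows by intersecting both with the condition $\theta\cdot\dim V=0$ and imposing the $\rho$-inequality.

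I expect the main obstacle to be the forward direction of the last equivalence --- that $y_a$ preserving the ``constant'' submodule $V_i\otimes_kk_{m_i}$ forces each graded component $x_{(a,m,f_1,f_2)}$ to preserve $V$ --- which is exactly where $k$-linear independence of the $\psi$'s (equivalently, the splitting $\Hom_{k_{m_{ij}}}(k_{m_i}^{\oplus r_i},k_{m_j}^{\oplus r_j})\simeq\bigoplus_{m,f_1,f_2}\Hom_k(k^{\oplus r_i},k^{\oplus r_j})$ used in \cref{Not/BlockMatrices}) and the $\Hom$-tensor splitting are needed. The reverse implication is immediate from $y_a=\sum x_{(a,m,f_1,f_2)}\otimes\psi_{m,f_1,f_2}$, and the remaining work is routine bookkeeping with the block decompositions.
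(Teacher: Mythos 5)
Your proof is correct and takes essentially the same route as the paper's: the paper's (one-line) argument rests on exactly the observation that the components $x_{(a,m,f_1,f_2)}$ are the graded pieces of $y_a$ with respect to the $\epsilon$-power decompositions of $k_{m_i}^{\oplus r_i}$ and $k_{m_j}^{\oplus r_j}$, so that $y_a$ preserves $V_i\otimes_k k_{m_i}$ if and only if each component maps $V_i$ into $V_j$. Your tensor--hom formulation is simply a more detailed writeup of that same equivalence.
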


\begin{proof}[Proof of Lemma \ref{Lem/HM2}]
The lemma is an immediate consequence of the following observation (see Section \ref{Sect/Truncation}): for any $(a,m,f_1,f_2)\in (Q_\bfm)_1$, the matrix $x_{a,m,f_1,f_2}$ corresponds to the morphism
\[
\epsilon^{f_2}\cdot k^{\oplus r_i}
\rightarrow
\epsilon^{mf_{ij}+f_1}\cdot k^{\oplus r_j}
\]
induced by $y_a$.
\end{proof}

\subsubsection{Semistable locus for the non-reductive action}

We now turn to the semistable locus of $R(Q,\bfm;\bfr)\times\bbA^1$ under the action of $\tilde{\GL}_{\bfm,\bfr}:=\GL_{\bfm,\bfr}\times\GG_m$. Consider the unipotent radical $U_{\bfm,\bfr}:=\mathrm{Ker}(\GL_{\bfm,\bfr}\rightarrow\GL_r)$. We want to understand the $U_{\bfm,\bfr}$-sweep of the previous semistable locus. We will need the following lemma.

\begin{lemma}\label{Lem/HM3}
Let $m\geq1$ and $r\geq1$. Consider $M\subset k_m^{\oplus r}$ a free $k_m$-submodule. Then there exists $u\in U_{m,r}$ and a $k$-subspace $V\subset k^{\oplus r}$ such that $M=u\cdot (V\otimes_kk_m)$.
\end{lemma}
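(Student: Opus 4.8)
The statement asserts that any free rank-$s$ submodule $M \subset k_m^{\oplus r}$ (here I write $s := \rk M$) can be brought by the unipotent group $U_{m,r}$ into the "standard" form $V \otimes_k k_m$ for some $s$-dimensional subspace $V \subset k^{\oplus r}$. The strategy is to choose a good $k_m$-basis of $M$ and then produce $u$ by a Gram--Schmidt-type clearing procedure, working one power of $\epsilon$ at a time. Recall from \cref{Lem/unipotentRad} that $U_{m,r} = \Ker(\GL_{m,r} \twoheadrightarrow \GL_r)$ consists of matrices $\Id + \sum_{k=1}^{m-1} u_k \epsilon^k$ with $u_k \in \Mat_{r\times r}(k)$, acting on $k_m^{\oplus r}$ by left multiplication.

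First I would fix a $k_m$-basis $v^{(1)}, \dots, v^{(s)}$ of $M$ and write each $v^{(l)} = v^{(l)}_0 + v^{(l)}_1 \epsilon + \dots + v^{(l)}_{m-1}\epsilon^{m-1}$ with $v^{(l)}_k \in k^{\oplus r}$. The leading coefficients $v^{(1)}_0, \dots, v^{(s)}_0$ must be $k$-linearly independent: indeed $M \cap \epsilon k_m^{\oplus r} = \epsilon M$ since $M$ is free and a direct summand-type argument (or the rank formula $\rk M = \dim_k M/(M \cap \epsilon M)$ from the rank definition in the excerpt) shows the images of the $v^{(l)}$ in $k_m^{\oplus r}/\epsilon k_m^{\oplus r} = k^{\oplus r}$ are independent. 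Set $V := \mathrm{span}_k(v^{(1)}_0, \dots, v^{(s)}_0)$, an $s$-dimensional subspace of $k^{\oplus r}$; this is the candidate $V$. The goal is then to find $u \in U_{m,r}$ with $u \cdot M = V \otimes_k k_m$, equivalently $u^{-1}(V \otimes_k k_m) = M$, equivalently (since $V \otimes_k k_m$ is free with $k_m$-basis $v^{(1)}_0, \dots, v^{(s)}_0$) that $u^{-1} v^{(l)}_0 \in M$ for each $l$, or dually $u \cdot v^{(l)}$ lies in $V \otimes_k k_m$ for each $l$.

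I would construct $u$ by successive approximation in the $\epsilon$-adic filtration. Suppose inductively we have found $u^{(k)} \in U_{m,r}$ such that $u^{(k)} \cdot v^{(l)} \equiv (\text{element of } V\otimes k_m) \pmod{\epsilon^{k} k_m^{\oplus r}}$ for all $l$; the base case $k=1$ holds by the definition of $V$. Write $u^{(k)} v^{(l)} = w^{(l)} + \epsilon^{k} z^{(l)}_k + O(\epsilon^{k+1})$ with $w^{(l)} \in V\otimes k_m$ and $z^{(l)}_k \in k^{\oplus r}$. Decompose $z^{(l)}_k = z^{(l)}_{k,V} + z^{(l)}_{k,\perp}$ along a fixed splitting $k^{\oplus r} = V \oplus V'$; the $V$-component can be absorbed into $w^{(l)}$, so only the $V'$-component obstructs. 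Now I want to correct by left-multiplying with $\Id - N \epsilon^{k}$ for a suitable $N \in \Mat_{r\times r}(k)$: this changes $u^{(k)} v^{(l)}$ in degree $k$ by $-N v^{(l)}_0$ (using that the leading term of $u^{(k)}v^{(l)}$ is $v^{(l)}_0$). Since $\{v^{(l)}_0\}_{l}$ is a basis of $V$, choose $N$ so that $N v^{(l)}_0 = z^{(l)}_{k,\perp}$ for all $l$ (extend arbitrarily off $V$); then $u^{(k+1)} := (\Id - N\epsilon^{k}) u^{(k)}$ kills the obstruction in degree $k$ while not disturbing degrees $< k$. After $m-1$ steps we reach $u := u^{(m)} \in U_{m,r}$ with $u \cdot v^{(l)} \in V \otimes_k k_m$ for all $l$, hence $u \cdot M \subseteq V \otimes_k k_m$; since $u$ is an automorphism and both sides are free of rank $s$, equality follows.

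The one point requiring a little care — and the main (mild) obstacle — is the bookkeeping showing that the degree-$k$ correction $\Id - N\epsilon^k$ genuinely does not perturb the lower-degree data and that the "leading term of $u^{(k)} v^{(l)}$ is still $v^{(l)}_0$" claim is exact (it is, since every element of $U_{m,r}$ is $\Id$ modulo $\epsilon$). Everything else is linear algebra over $k$ organised by the $\epsilon$-filtration; no deeper input is needed. I would present this as the $\epsilon$-adic Gram--Schmidt argument sketched above, possibly phrased more slickly as: the subgroup $U_{m,r}$ acts transitively on the set of free rank-$s$ submodules with a fixed reduction mod $\epsilon$, which is exactly what the inductive clearing proves.
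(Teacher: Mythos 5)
Your proof is correct, but it takes a more hands-on route than the paper. The paper's proof is a two-line group-theoretic argument: it observes that $\GL_{m,r}(k)$ acts transitively on the set $\mathrm{Gr}_m(r',r)(k)$ of locally free rank-$r'$ submodules of $k_m^{\oplus r}$ (via the identification of this set with full-rank $r\times r'$ matrices over $k_m$ modulo $\GL_{m,r'}(k)$), and then uses the decomposition $\GL_{m,r}(k)=U_{m,r}(k)\GL_r(k)$ together with the fact that $\GL_r(k)$ maps submodules of the form $V_0\otimes_k k_m$ to submodules of the same form, so every point of the Grassmannian lies in $U_{m,r}(k)\GL_r(k)\cdot(V_0\otimes_k k_m)$. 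Your $\epsilon$-adic Gram--Schmidt construction instead produces $u$ explicitly by clearing one power of $\epsilon$ at a time; it proves directly the sharper statement you mention at the end, namely that $U_{m,r}$ acts transitively on free submodules with a fixed reduction modulo $\epsilon$, from which the lemma follows. The trade-off is that the paper's argument hides the induction inside the (unproved but standard) transitivity claim, while yours makes all the linear algebra visible. Two small points you should tighten if you write this up: (1) the linear independence of the leading coefficients $v^{(1)}_0,\dots,v^{(s)}_0$ deserves a one-line proof (if $\sum_l c_l v^{(l)}_0=0$ with $c_l\in k$ not all zero, then $\sum_l (c_l\epsilon^{m-1})v^{(l)}=\epsilon^{m-1}\sum_l c_l v^{(l)}=0$ contradicts freeness of $M$); and (2) the final equality $u\cdot M=V\otimes_k k_m$ is cleanest by comparing $k$-dimensions, both sides having dimension $sm$.
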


\begin{proof}[Proof of Lemma \ref{Lem/HM3}]

Let $r'$ be the rank of $M$ and let $\mathrm{Gr}_m(r',r)(k)$ denote the set of all locally free submodules of $k_m^{\oplus r}$ of rank $r'$.  

Let $\Mat_{m,r\times r'}^0(k)$ be the set of matrices of size $r\times r'$ and coefficients in $k_m$, whose reduction modulo $\epsilon$ has rank $r'$. The action of $\GL_{m,r'}(k)$ by right multiplication is free and we have a bijection:
\[
\begin{array}{rcl}
\Mat_{m,r\times r'}^0(k)/\GL_{m,r'}(k) & \rightarrow & \mathrm{Gr}_m(r',r)(k) \\
(v_1\vert\ldots\vert v_{r'}) & \mapsto & \sum_{i=1}^{r'}k_m\cdot v_i,
\end{array}
\]
which is equivariant for the natural transitive $\GL_{m,r}(k)$-actions (left multiplication on the left-hand side).

Fix the subspace:
\[
V_0=\bigoplus_{i=1}^{r'}k\cdot e_i\subset k^{\oplus r}.
\]
Then locally free submodules of $k_m^{\oplus r}$ of the form $V\otimes_kk_m$ are exactly those given by:
\[
g\cdot(V_0\otimes_kk_m)=(g\cdot V_0)\otimes_kk_m,\ g\in\GL_{r'}(k).
\]
On the other hand, since the action of $\GL_{m,r}(k)$ on $\mathrm{Gr}_m(r',r)(k)$ is transitive, we have that:
\[
\mathrm{Gr}_m(r',r)(k)=U_{m,r}(k)\GL_r(k)\cdot (V_0\otimes_kk_m).
\]
This proves the claim.
\end{proof}

We can now proceed to the proof of the main result of this section:

\begin{theorem}\label{Thm/HMcrit}
The GIT semistable (resp.\ stable) locus in $R(Q,\bfm;\bfr)$ with respect to $q_\theta \colon R(Q,\bfr)^{\theta-\sst} \rightarrow M_{Q,\bfr}^{\theta-\sst}$ and $\rho$ is the open subset whose geometric points correspond to $(\theta,\rho)$-(semi)stable representations of $(Q,\bfm)$, in the sense of Definition \ref{def/stabilityQuivMult}.
\end{theorem}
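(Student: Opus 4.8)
The plan is to unwind the Hilbert--Mumford description of the NRGIT (semi)stable locus provided by \cref{Thm/extGradQuot} and \cref{rmk/alt descr HM}, and then to match it term by term with the two conditions of \cref{def/stabilityQuivMult} using the lemmas established in this subsection. First I would apply \cref{Thm/extGradQuot} and \cref{rmk/alt descr HM} to the externally graded equivariant action of $\GL_{\bfm,\bfr}\twoheadrightarrow\GL_\bfr$ on the truncation morphism $\tau\colon R(Q,\bfm;\bfr)\to R(Q,\bfr)$, for the good quotient $q_\theta$ and the character $\rho$ (the unipotent stabilisers over $R(Q,\bfr)^{\theta-\sst}$ being controlled by \cref{prop suff cond for trivial unip stabilisers}); choosing the auxiliary integer $N$ in the character $\widetilde{\rho}=(\rho,N)$ large enough, in particular $N\geq n_0$ from \cref{Lem/HM1}, this says that a point $x\in R(Q,\bfm;\bfr)$ lies in the NRGIT (semi)stable locus if and only if $\tau(x)\in R(Q,\bfr)^{\theta-\sst}$ and, for every $u\in U_{\bfm,\bfr}$, the point $(u\cdot x,1)\in R(Q,\bfm;\bfr)\times\bbA^1$ is $\widetilde{\rho}$-(semi)stable for the Levi subgroup $\GL_\bfr\times\GG_m$. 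Here I use that $\tau(u\cdot x)=\tau(x)$, since $u$ lies in the kernel of $\GL_{\bfm,\bfr}\twoheadrightarrow\GL_\bfr$ and $\tau$ is equivariant; hence $(u\cdot x,1)$ lies in the domain where \cref{Lem/HM1} applies.

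Next I would run each such point $(u\cdot x,1)$ through \cref{Lem/HM1} and then \cref{Lem/HM2}, which rephrases ``$(u\cdot x,1)$ is $\widetilde{\rho}$-(semi)stable for $\GL_\bfr\times\GG_m$'' as the requirement that for every $Q_0$-graded subspace $V\subset k^{\oplus\bfr}$ with $\theta\cdot\dim V=0$ such that $(V_i\otimes_k k_{m_i})_{i\in Q_0}$ is a subrepresentation of $M(u\cdot x)$, one has $\rho\cdot\dim V\geq0$ (resp.\ $>0$); the trivial subspaces $V=0$ and $V=k^{\oplus\bfr}$ contribute $\rho\cdot\dim V=0$ and impose no condition, which matches the range $0<\rk N<\rk M$ in \cref{def/stabilityQuivMult}. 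The heart of the proof is to sweep this requirement over all $u\in U_{\bfm,\bfr}$. Because $(u\cdot x)_a=u_{t(a)}x_au_{s(a)}^{-1}$, a collection $(N_i)_{i\in Q_0}$ of free $k_{m_i}$-submodules of $M(x)_i$ is a subrepresentation of $M(u\cdot x)$ precisely when $(u_i^{-1}N_i)_{i\in Q_0}$ is a subrepresentation of $M(x)$, and conjugation by a unipotent matrix is an isomorphism modulo $\epsilon$, hence preserves ranks. Combining this with \cref{Lem/HM3} applied vertex by vertex --- which writes every free $k_{m_i}$-submodule of $k_{m_i}^{\oplus r_i}$ in the form $u_i\cdot(V_i\otimes_k k_{m_i})$ --- one sees that, as $u$ runs over $U_{\bfm,\bfr}$ and $V$ over the admissible graded subspaces, the collections $(u_i^{-1}(V_i\otimes_k k_{m_i}))_{i\in Q_0}$ run over exactly the locally free subrepresentations $N$ of $M(x)$, with $\dim V=\rk N$. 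The swept requirement therefore becomes: for every locally free subrepresentation $N\subset M(x)$ with $0<\rk N<\rk M(x)$ and $\theta\cdot\rk N=0$, one has $\rho\cdot\rk N\geq0$ (resp.\ $>0$), which is exactly the second bullet of \cref{def/stabilityQuivMult}.

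It then remains to identify the residual condition $\tau(x)\in R(Q,\bfr)^{\theta-\sst}$ with the first bullet of \cref{def/stabilityQuivMult}. The truncation morphism on representation spaces induces the intrinsic truncation of \cref{Def/intrinsicTruncation}, so $\tau(x)$ is $\theta$-semistable as a representation of $Q$ if and only if $\tau(M(x))$ is, and by \cref{Rmk/genericTheta}(i) this holds if and only if $\theta\cdot\rk N\geq0$ for every (not necessarily locally free) subrepresentation $N\subset\sigma(M(x))$ with $0<\rk N<\rk M(x)$. Assembling the chain of equivalences identifies the geometric points of the NRGIT (semi)stable locus with $\{x\ :\ M(x)\text{ is }(\theta,\rho)\text{-(semi)stable}\}$; since the former is an open subset of $R(Q,\bfm;\bfr)$, this yields the stated description. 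I expect the main obstacle to be the middle paragraph: verifying that the $U_{\bfm,\bfr}$-sweep of the ``diagonal'' submodule condition reproduces precisely the locally free subrepresentations of $M(x)$, with nothing missing and nothing spurious, while keeping accurate track of ranks and of the orthogonality constraint $\theta\cdot(-)=0$ --- this is the step where \cref{Lem/HM3} carries the weight of the argument.
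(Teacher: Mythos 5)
Your proposal is correct and follows essentially the same route as the paper's proof: it invokes the Hilbert--Mumford description of \cref{Thm/extGradQuot} via \cref{rmk/alt descr HM} to reduce to the Levi $\tilde{\GL}_\bfr$, applies \cref{Lem/HM1} and \cref{Lem/HM2} to each $U_{\bfm,\bfr}$-translate, and uses \cref{Lem/HM3} together with the observation that $x$ preserves $(N_i)$ iff $u\cdot x$ preserves $(u_i\cdot N_i)$ to identify the swept condition with locally free subrepresentations. The only difference is that you spell out the $U$-sweep and the identification of the first bullet (via \cref{Rmk/genericTheta}) in more detail than the paper's terse write-up.
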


\begin{remark}
Note that, when $\theta$ is generic with respect to $\bfr$ (see Definition \ref{Def/genericStabilityParameter}), the second condition in Definition \ref{def/stabilityQuivMult} is vacuous. Therefore, in this setup, twisted relative affine quotients differ from their untwisted analogues only if there exist strictly $\theta$-semistable points in the base of the relative affine quotient.
\end{remark}

\begin{proof}
We use the description of the semistable locus in \cref{Thm/extGradQuot}, which involves considering the associated internally graded action:
\[
\GL_{\bfm,\bfr}\rtimes\GG_m\curvearrowright R(Q,\bfm;\bfr)\times\bbA^1.
\]
Set $n\geq0$ and consider the character $\tilde{\rho}=(\rho,n)$. By \cref{rmk/alt descr HM}, we have:
\[
\left(R(Q,\bfm;\bfr)\times(\bbA^1\setminus\{0\})\right)^{q_\theta,\tilde{\rho}-\sst, \tilde{\GL}_{\bfm,\bfr}}
=
\bigcap_{u\in U_{\bfm,\bfr}}u\cdot\left(R(Q,\bfm;\bfr)\times(\bbA^1\setminus\{0\})\right)^{q_\theta,\tilde{\rho}-\sst, \tilde{\GL}_\bfr}.
\]
Consider $x\in R(Q,\bfm;\bfr)$ and $M_i\subset k_{m_i}^{\oplus r_i}$. Note that $x$ preserves the modules $M_i,\ i\in Q_0$ if and only if $u\cdot x$ preserves the modules $u_i\cdot M_i,\ i\in Q_0$. The result then follows by combining Lemmas \ref{Lem/HM1}, \ref{Lem/HM2} and \ref{Lem/HM3}.
\end{proof}

\subsection{S-equivalence classes}

In this section, we describe closed orbits and S-equivalence classes of $(\theta,\rho)$-semistable representations of $(Q,\bfm)$, for some stability parameters $\theta,\rho$.

We will need a refinement of \cref{Lem/1PSlimit}.

\begin{lemma}\label{Lem/1PSvsJHfilt}
Let $x\in R(Q,\bfm;\bfr)^{q_\theta,\rho-\sst}$ and $\lambda \colon \GG_m\rightarrow\GL_{\bfm,\bfr}$ be a one-parameter subgroup. Then the datum of $\lambda$ is equivalent to that of a filtration $F_\bullet$ of $\bigoplus_{i\in Q_0}k_{m_i}^{\oplus r_i}$ by locally free $Q_0$-graded submodules. 

Moreover, $x_0:=\lim_{t \rightarrow 0}\lambda(t)\cdot x$ exists in $R(Q,\bfm;\bfr)^{q_\theta,\rho-\sst}$ if and only if $F_\bullet$ is a coarsening of a \emph{naive} Jordan-H\"older filtration of $M(x)$. In that case, we have $M(x_0)\simeq\mathrm{gr}_{F_\bullet}M(x)$.
\end{lemma}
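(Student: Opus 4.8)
The statement has two parts: (1) a dictionary between one-parameter subgroups $\lambda \colon \GG_m \to \GL_{\bfm,\bfr}$ and filtrations $F_\bullet$ of $\bigoplus_{i} k_{m_i}^{\oplus r_i}$ by locally free $Q_0$-graded submodules; and (2) the characterisation of when $\lim_{t\to 0}\lambda(t)\cdot x$ exists inside the semistable locus, together with the identification of the limit. I would organise the proof as follows.

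For part (1): a $1$-PS $\lambda$ of $\GL_{\bfm,\bfr} = \prod_i \GL_{r_i}(k_{m_i})$ is, after conjugating by an element of $\GL_{\bfm,\bfr}$ (which does not change the resulting filtration up to isomorphism), a cocharacter of a maximal torus, i.e.\ a $\ZZ$-grading of each free module $k_{m_i}^{\oplus r_i}$ by $k_{m_i}$-submodules. This grading yields a descending filtration $F^p = \bigoplus_{q\geq p} W^q$ where $W^q$ is the weight-$q$ subspace; each $F^p$ is a free $k_{m_i}$-submodule, graded over $Q_0$. Conversely, any filtration by locally free $Q_0$-graded submodules splits (since $k_{m_i}$ is local and the submodules are free, one can choose bases compatibly), giving back a cocharacter. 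This is the $k_{m_i}$-analogue of the classical correspondence used by King, and I would phrase it as such. The subtlety to watch: "locally free submodule" must mean a submodule that is free \emph{and} whose inclusion remains split after tensoring with the residue field — i.e.\ a direct summand at each stage — which is exactly the condition ensuring the associated graded pieces are again locally free.

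For part (2): I would reduce to the thickened-quiver picture $R(Q,\bfm;\bfr)\simeq R(Q_\bfm,\bfr)$ and apply \cref{Lem/1PSlimit}, but now for a $1$-PS of the full group $\GL_{\bfm,\bfr}$ rather than the Levi; the same weight computation shows that $\lim_{t\to 0}\lambda(t)\cdot x$ exists in $R(Q,\bfm;\bfr)$ if and only if each $F^p$ is a (locally free, graded) subrepresentation of $M(x)$, and when the limit exists it is $\mathrm{gr}_{F_\bullet}M(x)$ by the standard associated-graded computation. It then remains to impose that the limit lie in the \emph{semistable} locus $R(Q,\bfm;\bfr)^{q_\theta,\rho-\sst}$. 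Here I would invoke \cref{Thm/extGradQuot}(v): the limit stays semistable iff $\langle\lambda,\rho\rangle = 0$ and the truncated limit stays $\theta$-semistable, i.e.\ $\langle\lambda,\theta\rangle=0$ (using \cref{Lem/ssGr} for the truncation $\tau(M(x))$, together with the compatibility of $\tau$ with limits from \cref{Lem/thickQuiver}). Translating via King's formula $\langle\lambda,\rho\rangle = \sum_p \rho\cdot\rk F^pM$ and $\langle\lambda,\theta\rangle = \sum_p \theta\cdot\rk F^pM$, and using $\theta$-semistability of $M(x)$ (so every term $\theta\cdot\rk F^p M \geq 0$) and $(\theta,\rho)$-semistability (so $\rho\cdot\rk F^pM \geq 0$ whenever $\theta\cdot \rk F^p M = 0$), both sums vanishing forces $\theta\cdot \rk F^pM = \rho\cdot\rk F^pM = 0$ for every $p$. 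This says precisely that the successive quotients $F^{p-1}M/F^pM$ are $(\theta,\rho)$-semistable with vanishing $\theta$- and $\rho$-slopes, i.e.\ that $F_\bullet$ is a coarsening of a \emph{naive} Jordan–Hölder filtration; conversely any such coarsening visibly has a limit in the semistable locus by the same computation.

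The main obstacle I anticipate is part (1) — not the correspondence itself, but pinning down exactly which filtrations arise, i.e.\ that "locally free $Q_0$-graded submodule" is the right notion and that such filtrations always split $k_{m_i}$-linearly so that the associated graded is again locally free. Over the local ring $k_{m_i}$ a free submodule of a free module need not be a direct summand, so one must be careful that the $1$-PS construction only produces (and only needs) those that are; I would handle this by noting that the weight filtration of a cocharacter is split \emph{by construction} (take the sum of the complementary weight spaces), and conversely restrict attention to split filtrations, remarking that this matches the notion of locally free subrepresentation used throughout \cref{def/stabilityQuivMult} onward. The rest is a bookkeeping exercise combining \cref{Lem/1PSlimit}, \cref{Lem/ssGr}, \cref{Lem/thickQuiver} and the sign analysis of the two sums, all of which is routine once the framework is set up.
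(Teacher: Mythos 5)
Your proposal is correct and follows essentially the same route as the paper's proof: conjugating the 1-PS so it lies in (a torus of) the Levi, using \cref{Lem/HM3} to identify the filtration steps as split locally free submodules, applying \cref{Lem/1PSlimit} for existence of the limit, and then using \cref{Thm/extGradQuot}(v), \cref{Lem/ssGr} and King's pairing formula to force $\theta\cdot\rk F^pM=\rho\cdot\rk F^pM=0$ for all $p$. Your worry about free submodules failing to be direct summands is resolved exactly by \cref{Lem/HM3}, which the paper invokes at that point.
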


\begin{proof}
Let us prove the first claim. Let $U_{\bfm,\bfr}$ denote the unipotent radical of $\GL_{\bfm,\bfr}$. There exists $u\in U_{\bfm,\bfr}$ such that $u\lambda u^{-1}$ is a one-parameter subgroup of $\GL_\bfr\subset\GL_{\bfm,\bfr}$. Then, by the discussion in \cref{Sect/HManalysis} and \cref{Lem/HM3}, $\lambda$ induces a filtration of $\bigoplus_{i\in Q_0}k_{m_i}^{\oplus r_i}$ by locally free $Q_0$-graded submodules of the form
\[
u\cdot\left(\bigoplus_{i\in Q_0}W_i\otimes_kk_{m_i}\right)
,
\]
where $W_i\subset k^{\oplus r_i}$ is a $k$-vector subspace. Conversely, given such a filtration $F_\bullet$, one recovers the one parameter subgroup $\lambda \colon \GG_m\rightarrow\GL_{\bfm,\bfr}$ acting with weight $i$ on the $i$th subquotient of $F_\bullet$.

Combining \cref{Lem/1PSlimit} and \cref{Lem/HM3}, we obtain that $\lim_{t\rightarrow 0}\lambda(t)\cdot x$ exists in $R(Q,\bfm;\bfr)$ if and only if the filtration $F_\bullet$ consists of subrepresentations of $M(x)$. Moreover, we have:\[
\langle\lambda,\theta\rangle=\sum_i\theta\cdot\rk F_iM(x)
\]
and each of the above summands is non-negative, as $M(x)$ is $(\theta,\rho)$-semistable. If $x_0\in R(Q,\bfm;\bfr)^{q_\theta,\rho-\sst}$, then we have $\langle\lambda,\theta\rangle=0$, hence $\theta\cdot\rk F_iM(x)=0$ for all $i$. Since  $M(x)$ is $(\theta,\rho)$-semistable, this further implies that $\rho\cdot\rk F_iM(x)\geq0$ for all $i$. As $x_0\in R(Q,\bfm;\bfr)^{q_\theta,\rho-\sst}$ also gives $\langle\lambda,\rho\rangle=0$ by \cref{Thm/extGradQuot}, we deduce from
\[
\langle\lambda,\rho\rangle=\sum_i\rho\cdot\rk F_iM(x)
\]
that $\rho\cdot\rk F_iM(x)=0$ for all i. This shows that $F_\bullet$ is a coarsening of a \emph{naive} Jordan-H\"older filtration of $M(x)$. Conversely, if $\lambda$ corresponds to such a filtration of $M(x)$, then the above formulas yield $\langle\lambda,\theta\rangle=\langle\lambda,\rho\rangle=0$, so we indeed have $x_0\in R(Q,\bfm;\bfr)^{q_\theta,\rho-\sst}$.

Finally, we have $M(x_0)\simeq\mathrm{gr}_{F_\bullet}M(x)$, by construction (the reasoning is the same as in \cite[\S 3]{Kin94}).
\end{proof}

We can now easily characterise closed orbits and ensure that every $(\theta,\rho)$-semistable representation of $(Q,\bfm)$ has a well-defined associated $(\theta,\rho)$-polystable representation.

\begin{proposition}\label{Prop/closedOrbits}
Let $x\in R(Q,\bfm;\bfr)^{q_\theta,\rho-\sst}$. The following are equivalent:
\begin{enumerate}[label=\roman*)]
\item The $\GL_{\bfm,\bfr}$-orbit of $x$ is closed in $R(Q,\bfm;\bfr)^{q_\theta,\rho-\sst}$.
\item The representation $M(x)$ is $(\theta,\rho)$-polystable.
\end{enumerate}
\end{proposition}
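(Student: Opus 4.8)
The plan is to prove both implications by exploiting \cref{Lem/1PSvsJHfilt}, which translates the geometry of one-parameter subgroup flows into the combinatorics of naive Jordan--H\"older filtrations, together with the characterisation of closed orbits in \cref{Thm/extGradQuot} (v): a $\GL_{\bfm,\bfr}$-orbit in $R(Q,\bfm;\bfr)^{q_\theta,\rho-\sst}$ is closed if and only if it is closed under all flows along 1-PSs of $\GL_{\bfm,\bfr}$ (more precisely, a limit $\lim_{t\to0}\lambda(t)\cdot x$ that exists in the semistable locus lies in the same orbit as $x$).

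\textbf{Step 1: (i) $\Rightarrow$ (ii).} Suppose the orbit of $x$ is closed. Let $F_\bullet$ be a naive Jordan--H\"older filtration of $M(x)$; I want to show it splits, i.e.\ $M(x) \simeq \mathrm{gr}_{F_\bullet}M(x)$. By \cref{Lem/1PSvsJHfilt}, $F_\bullet$ corresponds to a 1-PS $\lambda \colon \GG_m \rightarrow \GL_{\bfm,\bfr}$ for which $x_0 := \lim_{t\to0}\lambda(t)\cdot x$ exists in $R(Q,\bfm;\bfr)^{q_\theta,\rho-\sst}$ and satisfies $M(x_0)\simeq \mathrm{gr}_{F_\bullet}M(x)$. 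Since the orbit of $x$ is closed in the semistable locus and closed under 1-PS flows (\cref{Thm/extGradQuot} (v)), the point $x_0$ lies in the $\GL_{\bfm,\bfr}$-orbit of $x$, hence $M(x_0)\simeq M(x)$. Therefore $M(x)\simeq \mathrm{gr}_{F_\bullet}M(x)$. As $F_\bullet$ was arbitrary, every naive Jordan--H\"older filtration of $M(x)$ splits, so $M(x)$ is $(\theta,\rho)$-polystable by \cref{Def/JHfiltrations}.

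\textbf{Step 2: (ii) $\Rightarrow$ (i).} Suppose $M(x)$ is $(\theta,\rho)$-polystable. To show the orbit of $x$ is closed in $R(Q,\bfm;\bfr)^{q_\theta,\rho-\sst}$, by \cref{Thm/extGradQuot} (v) it suffices to show that for every 1-PS $\lambda$ of $\GL_{\bfm,\bfr}$ such that $x_0 := \lim_{t\to0}\lambda(t)\cdot x$ exists in the semistable locus, $x_0$ lies in the orbit of $x$. By \cref{Lem/1PSvsJHfilt}, such a $\lambda$ corresponds to a filtration $F_\bullet$ of $\bigoplus_i k_{m_i}^{\oplus r_i}$ which is a coarsening of a naive Jordan--H\"older filtration $G_\bullet$ of $M(x)$, and $M(x_0)\simeq \mathrm{gr}_{F_\bullet}M(x)$. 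Since $M(x)$ is $(\theta,\rho)$-polystable, $G_\bullet$ splits, so $M(x)\simeq \mathrm{gr}_{G_\bullet}M(x) = \bigoplus_j N_j$ is a direct sum of $(\theta,\rho)$-stable representations. A coarsening $F_\bullet$ of a split filtration again splits, with $\mathrm{gr}_{F_\bullet}M(x)$ obtained by regrouping the same stable summands $N_j$; hence $M(x_0)\simeq \mathrm{gr}_{F_\bullet}M(x)\simeq \bigoplus_j N_j \simeq M(x)$, so $x_0$ is in the orbit of $x$. This shows the orbit is closed under all such flows, hence closed.

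\textbf{Main obstacle.} The delicate point is Step 2: one must make precise that a coarsening of a \emph{split} naive Jordan--H\"older filtration is itself split and has the same associated graded (up to isomorphism), which uses that the filtration $F_\bullet$ and its coarsening $G_\bullet$ consist of locally free $Q_0$-graded submodules and that $\mathrm{gr}_{F_\bullet}$ and $\mathrm{gr}_{G_\bullet}$ reassemble the same stable factors --- care is needed since the category of locally free representations of $(Q,\bfm)$ is not abelian, so one should argue directly using the splitting $M(x)\simeq\bigoplus_j N_j$ rather than general homological nonsense. A minor additional point to check in Step 1 is that the limit $x_0$ genuinely lands in $R(Q,\bfm;\bfr)^{q_\theta,\rho-\sst}$, which is exactly the content of the ``if'' direction of \cref{Lem/1PSvsJHfilt} applied to the (full) naive Jordan--H\"older filtration $F_\bullet$, so this is already available.
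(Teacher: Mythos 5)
Your Step 1 is correct and is exactly the paper's (very terse) argument: apply \cref{Lem/1PSvsJHfilt} to an arbitrary naive Jordan--H\"older filtration, use closedness of the orbit to put the limit point back in the orbit, and conclude that the filtration splits.

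Step 2, however, has a genuine gap at the claim ``a coarsening $F_\bullet$ of a split filtration again splits, with $\mathrm{gr}_{F_\bullet}M(x)$ obtained by regrouping the same stable summands.'' You flag this as the delicate point but then assert it without proof, and it is not an immediate consequence of anything available. The issue is twofold. First, ``splits'' in \cref{Def/JHfiltrations} only means there \emph{exists} an abstract isomorphism $M\simeq\mathrm{gr}_{G_\bullet}M$; it does not say the filtration $G_\bullet$ is induced by a direct-sum decomposition, so you cannot conclude that the subquotients $F_{i-1}M/F_iM=G_{j(i-1)}M/G_{j(i)}M$ (which are iterated extensions of consecutive $N_j$'s) themselves split. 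Second, the usual fix --- ``every subquotient of a semisimple object is semisimple'' --- relies on the stables being simple objects of an abelian category, which fails here: locally free representations of $(Q,\bfm)$ do not form an abelian category, and a $(\theta,\rho)$-stable representation is not simple in the ambient abelian category (stability only constrains \emph{locally free} subrepresentations of the right slope). The authors themselves record as an open question whether naively polystable implies polystable, which is closely tied to exactly this kind of regrouping statement, so it should not be treated as routine.

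The paper circumvents this by a geometric argument you could adopt: let $F'_\bullet$ be a naive Jordan--H\"older filtration refining $F_\bullet$ (it exists by \cref{Lem/1PSvsJHfilt}) with associated 1-PS $\lambda'$. Polystability gives $\mathrm{gr}_{F'_\bullet}M(x)\simeq M(x)$, and $\mathrm{gr}_{F'_\bullet}(\mathrm{gr}_{F_\bullet}M(x))\simeq\mathrm{gr}_{F'_\bullet}M(x)$ by construction, so the further limit $x'_0=\lim_{t\to0}\lambda'(t)\cdot x_0$ lies in $\GL_{\bfm,\bfr}\cdot x$. If $x_0\notin\GL_{\bfm,\bfr}\cdot x$, then $\GL_{\bfm,\bfr}\cdot x_0$ lies in the boundary of $\overline{\GL_{\bfm,\bfr}\cdot x}$ and so has strictly smaller dimension; but its closure contains $x'_0$ and hence the whole orbit $\GL_{\bfm,\bfr}\cdot x$, a contradiction on dimensions. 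This yields $x_0\in\GL_{\bfm,\bfr}\cdot x$ without ever proving the algebraic regrouping claim (and in fact establishes $\mathrm{gr}_{F_\bullet}M(x)\simeq M(x)$ a posteriori).
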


\begin{proof}
The implication i) $\Rightarrow$ ii) is a direct consequence of \cref{Thm/extGradQuot} and \cref{Lem/1PSvsJHfilt}. Let us prove ii) $\Rightarrow$ i). Suppose that $M(x)$ is $(\theta,\rho)$-polystable. By \cref{Thm/extGradQuot}, it suffices to prove that, for any one-parameter subgroup $\lambda \colon \GG_m\rightarrow\GL_{\bfm,\bfr}$ such that $x_0:=\lim_{t\rightarrow0}\lambda(t)\cdot x$ exists in $R(Q,\bfm;\bfr)^{q_\theta,\rho-\sst}$, we have $x_0\in\GL_{\bfm,\bfr}\cdot x$. Let $\lambda$ be a one-parameter subgroup such that this limit exists. By \cref{Lem/1PSvsJHfilt}, it corresponds to a coarsening $F_\bullet$ of a \emph{naive} Jordan-H\"older filtration $F'_\bullet$ of $M(x)$. Let $\lambda':\GG_m\rightarrow\GL_{\bfm,\bfr}$ be the one-parameter subgroup corresponding to $F'_\bullet$ by \cref{Lem/1PSvsJHfilt}. Since $M(x)$ is $(\theta,\rho)$-polystable, we obtain that $M(x)\simeq\mathrm{gr}_{F'_\bullet}M(x)$. Moreover, we have $M(x)\simeq\mathrm{gr}_{F'_\bullet}\left(\mathrm{gr}_{F_\bullet}M(x)\right)$, by construction. Therefore, we have that \[
x'_0:=\limit\lambda'(t)\cdot x_0=\limit\lambda'(t)\cdot x\in\GL_{\bfm,\bfr}\cdot x.
\]
This ensures that $x_0$ lies in the orbit of $x$, as otherwise, by examining dimensions as in the proof of \cite[Prop.\ 1.66]{Mil17}, the orbit closure of $x_0$ would contain $x'_0$, but not $x$, which gives a contradiction. This finishes the proof.
\end{proof}

\begin{proposition}\label{Prop/JHassociatedGraded}
Let $M$ be a $(\theta,\rho)$-semistable rank $\bfr$ representation of $(Q,\bfm)$. Then $M$ admits a Jordan-H\"older filtration $F_\bullet$. Any two $(\theta,\rho)$-polystable representations obtained as the associated graded of a Jordan-H\"older filtration of $M$ are isomorphic.
\end{proposition}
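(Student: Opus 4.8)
The plan is to leverage the GIT picture from \cref{Thm/HMcrit} and \cref{Thm/extGradQuot}, which identify $(\theta,\rho)$-semistable representations with points of $R(Q,\bfm;\bfr)^{q_\theta,\rho-\sst}$, closed orbits with $(\theta,\rho)$-polystable representations (\cref{Prop/closedOrbits}), and the good quotient $M_{Q,\bfm;\bfr}^{\theta,\rho-\sst}$ with S-equivalence classes. The key input is that in a good quotient every orbit closure contains a \emph{unique} closed orbit (\cref{Thm/extGradQuot}, part vi)), and that one flows from a semistable point to this polystable point along a 1PS of $\GL_{\bfm,\bfr}$.

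\textbf{Step 1: existence of a Jordan--Hölder filtration.} First I would show $M$ admits a \emph{naive} Jordan--Hölder filtration $F'_\bullet$ (this is the remark preceding \cref{Def/JHfiltrations}, by induction on $\rk \bfr$). Pick $x \in R(Q,\bfm;\bfr)^{q_\theta,\rho-\sst}$ with $M(x) \simeq M$. By \cref{Lem/1PSvsJHfilt}, the filtration $F'_\bullet$ corresponds to a 1PS $\lambda' \colon \GG_m \rightarrow \GL_{\bfm,\bfr}$ such that $x_0 := \lim_{t \to 0}\lambda'(t)\cdot x$ exists in $R(Q,\bfm;\bfr)^{q_\theta,\rho-\sst}$, with $M(x_0) \simeq \mathrm{gr}_{F'_\bullet}M$, which is naively $(\theta,\rho)$-polystable. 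If $M(x_0)$ is in fact $(\theta,\rho)$-polystable we are done; in general, by \cref{Prop/closedOrbits} there is a further 1PS flow from $x_0$ to a point $x_\infty$ with closed orbit, i.e.\ $M(x_\infty)$ $(\theta,\rho)$-polystable. Composing the flows and invoking \cref{Lem/1PSvsJHfilt} once more, the composite 1PS corresponds to a refinement (a \emph{naive} Jordan--Hölder filtration) $F_\bullet$ of $M$ whose associated graded $\mathrm{gr}_{F_\bullet}M \simeq M(x_\infty)$ is $(\theta,\rho)$-polystable. Hence $F_\bullet$ is a Jordan--Hölder filtration in the sense of \cref{Def/JHfiltrations}.

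\textbf{Step 2: uniqueness of the associated graded.} Now suppose $F_\bullet$ and $G_\bullet$ are two Jordan--Hölder filtrations of $M$, with $(\theta,\rho)$-polystable associated gradeds $P := \mathrm{gr}_{F_\bullet}M$ and $P' := \mathrm{gr}_{G_\bullet}M$. By \cref{Lem/1PSvsJHfilt}, these correspond to 1PSs $\lambda_F, \lambda_G$ of $\GL_{\bfm,\bfr}$ with $\lim_{t\to 0}\lambda_F(t)\cdot x = x_F$ and $\lim_{t\to 0}\lambda_G(t)\cdot x = x_G$ in $R(Q,\bfm;\bfr)^{q_\theta,\rho-\sst}$, where $M(x_F) \simeq P$ and $M(x_G) \simeq P'$. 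By \cref{Prop/closedOrbits}, both $\GL_{\bfm,\bfr}\cdot x_F$ and $\GL_{\bfm,\bfr}\cdot x_G$ are closed in $R(Q,\bfm;\bfr)^{q_\theta,\rho-\sst}$, and both lie in the orbit closure of $x$. By the uniqueness of the closed orbit in a semistable orbit closure (\cref{Thm/extGradQuot}, vi)), we get $\GL_{\bfm,\bfr}\cdot x_F = \GL_{\bfm,\bfr}\cdot x_G$, so $x_F$ and $x_G$ are in the same orbit, whence $P \simeq M(x_F) \simeq M(x_G) \simeq P'$ as representations of $(Q,\bfm)$.

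\textbf{Main obstacle.} The delicate point is Step 1, specifically ensuring that the composite of the two 1PS flows (first to a naively polystable point via $F'_\bullet$, then to a genuinely polystable point) really does arise from a single 1PS of $\GL_{\bfm,\bfr}$ corresponding to an honest \emph{naive} Jordan--Hölder filtration of $M$ --- rather than of $\mathrm{gr}_{F'_\bullet}M$. This requires that a naive Jordan--Hölder filtration of $\mathrm{gr}_{F'_\bullet}M$ lifts to a refinement of $F'_\bullet$ inside $M$, which follows because $\mathrm{gr}_{F'_\bullet}M$ is a quotient of a filtered object and the subquotients $F'_{i-1}M/F'_iM$ are $(\theta,\rho)$-stable (so the naive JH filtration of the graded object respects the grading and its pieces pull back to locally free submodules of $M$ of the correct ranks); combined with the identity $M(x) \simeq \mathrm{gr}_{F'_\bullet}(\mathrm{gr}_{F_\bullet}M(x))$ used in the proof of \cref{Prop/closedOrbits}. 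Care is also needed that all intermediate points remain in $R(Q,\bfm;\bfr)^{q_\theta,\rho-\sst}$, which is guaranteed by the rank-vector bookkeeping $\theta \cdot \rk F_iM = \rho \cdot \rk F_iM = 0$ recorded in \cref{Lem/1PSvsJHfilt}.
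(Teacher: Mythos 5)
Your Step 2 (uniqueness of the associated graded via uniqueness of the closed orbit in the orbit closure) is correct and is essentially how the paper concludes. The gap is in Step 1. You start from an arbitrary \emph{naive} Jordan--H\"older filtration $F'_\bullet$, flow to $x_0$ with $M(x_0)\simeq\mathrm{gr}_{F'_\bullet}M$, flow again from $x_0$ to the closed orbit, and then assert that ``the composite 1PS corresponds to a refinement (a naive Jordan--H\"older filtration) $F_\bullet$ of $M$''. There is no composite 1PS: two successive limits along different one-parameter subgroups are not in general realised by a single one-parameter subgroup from $x$, and the fact that the closed orbit \emph{is} reachable from $x$ by a single 1PS is exactly \cref{Thm/extGradQuot}~vi), which you should invoke directly rather than by ``composing flows''. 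Moreover your proposed repair is not coherent: a naive Jordan--H\"older filtration already has $(\theta,\rho)$-stable subquotients, so it admits no proper refinement of the required type, and the second flow produces a filtration of the quotient object $\mathrm{gr}_{F'_\bullet}M$, whose submodules do not lift to submodules of $M$. Even granting a single 1PS $\lambda$ from $x$ into the closed orbit, \cref{Lem/1PSvsJHfilt} only tells you that $\lambda$ corresponds to a \emph{coarsening} $F_\bullet$ of some naive Jordan--H\"older filtration $F'_\bullet$ --- the subquotients of $F_\bullet$ are semistable, not stable --- so you still would not have produced a Jordan--H\"older filtration.

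The paper's route closes exactly this remaining step: take the 1PS $\lambda$ flowing $x$ into the unique closed orbit, let $F_\bullet$ be the corresponding coarsening of a naive Jordan--H\"older filtration $F'_\bullet$ with associated 1PS $\lambda'$, and observe that
\[
\limit \lambda'(t)\cdot x_0=\limit\lambda'(t)\cdot x,
\]
since $\mathrm{gr}_{F'_\bullet}\bigl(\mathrm{gr}_{F_\bullet}M(x)\bigr)\simeq\mathrm{gr}_{F'_\bullet}M(x)$. The left-hand side lies in the closure of the (closed) orbit of $x_0$, so $\mathrm{gr}_{F'_\bullet}M(x)$ is the representation of a point in the closed orbit and hence is $(\theta,\rho)$-polystable by \cref{Prop/closedOrbits}; therefore $F'_\bullet$ itself is a Jordan--H\"older filtration of $M$. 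I recommend replacing your Step 1 with this argument.
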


\begin{proof}
Consider $x\in R(Q,\bfm;\bfr)^{q_\theta,\rho-\sst}$ such that $M\simeq M(x)$. By \cref{Thm/extGradQuot}, there is a unique closed orbit in the closure of $\GL_{\bfm,\bfr}\cdot x$ in this semistable locus and there exists a one-parameter subgroup $\lambda \colon \GG_m\rightarrow\GL_{\bfm,\bfr}$ such that $x_0=\lim_{t\rightarrow0}\lambda(t)\cdot x$ lies in that closed orbit. By \cref{Lem/1PSvsJHfilt}, $\lambda$ corresponds to a coarsening $F_\bullet$ of a \emph{naive} Jordan-H\"older filtration $F'_\bullet$ of $M(x)$. Let $\lambda'$ be the one-parameter subgroup associated to $F'_\bullet$ by \cref{Lem/1PSvsJHfilt}. Then
\[
x'_0:=\limit\lambda'(t)\cdot x_0=\limit\lambda'(t)\cdot x
\]
also lies in the closed orbit mentioned above. Thus $F'_\bullet$ is a Jordan-H\"older filtration of $M(x)$, which finishes the proof.
\end{proof}

From the above results we deduce that the notions of S-equivalence introduced in \cref{Def/JHfiltrations} and \cref{Thm/extGradQuot} coincide.

\subsection{Construction of moduli spaces}

\subsubsection{Applying relative non-reductive GIT to prove \cref{first main thm}}\label{sec proof of first main theorem}

Let $(Q,\bfm)$ be a quiver with multiplicities,  $\bfr\in\bbZ_{\geq0}^{Q_0}$ be a rank vector and $\theta,\rho\in\bbZ^{Q_0}$ be stability parameters. The aim of this section is to prove \cref{first main thm} by applying \cref{Thm/extGradQuot} to the equivariant action \eqref{equiv action quivers} using the external grading in $\S$\ref{Sect/extGrad}. For this we suppose the assumption \ref{U} holds for rank $r$ representations of $(Q,\bfm)$, which precisely ensures the unipotent stabiliser condition appearing in \cref{Thm/extGradQuot} is satisfied.

\begin{proof}[Proof of \cref{first main thm}]
  We apply \cref{Thm/extGradQuot} to the equivariant action of $\overline{\tau} \colon G_{\bfm,\bfr} \rightarrow G_{\bfr}$ on $\tau\colon R(Q,\bfm;\bfr) \rightarrow R(Q,\bfr)$, which is externally graded using the $\GG_m$-action in \cref{Lem/extGrad2} where $\beta_a= \alpha_i(f_{ji}-1)$. We take this quotient relative to King's GIT quotient of $R(Q,\bfr)$ with respect to $\theta$
  \[ q_\theta \colon R(Q,\bfr)^{\theta-\sst} \rightarrow M^{\theta-\sst}_{Q,\bfr} :=  R(Q,\bfr) \git_\theta \GL_{\bfr}. \]
  By \cref{Thm/extGradQuot}, we obtain a $\rho$-twisted relative affine GIT quotient 
  \[ R(Q,\bfm;\bfr)^{\theta,\rho-\sst}:=R(Q,\bfm;\bfr)^{q_\theta,\rho-\sst} \rightarrow M_{Q,\bfm;\bfr}^{\theta,\rho-\sst}:=R(Q,\bfm;\bfr) \git_{\! q_\theta,\rho} \GL_{\bfm,\bfr}\]
  which satisfies i) and the first claim in ii). Since $R(Q,\bfm;\bfr)$ is an affine space, and in particular normal and irreducible, so is the good quotient $M_{Q,\bfm;\bfr}^{\theta,\rho-\sst}$. Part iii) follows as by \cref{Thm/HMcrit}, the geometric points of $R(Q,\bfm;\bfr)^{q_\theta,\rho-\sst}$ correspond precisely to the $(\theta,\rho)$-semistable representations and similarly for stability. The claim about the geometric points correspond to S-equivalence classes follows from \cref{Thm/extGradQuot} v), \cref{Prop/closedOrbits} and \cref{Prop/JHassociatedGraded}.

  For the final dimension statement in part ii) and for part iv), we claim that the map \[R(Q,\bfm;\bfr)^{\theta,\rho-\st}:=R(Q,\bfm;\bfr)^{q_\theta,\rho-\st} \rightarrow M_{Q,\bfm;\bfr}^{\theta,\rho-\st}:=R(Q,\bfm;\bfr)^{q_\theta,\rho-\st}/G_{\bfm,\bfr}\] is a principal $G_{\bfm,\bfr}$-bundle. Indeed, by \cref{Lem/stabilisersStableLocus}, all $k$-points of $R(Q,\bfm;\bfr)^{q_\theta,\rho-\st}$ have trivial stabilisers, so the étale slice theorem \cite[Thm 4.5]{AHR20} guarantees that the quotient map is a principal bundle in the neighbourhood of any $k$-point. The claim follows, as $k$-points are dense on the quotient \cite[\S 10]{Gro66}. In particular, when the stable locus is non-empty, then $M_{Q,\bfm;\bfr}^{\theta,\rho-\sst}$ contains $M_{Q,\bfm;\bfr}^{\theta,\rho-\st}$ as a dense open subset, and so 
  \[ \dim M_{Q,\bfm;\bfr}^{\theta,\rho-\sst}  = \dim R(Q,\bfm;\bfr) - \dim G_{\bfm,\bfr} = \delta - \langle \bfr,\bfr \rangle_{Q,\bfm},\]
  as  $G_{\bfm,\bfr} = \GL_{\bfm,\bfr}/\Delta_\bfm$ and $\dim \Delta_\bfm = \delta$ (see \cref{Notation Euler pairing} for the definition of $\langle \bullet,\bullet \rangle_{Q,\bfm}$).
  
  Finally under the assumptions of part iv), we have $M_{Q,\bfm;\bfr}^{\theta,\rho-\sst}=M_{Q,\bfm;\bfr}^{\theta,\rho-\st}$ and the quotient is a principal $G_{\bfr,\bfm}$-bundle, so in particular, the base $M_{Q,\bfm;\bfr}^{\theta,\rho-\sst}$ is smooth. To prove that this is also a fine moduli space, one must prove the tautological family on $R(Q,\bfm;\bfr)$ descends to a universal family on $M_{Q,\bfm;\bfr}^{\theta,\rho-\sst}$.

  The tautological family descends to a universal family on the moduli stack $\Rep_{Q,\bfm;\bfr}=[R(Q,\bfm;\bfr)/\GL_{\bfm,\bfr}]$, which can be restricted to $[R(Q,\bfm;\bfr)^{q_{\theta},\rho-\st}/\GL_{\bfm,\bfr}]$. Moreover, the natural map \[ [R(Q,\bfm;\bfr)^{q_{\theta},\rho-\st}/\GL_{\bfm,\bfr}]\rightarrow M_{Q,\bfm;\bfr}^{\theta,\rho-\st}\] is a $\Delta_{\bfm}$-gerbe by \cref{Lem/stabilisersStableLocus} below. Therefore, a family on $[R(Q,\bfm;\bfr)^{q_{\theta},\rho-\st}/\GL_{\bfm,\bfr}]$ descends to $M_{Q,\bfm;\bfr}^{\theta,\rho-\st}$ if and only if the action of $\Delta_{\bfm}$ on its fibres is trivial. To obtain this, one proceeds exactly as in \cite[Proposition 5.3]{Kin94} to modify the $\Delta_{\bfm}$-action on the tautological bundles over $R(Q,\bfm;\bfr)$ by multiplying by a character of opposite weight, which exists as $\bfr$ is indivisible.
\end{proof}

\begin{lemma}\label{Lem/stabilisersStableLocus}
The group $G_{\bfm,\bfr}$ acts (set-theoretically) freely on $R(Q,\bfm;\bfr)^{q_{\theta},\rho-\st}$.
\end{lemma}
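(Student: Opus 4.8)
We must show that for every $x\in R(Q,\bfm;\bfr)^{q_\theta,\rho-\st}$ the stabiliser $\Stab_{\GL_{\bfm,\bfr}}(x)$ equals $\Delta_\bfm$; equivalently, writing $M:=M(x)$ (a $(\theta,\rho)$-stable representation by \cref{Thm/HMcrit}), that $\Aut_{Q,\bfm}(M)=k_\delta^\times\cdot\Id$. The first step is a Schur-type reduction. If $e\in E:=\End_{Q,\bfm}(M)$ is an idempotent, then $M=eM\oplus(1-e)M$ is a direct sum of subrepresentations, and each summand is locally free because $eM_i$ is a direct summand of the free $k_{m_i}$-module $M_i$, hence projective and therefore free over the local ring $k_{m_i}$. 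Applying the first condition of $(\theta,\rho)$-stability to $eM$ and to $(1-e)M$ (both contained in $\sigma(M)$) and summing the two inequalities forces $\theta\cdot\rk(eM)=0$ when $e\neq0,\Id$, and then the second, strict, condition of $(\theta,\rho)$-stability applied to the two locally free summands and summed contradicts $\rho\cdot\bfr=0$; hence $e\in\{0,\Id\}$. So $E$ is a finite-dimensional local $k$-algebra with residue field $k$ (as $k=\overline k$), which gives $E=k\cdot\Id\oplus\mathfrak n$ with $\mathfrak n$ the nilradical and $\Aut_{Q,\bfm}(M)=k^\times\Id\cdot(1+\mathfrak n)$. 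Since $\Delta_\bfm=k_\delta^\times\cdot\Id=k^\times\Id\cdot\big((1+\epsilon k_\delta)\cdot\Id\big)$ and $(1+\epsilon k_\delta)\cdot\Id\subseteq1+\mathfrak n$, it remains to prove the reverse inclusion, i.e.\ that $E=k_\delta\cdot\Id$.

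Next we split off the contribution of the unipotent radical. Assumption \ref{U} is precisely the statement that $\Stab_{U_{\bfm,\bfr}}(\iota(z))\subseteq\Delta_\bfm$ for all $z\in R(Q,\bfr)^{\theta-\sst}$, which is the hypothesis under which \cref{Thm/extGradQuot} was applied in the proof of \cref{first main thm}. The NRGIT construction of the stable locus used there (it is $R(Q,\bfm;\bfr)\cap\widetilde X^{q,\widetilde\rho-\st,\widetilde G}$, on whose internally graded counterpart the unipotent radical acts freely) yields that $U_{\bfm,\bfr}$ acts with stabilisers contained in $\Delta_\bfm$ on all of $R(Q,\bfm;\bfr)^{q_\theta,\rho-\st}$. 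In terms of $E$ this says: every $n\in\mathfrak n$ with $n(M)\subseteq\epsilon M$ lies in $(\epsilon k_\delta)\cdot\Id$.

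It therefore suffices to prove that the reduction $\overline u:=\tau_{\GL}(u)\in\GL_\bfr$ of any unipotent $u=\Id+n\in\Aut_{Q,\bfm}(M)$ is a scalar: indeed $\overline u$ is then unipotent and scalar, hence $\overline u=\Id$, so $u\in\Ker\tau_{\GL}=U_{\bfm,\bfr}$ and the previous paragraph applies. Write $\overline n:=\overline u-\Id$, a nilpotent endomorphism of the $\theta$-semistable representation $\tau(M)$; then $\Ker\overline n$ and $\mathrm{Im}\,\overline n$ are subrepresentations of $\tau(M)$ with $\dim\Ker\overline n+\dim\mathrm{Im}\,\overline n=\bfr$, so $\theta$-semistability of $\tau(M)$ forces $\theta\cdot\dim\Ker\overline n=\theta\cdot\dim\mathrm{Im}\,\overline n=0$. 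The plan is then to promote $\Ker\overline n$ and $\mathrm{Im}\,\overline n$ (or suitable refinements coming from the kernel/image filtration of $n$ on $M$) to \emph{locally free} subrepresentations of $M$, of ranks with vanishing $\theta$-pairing and summing to $\bfr$, so that the strict $\rho$-inequality of $(\theta,\rho)$-stability applied to them contradicts $\rho\cdot\bfr=0$ unless $\overline n=0$. The main obstacle, and the point where assumption \ref{U} must enter in an essential way, is precisely that the naive lifts to $M$ of a subrepresentation of $\tau(M)$ — its preimage, or the image of a submodule under $n$ — need not be locally free, so that the second stability condition does not apply to them directly; overcoming this (either by showing that these particular subrepresentations do lift to locally free subrepresentations of the correct rank, or by a degeneration argument along the external grading $\GG_m$-action that reduces the determination of $\Aut_{Q,\bfm}(M)$ to the $\GG_m$-fixed locus $\iota\big(R(Q,\bfr)^{\theta-\sst}\big)$, where \ref{U} controls the stabilisers) is the crux of the argument.
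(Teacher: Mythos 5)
Your proposal has a genuine gap, and you acknowledge it yourself: the final paragraph ends by naming the "crux of the argument" — promoting $\Ker\overline n$ and $\mathrm{Im}\,\overline n$ to locally free subrepresentations of $M$ of the right ranks, or replacing this by a degeneration argument — without carrying it out. As you correctly observe, the second stability condition in \cref{def/stabilityQuivMult} only applies to \emph{locally free} subrepresentations, and the natural lifts of subrepresentations of $\tau(M)$ need not be locally free, so the strict $\rho$-inequality cannot be invoked as written. A second, smaller gap is your assertion that the NRGIT construction "yields that $U_{\bfm,\bfr}$ acts with stabilisers contained in $\Delta_\bfm$ on all of $R(Q,\bfm;\bfr)^{q_\theta,\rho-\st}$": assumption \ref{U} controls unipotent stabilisers only on the image of $\iota$, and extending this to the whole stable locus is itself something that needs an argument (e.g.\ via the $U$-sweep description in \cref{rmk/alt descr HM}), not a restatement of the hypothesis. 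Your Schur-type reduction showing that $\End_{Q,\bfm}(M)$ is local is fine, but it does not by itself pin down the nilradical.

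The paper avoids all of this with a short group-theoretic argument that you may want to compare with. One has the exact sequence $1\rightarrow\Delta_\bfm\rightarrow\Stab_{\GL_{\bfm,\bfr}}(x)\rightarrow\Stab_{G_{\bfm,\bfr}}(x)\rightarrow 1$. Since $x$ is stable, $\Stab_{G_{\bfm,\bfr}}(x)$ is finite (a general feature of the (NR)GIT stable locus, cf.\ \cref{Thm/extGradQuot}), so the connected group $\Delta_\bfm$ has finite index and $\Stab_{G_{\bfm,\bfr}}(x)$ is the component group of $\Stab_{\GL_{\bfm,\bfr}}(x)$. But $\Stab_{\GL_{\bfm,\bfr}}(x)$ is the unit group of the finite-dimensional algebra $\End_{Q,\bfm}(M(x))$, i.e.\ a non-empty open subset of the affine space $\Stab_{\gl_{\bfm,\bfr}}(x)$, hence connected; therefore $\Stab_{G_{\bfm,\bfr}}(x)$ is trivial. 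Note that this argument never needs to identify $\End_{Q,\bfm}(M)$ with $k_\delta\cdot\Id$, which is exactly the statement your approach gets stuck on.
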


\begin{proof}
We just need to check that any $k$-point of $R(Q,\bfm;\bfr)^{q_{\theta},\rho-\st}$ has trivial stabiliser in $G_{\bfm,\bfr}$. Note that we have a short exact sequence of algebraic groups over $k$:
\[
1\rightarrow\Delta_\bfm\rightarrow\Stab_{\GL_{\bfm,\bfr}}(x)\rightarrow\Stab_{G_{\bfm,\bfr}}(x)\rightarrow 1.
\]
Since $\Delta_\bfm$ is connected and $\Stab_{G_{\bfm,\bfr}}(x)$ is finite (as $x$ is stable), we obtain that $\Stab_{G_{\bfm,\bfr}}(x)$ is the group of components of $\Stab_{\GL_{\bfm,\bfr}}(x)$. Moreover, $\Stab_{\GL_{\bfm,\bfr}}(x)$ is an open subset in the affine space $\Stab_{\gl_{\bfm,\bfr}}(x)$, so it is connected. This yields that $\Stab_{G_{\bfm,\bfr}}(x)$ is trivial.
\end{proof}

\begin{remark}\label{Rmk/constantMultiplicitiesJetBundles}
When $\bfm=m\mathbf{1}$ for some $m\geq1$, we obtain the $(m-1)$th jet space of $M_{Q,\bfr}^{\theta-\sst}$, which is an affine bundle with fibre $\bbA^{(m-1)\dim M_{Q,\bfr}}$.

Indeed, in that case, $\theta$ must be generic with respect to $\bfr$ by \cref{Cor/U-stabilisers when multiplicities are divisible}. In particular, the quotient map $q_\theta \colon R(Q,\bfr)^{\theta-\sst}\rightarrow M_{Q,\bfr}^{\theta-\sst}$ is a $G_\bfr$-principal bundle. Moreover, applying the $(m-1)$th jet space functor to the action $\GL_\bfr\curvearrowright R(Q,\bfr)$ yields the action $\GL_{\bfm,\bfr}\curvearrowright R(Q,\bfm;\bfr)$ and the truncation map $\tau$ corresponds to truncating jets. Since jet space functors commute with fibre products \cite[Prop.\ 1.2.6]{CLNS18} and \'etale pullbacks \cite[Prop.\ 1.1]{Mus01}, the universal property of categorical quotients ensures that $M_{Q,\bfm;\bfr}^{\theta-\sst}$ is the $(m-1)$th jet space of $M_{Q,\bfr}^{\theta-\sst}$.
\end{remark}

\begin{remark}\label{rmk moduli with non-trivial unip stabilisers}
    As mentioned in \cref{rmk NRGIT with non-trivial stabilisers}, relative NRGIT also applies more generally when, rather than requiring the assumption that all unipotent stabilisers in $R(Q,\bfr)^{\theta-\sst}$ are trivial (i.e.\ contained in $\Delta_{\bfm}$), we instead fix the dimensions of the unipotent stabilisers of points in $R(Q,\bfr)^{\theta-\sst}$ for a sequence of normal subgroups in $U$ whose subquotients are abelian. This results in a moduli space constructed as a NRGIT quotient which is projective-over-affine over a subvariety of King's classical moduli space $M^{\theta-\sst}_{Q,\bfr}$ determined by fixing these stabiliser dimensions. Often these stabiliser assumptions can be interpreted moduli-theoretically.
\end{remark}

We can apply \cref{first main thm} to construct moduli spaces of framed semistable quiver representations with multiplicities, where unipotent stabiliser assumptions are satisfied by \cref{prop suff cond for trivial unip stabilisers}, as the stability condition for framed representations (without multiplicities) is generic.

\begin{corollary}\label{thm framed moduli with multiplicities}
Let $(Q,\bfm)$ be a quiver with multiplicities, let $\bfr\in\bbZ_{\geq0}^{Q_0}$ be a rank vector and $\bff\in\bbZ_{\geq0}^{Q_0}$ a framing vector. Let further $\theta\in\bbZ^{Q_0}$ be a stability parameter. Denote by $q_{\bff,\theta} \colon R(Q_\bff,\hat{\bfr})^{\hat{\theta}-\sst}\rightarrow M_{Q,\bfr}^{\bff;\theta-\sst}$ the quotient map.
Then we have an irreducible, smooth moduli space:
\[
M_{Q,\bfm;\bfr}^{\bff;\theta-\sst}:=R(Q_{\bff},\hat{\bfm};\hat{\bfr})\git_{\! q_{\bff,\theta},\rho} \GL_{\bfm,\bfr}
\]
of dimension $\sum_{i\in Q_0}m_if_ir_i-\langle\bfr,\bfr\rangle_{Q,\bfm}$. Moreover, the quotient map is a principal $G_{\widehat{\bfm},\widehat{\bfr}}=\GL_{\bfm,\bfr}$-bundle, and $M_{Q,\bfm;\bfr}^{\bff;\theta-\sst}$ is projective over affine over $M_{Q,\bfr}^{\bff;\theta-\sst}$.
\end{corollary}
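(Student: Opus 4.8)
The plan is to deduce \cref{thm framed moduli with multiplicities} directly from \cref{first main thm} applied to the associated framed quiver with multiplicities $(Q_\bff,\widehat{\bfm})$ of \cref{Def quiver with mult associated to framing}, with rank vector $\widehat{\bfr}$ and stability parameters $\widehat{\theta}$ (the parameter produced by \cref{thm/classicalframedquiver}) together with an arbitrary $\rho$. First I would verify the hypothesis \ref{U} for rank $\widehat{\bfr}$ representations of $(Q_\bff,\widehat{\bfm})$; this is precisely where the framed setting is more convenient than the unframed one. The vertex $\infty$ satisfies $\widehat{r}_\infty=1$, so $\widehat{\bfr}$ is indivisible, and by \cref{thm/classicalframedquiver}(i) the parameter $\widehat{\theta}$ is generic with respect to $\widehat{\bfr}$. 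Hence condition (i) of \cref{prop suff cond for trivial unip stabilisers} is satisfied and \ref{U} holds.

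\cref{first main thm} then yields a $\rho$-twisted relative affine GIT quotient $M_{Q_\bff,\widehat{\bfm};\widehat{\bfr}}^{\widehat{\theta},\rho-\sst}$ of $R(Q_\bff,\widehat{\bfm};\widehat{\bfr})$ taken relative to King's framed quotient $q_{\bff,\theta}\colon R(Q_\bff,\widehat{\bfr})^{\widehat{\theta}-\sst}\rightarrow M_{Q,\bfr}^{\bff;\theta-\sst}=M_{Q_\bff,\widehat{\bfr}}^{\widehat{\theta}-\sst}$, which is exactly the object $M_{Q,\bfm;\bfr}^{\bff;\theta-\sst}$ of the statement once one uses the identification $G_{\widehat{\bfm},\widehat{\bfr}}\simeq\GL_{\bfm,\bfr}$ recorded after \cref{Def quiver with mult associated to framing} (the action of $\GL_{\widehat{\bfm},\widehat{\bfr}}$ on $R(Q_\bff,\widehat{\bfm};\widehat{\bfr})$ has the trivially-acting subgroup $\Delta_{\widehat{\bfm}}=k^{\times}$ with quotient $G_{\widehat{\bfm},\widehat{\bfr}}\simeq\GL_{\bfm,\bfr}$). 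Since $\widehat{\theta}$ is generic, $(\widehat{\theta},\rho)$ is generic with respect to $\widehat{\bfr}$ by \cref{Rmk/genericTheta}, so \cref{first main thm}(iv) gives that the NRGIT quotient map is a principal $G_{\widehat{\bfm},\widehat{\bfr}}=\GL_{\bfm,\bfr}$-bundle and that $M_{Q,\bfm;\bfr}^{\bff;\theta-\sst}$ is smooth; normality and irreducibility are \cref{first main thm}(ii), and the morphism $M_{Q,\bfm;\bfr}^{\bff;\theta-\sst}\rightarrow M_{Q,\bfr}^{\bff;\theta-\sst}$ of \cref{first main thm}(ii) is projective over affine, which is the last assertion. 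The choice of $\rho$ is immaterial by \cref{Rmk/genericTheta} and the discussion before \cref{Def/stabilityFramedQuivMult}, since the $\rho$-clause of \cref{def/stabilityQuivMult} is vacuous for generic $\widehat{\theta}$; combined with \cref{first main thm}(iii) and Crawley-Boevey's trick this also fixes the moduli interpretation.

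It remains to match the dimension formula. Here $\widehat{m}_\infty=1$, so $\delta=\gcd_i\widehat{m}_i=1$, and a short computation with \cref{Notation Euler pairing} — in which the arrows $b_{i,f}\colon\infty\to i$ contribute $\sum_{i\in Q_0}f_i\cdot\mathrm{lcm}(1,m_i)\cdot 1\cdot r_i=\sum_{i\in Q_0}m_if_ir_i$ — gives
\[
\langle\widehat{\bfr},\widehat{\bfr}\rangle_{Q_\bff,\widehat{\bfm}}=\langle\bfr,\bfr\rangle_{Q,\bfm}+1-\sum_{i\in Q_0}m_if_ir_i.
\]
Hence, whenever the stable locus is non-empty, \cref{first main thm}(ii) gives $\dim M_{Q,\bfm;\bfr}^{\bff;\theta-\sst}=\delta-\langle\widehat{\bfr},\widehat{\bfr}\rangle_{Q_\bff,\widehat{\bfm}}=\sum_{i\in Q_0}m_if_ir_i-\langle\bfr,\bfr\rangle_{Q,\bfm}$. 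Non-emptiness of the stable locus is equivalent to $R(Q_\bff,\widehat{\bfr})^{\widehat{\theta}-\st}\neq\varnothing$: for any $x$ in the latter, the representation $M(\iota(x))$ has truncation $V(x)$, which is $\widehat{\theta}$-stable, so $M(\iota(x))$ is $(\widehat{\theta},\rho)$-stable by \cref{Rmk/genericTheta} and thus $\iota(x)$ lies in the stable locus (if this locus is empty then $M_{Q,\bfr}^{\bff;\theta-\sst}$ is empty and so is $M_{Q,\bfm;\bfr}^{\bff;\theta-\sst}$, and the dimension statement is vacuous). The only genuine bookkeeping is the check of \ref{U} and tracking the rigidification $\GL_{\bfm,\bfr}\simeq G_{\widehat{\bfm},\widehat{\bfr}}$; the remainder is a transcription of \cref{first main thm}.
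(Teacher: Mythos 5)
Your proposal is correct and follows exactly the route the paper intends: the corollary is deduced by applying \cref{first main thm} to $(Q_\bff,\widehat{\bfm})$ with rank $\widehat{\bfr}$ and the generic parameter $\widehat{\theta}$ of \cref{thm/classicalframedquiver}, with assumption \ref{U} supplied by \cref{prop suff cond for trivial unip stabilisers}(i) since $\widehat{\bfr}$ is indivisible and $\widehat{\theta}$ is generic. Your dimension computation $\langle\widehat{\bfr},\widehat{\bfr}\rangle_{Q_\bff,\widehat{\bfm}}=\langle\bfr,\bfr\rangle_{Q,\bfm}+1-\sum_{i}m_if_ir_i$ and the non-emptiness argument via $\iota$ are correct details that the paper leaves implicit.
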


As in the classical case \cite{ER09}, forgetting the framing vectors induces a Hilbert-Chow map, called $\mathrm{HC}$ below:
\[
\begin{tikzcd}[ampersand replacement=\&]
M_{Q,\bfm;\bfr}^{\bff;\theta-\sst} \ar[d, "\mathrm{HC}_\bfm"]\ar[r] \& M_{Q,\bfr}^{\bff;\theta-\sst} \ar[d, "\mathrm{HC}"] \\
M_{Q,\bfm;\bfr}^{\theta-\sst} \ar[r] \& M_{Q,\bfr}^{\theta-\sst}.
\end{tikzcd}
\]
We plan to study the properties of the Hilbert-Chow map for quivers with multiplicities in future work.

\subsubsection{Nakajima quiver varieties with multiplicities}

In this section, we construct symplectic reductions associated to quivers with multiplicities via a moment map for the doubled quiver, generalising Nakajima quiver varieties \cite{N94}.

\begin{theorem}\label{Thm/quiverVarieties}
Let $(Q,\bfm)$ be a quiver with multiplicities and $\bfr\in\bbZ_{\geq0}^{Q_0}$ a rank vector. Fix $\gamma=(\gamma_i\cdot\Id)_{i\in Q_0}\in\gl_{\bfm,\bfr}$ and stability parameters $\theta,\rho\in\bbZ_{\geq0}^{Q_0}$. Suppose assumption \ref{U} holds for rank $\bfr$ representations of $(\overline{Q},\bfm)$. Then the closed subscheme
\[
\mu_{(Q,\bfm),\bfr}^{-1}(\gamma)^{q_\theta,\rho-\sst}:=\mu_{(Q,\bfm),\bfr}^{-1}(\gamma)\cap R(\overline{Q},\bfm;\bfr)^{q_\theta,\rho-\sst}
\]
admits a quasi-projective good quotient $N_{Q,\bfm;\bfr}^{\theta,\rho-\sst}(\gamma)$, which we call a Nakajima quiver variety with multiplicities.

If moreover $(\theta,\rho)$ is generic with respect to $\bfr$, then $N_{Q,\bfm;\bfr}^{\theta,\rho-\sst}(\gamma)$ is a smooth (algebraic) symplectic variety, of dimension $2(\delta-\langle\bfr,\bfr\rangle_{Q,\bfm})$.
\end{theorem}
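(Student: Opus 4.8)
The plan is to deduce the existence of the good quotient $N_{Q,\bfm;\bfr}^{\theta,\rho-\sst}(\gamma)$ directly from \cref{first main thm} by restriction to a closed invariant subscheme, and then to establish smoothness and the symplectic structure by a dimension count together with a deformation-theoretic (or Lie-theoretic) transversality argument at stable points. First, I would observe that $\mu_{(Q,\bfm),\bfr}$ is $\GL_{\bfm,\bfr}$-equivariant (with respect to the coadjoint action on $\gl_{\bfm,\bfr}^\vee \simeq \gl_{\bfm,\bfr}$), and since $\gamma = (\gamma_i \cdot \Id)$ is fixed by the coadjoint action of $\GL_{\bfm,\bfr}$ — it lies in the centre — the fibre $\mu_{(Q,\bfm),\bfr}^{-1}(\gamma)$ is a $\GL_{\bfm,\bfr}$-invariant closed subscheme of $R(\overline Q,\bfm;\bfr)$. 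Intersecting with the semistable locus $R(\overline Q,\bfm;\bfr)^{q_\theta,\rho-\sst}$ of \cref{first main thm} (applied to the doubled quiver $\overline Q$, which is legitimate since assumption \ref{U} is assumed for rank $\bfr$ representations of $(\overline Q,\bfm)$), we get a $\GL_{\bfm,\bfr}$-invariant closed subscheme of the semistable locus; restricting the good quotient $q \colon R(\overline Q,\bfm;\bfr)^{q_\theta,\rho-\sst} \to M_{\overline Q,\bfm;\bfr}^{\theta,\rho-\sst}$ to this closed invariant subscheme yields a good quotient $N_{Q,\bfm;\bfr}^{\theta,\rho-\sst}(\gamma)$, which is a closed subscheme of $M_{\overline Q,\bfm;\bfr}^{\theta,\rho-\sst}$ and hence quasi-projective (this uses the standard fact that good quotients restrict to good quotients on closed invariant subschemes, together with \cref{Thm/extGradQuot} i)).

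For the second part, assume $(\theta,\rho)$ is generic with respect to $\bfr$, so that (by \cref{Rmk/genericTheta}) semistability coincides with stability and, by \cref{first main thm} iv) and \cref{Lem/stabilisersStableLocus}, $G_{\bfm,\bfr}$ acts freely on $R(\overline Q,\bfm;\bfr)^{q_\theta,\rho-\st}$ with the quotient map a principal $G_{\bfm,\bfr}$-bundle. The key point is to show that $\gamma$ is a regular value of the restricted moment map
\[
\mu_{(Q,\bfm),\bfr} \colon R(\overline Q,\bfm;\bfr)^{q_\theta,\rho-\st} \to \gl_{\bfm,\bfr}^0 \simeq (\gl_{\bfm,\bfr}/\mathfrak d_\bfm)^\vee
\]
at every point of the fibre. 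At a stable point $x$, the differential $d\mu_x$ has image equal to the annihilator of the Lie algebra stabiliser of $x$; since the $G_{\bfm,\bfr}$-action is free at $x$, this stabiliser is exactly $\mathfrak d_\bfm$, so $d\mu_x$ is surjective onto $\gl_{\bfm,\bfr}^0$. Hence $\mu_{(Q,\bfm),\bfr}^{-1}(\gamma)^{q_\theta,\rho-\st}$ is smooth of dimension $\dim R(\overline Q,\bfm;\bfr) - \dim \gl_{\bfm,\bfr}^0 = \dim R(\overline Q,\bfm;\bfr) - (\dim \gl_{\bfm,\bfr} - \delta)$, and since the principal bundle structure descends, $N_{Q,\bfm;\bfr}^{\theta,\rho-\sst}(\gamma)$ is smooth of dimension
\[
\dim R(\overline Q,\bfm;\bfr) - \dim \gl_{\bfm,\bfr} + \delta - \dim G_{\bfm,\bfr} = \dim R(\overline Q,\bfm;\bfr) - 2\dim \GL_{\bfm,\bfr} + 2\delta.
\]
Using $\dim R(\overline Q,\bfm;\bfr) = 2\dim R(Q,\bfm;\bfr) = 2\sum_{a \colon i\to j}\mu_{ij}r_ir_j$ and $\dim \GL_{\bfm,\bfr} = \sum_i m_i r_i^2$, this equals $2\big(\delta - \langle\bfr,\bfr\rangle_{Q,\bfm}\big)$ by \cref{Notation Euler pairing}, as claimed.

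Finally, for the symplectic structure: $R(\overline Q,\bfm;\bfr) \simeq \mathrm T^*R(Q,\bfm;\bfr)$ carries the canonical (algebraic) symplectic form, which is $\GL_{\bfm,\bfr}$-invariant, and $\mu_{(Q,\bfm),\bfr}$ is a moment map for this Hamiltonian action; the standard Marsden–Weinstein reduction argument (in its algebraic form, valid since we are on the locus where the action is free and $\gamma$ is a regular value) then endows $N_{Q,\bfm;\bfr}^{\theta,\rho-\sst}(\gamma)$ with an algebraic symplectic form, pulled back to a well-defined closed nondegenerate $2$-form on the smooth quotient. I expect the main obstacle to be the careful bookkeeping in the regular-value/transversality step: one must correctly identify the target of the moment map as $\gl_{\bfm,\bfr}^0$ (the orthogonal complement of $\mathfrak d_\bfm$, using the trace pairing of \cref{Not/TracePairing}) rather than all of $\gl_{\bfm,\bfr}^\vee$, match up the image of $d\mu_x$ with $\mathrm{ann}(\mathrm{Lie}\,\Stab(x))$, and then verify that the dimension count collapses exactly to $2(\delta - \langle\bfr,\bfr\rangle_{Q,\bfm})$ — this last identity is a direct consequence of the dimension formula for $M_{Q,\bfm;\bfr}^{\theta,\rho-\sst}$ already established in \cref{first main thm} ii) and the general principle that a symplectic reduction at a regular value of a free Hamiltonian action has dimension twice that of the underlying (non-Hamiltonian) quotient.
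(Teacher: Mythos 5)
Your proposal is correct and follows essentially the same route as the paper: restriction of the good quotient from \cref{first main thm} to the closed invariant fibre $\mu_{(Q,\bfm),\bfr}^{-1}(\gamma)$, surjectivity of $d\mu_x$ onto $\gl_{\bfm,\bfr}^0$ at stable points via the standard moment-map identity (this is exactly the content of the lemma of Crawley-Boevey--Van den Bergh that the paper cites), the same dimension count, and algebraic Marsden--Weinstein reduction for the symplectic form.
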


\begin{proof}[Proof of \cref{Thm/quiverVarieties}]
By \cref{first main thm}, $R(\overline{Q},\bfm;\bfr)^{q_\theta,\rho-\sst}$ admits a quasi-projective good quotient, so this is also true for the closed $\GL_{\bfm,\bfr}$-invariant subset $\mu_{(Q,\bfm),\bfr}^{-1}(\gamma)^{q_\theta,\rho-\sst}$.

Let us now suppose that $(\theta,\rho)$ is generic with respect to $\bfr$. Then all points in $\mu_{(Q,\bfm),\bfr}^{-1}(\gamma)^{q_\theta,\rho-\sst}$ are stable. Smoothness follows from \cite[Lem.\ 2.1.5]{CBVB04}, applied to $G_{\bfm,\bfr}\curvearrowright R(\overline{Q},\bfm;\bfr)$, as $\mu_{Q,\bfm;\bfr}$ is a submersion on restriction to the (semi)stable locus. Moreover
\[
\begin{split}
\dim N_{Q,\bfm;\bfr}^{\theta,\rho-\sst}(\gamma) &
=
\dim \mu_{(Q,\bfm),\bfr}^{-1}(\gamma)^{q_\theta,\rho-\sst}-\dim G_{\bfm,\bfr} \\
& =
\dim R(\overline{Q},\bfm;\bfr)-\dim \gl_{\bfm,\bfr}^0-(\dim\GL_{\bfm,\bfr}-\dim\Delta_{\bfm,\bfr}) \\
& =
2(\dim\gl_{\bfm,\bfr}-\langle\bfr,\bfr\rangle_{Q,\bfm})-(\dim\gl_{\bfm,\bfr}-\delta)-(\dim\gl_{\bfm,\bfr}-\delta) \\
& =
2(\delta-\langle\bfr,\bfr\rangle_{Q,\bfm}).
\end{split}
\]
The fact that $N_{Q,\bfm;\bfr}^{\theta,\rho-\sst}(\gamma)$ is algebraic symplectic follows from the algebraic version of the Marsden-Weinstein reduction \cite{MW74}. The discussion above \cite[Cor.\ 3.18]{HSS20} applies to our setup: on affine opens in $N_{Q,\bfm;\bfr}^{\theta,\rho-\sst}(\gamma)$, the Poisson bracket inherited by the algebra of invariant functions induces the symplectic form coming from symplectic slices.
\end{proof}

\subsection{Examples}

In this section, we illustrate the above constructions in some example. We proceed by increasing complexity, from jet spaces of smooth classical quiver moduli to moduli spaces involving two stability parameters.

\subsubsection{Grassmannians as quiver moduli spaces}

Let $\mathrm{Gr}(n,r)$ denote the classical Grassmannian parametrising $n$-dimensional vector subspaces of $k^{\oplus r}$. We recall that $\mathrm{Gr}(n,r)$ can be constructed as a moduli space $M^{r,0-\sst}_{\bullet,n}$ of $0$-semistable $r$-framed $n$-dimensional representation of the one vertex quiver $Q := \bullet$. Note that a $r$-framed $n$-dimensional representation $(V,b)$ of $Q$ consists of a $n$-dimensional vector space $V$ and a linear map $b \colon k^{\oplus r} \rightarrow V$. Such a representation is $0$-(semi)stable (in the sense of \cref{Def framed semistability}) if and only if $b$ is surjective (i.e. the linear map $b$ has rank $n$). Hence we have $q \colon R(\bullet_r,\widehat{n})^{{0}-\sst} = \Hom(k^{\oplus r},k^{\oplus n})^{\rk n} \rightarrow M^{r,0-\sst}_{\bullet,n} = R(\bullet_r,\widehat{n})^{{0}-\sst}/\GL_n(k)$ where the $\GL_n(k)$-action is free. Moreover, we have a projective morphism to the unframed moduli space:
\[ M^{r,0-\sst}_{\bullet,n}:= R(\bullet_r,\widehat{n})^{{0}-\sst} \git \GL_{n}(k)   \rightarrow  M^{0-\sst}_{\bullet,n} = \Rep(\bullet,n)\git \GL_n(k) = \Spec k. \]

Let us now explain how to construct the Grassmannian with multiplicities $\mathrm{Gr}_m(n,r)$, which parametrises rank $n$ locally free submodules of $k_m^{\oplus r}$ (and appeared in the proof of \cref{Lem/HM3}), as a moduli space of framed quivers with multiplicities, where at the framing vertex $\infty$ we take the multiplicity $m_\infty = m$ (as opposed to setting $m_\infty = 1$ as we did in \cref{Def quiver with mult associated to framing}). More precisely, we will show this naturally arises as a GIT quotient for the equivariant action of $\GL_n(k_m) \rightarrow \GL_n(k)$ on $\tau \colon \Rep(\bullet_r,\bfm; \widehat{n}) \rightarrow \Rep(\bullet_r,\bfm; \widehat{n})$ where $\bfm = (m,m)$ that is taken with respect to the good (in fact geometric) $\GL_n(k)$-quotient $q \colon R(\bullet_r,\widehat{n})^{{0}-\sst} \rightarrow M^{r,0-\sst}_{\bullet,n}$ and trivial choice of twisting character $\rho$. We note that as semistability coincides with stability for $r$-framed $n$-dimensional representations of $Q = \bullet$, we have the necessary unipotent stabiliser condition. We obtain a geometric quotient
\[  \Rep(\bullet_r,\bfm; \widehat{n})^{ss} \rightarrow M^{r,0-\sst}_{\bullet,m;n}:= R(\bullet_r,\bfm,\widehat{n}) \git_q \GL_{n}(k_m) \]
where $\Rep(\bullet_r,\bfm; \widehat{n})^{ss} =\tau^{-1} (R(\bullet_r,\widehat{n})^{{0}-\sst})= \Mat^\circ_{r \times n}(k_m)$ consists of $r \times n$-matrices whose reduction modulo $\epsilon$ has rank $n$, and by construction there is an affine morphism 
\[\mathrm{Gr}_m(n,r)=M^{r,0-\sst}_{\bullet,m;n} \rightarrow \mathrm{Gr}(n,r) =M^{r,0-\sst}_{\bullet,n}.\] In fact, as we are working with a constant tuple of multiplicities $\bfm = (m,m)$, this morphism is just realising $\mathrm{Gr}_m(n,r)$ as the bundle of $m$-jets over $\mathrm{Gr}(n,r)$, see \cref{Rmk/constantMultiplicitiesJetBundles}.

\subsubsection{Untwisted moduli of stable representations with varying multiplicities}

\begin{example}
Let us consider the following quiver with multiplicities:
\[
\begin{tikzcd}[row sep=small, column sep=small]
\bullet_0 \ar[loop left]\ar[loop left,rotate=-60,dashed]\ar[loop above, dashed]\ar[loop right,rotate=60,dashed]\ar[loop right] \\
\bullet_1 \ar[u]
\end{tikzcd}
,
\]
where the vertex $0$ carries $g\geq1$ loops and $\bfm=(m_0,m_1)$. We set $\bfr=(r,1)$ and choose the stability parameter $\theta=(1,-r)$.

Note that for $0\leq\bfr'\leq\bfr$, we have $\theta\cdot\bfr\geq0$ unless $r'_1=1$ and $r'_0<r_0$. Thus, a representation $(x_1,\ldots,x_g,v)\in R(Q,\bfr)$ is $\theta$-(semi)stable if and only if the vector $v\in\Hom_k(k,k^{\oplus r})\simeq k^{\oplus r}$ spans $k^{\oplus r}$ under the joint action of $x_1,\ldots, x_g$.

Now consider $x=(x_1,\ldots,x_g,v)\in R(Q,\bfm;\bfr)$. Since
\[
\Hom_{k_{m_{01}}}(k_{m_1},k_{m_0}^{\oplus r})\simeq\bigoplus_{f=0}^{f_{01}-1}k_{m_0}^{\oplus r},
\]
we can view the vector $v$ as the datum of $f_{01}$ vectors $v_f\in k_{m_0}^{\oplus r},\ 0\leq f\leq f_{01}-1$, where $v_f$ corresponds to the image of $\epsilon^{f_{01}-1-f}$. Then $\tau(x)$ is $\theta$-(semi)stable if and only if the vector $v_0$ spans $k_{m_0}^{\oplus r}$ under the joint action of $x_1,\ldots,x_g$. Indeed, precomposing the map $v\in\Hom_{k_{m_{01}}}(k_{m_1},k_{m_0}^{\oplus r})$ with $\epsilon^{f_{01}-1}$ amounts to replacing $(v_0,\ldots,v_{f_{01}-1})$ with $(\epsilon^{f_{10}}v_1,\ldots,\epsilon^{f_{10}}v_{f_{01}-1},v_0)$. Note that, although the vectors $v_0,\ldots,v_{f_{01}-1}$ remind us of framing vectors, the stability condition only bears on $v_0$. So $M(x)$ is not a stable framed representation, as in Definition \ref{Def/stabilityFramedQuivMult}.

Call $M_{g,f_{01}}$ the moduli space parametrising $g$-tuples of $r\times r$ matrices with coefficients in $k$ and $f_{01}$-tuples $(v_0,\ldots, v_{f_{01}-1})$ of vectors in $k^{\oplus r}$, such that $v_0$ generates $k^{\oplus r}$ under the joint action of $x_1,\ldots,x_g$, up to simultaneous action of $\GL_r$. Then $M_{Q,\bfm;\bfr}^{\theta-\sst}$ is isomorphic to the geometric quotient of the $(m_0-1)$th jet scheme of $M_{g,f_{01}}$ by $k_{m_1}^{\times}$ acting on framing vectors.
\end{example}

\begin{example}
Let us consider the Kronecker quiver
\[
\begin{tikzcd}[ampersand replacement=\&]
\bullet_1\ar[r,shift left]\ar[r, shift right] \& \bullet_2
\end{tikzcd}
\]
with multiplicities $\bfm=(2,m)$, where $m\geq1$ is an odd integer. Fix the rank vector $\bfr=\mathbf{1}$ and the stability parameter $\theta=(-1,1)$. Note that $\theta$ is generic with respect to $\bfr$.

The datum of a $k$-linear map $\Lambda \colon k_2\rightarrow k_m$ is equivalent to the datum of two vectors $\lambda,\lambda'\in k_m$, corresponding respectively to the image of $1,\epsilon\in k_2$. Then the truncation map can be expressed as follows:
\[
\begin{array}{rcl}
R(Q,\bfm;\bfr)\simeq k_m^{\oplus 2}\oplus k_m^{\oplus 2} & \rightarrow & k\oplus k\simeq R(Q,\bfr) \\
(\lambda,\lambda',\mu,\mu') & \mapsto & (\lambda'_0,\mu'_0).
\end{array}
\]
It is well-known that $M_{Q,\bfr}^{\theta-\sst}\simeq\mathbb{P}_k^1$, with homogeneous coordinates $[\lambda'_0:\mu'_0]$. Let us compute the ring of functions of $M_{Q,\bfm;\bfr}^{\theta-\sst}$ on the open subset $D_{\lambda'_0}=\{\lambda'_0\ne0\}$ as the computation for the other open subset is analogous. By construction, this ring is:
\[
\mathcal{O}_{M_{Q,\bfm;\bfr}^{\theta-\sst}}(D_{\lambda'_0})
=
\left(\mathcal{O}(R(Q,\bfm;\bfr))\left[\frac{1}{\lambda'_0}\right]\right)^{\GL_{\bfm,\bfr}}.
\]
One can check that the following $3m-1$ functions are $\GL_{\bfm,\bfr}$-invariant and algebraically independent:
\[
\left\{
\begin{array}{ll}
\left(\frac{\mu'}{\lambda'}\right)_l, & 0\leq l\leq m-1, \\
\left(\frac{\mu}{\lambda'}\right)_l, & 1\leq l\leq m-1, \\
\left(\frac{\lambda}{\lambda'}\right)_l, & 1\leq l\leq m-1, \\
\left(\frac{\lambda}{\lambda'}-\frac{\mu}{\lambda'}\right)_0, & 
\end{array}
\right.
\]
where the subscript $l$ denotes taking the coefficient of $\epsilon^l$.

Moreover, we have $\dim M_{Q,\bfm;\bfr}^{\theta-\sst}=3m-1$, so $\mathcal{O}_{M_{Q,\bfm;\bfr}^{\theta-\sst}}(D_{\lambda'_0})$ is the polynomial ring generated by the above invariant functions. Note that ${\mu'_0}/{\lambda'_0}$ is the coordinate of the affine chart $\{\lambda'_0\ne0\}$ in $M_{Q,\bfr}^{\theta-\sst}\simeq\mathbb{P}_k^1$. Thus, we obtain that the morphism $M_{Q,\bfm;\bfr}^{\theta-\sst}\rightarrow M_{Q,\bfr}^{\theta-\sst}$ is an $\mathbb{A}_k^{3m-2}$-fibration trivialised by the open cover $D_{\lambda'_0}\cup D_{\mu'_0}$.
\end{example}

\subsubsection{Twisted moduli of representations with varying multiplicities}\label{Sect/examplesTwistedModuli}

Let us spell out stability conditions for the following quiver
\[
Q=
\begin{tikzcd}[row sep=small, column sep=small]
\bullet_1 \ar[dr] & \bullet_2 \ar[d] & \bullet_3 \ar[dl] \\
\bullet_n \ar[r] & \bullet_0 & \ldots \\
\bullet_{n-1} \ar[ur] & \ldots & \ldots
\end{tikzcd}
\]
(where $n\geq1$) with multiplicities $\bfm=(m_0,m_1,\ldots,m_1)$, where $\gcd(m_0,m_1)=1$. Set a rank vector $\bfr:=(r_0,r_1,\ldots,r_1)$. We use the notation $(x_{i})_{1\leq i\leq n}:=(x_{i})_{(i\rightarrow 0)\in Q_1}\in R(Q,\bfm;\bfr)$.

Consider stability parameters $\theta:=(nr_1,-r_0,\ldots,-r_0)$ and $\rho\in\bbZ^{Q_0}$. First note that $z\in R(Q,\bfr)$ is $\theta$-semistable if and only if for any collection of vector subspaces $W_i\subseteq k^{\oplus r_1},\ 1\leq i\leq n$, we have:
\[
\frac{\dim_k\left(\sum_{i=1}^nz_i(W_i)\right)}{\sum_{i=1}^n\dim_kW_i}\geq\frac{r_0}{nr_1}.
\]
One can easily check that, if $z$ is $\theta$-semistable, then, for all $1\leq i\leq n$, we have that $z_i$ is injective and moreover:
\[
\sum_{i=1}^n\mathrm{Im}(z_i)=k^{\oplus r_0}.
\]
From Lemma \ref{lem ustab condition when adjacent vertices are coprime}, it follows that, for any $z\in R(Q,\bfr)^{\theta-\sst}$, the point $\iota(z)\in R(Q,\bfm;\bfr)$ has trivial unipotent stabilisers.

Now let $x\in R(Q,\bfm;\bfr)$. By \cref{Def/intrinsicTruncation}, we obtain that $\tau(x)$ is $\theta$-semistable if and only if for any locally free submodules $N_i\subset k_{m_1}^{\oplus r_1},\ 1\leq i\leq n$, we have:
\[
\frac{\rk\left(\sum_{i=1}^nx_i\epsilon^{m_1-1}(N_i)\right)}{\sum_{i=1}^n\rk N_i}\geq\frac{r_0}{nr_1}.
\]

Let us now add the $\rho$-stability condition. Suppose that $\tau(x)$ is $\theta$-semistable and $N\subseteq M(x)$ is a locally free subrepresentation which satisfies $\theta\cdot\rk N=0$. Consider the subrepresentation $N'\subseteq N$ given by:
\[
N'_i
=
\left\{
\begin{array}{ll}
\sum_{i=1}^nx_i\epsilon^{m_1-1}(N_i) & \text{if }i=0, \\
N_i & \text{else},
\end{array}
\right.
\]
which might not be locally free \emph{a priori}. Since $\tau(x)$ is $\theta$-semistable, we have that $\theta\cdot\rk N'\geq0$. On the other hand, since $\theta_0>0$, we obtain:
\[
0\leq\theta\cdot\rk N'\leq\theta\cdot\rk N=0.
\]
From this, we deduce that $\rk N'_0=\rk N_0$, hence $N'_0=N_0$. In other words, the joint image of the maps $x_i\epsilon^{m_1-1},\ 1\leq i\leq n$, as well as the joint image of the maps $x_i,\ 1\leq i\leq n$, is locally free.

Therefore, we only need to additionally check that, for any locally free submodules $N_i\subseteq k_{m_1}^{\oplus r_1}$ such that the joint image $N_0$ of the maps $x_i,\ 1\leq i\leq n$ is locally free and $\theta\cdot\rk N=0$, we have $\rho\cdot\rk N\geq0$.

Summing up, we obtain that $x\in R(Q,\bfm;\bfr)$ is semistable in the sense of Theorem \ref{Thm/HMcrit} if and only if for any locally free submodules $N_i\subseteq k_{m_1}^{\oplus r_1}$, we have:
\[
\frac{\rk\left(\sum_{i=1}^nx_i\epsilon^{m_1-1}(N_i)\right)}{\sum_{i=1}^n\rk N_i}\geq\frac{r_0}{nr_1},
\]
and if moreover the joint image $N_0$ of the maps $x_i,\ 1\leq i\leq n$ is locally free of rank
\[
\frac{r_0}{nr_1}\cdot\sum_{i=1}^n\rk N_i,
\]
we have $\rho\cdot\rk N\geq0$.

\section{Cohomological results}\label{sec/purity}

In this section, we prove \cref{main thm purity}. Our proof relies on a general purity criterion for smooth varieties, which involves a contracting torus action. We show that this criterion applies to a wide range of non-reductive GIT quotients. 

\subsection{General techniques for purity}

One technique that is frequently employed to deduce cohomological purity of smooth quasi-projective complex varieties is to exhibit a $\GG_m$-action which is \emph{semi-projective} in the following sense.

\begin{definition}
    A $\GG_m$-action on a quasi-projective variety $X$ is semi-projective if the following conditions hold:
\begin{itemize}
\item for all closed points $x\in X$, the limit $\lim_{t\rightarrow0} t\cdot x$ exists;
\item the fixed-point locus $X^{\GG_m}$ is proper.
\end{itemize}
\end{definition}

The following lemma is an elementary consequence of the valuative criterion for properness. See \cite[\S 2.4]{CBVB04} or \cite[Thm.\ 2.2.7]{HLRV11} for earlier occurrences of this argument.

\begin{lemma}\label{Lem/semiprojectivity and proper morphisms}
Let $f \colon X \rightarrow Y$ be a proper $\GG_m$-equivariant map between quasi-projective $\GG_m$-varieties. Then the induced morphism $X^{\GG_m}\rightarrow Y^{\GG_m}$ is proper. Moreover, if $Y$ is semi-projective, then $X$ is semi-projective.
\end{lemma}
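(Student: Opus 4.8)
The statement is essentially a formal consequence of the valuative criterion of properness applied to $\GG_m$-orbits, so I would organise the argument in two parts corresponding to the two assertions. For the first part, I want to show that $f$ restricts to a proper morphism $X^{\GG_m}\to Y^{\GG_m}$. Since $X^{\GG_m}$ and $Y^{\GG_m}$ are closed subschemes of $X$ and $Y$ respectively, and $f$ is $\GG_m$-equivariant, $f$ maps $X^{\GG_m}$ into $Y^{\GG_m}$; call the restriction $f_0$. Properness of $f_0$ is then a base change property: $X^{\GG_m} = X \times_{[X/\GG_m]} [Y/\GG_m] \cdots$ — more concretely, $X^{\GG_m}$ is the fibre product of $X \to Y \leftarrow Y^{\GG_m}$ when $Y^{\GG_m}$ is a union of connected components of the fixed locus, but in general one argues directly: $X^{\GG_m} = f^{-1}(Y^{\GG_m}) \cap X^{\GG_m}$, and $f^{-1}(Y^{\GG_m}) \to Y^{\GG_m}$ is proper as a base change of the proper $f$; then $X^{\GG_m} \hookrightarrow f^{-1}(Y^{\GG_m})$ is a closed immersion, hence proper, and the composite $f_0$ is proper. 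I should double check that $f^{-1}(Y^{\GG_m})$, with its $\GG_m$-action, has $X^{\GG_m}$ as a closed $\GG_m$-invariant subscheme — this is clear since the fixed locus of a $\GG_m$-action on a scheme is always closed, and $X^{\GG_m} \subseteq f^{-1}(Y^{\GG_m})$ because $f$ is equivariant.

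\textbf{Semi-projectivity of $X$.} Assume $Y$ is semi-projective. I need to verify the two conditions of the definition for $X$. The fixed-point locus condition is exactly the first part just proved, combined with properness of $Y^{\GG_m}$: $X^{\GG_m} \to Y^{\GG_m}$ is proper and $Y^{\GG_m}$ is proper, so $X^{\GG_m}$ is proper. For the limit condition, fix a closed point $x \in X$ and consider the orbit map $\GG_m \to X$, $t \mapsto t\cdot x$. Applying $f$ gives $\GG_m \to Y$, $t \mapsto t \cdot f(x)$, which extends to a morphism $\mathbb{A}^1 \to Y$ since $Y$ is semi-projective; write $y_0 = \lim_{t\to 0} t\cdot f(x)$. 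I now want to extend $\GG_m \to X$ to $\mathbb{A}^1 \to X$. This is where the valuative criterion enters: let $R = k\lpow s \rpow$ with fraction field $K = k\llau s \rlau$, and consider the $K$-point of $X$ given by $s \mapsto s \cdot x$ (using the coordinate $s$ on the formal disk as the parameter $t$). Its image in $Y$ extends to an $R$-point of $Y$ because $\mathbb{A}^1 \to Y$ is defined at $0$. Since $f$ is proper, the valuative criterion provides a (unique) lift to an $R$-point of $X$, i.e.\ a morphism $\Spec R \to X$ whose generic point is $s\cdot x$ and which sits over the $R$-point of $Y$. The special point of this $R$-point is the desired limit $\lim_{t\to 0} t\cdot x$; one should note that the extension is automatically $\GG_m$-equivariant (or rather, gives a morphism $\mathbb{A}^1 \to X$ of $\GG_m$-schemes) by uniqueness in the valuative criterion applied after a further $\GG_m$-twist, which forces the limit to be a well-defined point of $X$ independent of trivialisation.

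\textbf{Main obstacle.} The technical heart is the valuative-criterion step: making precise that the formal arc $s \mapsto s\cdot x$ in $X$, which a priori is only a $K$-point, extends to an $R$-point, and that the resulting special fibre deserves to be called $\lim_{t\to 0} t\cdot x$. The subtlety is purely bookkeeping — one must pass between the $\GG_m$-action language ("the limit exists") and the scheme-theoretic language ("the orbit map extends over $\mathbb{A}^1$"), and check compatibility of the $\GG_m$-actions under the lift, which follows from uniqueness of the lift in the valuative criterion since $f$ is separated. I would also remark that this is precisely the argument recorded in \cite[\S 2.4]{CBVB04} and \cite[Thm.\ 2.2.7]{HLRV11}, so in the paper it suffices to give the short argument above and cite those references for the details. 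No step requires genuinely new input beyond properness of $f$ and semi-projectivity of $Y$.
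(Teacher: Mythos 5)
Your proposal is correct and follows exactly the route the paper intends: the paper gives no written proof, only the remark that the lemma is an elementary consequence of the valuative criterion together with the citations to \cite[\S 2.4]{CBVB04} and \cite[Thm.\ 2.2.7]{HLRV11}, and your two steps (properness of $X^{\GG_m}\to Y^{\GG_m}$ via the closed immersion into the base change $f^{-1}(Y^{\GG_m})$, and existence of limits via lifting the orbit arc through the proper map $f$) are precisely that argument. The only bookkeeping worth tightening is to run the valuative criterion with the DVR $\cO_{\bbA^1,0}$ (fraction field $k(s)$) rather than $k\lpow s\rpow$, so that the lifted $R$-point glues directly with the orbit map on $\GG_m$ by separatedness of $X$.
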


The following propositions collect some standard results concerning how semi-projective $\GG_m$-actions can be used to deduce cohomological purity. We start with an absolute statement.

\begin{proposition}\label{Prop purity}
\emph{\cite[Cor.\ 1.3.2]{HRV13}}
Let $X$ be a smooth quasi-projective $\GG_m$-variety.
\begin{enumerate}[label=\roman*)]
    \item\label{Prop purity item 1} If the $\GG_m$-action on $X$ is semi-projective, then the cohomology of $X$ carries a pure Hodge structure.
    \item\label{Prop purity item 2} If there is a proper $\GG_m$-equivariant map $f \colon X \rightarrow Y$ to a (not necessarily smooth) $\GG_m$-variety $Y$ such that the $\GG_m$-action on $Y$ is semi-projective, then the cohomology of $X$ carries a pure Hodge structure.
\end{enumerate}
\end{proposition}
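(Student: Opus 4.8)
The plan is to prove (i) by combining the Bia\l{}ynicki--Birula decomposition with Deligne's purity theorem for smooth projective varieties and Poincar\'e duality, and then to deduce (ii) formally from (i) together with \cref{Lem/semiprojectivity and proper morphisms}. For (i), the first step is to set up the attracting stratification. Since $X$ is smooth and the $\GG_m$-action is semi-projective, the fixed locus $X^{\GG_m}$ is smooth and proper, hence a disjoint union of smooth projective varieties $F_1,\dots,F_r$ (its connected components), and the attracting sets $X_\alpha^+=\{x\in X\mid\lim_{t\to0}t\cdot x\in F_\alpha\}$ are smooth locally closed subvarieties equipped with projections $p_\alpha\colon X_\alpha^+\to F_\alpha$, $x\mapsto\lim_{t\to0}t\cdot x$, which are Zariski-locally trivial $\bbA^{d_\alpha}$-bundles. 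Semi-projectivity ensures that the $X_\alpha^+$ cover $X$, and after reindexing so that $\dim X_\alpha^+$ is non-decreasing, the unions $Z_p:=\bigcup_{\alpha\le p}X_\alpha^+$ form a filtration $\emptyset=Z_0\subset Z_1\subset\cdots\subset Z_r=X$ by closed subvarieties with $Z_p\setminus Z_{p-1}=X_p^+$.

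The second step is a weight computation, stratum by stratum. Since $p_\alpha$ is a Zariski-locally trivial affine bundle, there is an isomorphism of mixed Hodge structures $H^k_c(X_\alpha^+)\cong H^{k-2d_\alpha}_c(F_\alpha)(-d_\alpha)$; as $F_\alpha$ is smooth projective, $H^{k-2d_\alpha}(F_\alpha)$ is pure of weight $k-2d_\alpha$ by Deligne, so $H^k_c(X_\alpha^+)$ is pure of weight $k$. Feeding this into the long exact sequences of compactly supported cohomology for the closed--open decompositions $Z_{p-1}\hookrightarrow Z_p\hookleftarrow X_p^+$, I would prove by induction on $p$ that $H^k_c(Z_p)$ is pure of weight $k$ for all $k$: the maps in these sequences are morphisms of mixed Hodge structures, and since there are no nonzero morphisms between pure Hodge structures of distinct weights the relevant connecting map vanishes, so $H^k_c(Z_p)$ is an extension of the pure weight-$k$ structure $H^k_c(X_p^+)$ by a sub-Hodge structure of the pure weight-$k$ structure $H^k_c(Z_{p-1})$, hence is itself pure of weight $k$. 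Taking $p=r$ gives that $H^k_c(X)$ is pure of weight $k$; since $X$ is smooth of pure dimension $n$, Poincar\'e duality $H^k(X)\cong H^{2n-k}_c(X)^\vee(-n)$ in the category of mixed Hodge structures then upgrades this to the purity of $H^k(X)$ of weight $k$, completing (i).

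For (ii), \cref{Lem/semiprojectivity and proper morphisms} applied to the proper $\GG_m$-equivariant morphism $f\colon X\to Y$ shows that $X$ is itself semi-projective (here only the quasi-projectivity of $X$ is used, and $Y$ need not be smooth); since $X$ is also smooth by hypothesis, part (i) applies directly. The step I expect to require the most care is the Bia\l{}ynicki--Birula input in (i): one must justify, in the generality of a smooth but merely quasi-projective $\GG_m$-variety on which every limit $\lim_{t\to0}t\cdot x$ exists, that the attracting sets are locally closed and smooth, cover $X$, carry the affine-bundle projections to the fixed components, and can be assembled into a filtration by closed subvarieties. For this I would appeal to the quasi-projective form of the Bia\l{}ynicki--Birula theorem (via Sumihiro's equivariant completion, or the treatments of Drinfeld and of Jelisiejew--Sienkiewicz); granting that, the rest is the standard argument that a variety stratified by affine bundles over smooth projective varieties has pure compactly supported cohomology, together with Poincar\'e duality to descend to ordinary cohomology.
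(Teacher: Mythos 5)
Your proposal is correct, but it proves substantially more than the paper does: the paper offers no proof of part \ref{Prop purity item 1} at all, simply citing \cite[Cor.\ 1.3.2]{HRV13}, and disposes of part \ref{Prop purity item 2} with the single remark that it follows from \cref{Lem/semiprojectivity and proper morphisms} --- which is exactly your argument for (ii), so there you coincide. What you have written out for (i) is essentially a reconstruction of the proof in the cited reference: the Bia\l{}ynicki--Birula attracting decomposition into affine bundles over the smooth projective fixed components, purity of $H^k_c$ of each stratum via the Tate twist, the vanishing of connecting maps between pure structures of distinct weights in the long exact sequences of the filtration, and Poincar\'e duality to pass from $H^\bullet_c$ to $H^\bullet$. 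This buys a self-contained argument where the paper relies on a black box. One technical caveat: ordering the strata by non-decreasing dimension does not in general make the partial unions $Z_p$ closed; the ordering that works (and the one used in \cite{HRV13} and in Bia\l{}ynicki--Birula's filtrability theorem) is by the weights of the $\GG_m$-action on an ample linearisation restricted to the fixed components, since $\overline{X_\alpha^+}$ is contained in the union of cells whose fixed component has weight at least that of $F_\alpha$. You correctly flag the Bia\l{}ynicki--Birula input as the delicate step; with the ordering corrected as above, the rest of your argument goes through verbatim.
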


Point ii. above follows from \cref{Lem/semiprojectivity and proper morphisms}.

This criterion (or its variant for $l$-adic cohomology \cite[App.\ A]{CBVB04}) can be applied to several moduli spaces: moduli of (framed) quiver representations \cite{ER09}, Nakajima quiver varieties \cite{CBVB04,Hau10}, hypertoric varieties \cite{HS02} and moduli spaces of Higgs bundles on a curve \cite{HRV13}.

As in \cite{CBVB04} and \cite{HLRV11}, we will also study the cohomology of fibres in a $\GG_m$-equivariant $\mathbb{A}^1$-family. For this, we will need a relative version of the previous criterion (see also \cite[App.\ A]{HPL21} for a similar result concerning Voevodsky motives). 

\begin{proposition}\label{Prop/purityCriterion}
\emph{\cite[Thm.\ B.1]{HLRV11}}
Let $f \colon \mathcal{X}\rightarrow\bbA_{\bbC}^1$ be a smooth morphism of complex quasi-projective varieties. Suppose that $\mathcal{X}$ is endowed with a semi-projective $\GG_m$-action such that $f$ is equivariant with respect to a scaling action of positive weight on $\bbA_{\bbC}^1$. Then the cohomology groups of any two fibres of $f$ are isomorphic and carry pure Hodge structures, which are preserved by this isomorphism.
\end{proposition}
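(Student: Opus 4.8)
The strategy is to reduce to comparing the central fibre $X_0 := f^{-1}(0)$ with the fibre $X_1 := f^{-1}(1)$, and to extract purity from the Bialynicki--Birula decomposition of the smooth, semi-projective total space $\mathcal{X}$. First, since $f$ is equivariant for a weight-$w$ scaling action ($w>0$) on $\bbA^1_\bbC$, for any $t$ the automorphism $t\cdot(-)$ of $\mathcal{X}$ restricts to an isomorphism of varieties $X_c\xrightarrow{\sim}X_{t^wc}$; as $c\mapsto t^wc$ is surjective on $\GG_m$, all nonzero fibres are isomorphic as varieties, hence have isomorphic cohomology with isomorphic mixed Hodge structures, and it remains to compare $X_0$ and $X_1$. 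Note that $X_0$ is smooth (a fibre of the smooth morphism $f$ out of the smooth variety $\mathcal{X}$), is $\GG_m$-invariant, and is semi-projective: $X_0^{\GG_m}=\mathcal{X}^{\GG_m}$ (since $\mathcal{X}^{\GG_m}\subseteq f^{-1}(0)$) is proper, and $\lim_{t\to0}t\cdot x\in\mathcal{X}^{\GG_m}\subseteq X_0$ for $x\in X_0$. Hence $H^*(\mathcal{X})$ and $H^*(X_0)$ are pure, by \cref{Prop purity item 1}. (One also checks the hypotheses force $f$ surjective unless there is only one fibre, so we may assume $X_0\neq\emptyset$.)

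Second, I would show that the restriction $i_0^*\colon H^*(\mathcal{X})\to H^*(X_0)$ is an isomorphism of mixed Hodge structures, by comparing the Bialynicki--Birula decompositions $\mathcal{X}=\bigsqcup_\alpha\mathcal{X}^+_\alpha$ and $X_0=\bigsqcup_\alpha X^+_{0,\alpha}$ attached to the common family of fixed components $F_\alpha$ of $\mathcal{X}^{\GG_m}=X_0^{\GG_m}$. The normal bundle $N_{X_0/\mathcal{X}}=f^*N_{\{0\}/\bbA^1_\bbC}$ is $\GG_m$-equivariantly trivial of weight $+w>0$ (the ideal of $X_0$ is generated by $f$, a function of weight $-w$), so along each $F_\alpha$ it contributes only attracting directions; from the equivariant exact sequence $0\to N_{F_\alpha/X_0}\to N_{F_\alpha/\mathcal{X}}\to N_{X_0/\mathcal{X}}|_{F_\alpha}\to0$ the number $\nu_\alpha$ of negative $\GG_m$-weights on $N_{F_\alpha/\mathcal{X}}$ equals that on $N_{F_\alpha/X_0}$. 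The standard consequence of a filtrable Bialynicki--Birula decomposition then gives $H^*(\mathcal{X})\cong\bigoplus_\alpha H^{*-2\nu_\alpha}(F_\alpha)(-\nu_\alpha)\cong H^*(X_0)$ as mixed Hodge structures, compatibly with $i_0^*$ (both splittings being detected by restriction to the $F_\alpha$, and $i_0$ restricting to the identity on each $F_\alpha$).

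Third, I would analyse the open complement $U:=f^{-1}(\GG_m)=\mathcal{X}\setminus X_0$ through the Gysin sequence of mixed Hodge structures for the smooth divisor $X_0\subset\mathcal{X}$: $\cdots\to H^{k-2}(X_0)(-1)\xrightarrow{\gamma}H^k(\mathcal{X})\xrightarrow{j^*}H^k(U)\to H^{k-1}(X_0)(-1)\to\cdots$. Since $N_{X_0/\mathcal{X}}$ is trivial its Euler class vanishes, so $i_0^*\circ\gamma=0$; as $i_0^*$ is an isomorphism, $\gamma=0$, and the sequence splits into short exact sequences $0\to H^k(\mathcal{X})\to H^k(U)\to H^{k-1}(X_0)(-1)\to0$, which split as mixed Hodge structures since the outer terms are pure of the distinct weights $k$ and $k+1$. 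On the other hand the $\GG_m$-flow trivialises $U\to\GG_m$ after the degree-$w$ base change, exhibiting $U$ as the quotient of $\GG_m\times X_1$ by a free $\mu_w$-action; by the Künneth formula (the $\mu_w$-action on the $\GG_m$-factor being homotopically trivial) $H^k(U;\bbQ)\cong H^k(X_1)^{\mu_w}\oplus H^{k-1}(X_1)^{\mu_w}(-1)$. Comparing the two descriptions of $H^k(U)$ and using Deligne's bound that $H^m(X_1)$ has weights $\geq m$ forces $H^{k-1}(X_1)^{\mu_w}(-1)$ to be pure of weight $k+1$ for every $k$, whence $H^m(X_1)^{\mu_w}$ is pure of weight $m$; matching weight-graded pieces then gives canonical isomorphisms $H^m(X_1)^{\mu_w}\cong H^m(\mathcal{X})\cong H^m(X_0)$ of mixed Hodge structures for all $m$. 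For $w=1$ this is $H^*(X_1)$ itself, and in general the same conclusion follows once one checks that $\mu_w$ acts trivially on $H^*(X_1;\bbQ)$ (or by arranging $w=1$). Together with the first paragraph this yields that all fibres are cohomologically pure with isomorphic cohomology, the isomorphisms being isomorphisms of mixed Hodge structures.

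The main obstacle I anticipate is the weight bookkeeping in the last paragraph — matching the Gysin and Künneth descriptions of $H^*(U)$ and reading off purity of $X_1$ — together with the more routine but delicate points of identifying the $\GG_m$-weight on $N_{X_0/\mathcal{X}}$ and invoking filtrability of the Bialynicki--Birula decomposition in the precise form needed for the splitting $H^*(\mathcal{X})\cong\bigoplus_\alpha H^{*-2\nu_\alpha}(F_\alpha)(-\nu_\alpha)$. The behaviour of the finite group $\mu_w$ on $H^*(X_1)$ for $w>1$ is a genuine, if minor, wrinkle that must be handled.
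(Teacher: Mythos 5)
The paper does not prove this proposition: it is quoted verbatim from \cite[Thm.\ B.1]{HLRV11}, so there is no internal argument to compare against. Judged on its own terms, your reconstruction is essentially the standard proof and is sound in all but one place: the identification of all nonzero fibres via the action, the semi-projectivity (hence purity) of $\mathcal{X}$ and of $X_0$, the isomorphism $i_0^*\colon H^*(\mathcal{X})\to H^*(X_0)$ (which you can get more directly from the fact that $\mathcal{X}$ and $X_0$ deformation retract onto the same compact core, sidestepping the Bialynicki--Birula bookkeeping and the somewhat delicate claim that the resulting splittings are compatible with $i_0^*$), the vanishing of the Gysin map because $X_0$ is a principal divisor with trivial normal bundle, and the weight comparison between the Gysin and K\"unneth descriptions of $H^*(U)$ are all correct.

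The one genuine gap is the one you flag yourself: for weight $w>1$ your argument only identifies $H^*(X_1)^{\mu_w}$ with $H^*(X_0)$, and the triviality of the $\mu_w$-action (equivalently, of the local monodromy of $f$ at $0$) on $H^*(X_1;\bbQ)$ is not automatic from anything you have established --- for a non-proper smooth family it is essentially equivalent to the statement being proved. Since the proposition is asserted for arbitrary positive weight, this must be closed. The clean fix is the base change $s\mapsto s^w$: the fibre product $\mathcal{X}':=\mathcal{X}\times_{\bbA^1_{\bbC},\,s\mapsto s^w}\bbA^1_{\bbC}$ is smooth (the defining equation $f(x)-s^w=0$ has nowhere-vanishing differential because $f$ is smooth), the projection $f'\colon\mathcal{X}'\to\bbA^1_{\bbC}$ is smooth with $f'^{-1}(0)=X_0$ and $f'^{-1}(1)=X_1$, and the action $t\cdot(x,s)=(t\cdot x,\,ts)$ makes $\mathcal{X}'$ semi-projective with $f'$ equivariant of weight $1$. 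Running your $w=1$ argument on $f'$ then gives the full statement, and a posteriori shows that $\mu_w$ does act trivially on $H^*(X_1;\bbQ)$.
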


\subsection{Purity results in a relative setting}\label{Subsect/RelativePurityTech}

In this section we will develop results that show, under certain assumptions, that non-reductive relative GIT quotients have pure cohomology. Since these quotients are described as relative spectra, we study the behaviour of fixed-point loci under affine morphisms.

\begin{lemma}\label{Lem/fixedPoints}
Let $\pi \colon Y =\rSpec_X\mathcal{B} \rightarrow X$ be an equivariant affine finite-type surjective morphism of complex finite-type $\GG_m$-schemes. Suppose the following conditions hold:
\begin{enumerate}[label=\roman*)]
\item\label{item 1 Lem/fixedPoints} For every closed point $x\in X$, the limit $\underset{t\rightarrow0}{\lim}\ t\cdot x$ exists;
\item\label{item 2 Lem/fixedPoints} For every $\GG_m$-fixed closed point $x_0\in X$, the $\widehat{\mathcal{O}_{X,x_0}}$-algebra
\[
\widehat{\mathcal{B}_{x_0}}
:=
\mathcal{B}_{x_0}\otimes_{\mathcal{O}_{X,x_0}}\widehat{\mathcal{O}_{X,x_0}}
\]
is generated by elements of negative weight with respect to the $\GG_m$-action.
\end{enumerate}
Then for every closed point $y\in Y$, the limit $\underset{t\rightarrow0}{\lim}\ t\cdot y$ exists and $\pi$ induces a bijective closed immersion $Y^{\GG_m}\hookrightarrow X^{\GG_m}$.
\end{lemma}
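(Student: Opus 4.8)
The strategy is to work locally on $X$ near $\GG_m$-fixed points, using completions, and then globalize. First observe that both claims (existence of limits in $Y$ and $Y^{\GG_m}\hookrightarrow X^{\GG_m}$ being a bijective closed immersion) are statements that can be checked on the underlying points together with a statement about the scheme structure of $Y^{\GG_m}$, so it suffices to understand the situation fibrewise over $X$, and in fact formally locally at fixed points of $X$. Since $\pi$ is affine and surjective of finite type, $Y^{\GG_m}\to X^{\GG_m}$ is affine, and surjectivity of $\pi$ gives surjectivity on fixed loci once we know limits exist in $Y$ (a fixed point of $X$ in the image of $\pi$ admits a preimage, and flowing that preimage to its limit, which exists by the first claim, produces a fixed preimage); so the crux is (a) existence of limits in $Y$, and (b) that $Y^{\GG_m}\to X^{\GG_m}$ is a closed immersion.

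For existence of limits: take a closed point $y\in Y$ with image $x=\pi(y)\in X$. By hypothesis \ref{item 1 Lem/fixedPoints}, $x_0:=\lim_{t\to0} t\cdot x$ exists in $X$; it is a $\GG_m$-fixed point. The orbit map $\GG_m\to X$ through $x$ extends to a morphism $\bbA^1\to X$ sending $0\mapsto x_0$, and I want to lift this to $\bbA^1\to Y$ through $y$. Pulling back $Y$ along $\bbA^1\to X$ gives an affine $\GG_m$-scheme over $\bbA^1$; after passing to the completion at $0\in\bbA^1$, hypothesis \ref{item 2 Lem/fixedPoints} says the relevant algebra $\widehat{\mathcal{B}_{x_0}}$ is generated over $\widehat{\mathcal{O}_{X,x_0}}$ by elements of strictly negative $\GG_m$-weight. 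The point is that negative-weight generators force the limit to exist: writing $y$ via a finite list of generators $b_1,\dots,b_N$ of negative weights $-w_1,\dots,-w_N$, the flow $t\cdot y$ is encoded by $t^{w_k}$ times bounded quantities together with the flow on $X$, all of which extend across $t=0$. More precisely, the morphism $\GG_m\to Y$, $t\mapsto t\cdot y$, corresponds to an algebra map $\mathcal{B}\to k[t,t^{-1}]$ which, because the generators have negative weight and the base map $\mathcal{O}_X\to k[t]$ already extends to $t=0$ (as $x_0$ exists), lands in $k[t]$; hence $\lim_{t\to0}t\cdot y$ exists. I would phrase this cleanly using the valuative criterion: properness of the extended orbit closure is what is really being asserted, and negative-weight generation is exactly the condition guaranteeing the orbit map extends over the origin of $\bbA^1$.

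For the closed-immersion statement: since $\pi$ is affine, $Y^{\GG_m}\to X^{\GG_m}$ is affine, so it is enough to show it is a closed immersion locally, i.e. that for each fixed point $x_0\in X^{\GG_m}$ the map on completed local rings is surjective. Decompose $\widehat{\mathcal{B}_{x_0}}=\bigoplus_{w\in\ZZ}\widehat{\mathcal{B}_{x_0}}_{(w)}$ into weight spaces. The completed local ring of $Y^{\GG_m}$ at a point over $x_0$ is the weight-zero quotient $\widehat{\mathcal{B}_{x_0}}/(\text{ideal generated by nonzero-weight elements})$, and this receives a surjection from $\widehat{\mathcal{B}_{x_0}}_{(0)}$. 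By hypothesis \ref{item 2 Lem/fixedPoints}, $\widehat{\mathcal{B}_{x_0}}$ is generated over $\widehat{\mathcal{O}_{X,x_0}}=\widehat{\mathcal{B}_{x_0}}_{(0)}\cap(\text{image of }\mathcal{O}_X)$ by negative-weight elements; consequently $\widehat{\mathcal{B}_{x_0}}_{(0)}$ is generated as an $\widehat{\mathcal{O}_{X,x_0}}$-module by products of negative-weight generators, but every such product of positive length lies in the ideal generated by nonzero-weight elements, so the weight-zero part of the quotient is just the image of $\widehat{\mathcal{O}_{X,x_0}}$. This gives surjectivity $\widehat{\mathcal{O}_{X,x_0}}\twoheadrightarrow \widehat{\mathcal{O}_{Y^{\GG_m}}}$, hence the closed immersion. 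Bijectivity on points then follows: injectivity because a fixed point of $Y$ over $x_0$ is determined by its image (the fibre $\pi^{-1}(x_0)^{\GG_m}$ is a single reduced point by the same weight analysis applied to the fibre algebra $\widehat{\mathcal{B}_{x_0}}\otimes k(x_0)$, which is generated by negative-weight nilpotents over $k$ hence has weight-zero part $k$), and surjectivity from the limit argument above combined with surjectivity of $\pi$.

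\textbf{Main obstacle.} The delicate point is the passage between the completed picture (where hypothesis \ref{item 2 Lem/fixedPoints} is stated) and the honest scheme $Y$: one must be careful that ``generated by negative-weight elements after completion'' genuinely controls the flow of an arbitrary closed point $y$, including points not lying over $x_0$ but flowing to the fibre over $x_0$. I expect the cleanest route is to reduce to the one-parameter family $\bbA^1\to X$ through $x$ with $0\mapsto x_0$, base-change $Y$, and then invoke faithful flatness of completion together with the valuative criterion for the extended orbit; handling the scheme structure of $Y^{\GG_m}$ (as opposed to just its points) is where the weight-space decomposition of the completed algebra does the real work.
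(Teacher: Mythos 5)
Your proposal is correct and follows essentially the same route as the paper: extend the orbit map of $x=\pi(y)$ to $\bbA^1$, pass to the completed algebra $\widehat{\mathcal{B}_{x_0}}$ at the limit fixed point so that negative-weight generation forces the induced map to Laurent series to factor through power series (hence the limit of $y$ exists), and then obtain the closed immersion on fixed loci by observing that the negative-weight generators lie in the ideal cutting out the fixed locus, so the weight-zero quotient is generated by the image of $\widehat{\mathcal{O}_{X,x_0}}$. The paper's write-up is just a more compact version of your "main obstacle" resolution, using the commutative square $\mathcal{B}(U)\to\widehat{\mathcal{B}_{x_0}}\to\bbC\llau t\rlau$ over $\mathcal{O}_X(U)\to\widehat{\mathcal{O}_{X,x_0}}\to\bbC\lpow t\rpow$ in place of your base change to $\bbA^1$ and appeal to the valuative criterion.
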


\begin{proof}
Let $y\in Y$ be a closed point and $x=\pi(y)$.  The orbit map $\sigma_x \colon \GG_m \rightarrow X$ given by $t \mapsto t \cdot x$ extends to $\mathbb{A}^1$ by Assumption \ref{item 1 Lem/fixedPoints}, and we let $x_0:=\sigma_x(0)$ denote the limit point. The extended orbit map $\sigma_x \colon \mathbb{A}^1 \rightarrow X$, when restricted to an open affine set $U \subset X$ containing $x_0$, induces the following commutative diagram of $k$-algebras:
\[
\begin{tikzcd} 
\mathcal{O}_X(U) \ar[d] \ar[r] & \widehat{\mathcal{O}_{X,x_0}} \ar[d]\ar[r, "\widehat{\sigma^{\#}_{x,0}}"] & \bbC\lpow t\rpow . \ar[d] 
\\
\mathcal{B}(U) \ar[r] & \widehat{\mathcal{B}_{x_0}} \ar[r] & \bbC\llau t\rlau
\end{tikzcd}
\]
Assumption \ref{item 2 Lem/fixedPoints} implies that the homomorphism $\mathcal{B}(U)\rightarrow \bbC\llau t\rlau$ induced by the bottom arrows of the diagram factors through $\bbC\lpow t\rpow$. Therefore, the orbit map for $y$ induces a homomorphism $\mathcal{B}(U)\rightarrow \bbC[t]_{(0)}:= \cO_{\mathbb{A}^1,0}$, which means that the orbit map $\sigma_y \colon \GG_m \rightarrow Y$ extends to $\mathbb{A}^1$, so $\lim_{t\rightarrow0}\ t\cdot y$ exists.

Moreover, since $\pi$ is surjective, the induced morphism $Y^{\GG_m}\rightarrow X^{\GG_m}$ is also surjective. Indeed, given any $x_0\in X^{\GG_m}$, there exists $y\in Y$ such that $\pi(y)=x_0$. Then $y_0:=\lim_{t\rightarrow0} t\cdot y$ is a preimage of $x_0$ in $Y^{\GG_m}$.

We now examine the morphism $Y^{\GG_m}\rightarrow X^{\GG_m}$ locally. For $x_0 \in X^{\GG_m}$, let $I_{x_0}\subset\mathcal{O}_{X,x_0}$ (resp. $J_{x_0}\subset\mathcal{B}_{x_0}$) be the ideal generated by functions which have non-zero weight in $\widehat{\mathcal{B}_{x_0}}$. Then the morphism $Y^{\GG_m}\rightarrow X^{\GG_m}$ is locally given by the algebra homomorphism
\[
\mathcal{O}_{X,x_0}/I_{x_0}
\rightarrow
\mathcal{B}_{x_0}/J_{x_0}.
\]
Assumption \ref{item 2 Lem/fixedPoints} implies that generators of $\mathcal{B}_{x_0}$ over $\mathcal{O}_{X,x_0}$ are contained in $J_{x_0}$. Thus the above algebra homomorphism is surjective, which proves that $Y^{\GG_m}\rightarrow X^{\GG_m}$ is a closed immersion. This completes the proof, as we have already observed that $Y^{\GG_m}\rightarrow X^{\GG_m}$ is surjective.
\end{proof}

\begin{remark}
In \cref{Lem/fixedPoints}, Assumption \ref{item 2 Lem/fixedPoints} refers to the completed local ring $\widehat{\mathcal{O}_{X,x_0}}$ (resp. $\widehat{\mathcal{B}_{x_0}}$), in order to make sense of the assumptions on weights. Indeed, the local ring $\mathcal{O}_{X,x_0}$ (resp. $\mathcal{B}_{x_0}$) does not necessarily inherit a grading from the $\GG_m$-action, whereas the completed local ring is the additive span (topologically, for the adic topology) of its homogeneous elements.

Alternatively, one could work with affine $\GG_m$-invariant neighbourhoods of $x_0$. Such neighbourhoods exist, for instance, when $X$ (resp. $Y$) is normal \cite{Sum74}. The proof of Proposition \ref{Prop/weightAssumption} below uses such neighbourhoods, which can be constructed in an elementary way in the setup of the proposition.
\end{remark}

\begin{corollary}\label{corollary affine map semiproj}
    Let $\pi \colon Y =\rSpec_X \mathcal{B} \rightarrow X$ be an equivariant affine finite-type surjective morphism of complex finite-type $\GG_m$-schemes. Suppose the $\GG_m$-action on $X$ is semi-projective and \cref{Lem/fixedPoints} \ref{item 2 Lem/fixedPoints} holds. Then the $\GG_m$-action on $Y$ is semi-projective.
\end{corollary}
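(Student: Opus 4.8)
The proof of \cref{corollary affine map semiproj} is a short deduction from \cref{Lem/fixedPoints}, so the plan is to unwind the definition of semi-projectivity into its two constituent conditions and verify each in turn. First I would recall that, by hypothesis, the $\GG_m$-action on $X$ is semi-projective, meaning that (a) for every closed point $x \in X$ the limit $\lim_{t \to 0} t \cdot x$ exists, and (b) the fixed-point locus $X^{\GG_m}$ is proper. Since \cref{Lem/fixedPoints} \ref{item 2 Lem/fixedPoints} is assumed and $\pi$ is equivariant, affine, finite-type and surjective, condition (a) above is exactly \cref{Lem/fixedPoints} \ref{item 1 Lem/fixedPoints}, so all the hypotheses of \cref{Lem/fixedPoints} are in force.

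Next I would apply \cref{Lem/fixedPoints} directly. Its conclusion gives two things: that for every closed point $y \in Y$ the limit $\lim_{t \to 0} t \cdot y$ exists, and that $\pi$ induces a bijective closed immersion $Y^{\GG_m} \hookrightarrow X^{\GG_m}$. The first of these is precisely the first bullet in the definition of semi-projectivity for the $\GG_m$-action on $Y$. For the second bullet, I would observe that a closed immersion into a proper scheme has proper source — closed subschemes of proper schemes are proper, this is a standard consequence of properness being stable under composition and the fact that closed immersions are proper. Since $X^{\GG_m}$ is proper by hypothesis and $Y^{\GG_m} \hookrightarrow X^{\GG_m}$ is a closed immersion, $Y^{\GG_m}$ is proper as well.

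Putting these together, the $\GG_m$-action on $Y$ satisfies both conditions in the definition of a semi-projective action, which is exactly what we wanted to prove. I do not anticipate any genuine obstacle here: the entire content has been packaged into \cref{Lem/fixedPoints}, and the only additional input is the elementary fact that a closed subscheme of a proper scheme is proper. The one point to be slightly careful about is that semi-projectivity is usually stated for quasi-projective varieties, so if one wants the statement in that generality one should note that $Y$, being affine over the (quasi-projective) $X$, is itself quasi-projective (or at least separated and finite type, which is all that is needed for the properness argument); but this is immediate from $\pi$ being an affine morphism.

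\begin{proof}
By hypothesis the $\GG_m$-action on $X$ is semi-projective, so in particular $\lim_{t \to 0} t \cdot x$ exists for every closed point $x \in X$; this is condition \ref{item 1 Lem/fixedPoints} of \cref{Lem/fixedPoints}. Together with the assumption that \cref{Lem/fixedPoints} \ref{item 2 Lem/fixedPoints} holds, all hypotheses of \cref{Lem/fixedPoints} are satisfied. Hence $\lim_{t\to0} t \cdot y$ exists for every closed point $y \in Y$, and $\pi$ induces a bijective closed immersion $Y^{\GG_m} \hookrightarrow X^{\GG_m}$. Since the $\GG_m$-action on $X$ is semi-projective, $X^{\GG_m}$ is proper; as a closed subscheme of a proper scheme is proper, it follows that $Y^{\GG_m}$ is proper. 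Thus the $\GG_m$-action on $Y$ satisfies both defining conditions of semi-projectivity.
\end{proof}
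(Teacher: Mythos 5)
Your proof is correct and matches the paper's intent exactly: the corollary is stated without proof precisely because it follows immediately from \cref{Lem/fixedPoints} in the way you describe (semi-projectivity of $X$ supplies hypothesis \ref{item 1 Lem/fixedPoints}, the lemma gives existence of limits in $Y$ and a closed immersion $Y^{\GG_m}\hookrightarrow X^{\GG_m}$, and properness of $Y^{\GG_m}$ follows since closed subschemes of proper schemes are proper). No gaps.
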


Assumption \ref{item 2 Lem/fixedPoints} in Proposition \ref{Lem/fixedPoints} may seem difficult to check when $\mathcal{B}$ is an algebra sheaf of invariants. However, we prove a sufficient condition for that assumption to be met, in a favourable setup.

\paragraph{Setup.}
Let $f \colon X = \rSpec_{Z}\cA \rightarrow Z$ be an affine morphism between affine schemes of finite type, which is endowed with an equivariant action of $G\rtimes T_g=(U\rtimes R)\rtimes T_g\rightarrow R$, where $R$ (resp.\ $U$) is a reductive (resp.\ unipotent) algebraic group and $T_g \cong \GG_m$ is a multiplicative group that (externally) grades the equivariant action of $G \rightarrow R$.

Let $\theta$ be a character of $R$ and let $q \colon Z^{\theta-\sst} \rightarrow W:=Z\git_{\hspace{-2pt} \theta} R$ denote the reductive GIT quotient with respect to $\theta$. Denote by $j \colon Z^{\theta-\sst}\hookrightarrow Z$ the open immersion of the semistable locus. We also write $X^{q-\sst}:=f^{-1}(Z^{\theta-\sst})$ and $f\colon X^{q-\sst} =\rSpec_{Z^{\theta-\sst}} j^*\mathcal{A} \rightarrow Z$ for the induced affine morphism. 

Suppose all points of $Z^{\theta-\sst}\subseteq X^{q-\sst}$ have trivial $U$-stabilisers. Then by Theorem \ref{Thm/extGradQuot} (for the trivial choice of $\rho$), the scheme $X^{q-\sst}$ admits a good $G$-quotient $p \colon X^{q-\sst} \rightarrow Q:=X\git_{\hspace{-2pt} q} G$ given by the (untwisted) relative affine GIT quotient $X\git_{\hspace{-2pt} q} G :=\rSpec_W q_*j^*\cA^G$. In particular, there is an induced affine morphism $\overline{f} \colon Q =X\git_{\hspace{-2pt} q} G\rightarrow W=Z\git_{\hspace{-2pt} \theta} G$ on quotients.

Now, suppose that $X$ and $Z$ are endowed with conical actions of $T_c \cong \GG_m$ (i.e.\ all non-constant functions have negative $T_c$-weight), which commute with the actions of $G\rtimes T_g$ and $R$, and such that $f$ is $T_c$-equivariant. For a positive integer $N$, we will consider the action of the following one-dimensional torus $T_N \subset T:=T_g\times T_c$
\[
\begin{array}{rcl}
T_N & \rightarrow & T_g\times T_c=T \\
t & \mapsto & (t^N,t).
\end{array}
\]

To apply \cref{Lem/fixedPoints} to the induced map $\overline{f} \colon Q =X\git_{\hspace{-2pt} q} G\rightarrow W=Z\git_{\hspace{-2pt} \theta} G$ between quotients, we plan to use the following proposition. 

\begin{proposition}\label{Prop/weightAssumption}
In the above setup, there exists an integer $N_0$ such that, for $N\geq N_0$, the sheaf of $\mathcal{O}_W$-algebras $\mathcal{B}=q_*j^*\mathcal{A}^{G}$ satisfies Assumption \ref{item 2 Lem/fixedPoints} of \cref{Lem/fixedPoints} for the action of $T_N \cong \GG_m$. In particular, if the $T_N$-action on $W$ is semi-projective, then so is the $T_N$-action on $Q = \rSpec_W  \mathcal{B}$.
\end{proposition}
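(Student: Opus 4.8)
The plan is to reduce Assumption \ref{item 2 Lem/fixedPoints} to a statement about the $T$-weights of homogeneous generators of the algebra of relative invariants, and then to arrange for these weights to be made uniformly negative by passing to the subtorus $T_N$ for $N$ large. First I would work locally over $W$: since $W = Z\git_\theta R$ is of finite type and the conical $T_c$-action together with $T_g$ descend to $W$ (as they commute with $R$), I may cover a $T$-fixed point $w_0 \in W$ by a $T$-invariant affine open $W_0 = \Spec C \subset W$ — such neighbourhoods exist in the relevant normal situation by Sumihiro \cite{Sum74}, but in fact here one can produce them explicitly: localising at a single homogeneous invariant of the appropriate weight suffices, because the conical $T_c$-action makes the graded ring $\cO_Z(Z)$ have all positive-degree pieces in strictly negative $T_c$-weight, and $C$ inherits a $T = T_g \times T_c$-grading. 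Over $W_0$, the section algebra $\mathcal{B}(W_0) = (q_*j^*\cA^G)(W_0)$ is the ring of $G$-invariants in $\cO_{X}((q\circ f)^{-1}(W_0))$, which is a localisation of the $T$-graded ring $\cA(Z_0) = \cO_X(f^{-1}(Z_0))$ where $Z_0 = q^{-1}(W_0)$; since $G$ commutes with $T$, the invariant subring is $T$-graded as well.

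Next I would pin down the $T$-weights. The $T_c$-action is conical on $X$, so every non-constant homogeneous element of $\cA(Z_0)$ has strictly negative $T_c$-weight; say the $T_c$-weights of a finite homogeneous generating set of $\mathcal{B}(W_0)$ over $C$ lie in $\{-1,\dots,-d\}$ (after rescaling, the minimal nonzero conical weight is $-1$, so in fact all are $\le -1$), while their $T_g$-weights lie in some bounded interval $[-e_+,e_+]$ determined by the external grading and by the finitely many generators. Under the composite $T_N \hookrightarrow T$, $t \mapsto (t^N, t)$, a homogeneous element of $T_g$-weight $a$ and $T_c$-weight $b$ acquires $T_N$-weight $Na + b$. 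Choosing $N_0$ so that $N_0 \cdot (-1) + e_+ < 0$ — i.e. $N_0 > e_+$ — guarantees that for all $N \ge N_0$ every generator has strictly negative $T_N$-weight. Finitely many such bounds (one for each of finitely many affine patches $W_0$ covering $W$, or better, a single uniform bound since the generating data is built from the fixed finite presentation of $\cA$ and of $\theta$) give a single $N_0$ that works globally; this is exactly the content of ``there exists an integer $N_0$ such that for $N \ge N_0$''. After completing at $w_0$, the completed algebra $\widehat{\mathcal{B}_{w_0}}$ is topologically generated by these same elements, hence by elements of negative $T_N$-weight, which is Assumption \ref{item 2 Lem/fixedPoints}.

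Finally, the last sentence is immediate from \cref{corollary affine map semiproj}: the map $\overline{f}\colon Q = \rSpec_W \mathcal{B} \to W$ is an equivariant affine finite-type surjective morphism of finite-type $\GG_m$-schemes for the $T_N$-action (surjectivity because $\mathcal{B}$ contains $\cO_W$ as the degree-zero part, finite type because $\cA$ is and invariants of reductive-in-$R$ plus the NRGIT finite-generation of \cref{Thm/extGradQuot} apply), and we have just verified \cref{Lem/fixedPoints}\ref{item 2 Lem/fixedPoints}; so if the $T_N$-action on $W$ is semi-projective, so is the $T_N$-action on $Q$.

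I expect the main obstacle to be the bookkeeping in the second paragraph: one must check that the conical $T_c$-action genuinely descends to the relative invariant algebra with \emph{strictly} negative weights on all non-constant homogeneous generators (not merely non-positive), uniformly across a finite affine cover of $W$, and that the $T_g$-weights of a generating set are bounded independently of the chosen patch. Handling the interaction of the localisation defining $W_0$ (inverting an invariant) with the grading — so that one does not accidentally introduce generators of weight zero or positive weight — is the delicate point; it is resolved by choosing the localising element to be homogeneous of negative $T_c$-weight and noting that its inverse then has positive $T_c$-weight but still pairs with a large enough $N$ to keep the total $T_N$-weight negative only if one is careful, so in practice one localises at elements of $C$ (pure $T_c$-weight zero is impossible for non-units in the conical setting) — more precisely one uses that $W$ itself, being a GIT quotient of the conical $Z$, is conical, so its structure sheaf already has the right weight behaviour and one never needs to inject positive-weight generators.
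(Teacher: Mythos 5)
There is a genuine gap in your weight bookkeeping, and it sits exactly at the point you yourself flag as delicate. The affine opens covering $W=Z\git_\theta R$ are of the form $W_\sigma$ for $\sigma$ a homogeneous $\theta^m$-\emph{semi-invariant} (not an invariant: inverting invariants only covers the affine quotient $Z\git R$, not the $\Proj$ over it), and the generators of $\mathcal{B}(W_\sigma)$ over $\cO_W(W_\sigma)$ are fractions of the form $\tau^m/\sigma^{n}$ with $\tau$ a homogeneous semi-invariant section of $\cA$. Since $\sigma$ is a non-constant function on the conical $Z$, its $T_c$-weight is strictly negative, so $1/\sigma$ contributes \emph{positively} to the $T_c$-weight of these fractions; your assertion that the generators of $\mathcal{B}(W_0)$ over $C$ have $T_c$-weights in $\{-1,\dots,-d\}$ is therefore false in general, and the bound $N_0>e_+$ derived from it does not prove anything. (Your numerology is also internally inconsistent: with $T_N\ni t\mapsto(t^N,t)$ the weight is $Na+b$ with $a$ the $T_g$-weight, yet your inequality $N_0\cdot(-1)+e_+<0$ multiplies $N$ against the $T_c$-weight.) Allowing the $T_g$-weights to range over $[-e_+,e_+]$, i.e.\ to be positive, makes the conclusion simply unreachable: a generator with $a>0$ and $b>0$ would have positive $T_N$-weight for every $N$.

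The mechanism that actually makes the statement true, and which the paper uses, is the opposite of what you wrote: it is the $T_g$-weight, not the $T_c$-weight, that is uniformly strictly negative on the problematic generators, and $N$ is attached to the $T_g$-factor precisely to let it dominate. Concretely, $\sigma\in\cO(Z)$ has $T_g$-weight $0$ (the external grading satisfies $\cA_0=\cO_Z$ and is concentrated in non-positive degrees), so $T\mathrm{-wt}(\sigma)=(0,w)$ with $w<0$; the new generators $\tau_j$ are chosen outside $\cO(Z)^{R,\theta}$ and hence have $T_g$-weight $v_j<0$ as well as $T_c$-weight $w_j<0$. Then $\tau_j^m/\sigma^{n_j}$ has $T_N$-weight $Nmv_j+(mw_j-n_jw)$, whose second term may well be positive but is bounded over the finitely many generators and patches, while the first term tends to $-\infty$; taking $N$ large enough makes all of them negative, and the remaining generators (those in $\cA(Z)^R$ and the $\sigma_i^m/\sigma^{m_i}\in\cO_W(W_\sigma)$) cause no trouble. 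Your deduction of the final sentence from \cref{corollary affine map semiproj} is fine, but the core estimate needs to be redone along these lines.
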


\begin{proof}
Let $\sigma\in\cO(Z)^R_{\theta^m}$ be a $\theta^m$-semi-invariant function, which we can assume without loss of generality is homogeneous for the grading induced by the $T$-action. Then $Z_\sigma:= \{ \sigma \neq 0\}$ is an affine open subscheme contained in $Z^{\theta-ss}$, so that $Z_\sigma$ admits a good quotient $W_\sigma = Z_\sigma \git R \hookrightarrow W$. We define the open affine subschemes $X_\sigma:=f^{-1}(Z_\sigma)$ and $Q_\sigma:=(\overline{f})^{-1}(W_\sigma) = X_\sigma \git G$, so that we have the following open immersions:
\[
\begin{tikzcd}[ampersand replacement=\&]
X_\sigma \ar[r,"p_\sigma"]\ar[d,"f_\sigma"] \& Q_\sigma \ar[d,"\overline{f}_\sigma"] \\
Z_\sigma \ar[r,"q_\sigma"] \& W_\sigma
\end{tikzcd}
\quad \stackrel{\mathrm{open}}{\hookrightarrow} \quad 
\begin{tikzcd}[ampersand replacement=\&]
X^{q-\sst} \ar[r,"p"]\ar[d,"f"] \& Q \ar[d,"\overline{f}"] \\
Z^{\theta-\sst} \ar[r,"q"] \& W.
\end{tikzcd}
\]
Note that $\overline{f}_\sigma$ corresponds to the algebra homomorphism $\mathcal{O}_W(W_\sigma)\rightarrow\mathcal{A}(Z_\sigma)^G$.

Since the $T$-action on $Z$ commutes with the $R$-action, the good $R$-quotient $W$ can be covered with finitely many $T$-invariant affine open subschemes $W_\sigma$ defined as above. Moreover, for any $N\geq1$ and for every $w\in W_\sigma^{T_N}$, we have:
\[
(q_*\cA^G)_w\otimes_{\cO_{W,w}}\widehat{\cO_{W,w}}
\simeq
\mathcal{A}(Z_\sigma)^G\otimes_{\cO_W(W_\sigma)}\widehat{\cO_{W,w}}.
\]
Therefore, it is enough to show that, for fixed $W_\sigma$ and for $N$ large enough, the algebra $\mathcal{A}(Z_\sigma)^G$ is generated over $\cO_W(W_\sigma)$ by functions of negative weight with respect to the grading induced by the $T_N$-action.

Let us pick generators of the following algebras of semi-invariants
\[
\cO(Z)^R[\sigma_1,\ldots,\sigma_r] \twoheadrightarrow \cO(Z)^{R,\theta},
\]
\[
\cA(Z)^R[\sigma_1,\ldots,\sigma_r,\tau_1,\ldots,\tau_s] \twoheadrightarrow \cA(Z)^{R,\theta},
\]
such that $\sigma_i$ for $1\leq i\leq r$ (resp.\ $\tau_j$ for $ 1\leq j\leq s$) are homogeneous with respect to the $T$-grading and semi-invariant with respect to $\theta^{m_i}$ for $ 1\leq i\leq r$ (resp.\ $\theta^{n_j}$ for $ 1\leq j\leq s$). We moreover assume that $\tau_j\notin\cO(Z)^{R,\theta}$ for all $1\leq j\leq s$, so that every $\tau_j$ has negative $T_g$-weight. 

Since $Z_\sigma =\{ \sigma \neq 0 \}$ and localisation commutes with taking invariants, we have:
\[
\mathcal{A}(Z_\sigma)^G\subset\mathcal{A}(Z_\sigma)^R
\twoheadleftarrow
\cA(Z)^R\left[\frac{\sigma_i^{m}}{\sigma^{m_i}},\frac{\tau_j^{m}}{\sigma^{n_j}}\right]_{\substack{1\leq i\leq r\\ 1\leq j\leq s}}.
\]
Note that all non-constant functions in $\cA(Z)^R$ have negative $T_c$-weight and non-positive $T_g$-weight; thus they have negative $T_N$-weight for $N \geq 1$. Furthermore,  ${\sigma_i^{m}}/{\sigma^{m_i}}\in\cO_W(W_\sigma)$. Thus is remains to check that the generators ${\tau_j^{m}}/{\sigma^{n_j}}$ have negative $T_N$ weight for $N$ sufficiently large. The $T$-weights of $\sigma$ and $\tau_j$ have the following form
\[ T\mathrm{-wt}(\sigma) = (0,w) \quad \quad T\mathrm{-wt}(\tau_i)= (v_i,w_i)\]
where $v_i,w_i$ and $w$ are all negative. Hence for $N$-sufficiently large, the functions ${\tau_j^{m}}/{\sigma^{n_j}}$ for $ 1\leq j\leq s$ have negative $T_N$-weight. Therefore, the graded algebra $\cA(Z_\sigma)^R$ is generated by functions of negative $T_N$-weight over $\cO_W(W_\sigma)$. The same then holds for the graded subalgebra $\cA(Z_\sigma)^G$.

The final claim then follows from \cref{corollary affine map semiproj}.
\end{proof}

Let us finally explain how these results can be used to deduce purity statements for relative affine non-reductive GIT quotients $Q = X\git_{\hspace{-2pt} q} G$ that are smooth, without needing the underlying reductive GIT quotient $W = Z\git_{\hspace{-2pt} \theta} R$ to be smooth. We can also extend these purity results to the case of relative twisted affine non-reductive GIT quotients, where a non-trivial character $\rho \colon G \rightarrow \GG_m$ is used to obtain a quotient $Q_\rho := X\git_{\hspace{-2pt}q,\rho} G$ which is projective over $Q = X \git_{\hspace{-2pt} q} G$.

\begin{theorem}\label{Thm/purityNRGITquotients}
In the above set-up, we have the following.
\begin{enumerate}[label=\roman*)]
    \item\label{item 1 purity nonred GIT} The $T_N$-action on $W$ is semi-projective (for any $N\geq1$).
    \item\label{item 2 purity nonred GIT} For $N$ sufficiently large, the $T_N$-action on $Q$ and $Q_\rho$ is semi-projective.
    \item\label{item 3 purity nonred GIT} In particular, if $Q$ (resp.\ $Q_\rho$) is smooth, then its cohomology is pure.
\end{enumerate}
\end{theorem}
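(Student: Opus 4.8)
The plan is to prove the three statements of \cref{Thm/purityNRGITquotients} in order, using the machinery assembled in \cref{Subsect/RelativePurityTech}.

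\textbf{Part \ref{item 1 purity nonred GIT}.} The key point is that $Z$ is an affine scheme equipped with a conical $T_c$-action, so that all non-constant functions on $Z$ have negative $T_c$-weight. Since the $T_c$-action commutes with the $R$-action, $W = Z\git_{\theta} R = \Proj\left(\bigoplus_{n\geq0}\cO(Z)^R_{\theta^n}\right)$ inherits a $T$-action (where $T = T_g \times T_c$), and hence a $T_N$-action for each $N\geq1$. I would argue that this $T_N$-action is semi-projective directly: the fixed-point locus $W^{T_N}$ is closed in $W$, which is projective over the affine scheme $Z\git R = \Spec\left(\cO(Z)^R\right)$. Because the $T_c$-action is conical, every non-constant $R$-invariant function on $Z$ has negative $T_c$-weight, hence negative $T_N$-weight for $N\geq1$; this forces the image of $W^{T_N}$ in $Z\git R$ to be a single (fixed) point, so $W^{T_N}$ is proper. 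The existence of limits $\lim_{t\to0} t\cdot w$ for all closed $w\in W$ follows similarly: the $T_N$-action on the affine base $Z\git R$ is contracting onto the unique fixed point by conicality, and $W$ is projective — hence proper — over that base, so limits exist by the valuative criterion (this is precisely the content of \cref{Lem/semiprojectivity and proper morphisms} applied to $W \to Z\git R$, noting $Z \git R$ is trivially semi-projective as a conical affine variety).

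\textbf{Part \ref{item 2 purity nonred GIT}.} For $Q = X\git_{q} G = \rSpec_W q_*j^*\cA^G$, I would invoke \cref{Prop/weightAssumption}: for $N$ sufficiently large, the sheaf of $\cO_W$-algebras $\mathcal{B} = q_*j^*\cA^G$ satisfies Assumption \ref{item 2 Lem/fixedPoints} of \cref{Lem/fixedPoints} for the $T_N$-action. The morphism $\overline{f}\colon Q\to W$ is affine, finite-type and surjective (surjectivity since $\cA$ is generated by $\cO_Z$ and the quotient $Q\to W$ covers $W$), and it is $T_N$-equivariant by construction. Combined with Part \ref{item 1 purity nonred GIT}, \cref{corollary affine map semiproj} then immediately gives that the $T_N$-action on $Q$ is semi-projective. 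For the twisted quotient $Q_\rho = X\git_{q,\rho} G = \rProj_W q_*j^*\cA^{G,\rho}$, recall from the discussion after \cref{def NRGIT quotient} that $Q_\rho$ is projective over $Q$; moreover the $T$-action on $X$ induces a $T_N$-action on $Q_\rho$ compatible with the projective morphism $Q_\rho \to Q$. Applying \cref{Lem/semiprojectivity and proper morphisms} to this proper $T_N$-equivariant morphism $Q_\rho \to Q$, semi-projectivity of $Q$ yields semi-projectivity of $Q_\rho$.

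\textbf{Part \ref{item 3 purity nonred GIT}.} This is now immediate: if $Q$ (resp.\ $Q_\rho$) is smooth, then by Parts \ref{item 1 purity nonred GIT}--\ref{item 2 purity nonred GIT} it carries a semi-projective $\GG_m = T_N$-action, so \cref{Prop purity}\ref{Prop purity item 1} applies and its cohomology carries a pure Hodge structure.

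\textbf{Main obstacle.} The genuinely substantive input is \cref{Prop/weightAssumption} and, underlying it, the bookkeeping of $T$-weights: one must check that after inverting a $\theta$-semi-invariant $\sigma$ of $T$-weight $(0,w)$ with $w<0$, the extra generators $\tau_j^m/\sigma^{n_j}$ of $\cA(Z_\sigma)^R$ over $\cO_W(W_\sigma)$ — which have $T_g$-weight strictly negative since $\tau_j \notin \cO(Z)^{R,\theta}$, and negative $T_c$-weight by conicality — can be made to have negative $T_N$-weight uniformly by taking $N$ large. Since \cref{Prop/weightAssumption} is already established in the excerpt, the remaining work here is only to verify the hypotheses (affineness, finite type, surjectivity, equivariance of $\overline{f}$, and the conical/commuting assumptions on the torus actions) are genuinely in place, and to handle the passage to the twisted quotient via properness — all of which is routine given the results cited.
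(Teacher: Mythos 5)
Your proposal is correct and follows essentially the same route as the paper: part i) via conicality of the $T_N$-action on $Z\git R$ (using that $T_g$ acts trivially on $Z$, which your weight computation implicitly relies on) together with \cref{Lem/semiprojectivity and proper morphisms} for the projective map $W \to Z\git R$; part ii) via \cref{Prop/weightAssumption} for $Q$ and properness of $Q_\rho \to Q$ for the twisted quotient; part iii) via \cref{Prop purity}\ref{Prop purity item 1}. No gaps.
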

\begin{proof}
For \ref{item 1 purity nonred GIT}, we first note that as $T_g$ acts trivially on $Z$ and $T_c$ acts conically, the induced $T_N$-action on $Z$ is conical (for any $N\geq1$). Hence the induced $T_N$-action on the affine reductive GIT quotient $Z\git R$ is conical, and thus also semi-projective. Since $W = Z\git_{\hspace{-2pt}\theta} R$ is a twisted affine GIT quotient, it is projective over the affine GIT quotient $Z\git R$ and so its induced $T_N$-action is also semi-projective by \cref{Lem/semiprojectivity and proper morphisms}.

For \ref{item 2 purity nonred GIT}, we apply \cref{Prop/weightAssumption} to show that for $N$ sufficiently large, the $T_N$-action on $Q$ is semi-projective. As above, since $Q_\rho \rightarrow Q$ is proper, the induced $T_N$-action on $Q_\rho$ is semi-projective. Then we conclude \ref{item 3 purity nonred GIT} by applying \cref{Prop purity} \ref{Prop purity item 1}.
\end{proof}

\subsection{Purity for smooth quiver moduli spaces with multiplicities}\label{sec/purity for moduli of quiver reps with multiplicities}

We will now apply \cref{Thm/purityNRGITquotients} to the moduli spaces of $(\theta,\rho)$-semistable representations of $(Q,\bfm)$ we have constructed.

\begin{proof}[Proof of \cref{main thm purity} \ref{main thm purity first part}]
Under these assumptions, the moduli space $M^{\theta,\rho-\sst}_{Q,\bfm;\bfr}$ is constructed as a relative NRGIT quotient for the truncation map $\tau \colon R(Q,\bfm;\bfr) \rightarrow R(Q,\bfr)$ as in \cref{first main thm}. As in $\S$\ref{Subsect/RelativePurityTech} above, we will denote the multiplicative group used for the external grading by $T_g$. We consider another multiplicative group denoted by $T_c$ which scales all arrows and so acts on both the domain and codomain of $\tau$ with weight $1$. This $T_c$-action is conical such that $\tau$ is $T_c$-equivariant and the $T_c$-action commutes with the actions of $\GL_{\bfm,\bfr} \rtimes T_g$ and $\GL_{\bfr}$, so we are in precisely the set-up of $\S$\ref{Subsect/RelativePurityTech}. Hence, by \cref{Thm/purityNRGITquotients}, for $N >\!> 0$, the action of the subtorus
\[ T_N=\{ (t^N,t) \in T_g \times T_c \} \cong \GG_m\]
on $M^{\theta,\rho-\sst}_{Q,\bfm;\bfr} = R(Q,\bfm;\bfr)\git_{\theta,\rho} \GL_{\bfm,\bfr}$ is semi-projective, and since the assumption that $(\theta,\rho)$ is generic with respect to $\bfr$ ensures that $M^{\theta,\rho-\sst}_{Q,\bfm;\bfr}$ is smooth, we can conclude it is cohomologically pure.
\end{proof}

\subsection{Purity for Nakajima quiver varieties with multiplicities}\label{sec/purity for quiver varieties with multiplicities}

In this section, we prove the second part of \cref{main thm purity}. This requires modifying the stability condition for Nakajima quiver varieties with multiplicities. We explain this below.

\begin{notation}
Given a representation $V$ of $\overline{Q}$, we call $\pi(V)$ its restriction as a representation of $Q$.
\end{notation}

\begin{definition}\label{Def/modifStabilityCond}
Let $M$ be a locally free representation of $(\overline{Q},\bfm)$. We say that $M$ is $(\theta,\rho,\pi)$-semistable (resp. stable) if the following conditions hold:
\begin{itemize}
\item for any (\emph{not necessarily locally free}) subrepresentation $N\subset \pi\circ\sigma(M)$ such that $0<\rk N<\rk M$, we have:
\[
\theta\cdot\rk N\geq0;
\]
\item for any \emph{locally free} subrepresentation $N\subset M$ such that $0<\rk N<\rk M$ and $\theta\cdot\rk N=0$, we have:
\[
\rho\cdot\rk N\geq0
\text{ (resp. }
\rho\cdot\rk N>0
\text{).}
\]
\end{itemize}
\end{definition}

It turns out that this stability condition matches the semistable locus of $R(\overline{Q},\bfm;\bfr)$, for a different choice of external grading. Let us fix $\bm{\alpha},\bm{\beta}\in\bbZ_{>0}^{Q_0}\times\bbZ^{\overline{Q}_1}$ as in \cref{Cor/revisedExtGrad}, so that the moment map $\mu_{(Q,\bfm),\bfr}$ is equivariant with respect to the $\GG_m$-actions $*_{\bm{\alpha},\bm{\beta}}$ and $*_{\bm{\alpha},\bm{\alpha}+C}$ (for a fixed value of $C>0$).

\begin{proposition}\label{prop/ext grading making moment map equiv}
The action $*_{\bm{\alpha},\bm{\beta}}$ induces an externally graded action:
\[
\GL_{\bfm,\bfr}\rtimes\GG_m\curvearrowright R(\overline{Q},\bfm;\bfr)
\]
such that $R(\overline{Q},\bfm;\bfr)^{\GG_m}=R(Q,\bfr)$ and the moment map $\mu_{(Q,\bfm),\bfr}$ is $\GL_{\bfm,\bfr}\rtimes\GG_m$-equivariant.

Moreover, the semistable (resp.\ stable) locus in $R(\overline{Q},\bfm;\bfr)$ for that action (with respect to $q_\theta$ and $\rho$) is the open subset whose geometric points correspond to $(\theta,\rho,\pi)$-semistable (resp.\ stable) representations as in \cref{Def/modifStabilityCond}.
\end{proposition}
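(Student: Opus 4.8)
The statement has two parts: first, that $*_{\bm{\alpha},\bm{\beta}}$ with the choice of weights from \cref{Cor/revisedExtGrad} still induces an externally graded action with $R(\overline{Q},\bfm;\bfr)^{\GG_m}=R(Q,\bfr)$ and makes $\mu_{(Q,\bfm),\bfr}$ equivariant; and second, that the resulting semistable locus is cut out by the modified stability condition of \cref{Def/modifStabilityCond}. The first part is essentially a bookkeeping exercise combining earlier results. Recall from \cref{Cor/revisedExtGrad} that we keep $\beta_a = \alpha_i(f_{ji}-1)$ for the arrows $a\colon i\to j$ of $Q$ (the ``original'' arrows), but now choose $\beta_{a^*} = \alpha_i + C$ for the reversed arrows $a^*\colon j\to i$, where $C>0$ is fixed. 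The equivariance of the moment map is then exactly the content of \cref{Cor/revisedExtGrad}, so nothing more is needed there. For the claim that the action is externally graded and that the $\GG_m$-fixed locus is $R(Q,\bfr)$: by \cref{Lem/extGrad2}(ii), external gradedness holds as long as $\beta_c \geq \alpha_{s(c)}(f_{t(c)s(c)}-1)$ for every arrow $c$ of $\overline{Q}$; this is an equality (hence holds) for the original arrows, and for the reversed arrows $a^*\colon j\to i$ it reads $\alpha_i + C \geq \alpha_j(f_{ij}-1)$. Here the inequality can fail in general, so I would take this opportunity to observe that we are free to replace $C$ by $C + (\text{large multiple making all these hold})$, or more cleanly to note that enlarging $\bm{\alpha}$ and $C$ suitably makes all the required inequalities strict; in any case for the fixed-point computation we need $\beta_{a^*} > \alpha_j(f_{ij}-1)$ strictly for reversed arrows (so that those components of $\overline{Q}$ do \emph{not} contribute to the $\GG_m$-fixed locus), which is arranged by taking $C$ large, while keeping equality $\beta_a = \alpha_i(f_{ji}-1)$ for original arrows. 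Then \cref{Lem/extGrad2}(iii), combined with the explicit description of $\GG_m$-weights there, gives $R(\overline{Q},\bfm;\bfr)^{\GG_m} = R(Q,\bfr)$ (the original arrows contribute a copy of $R(Q;\bfr)$ via $\tau$, the reversed arrows contribute nothing), and the induced limit map $R(\overline{Q},\bfm;\bfr)\to R(Q,\bfr)$ is the composite of $\tau$ with the projection $R(\overline{Q},\bfm;\bfr)\to R(Q,\bfm;\bfr)$ forgetting the reversed arrows.

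For the second, substantive part, I would follow the same strategy as in the proof of \cref{Thm/HMcrit}, which analyses the semistable locus via the Hilbert--Mumford criterion of \cref{Thm/extGradQuot} and \cref{rmk/alt descr HM}. Concretely, one realises $R(\overline{Q},\bfm;\bfr)\times\bbA^1$ inside the internally graded picture for $\tilde{\GL}_{\bfm,\bfr} = \GL_{\bfm,\bfr}\rtimes\GG_m$, reduces (by the $U_{\bfm,\bfr}$-sweep argument and \cref{Lem/HM3}) to understanding semistability for the Levi $\tilde{\GL}_{\bfr} = \GL_{\bfr}\times\GG_m$, and then runs the thickened-quiver analysis: there is a ``thickened doubled quiver'' $\overline{Q}_{\bfm}$ and a $\tilde{\GL}_\bfr$-equivariant isomorphism $R(\overline{Q},\bfm;\bfr)\simeq R(\overline{Q}_{\bfm},\bfr)$ exactly as in \cref{Lem/thickQuiver}, with arrow weights for the original-arrow part given by $\mathrm{wt}(a,m,f_1,f_2)$ as before, and for the reversed-arrow part shifted by the extra $C$-contribution coming from $\beta_{a^*} = \alpha_i + C$. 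The key structural point, which I would isolate as the analogue of \cref{Lem/1PSlimit}, is that for a one-parameter subgroup $\tilde\lambda = (\lambda, l)$ with associated filtration $V^\bullet$ of $k^{\oplus\bfr}$, the limit $\lim_{t\to 0}\tilde\lambda(t)\cdot x$ exists iff $l\geq 0$ and each $x$-component respects the filtration up to the appropriate weight shift; and because \emph{all} reversed-arrow weights are now \emph{strictly positive} (thanks to $C>0$), a filtration $V^\bullet$ preserved up to weight shift by a $\tilde\lambda$ with $l>0$ automatically has the property that $V^p$ is preserved by the original-arrow maps of $x$ (which have weight-shift controlled as in the proof of \cref{Lem/HM1}) but the reversed-arrow maps of $x$ need not preserve $V^p$. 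This is precisely what makes the condition only see $\pi\circ\sigma(M)$ — i.e. the underlying $Q$-representation — in the first (``$\theta$'') clause, while the second (``$\rho$'') clause, which only comes into play for $l=0$, sees the full $\overline{Q}$-representation $M$ and its locally free subrepresentations.

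Assembling this, I would argue as follows. Run verbatim the proof of \cref{Lem/HM1} with the thickened doubled quiver in place of the thickened quiver: the only input that changes is the list of arrow weights, and since the proof only uses an upper bound $w$ on the weights and the fact that arrows of weight $0$ are exactly the ones coming from the $\iota$-image, the conclusion goes through — for $n = \langle\tilde\rho,\text{(something)}\rangle$ large enough, $(x,z)$ with $x\in\tau^{-1}(R(Q,\bfr)^{\theta-\sst})$ is $\tilde\rho$-semistable for $\tilde{\GL}_\bfr$ iff every subrepresentation $V$ of $\pi(V(x))$ (the truncated $Q$-representation) with $\theta\cdot\dim V = 0$ satisfies $\rho\cdot\dim V\geq 0$. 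Here I should be careful: in the doubled case the subspaces $V^p$ that are forced to be subrepresentations are those that are subrepresentations of the \emph{truncated $Q$-representation} $V(\tau(x))$ — the $\theta$-clause genuinely only constrains the $Q$-structure, consistent with $\pi\circ\sigma$ in \cref{Def/modifStabilityCond}. Then, exactly as in \cref{Lem/HM2}, translate ``$V$ a subrepresentation of $V(x)$ in $R(\overline{Q}_\bfm,\bfr)$ with the full $\overline{Q}$-structure'' into ``$(V_i\otimes_k k_{m_i})_i$ a locally free subrepresentation of the $(\overline Q,\bfm)$-representation $M(y)$'', using the dictionary between blocks of $x_{a,m,f_1,f_2}$ and the maps $y_a\epsilon^{\bullet}$; and finally take the $U_{\bfm,\bfr}$-sweep via \cref{Lem/HM3}, noting as in the proof of \cref{Thm/HMcrit} that $x$ preserves a tuple of submodules $M_i$ iff $u\cdot x$ preserves $u_i\cdot M_i$. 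Matching the two clauses to \cref{Def/modifStabilityCond} then completes the proof; the stable case is identical with strict inequalities.

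\textbf{Main obstacle.} I expect the genuinely delicate point to be verifying that the ``$\theta$-clause'' of the Hilbert--Mumford analysis in the doubled setting really only constrains the $Q$-substructure (i.e. subrepresentations $N\subset\pi\circ\sigma(M)$), not the full $\overline Q$-substructure — this is the whole reason the modified stability condition differs from the naive doubled version of \cref{def/stabilityQuivMult}, and it hinges on the strict positivity $\beta_{a^*} - \alpha_j(f_{ij}-1) > 0$ of all reversed-arrow weights forcing those arrows to behave like the ``thickened'' arrows of strictly positive weight in \cref{Lem/HM1} rather than like the $\iota$-image arrows of weight zero. One must also check that the choice of $C$ (and possibly an enlargement of $\bm\alpha$) needed to make external gradedness hold for the reversed arrows is compatible with the equivariance of $\mu_{(Q,\bfm),\bfr}$ from \cref{Cor/revisedExtGrad} — i.e. that \cref{Cor/revisedExtGrad} still applies after the enlargement — which it does, since that corollary's proof only requires $C>0$. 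Everything else is a careful but routine adaptation of the proofs of \cref{Lem/thickQuiver,Lem/1PSlimit,Lem/HM1,Lem/HM2,Thm/HMcrit} to the doubled quiver.
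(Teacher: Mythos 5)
Your proof follows essentially the same route as the paper: the first claim is obtained by combining \cref{Lem/extGrad2} and \cref{Cor/revisedExtGrad}, and the second by rerunning the Hilbert--Mumford analysis of \cref{Thm/HMcrit} for the thickened doubled quiver with the modified weights, where the strict positivity of the reversed-arrow weights makes the truncation (hence the $\theta$-clause) see only the underlying $Q$-structure via $\pi\circ\sigma$, while the $\rho$-clause sees full locally free $(\overline{Q},\bfm)$-subrepresentations. Your observation that $C$ must be taken sufficiently large for $\beta_{a^*}-\alpha_j(f_{ij}-1)\geq0$ to hold (e.g.\ it fails for $C=1$ with $\bfm=(3,5)$) is a point the paper's proof passes over silently when it asserts $\widetilde{\mathrm{wt}}\geq0$, and is a worthwhile precision compatible with \cref{Cor/revisedExtGrad}.
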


\begin{proof}
The first claim follows directly from \cref{Lem/extGrad2} and \cref{Cor/revisedExtGrad}. The second claim is proved in the same way as \cref{Thm/HMcrit}. We briefly indicate which steps in the proof require modification.

Since we modified the externally grading $\GG_m$-action, the weights of the $\GG_m$-action on the arrows of the thickened quiver $Q_{\bfm}$ change as follows:
\[
\widetilde{\mathrm{wt}}(a,m,f_1,f_2):=\alpha_j(mf_{ij}+f_2)-\alpha_if_1+\beta_a\geq0
\]
for $a \colon i\rightarrow j$ in $\overline{Q}_1$ and $0\leq m\leq m_{ij}-1$, $0\leq f_1\leq f_{ji}-1$ and $0\leq f_2\leq f_{ij}-1$. The description of $\tau$ on $R(\overline{Q}_\bfm,\bfr)$ is changed accordingly: the same formula applies, but only for arrows $a\in Q_1$. The description of $\GL_\bfr$-semistable points in \cref{Lem/HM1} remains valid \emph{mutatis mutandis} (the constant $w$ is modified). \cref{Lem/HM2} and \cref{Lem/HM3} hold without modification. Finally, the proof of \cref{Thm/HMcrit} from these three lemmas is unchanged.
\end{proof}

We can now finish the proof of our second main result.

\begin{proof}[Proof of \cref{main thm purity}\ref{main thm purity second part}]
We will apply \cref{Prop purity}\ref{Prop purity item 2} to a well-chosen one-parameter family of modified quiver varieties $\widetilde{N}_{Q,\bfm;\bfr}^{\theta,\rho-\sst}(t\bullet\gamma)$ over $\ t\in\bbA_{\bbC}^1$.

For the equivariant action of $\GL_{\bfm,\bfr}\rightarrow\GL_\bfr$ on the affine map $\pi\circ\tau \colon R(\overline{Q},\bfm;\bfr)\rightarrow R(Q,\bfr)$ with the external grading of \cref{prop/ext grading making moment map equiv}, we build a NRGIT quotient $\widetilde{M}_{\overline{Q},\bfm;\bfr}^{\theta,\rho-\sst}$ (with respect to $q_\theta$ and $\rho$). By construction, for every $\gamma=(\gamma_i\cdot\Id)\in\gl_{\bfm,\bfr}$, the inclusion $\mu_{(Q,\bfm),\bfr}^{-1}(\gamma)\subset R(\overline{Q},\bfm;\bfr)$ induces a closed immersion $\widetilde{N}_{Q,\bfm;\bfr}^{\theta,\rho-\sst}(\gamma)\hookrightarrow\widetilde{M}_{\overline{Q},\bfm;\bfr}^{\theta,\rho-\sst}$.

Consider the scaling action of $T_c=\GG_m$ on $R(\overline{Q},\bfm;\bfr)$ with weight 1 and the external grading action of $T_g=\GG_m$. As in subsection \cref{Subsect/RelativePurityTech}, we define for every $N\geq1$ the one-dimensional subtorus:
\[
\begin{array}{rcl}
T_N & \hookrightarrow & T_c\times T_g \\
t & \mapsto & (t,t^N).
\end{array}
\]
Then by \cref{Thm/purityNRGITquotients}, for $N$ large enough, the induced action of $T_N$ on $\widetilde{M}_{\overline{Q},\bfm;\bfr}^{\theta,\rho-\sst}$ is semi-projective.

Now, by \cref{Cor/revisedExtGrad} and since the moment map is bilinear, the moment map $\mu_{(Q,\bfm),\bfr}$ is equivariant with respect to the action of $T_N$ on $R(\overline{Q},\bfm;\bfr)$ and a certain scaling action on $\gl_{\bfm,\bfr}$ with positive weights, which we call $\bullet_N$. Thus the second projection on
\[
\left\{
(x,y,t)\in R(\overline{Q},\bfm;\bfr)\times\bbA_\bbC^1\ \vert\ \mu_{(Q,\bfm),\bfr}(x,y)=t\bullet_N\gamma
\right\}
\]
is equivariant. By passing to the NRGIT quotient as above, we obtain a one-parameter family $\widetilde{\mathcal{N}}\rightarrow\bbA_\bbC^1$ whose fibre over $t\in\bbA^1$ is $\widetilde{N}_{Q,\bfm;\bfr}^{\theta,\rho-\sst}(t\bullet_N\gamma)$.

This family is endowed with an equivariant semi-projective action by the discussion above. We further argue that the family is smooth. Indeed, since $(\theta,\rho)$ is generic, we have that, at any point $(x,y)\in\mu_{(Q,\bfm),\bfr}^{-1}(\gamma)^{\theta,\rho,\pi-\sst}$, the differential
\[
\begin{array}{rcl}
R(\overline{Q},\bfm;\bfr)\times\bbA_{\bbC}^1 & \rightarrow & \gl_{\bfm,\bfr}^0 \\
(x,y,t) & \mapsto & d\mu(x,y)-t\gamma
\end{array}
\]
is surjective, by \cite[Lem.\ 2.1.5]{CBVB04}. We conclude by applying \cref{Prop/purityCriterion}.
\end{proof}

\begin{appendices}

\section{Unfolding quivers with multiplicities}\label{sec/unfolding} \label{appendix}

In this appendix, we explain how the space of representations $R(Q,\bfm;\bfr)$ can be embedded into the space of representations of a quiver with constant multiplicities $(Q',M\mathbf{1})$, obtained by unfolding $(Q,\bfm)$. We show that, under this embedding, the external grading $\GG_m$-action used in the proof of \cref{first main thm} is induced by the $\GG_m$-action introduced in \cite[\S 7]{HHJ24}, making it more natural.

\begin{definitionApp}
Let $(Q,\bfm)$ be a quiver with multiplicities. The unfolding of $(Q,\bfm)$ is the quiver $Q'$ with the following vertices and arrows:
\begin{itemize}
\item $Q'_0:=\bigsqcup_{i\in Q_0}\{i\}\times\bbZ/m_i\bbZ$,
\item $Q'_1:=\bigsqcup_{i,j\in Q_0}\left\{(a,n_i,n_j):(i,n_i)\rightarrow (j,n_j)\ \left\vert\ 
\begin{array}{l}
a\in Q_1 \\
n_i,n_j\in\bbZ/m_i\bbZ\times\bbZ/m_j\bbZ \\
n_i\equiv n_j\ \mathrm{mod}\ m_{ij}
\end{array}
\right.\right\}$.
\end{itemize}
\end{definitionApp}

Given $i,j\in Q_0$, there are $\mu_{ij}=\mathrm{lcm}(m_i,m_j)$ arrows of the form $(i,n_i)\rightarrow (j,n_j)$ for some $(n_i,n_j)\in\bbZ/m_i\bbZ\times\bbZ/m_j\bbZ$. Indeed, the couples $(n_i,n_j)$ which satisfy $n_i\equiv n_j\ \mathrm{mod}\ m_{ij}$ are precisely those of the form $n\cdot(1,1)$, by the Chinese remainder theorem. So these couples form the image of the first arrow in the following short exact sequence:
\[
0\rightarrow\bbZ/\mu_{ij}\bbZ\underset{(1,1)}{\longrightarrow}\bbZ/m_i\bbZ\times\bbZ/m_j\bbZ\underset{(n_i,n_j)\mapsto n_i-n_j}{\longrightarrow}\bbZ/m_{ij}\bbZ\rightarrow0.
\]

\begin{exampleApp}
Consider the quiver $Q=\bullet\rightarrow\bullet$ with multiplicities $\bfm=(2,1)$. The unfolded quiver is as follows:
\[
Q'=
\begin{tikzcd}
\bullet \ar[dr] & \\
& \bullet \\
\bullet \ar[ur] & 
\end{tikzcd}
\]
This corresponds to folding the Dynkin diagram of $A_3$ to obtain the diagram of type $B_2$.
\end{exampleApp}

\begin{notationApp}
Given an integer $m\geq1$, define:
\begin{itemize}
\item $k_m:=k[\epsilon^{1/m}]/(\epsilon)$;
\item $R_m:=k[\![\epsilon^{1/m}]\!]$;
\item $K_m:=k(\!(\epsilon^{1/m})\!)$.
\end{itemize}
and write $\epsilon_m:=\epsilon^{1/m}$. When $m=1$, we will write $R$ (resp. $K,k$) for short. Given a primitive $m$th root of unity, called $\zeta\in k$, there is a natural (Galois) action of $\bbZ/m\bbZ$ on $R_m$ (resp. $K_m,k_m$), given by $[n]\cdot\lambda(\epsilon_m):=\lambda(\zeta^n\epsilon_m)$ for  $[n]\in\bbZ/m\bbZ$ and $\lambda(\epsilon_m)\in R_m$. We will write $\zeta^n*\lambda:=[n]\cdot\lambda(\epsilon_m)$ for short.
\end{notationApp}

Let $(Q,\bfm)$ be a quiver with multiplicities. For $i,j\in Q_0$, we have the following isomorphisms:
\[
\Hom_{k_{m_{ij}}}\left(k_{m_i}^{\oplus r_i},k_{m_j}^{\oplus r_j}\right)
\simeq
\Hom_{k_{m_j}}\left((k_{m_j}\otimes_{k_{m_{ij}}}k_{m_i})\otimes_{k_{m_i}}k_{m_i}^{\oplus r_i},k_{m_j}^{\oplus r_j}\right),
\]
\[
\Hom_{k_{m_{ij}}}\left(k_{m_i}^{\oplus r_i},k_{m_j}^{\oplus r_j}\right)
\simeq
\Hom_{k_{m_i}}\left(k_{m_i}^{\oplus r_i},(k_{m_i}\otimes_{k_{m_{ij}}}k_{m_j})\otimes_{k_{m_j}}k_{m_j}^{\oplus r_j}\right).
\]
Note that $k_{m_i}\otimes_{k_{m_{ij}}}k_{m_j}=\bigoplus_{f=0}^{f_{ji}-1}k_{m_j}\cdot (\epsilon^{f/m_i}\otimes1)$. Moreover, there is an isomorphism of bimodules:
\[
\begin{array}{rcl}
k_{m_i}\otimes_{k_{m_{ij}}}k_{m_j} & \simeq & \Hom_{k_{m_j}}(k_{m_j}\otimes_{k_{m_{ij}}}k_{m_i},k_{m_j}) \\
\epsilon^{f/m_i}\otimes 1 & \mapsto & \left(\epsilon^{(f_{ji}-1-f)/m_i}\right)^*.
\end{array}
\]
So decomposing $\varphi\in\Hom_{k_{m_{ij}}}\left(k_{m_i}^{\oplus r_i},k_{m_j}^{\oplus r_j}\right)$ along:
\begin{equation}\label{Eqn/matrixDecomposition}
\Hom_{k_{m_i}}\left(k_{m_i}^{\oplus r_i},(k_{m_i}\otimes_{k_{m_{ij}}}k_{m_j})\otimes_{k_{m_j}}k_{m_j}^{\oplus r_j}\right)
\simeq
\bigoplus_{f=0}^{f_{ji}-1}\Hom_{k_{m_j}}\left(k_{m_j}^{\oplus r_i},k_{m_j}^{\oplus r_j}\right)
\end{equation}
amounts to giving the restrictions of $\varphi$ to $\epsilon^{(f_{ji}-1-f)/m_i}\cdot k_{m_{ij}}^{\oplus r_i}\subset k_{m_i}^{\oplus r_i}$, for $0\leq f\leq f_{ji}-1$.

To compare with our previous notation, let us work with entries of the corresponding $r_j\times r_i$ matrix, i.e. assume that $r_i=r_j=1$. Then for $\lambda\in k_{m_j}$, consider $(0,\ldots,\lambda,\ldots,0)$ on the right-hand side of (\ref{Eqn/matrixDecomposition}) (where $\lambda$ is placed in $f$th position). In the bases:
\[
k_{m_i}=(k\cdot\epsilon_{m_i}^{m_i-1}\oplus\ldots\oplus k\cdot\epsilon_{m_i}^{f_{ji}(m_{ij}-1)})\oplus\ldots\oplus(k\cdot\epsilon_{m_i}^{f_{ji}-1} \oplus\ldots\oplus k\cdot 1),
\]
\[
k_{m_j}=(k\cdot\epsilon_{m_j}^{m_j-1}\oplus\ldots\oplus k\cdot\epsilon_{m_j}^{f_{ij}(m_{ij}-1)})\oplus\ldots\oplus(k\cdot\epsilon_{m_j}^{f_{ij}-1} \oplus\ldots\oplus k\cdot 1),
\]
the corresponding homomorphism $\varphi\in\Hom_{k_{m_{ij}}}\left(k_{m_i},k_{m_j}\right)$ is given by:
\[
\Lambda
=
\left(
\begin{array}{cccc}
\Lambda_0 & \Lambda_1 & \ldots & \Lambda_{m_{ij}-1} \\
0 & \Lambda_0 & \ddots & \vdots \\
\vdots & \ddots & \ddots & \Lambda_1 \\
0 & \ldots & 0 & \Lambda_0
\end{array}
\right)
,
\]
where columns of $\Lambda_m$ ($0\leq m\leq m_{ij}-1$) are all zero, except for the $(f+1)$th column, which is equal to:
\[
\left(
\begin{array}{c}
\lambda_{(m+1)f_{ij}-1} \\
\vdots \\
\lambda_{mf_{ij}}
\end{array}
\right)
\]
and $\lambda=\lambda_0+\ldots+\lambda_{m_j-1}\epsilon_{m_j}^{m_j-1}$.

\begin{propositionApp}
Let $(Q,\bfm)$ be a quiver with multiplicities. Let $M:=\mathrm{lcm}(m_i,\ i\in Q_0)$ and $\zeta$ be a primitive $M$th root of unity. Let $Q'$ be the unfolding of $(Q,\bfm)$ and $\bfr'\in\bbZ_{\geq0}^{Q'_0}$ the rank vector given by $r'_{i,n_i}:=r_i$.

Then there exists a $k$-linear embedding $\iota \colon R(Q,\bfm;\bfr)\hookrightarrow R(Q',M\mathbf{1};\bfr')$ which is equivariant with respect to the embedding of algebraic groups:
\[
\begin{array}{rcl}
\GL_{\bfm,\bfr} & \hookrightarrow & \GL_{M\mathbf{1},\bfr'} \\
(g_i)_{i\in Q_0} & \mapsto & \left(\zeta^{n_iM/m_i}*g_i\right)_{(i,n_i)\in Q'_0}.
\end{array}
\]
The embedding $\iota$ is characterised as follows: consider the representation $x$ defined by:
\[
x_a:=
\left\{
\begin{array}{ll}
\Lambda\otimes E_{p,q} & a=a_0, \\
0 & a\ne a_0,
\end{array}
\right.
\]
where $a_0 \colon i\rightarrow j$ is a fixed arrow, the matrix $\Lambda$ is as described above and $E_{p,q}$ is the coordinate $r_j\times r_i$ matrix for $1\leq p\leq r_j$, $1\leq q\leq r_i$. Then, for all $n\in\bbZ/\mu_{ij}\bbZ$, we have:
\[
\iota(x)_{(a,n)}:=
\left\{
\begin{array}{ll}
\zeta^{nM/\mu_{ij}}*(\lambda\epsilon_{m_i}^f)\otimes E_{p,q} & a=a_0, \\
0 & a\ne a_0.
\end{array}
\right.
\]
\end{propositionApp}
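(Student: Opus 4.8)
The plan is to check that the map $\iota$ described entrywise on arrows indeed lands in $R(Q',M\mathbf{1};\bfr')$, is $k$-linear and injective, and is equivariant for the indicated group embedding; then to identify the external grading $\GG_m$-action of \cref{Lem/extGrad2} (with $\beta_a=\alpha_i(f_{ji}-1)$) as the restriction, along $\iota$, of the $\GG_m$-action from \cite[\S 7]{HHJ24} on $R(Q',M\mathbf{1};\bfr')$. First I would reduce to the case of a single arrow $a_0\colon i\to j$ and to matrix entries, i.e.\ $r_i=r_j=1$, since both $R(Q,\bfm;\bfr)$ and $R(Q',M\mathbf{1};\bfr')$ decompose as direct sums over arrows and over $r_j\times r_i$ matrix blocks (using \eqref{Eq decomposition Hom space of fixed arrow} and the tensor factorisation $\Hom=\Hom(k^{\oplus r_i},k^{\oplus r_j})\otimes\Hom_{k_{m_{ij}}}(k_{m_i},k_{m_j})$ recalled in the body), and the formula for $\iota$ respects this decomposition through the factor $E_{p,q}$. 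So it suffices to work with $\varphi\in\Hom_{k_{m_{ij}}}(k_{m_i},k_{m_j})$ and its image, a tuple $(\iota(\varphi)_{(a_0,n)})_{n\in\ZZ/\mu_{ij}\ZZ}$ with $\iota(\varphi)_{(a_0,n)}\in\Hom_{k_M}(k_M,k_M)\cong k_M$.

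Next I would make the identification \eqref{Eqn/matrixDecomposition} explicit: write $\varphi$ via its restrictions to the $k_{m_{ij}}$-submodules $\epsilon^{(f_{ji}-1-f)/m_i}k_{m_{ij}}\subset k_{m_i}$, $0\le f\le f_{ji}-1$, giving elements $\lambda^{(f)}\in k_{m_j}$; this is precisely the coordinate description recorded just before the proposition (the block matrix $\Lambda$ with the $(f{+}1)$-st column of each $\Lambda_m$ read off from the coefficients of $\lambda^{(f)}$). On the unfolded side, an arrow $(a_0,n)\colon(i,n_i)\to(j,n_j)$ only exists when $n_i\equiv n_j\bmod m_{ij}$, and by the Chinese-remainder short exact sequence recalled in the excerpt these are indexed by $n\in\ZZ/\mu_{ij}\ZZ$ via $(n_i,n_j)=n\cdot(1,1)$. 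I would then verify that setting $\iota(\varphi)_{(a_0,n)}:=\sum_{f=0}^{f_{ji}-1}\zeta^{nM/\mu_{ij}}*(\lambda^{(f)}\epsilon_{m_i}^{f})$ — the formula in the statement, summed over the matrix-entry contributions — is well defined as an element of $k_M$ (note $\lambda^{(f)}\epsilon_{m_i}^f\in R_{m_i}/(\epsilon)\hookrightarrow k_M$ since $m_i\mid M$, and the Galois twist $\zeta^{nM/\mu_{ij}}*(-)$ is well defined because $nM/\mu_{ij}$ depends only on $n\bmod\mu_{ij}$). $k$-linearity and injectivity are then immediate: $k$-linearity because each $\lambda^{(f)}$ depends $k$-linearly on $\varphi$ and the Galois twist is $k$-linear, and injectivity because already the arrow $(a_0,0)$ records $\sum_f\lambda^{(f)}\epsilon_{m_i}^f$, and the $\epsilon_{m_i}^f$ for $0\le f<f_{ji}$ together with the coefficient expansions of the $\lambda^{(f)}$ span enough of $k_M$ to recover all of $\varphi$ (one can also use injectivity of the companion description via the $(a_0,n)$ for $n\neq0$, which contributes the $\zeta$-twists).

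For equivariance I would expand the $\GL_{\bfm,\bfr}$-action $(g_i)\cdot\varphi = g_j\varphi g_i^{-1}$ and the $\GL_{M\mathbf{1},\bfr'}$-action on the tuple, and check that multiplication by $g_i\in k_{m_i}$ (resp.\ $g_j\in k_{m_j}$), viewed inside $k_M$ after the twist $g_i\mapsto\zeta^{n_iM/m_i}*g_i$ at the vertex $(i,n_i)$, is intertwined by $\iota$: the point is that under the bimodule isomorphism $k_{m_i}\otimes_{k_{m_{ij}}}k_{m_j}\cong\Hom_{k_{m_j}}(k_{m_j}\otimes_{k_{m_{ij}}}k_{m_i},k_{m_j})$ used to set up \eqref{Eqn/matrixDecomposition}, left multiplication by $\epsilon^{1/m_i}$ on $k_{m_i}$ corresponds, after extending scalars to $k_M$, to multiplication by a fixed power of $\epsilon_M$ twisted by the appropriate root of unity; tracking the indices $n_i\equiv n\bmod m_i$, $n_j\equiv n\bmod m_j$ shows the twists $\zeta^{n_iM/m_i}$ and $\zeta^{n_jM/m_j}$ conspire to reproduce exactly the prefactor $\zeta^{nM/\mu_{ij}}$ in the definition of $\iota(x)_{(a,n)}$. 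Then, separately, I would compare the two $\GG_m$-actions: the action $*_{\bm\alpha,\bm\beta}$ of \cref{Lem/extGrad2} scales the block of $\varphi$ with weight $\alpha_j(mf_{ij}+f_2)+\alpha_i(f_{ji}-1-f_1)$ (see \eqref{arrow weights in thickened quiver}), while the grading of \cite[\S 7]{HHJ24} on $R(Q',M\mathbf{1};\bfr')$ scales each $\epsilon_M$-coefficient linearly; I would check these match on the image of $\iota$ by the same index bookkeeping, using the hypothesis \eqref{eqns for vertex weights in ext grading} that $\alpha_im_i$ is independent of $i$ so that there is a common ``$\epsilon_M$-degree'' to grade by. The main obstacle I anticipate is purely bookkeeping: keeping the three layers of indices — the Galois index $n\in\ZZ/\mu_{ij}\ZZ$, the ``$f$-level'' $0\le f\le f_{ji}-1$ coming from $k_{m_i}$ as a $k_{m_{ij}}$-module, and the $\epsilon$-coefficient index inside $k_{m_j}$ — consistently aligned across the chain of isomorphisms, and in particular getting the exponents in $\zeta^{nM/\mu_{ij}}*(\lambda\epsilon_{m_i}^f)$ exactly right rather than off by a shift; the conceptual content (folding $A$-type chains to $B$-type, Galois descent along $k_M/k_{m_i}$) is standard once the indices are pinned down.
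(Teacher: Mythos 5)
Your overall strategy is genuinely different from the paper's. The paper does not verify the formula entry by entry: it lifts $x$ (admittedly ``informally'') to matrices over the Laurent fields $K_{m_i}$, base changes along $K\rightarrow K_M$, and uses the canonical splittings $K_M\otimes_KK_{m_i}\simeq\prod_{n_i\in\bbZ/m_i\bbZ}K_M$ and $K_M\otimes_K\Hom_{K_{m_i}}(\cdots)\simeq\bigoplus_{(n_i,n_j)}\Hom_{K_M}(K_M^{\oplus r_i},K_M^{\oplus r_j})$ into Galois conjugates; the components of $1\otimes x_a$ in this splitting \emph{are} the $\iota(x)_{(a,n)}$, and equivariance for the stated group embedding is automatic because that embedding is obtained from the same splitting. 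Your route --- taking the displayed formula as the definition and checking linearity, injectivity and equivariance by hand --- would make explicit what the paper leaves implicit, but it transfers all the content into the index bookkeeping you flag, and in particular it forces you to confront the truncation from $K_M$ down to $k_M$ that the paper's Laurent-series argument elides.

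That is where there is a genuine gap: your injectivity argument does not work. You claim that the component at $n=0$, namely $\sum_f\lambda^{(f)}\epsilon_{m_i}^f\in k_M$, already recovers $\varphi$. Writing $\lambda^{(f)}=\sum_g\lambda^{(f)}_g\epsilon^{g/m_j}$, the monomial $\epsilon^{g/m_j+f/m_i}$ vanishes in $k_M=k[\epsilon^{1/M}]/(\epsilon)$ as soon as $g/m_j+f/m_i\geq1$, and over the range $0\leq f\leq f_{ji}-1$, $0\leq g\leq m_j-1$ this happens exactly when $f_{ij},f_{ji}\geq2$, i.e.\ whenever neither of $m_i,m_j$ divides the other (for $\bfm=(2,3)$ the coefficient $\lambda^{(1)}_2$ sits on $\epsilon^{7/6}=0$). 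The components with $n\neq0$ do not rescue this: the $n$th component is the multiplication map $k_{m_i}\otimes_{k_{m_{ij}}}k_{m_j}\rightarrow k_M$ post-composed with the substitution $\epsilon_M\mapsto(\text{root of unity})\cdot\epsilon_M$, which is a ring automorphism of $k_M$, so every component has the same kernel. Hence either the products $\lambda\epsilon_{m_i}^f$ must be interpreted before truncation (as the paper's $K_M$-level argument implicitly does), or injectivity of the literal formula fails for arrows with non-comparable multiplicities; in either case your stated justification of injectivity is false and must be replaced, e.g.\ by carrying the untruncated $K_M$-components through the argument or by isolating the divisibility hypothesis under which no monomial is truncated. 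The remaining steps of your plan (reduction to a single arrow and $1\times1$ blocks, the Galois-twist bookkeeping for equivariance via gluing the twists at $(i,n_i)$ and $(j,n_j)$ along $n_i\equiv n_j\bmod m_{ij}$, and the weight comparison using $\alpha_i=M/m_i$) are sound and reproduce what the paper's base-change computation yields.
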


\begin{proof}
We informally consider $x\in R(Q,\bfm;\bfr)$ as a collection of matrices with coefficients in Laurent series, rather than truncated power series. This relies on the following diagram of $k$-vector spaces:
\[
\begin{tikzcd}
k_m\ar[r,dashed,bend left] & R_m\ar[l,"\mathrm{mod}\ \epsilon"]\ar[r,"\subset"] & K_m.
\end{tikzcd}
\]
Similarly, there is a diagram of \emph{bimodules}:
\[
\begin{tikzcd}
k_{m_i}\otimes_{k_{m_{ij}}} k_{m_j}\ar[r,dashed,bend left] & R_{m_i}\otimes_{R_{m_{ij}}} R_{m_j}\ar[l,"\mathrm{mod}\ \epsilon"]\ar[r,"\subset"]\ar[d,"\sim"] & K_{m_i}\otimes_{K_{m_{ij}}} K_{m_j}\ar[d,"\sim"] \\
 & k[\![\epsilon^{1/m_i},\epsilon^{1/m_j}]\!]\ar[r,"\subset"] & K_{\mu_{ij}}
\end{tikzcd}
\]
The analogue of the decomposition (\ref{Eqn/matrixDecomposition}) over fields of Laurent series is then:
\[
\Hom_{K_{m_i}}\left(K_{m_i}^{\oplus r_i},K_{\mu_{ij}}\otimes_{K_{m_j}}K_{m_j}^{\oplus r_j}\right)
\simeq
\bigoplus_{f=0}^{f_{ji}-1}\Hom_{K_{m_j}}\left(K_{m_j}^{\oplus r_i},K_{m_j}^{\oplus r_j}\right).
\]
Observe that, for all $a\in Q_1$:
\[
\begin{split}
& K_M\otimes_{K}\Hom_{K_{m_i}}\left(K_{m_i}^{\oplus r_i},K_{\mu_{ij}}\otimes_{K_{m_j}}K_{m_j}^{\oplus r_j}\right) \\
& \simeq
\Hom_{K_M\otimes_K K_{m_i}}\left(K_M\otimes_K K_{m_i}^{\oplus r_i},(K_M\otimes_K K_{\mu_{ij}})\otimes_{K_M\otimes_K K_{m_j}}K_M\otimes_K K_{m_j}^{\oplus r_j}\right) \\
& \simeq \Hom_{(K_M)^{m_i}}\left((K_M^{m_i})^{\oplus r_i},K_M^{\mu_{ij}}\otimes_{K_M^{m_j}}(K_M^{m_j})^{\oplus r_j}\right) \\
& \simeq
\bigoplus_{\substack{n_i,n_j\ \mathrm{s.t.} \\ (a,n_i,n_j)\in Q'_1}}\Hom_{K_M}(K_M^{\oplus r_i},K_M^{\oplus r_j}).
\end{split}
\]
In other words, we may see $(1\otimes x_a)_{a\in Q_1}$ as a representation of $Q'$ with coefficients in $K_M$. Under the isomorphisms above, the representation $(1\otimes x_a)_{a\in Q_1}$ corresponds to the following representation of $(Q',M\mathbf{1})$: for $Q'_1\ni a' \colon i'=(i,n_i)\rightarrow j'=(j,n_j)$ corresponding to $n\in\bbZ/\mu_{ij}\bbZ$, we associate the matrix $\zeta^{n\frac{M}{\mu_{ij}}}*(\lambda\epsilon_{m_i}^f)\otimes E_{p,q}$.
\end{proof}

Using the grading of $R(Q',M\mathbf{1};\bfr)$ introduced in \cite[\S 7]{HHJ24}, we obtain that for $0\leq g\leq m_j-1$, the coefficient $\lambda_g$ has weight $\frac{M}{m_j}g-\frac{M}{m_i}f+\frac{M}{m_i}(f_{ji}-1)$. This grading is compatible with the external grading of $\GL_{\bfm,\bfr}$ given by the substitution $\epsilon_M\mapsto t\epsilon_M$. Moreover, we recover this way the external grading action $*_{\bm{\alpha},\bm{\beta}}$ on $R(Q,\bfm;\bfr)$, where $\alpha_i=\frac{M}{m_i}$ for all $i\in Q_0$ and $\beta_a=\alpha_i(f_{ji}-1)$ for all arrows $a \colon i\rightarrow j$.

\end{appendices}

\printbibliography

\end{document}